\documentclass[aos,preprint]{imsart}

\RequirePackage{amsthm,amsmath,amsfonts,amssymb}
\RequirePackage[numbers,sort&compress]{natbib}
\RequirePackage[colorlinks,citecolor=blue,urlcolor=blue]{hyperref}
\RequirePackage{graphicx}
\usepackage{stmaryrd}

\usepackage{tikz}
\usepackage{pgfplots}
\pgfplotsset{compat=1.18}
\usetikzlibrary{matrix,arrows,external}
\startlocaldefs

\theoremstyle{plain}

\newtheorem{theorem}{Theorem}[section]
\newtheorem{lemma}[theorem]{Lemma}
\theoremstyle{definition}
\newtheorem{definition}[theorem]{Definition}

\theoremstyle{plain}
\newtheorem{proposition}[theorem]{Proposition}
\newtheorem{corollary}{Corollary}

\theoremstyle{definition}
\newtheorem{remark}[theorem]{Remark}

\newcommand{\op}{\mathrm{op}}

\DeclareMathOperator{\sign}{sign}

\newcommand{\1}{\boldsymbol{1}}

\newcommand{\ClusterRisk}{\mathcal{R}_n^{\mathrm{clust}}}

\newcommand{\ClusterRiskOn}{\mathcal{R}_n^{\mathrm{clust,on}}}
\newcommand{\ClassifRisk}{\mathcal{R}_n^{\mathrm{class}}}
\newcommand{\ClassifRiskOn}{\mathcal{R}_n^{\mathrm{class,on}}}

\newcommand{\rom}[1]{\lowercase\expandafter{\romannumeral #1\relax}}
\DeclareMathOperator*{\argmin}{arg\,min}

\newcommand{\EE}{\mathbb{E}}

\newcommand{\PP}{\mathbb{P}}

\usepackage{mathtools}

\DeclarePairedDelimiterX\Set[1]{\lbrace}{\rbrace}%
{  #1 }

\endlocaldefs

\begin{document}

\begin{frontmatter}
\title{Sharp and Adaptive Cluster Recovery in Slowly Mixing Gaussian Hidden Markov Models}
\runtitle{Cluster Recovery in Gaussian Hidden Markov Models}

\begin{aug}
\author[A]{\fnms{Ibrahim}~\snm{Kaddouri} 
\ead[label=e1]{Ibrahim.Kaddouri@warwick.ac.uk}}
\author[B]{\fnms{Mohamed}~\snm{Ndaoud}\ead[label=e2]{ndaoud@essec.edu}}
\address[A]{Department of Statistics, University of Warwick 
\printead[presep={,\ }]{e1}}

\address[B]{Department of Decisions and Operations,
ESSEC Business School \printead[presep={,\ }]{e2}}
\end{aug}

\begin{abstract}
We study cluster recovery in a two-state Gaussian hidden Markov model in the slowly mixing regime, where the transition probability $\delta$ may vanish with the sample size. Temporal persistence creates long homogeneous segments and fundamentally changes the recovery problem relative to the i.i.d. Gaussian mixture benchmark. Rather than being purely adverse, label dependence provides exploitable structure: the statistical difficulty becomes concentrated near the rare transition points, giving rise to recovery regimes governed explicitly by their frequency $\delta$.

We first characterize, up to universal constants, the oracle Bayes
risk for offline and online clustering. Both problems exhibit a
nonstandard polynomial risk regime, but the sequential constraint
induces an additional logarithmic localization cost, yielding a genuine
online/offline gap. We also study fixed lag prediction with delayed
label feedback and identify the effective memory horizon beyond which
past labels no longer improve the optimal rate.

When the signal direction is unknown and possibly high dimensional,
the problem is governed by the effective signal strength
\[
    r_n^2
    =
    \frac{\|\theta\|^4/\sigma^4}
         {\|\theta\|^2/\sigma^2+d/n}.
\]
We derive minimax lower bounds and construct fully adaptive
polynomial time procedures attaining the sharp recovery rates. In
particular, almost full recovery is possible precisely when
$r_n^2\gg\delta$. If $n\delta\gtrsim\log n$, adaptation to both the
signal $\theta$ and the transition probability $\delta$ incurs no first order loss, and
the sharp exact recovery threshold (under the Hamming loss) is
\[
    r_n^2
    =
    2\log(n\delta)\,(1+o(1)).
\]
Thus the effective complexity of the problem is governed by the number
of latent transitions, rather than by the sample size itself.
\end{abstract}

\begin{keyword}[class=MSC]
\kwdgroup[type=primary]{\kwd{62C20}
\kwd{62M05}}
\kwdgroup[type=secondary]{\kwd{62H30}}
\end{keyword}

\begin{keyword}
\kwd{hidden Markov models}
\kwd{minimax exact recovery}
\kwd{adaptive estimation}
\end{keyword}

\end{frontmatter}

\section{Introduction}

Clustering is a fundamental problem in unsupervised learning: given observations generated from a heterogeneous population, the objective is to recover the latent groups without observing their labels. A large body of work has developed algorithms and statistical guarantees for mixture models under an independence assumption; see, among many others, \cite{KSH05,SBY09,MVW17,GV19,Ndaoud18}, and the surveys in \cite{MR3889693,G21}. In many applications, however, observations arrive sequentially and the latent states exhibit substantial temporal persistence. Treating such data as i.i.d. can therefore miss an important source of statistical information.

We study this phenomenon in a two-state Gaussian hidden Markov model. The observations satisfy
\begin{equation}
\label{eq:model_hmm}
    X_i=\eta_i\theta+\sigma\xi_i,
    \qquad i=1,\dots,n,
\end{equation}
where $\theta\in\mathbb{R}^d$ is an unknown signal vector, $\sigma>0$ is the noise standard deviation, $\xi_i\stackrel{\mathrm{i.i.d.}}{\sim}\mathcal{N}(0,I_d)$, and $(\eta_i)_{i\in[n]}$ is a stationary symmetric Markov chain on $\{-1,+1\}$ with transition matrix
\[
    Q=
    \begin{pmatrix}
        1-\delta & \delta\\
        \delta & 1-\delta
    \end{pmatrix},
    \qquad 0<\delta\le \frac12.
\]
The parameter $\delta$ controls the persistence of the hidden labels. At $\delta=1/2$, the labels are independent and the model reduces to the balanced two-component Gaussian mixture. Our focus is on the slowly mixing regime, in which $\delta$ may vanish with $n$ and is not assumed to be bounded below by a positive constant.

\textbf{The regime $\delta\geq 1/n$.} Throughout the paper, 
we will often restrict attention to
$
    \delta\geq\frac1n.
$
This is a natural nondegeneracy condition. The expected number of
transitions is $(n-1)\delta$, whereas
\[
    \PP(\eta_1=\cdots=\eta_n)
    =
    (1-\delta)^{n-1}.
\]
Hence, if $\delta\lesssim1/n$, the latent path has no transition with
probability bounded away from zero. If $\delta=o(1/n)$, this
probability tends to one. On this event the sample contains a single
latent cluster and recovery, modulo the global label permutation, is
trivial. The lower bound $\delta\ge1/n$ excludes the asymptotically
transition-free regime $n\delta\to0$, but still permits paths
with only a bounded expected number of transitions.

\textbf{The noise level $\sigma$.}
For simplicity, several constructions below are stated for known
$\sigma$, and the oracle analysis is normalized to $\sigma=1$.
Estimating $\sigma$ is orthogonal to the slowly mixing structure:
the dependence of the hidden labels affects the low-rank signal
component, whereas the isotropic Gaussian noise level can be estimated
by standard moment or spectral methods. Thus a consistent estimator
of $\sigma$ may be plugged into the procedures below without changing
the rates or phase transitions. We therefore treat $\sigma$ as known
and focus on the difficulties created by the latent Markov structure.

Slow mixing changes the geometry of the recovery problem. A typical latent trajectory consists of long constant stretches separated by relatively few transition points. As a consequence, clustering is no longer governed solely by the overlap of two Gaussian components. Away from a transition, many neighboring observations carry information about the same hidden state, whereas observations near transition points are the most difficult to assign correctly. This suggests the heuristic decomposition
\[
    \text{Clustering risk}
    \approx
    \text{Transition frequency}
    \times
    \text{Localization error per transition}.
\]
This connection with change-point localization is one of the main structural themes of the paper. Related Gaussian change-point problems are studied, for example, in \cite{VFLR20}, but the present setting differs in that both the number and the locations of the changes are random and generated by a Markov chain, and the loss is a global clustering loss modulo label permutation.

We first study an oracle problem in which the model parameters are known. This isolates the intrinsic effect of temporal dependence from the difficulty of estimating the signal direction. We consider both offline clustering, in which the full trajectory $(X_1,\dots,X_n)$ is available before any decision is made, and online clustering, in which assignments are made sequentially and cannot be revised. The two settings exhibit the same weak-signal threshold, but they differ substantially in the intermediate signal regime. Offline access to observations on both sides of a potential transition removes the additional delay cost that appears in the online problem. We also introduce a \textit{fixed-lag online prediction} problem in which labels are revealed after a fixed lag. This auxiliary model provides a useful benchmark for understanding how much information is carried by recent historical labels and plays a role in the analysis of the online lower bound.

We then turn to the statistically more demanding setting in which $\theta$ is unknown and may be high-dimensional. For the minimax class
\[
    \Theta(\Delta)=\{\theta\in\mathbb{R}^d:\|\theta\|\ge \Delta\},
\]
the effective signal is
\begin{equation}\label{eq:rn}
    r_n^2 = \frac{\Delta^4/\sigma^4}{\Delta^2/\sigma^2+d/n},
\end{equation}

which is the same high-dimensional signal parameter that appears in the i.i.d. two-component Gaussian mixture \cite{Ndaoud18}. The Markov structure enters through $\delta$: our lower bounds identify $r_n^2\asymp\delta$ as the onset of nontrivial recovery, rather than the constant-order threshold of the i.i.d. model. We complement these lower bounds with adaptive procedures. In particular, we construct estimators of the signal magnitude and an initial label direction, show that $\delta$ can be estimated up to a constant factor in the recovery regime, and develop two polynomial-time recovery procedures: an iterative estimator that closely mirrors the oracle construction, and a one-shot sample-splitting estimator that attains the minimax rate.


The main contributions can be summarized as follows.
\begin{enumerate}
    \item We characterize the oracle Bayes risk for offline clustering in a slowly mixing Gaussian HMM and identify the weak, intermediate, and strong-signal regimes. In particular, temporal persistence lowers the threshold for almost full recovery from constant order in the i.i.d. model to the transition scale $\delta$.

    \item We characterize the corresponding online Bayes risk. The sequential constraint creates an additional logarithmic localization cost in the intermediate regime, revealing a genuine online/offline gap.

    \item We analyze the fixed-lag online prediction problem and identify a critical lag separating a memory-dominated regime, in which a recently revealed label is the most useful source of information, from an observation dominated regime, in which sufficiently old labels no longer improve the optimal rate.

    \item  In the unknown direction high-dimensional problem, we derive
    minimax lower bounds and fully adaptive polynomial-time procedures.
    Almost full recovery occurs exactly when $r_n^2\gg\delta$ and when $n\delta\ge C_0\log n$ for a
sufficiently large universal constant $C_0$, the sharp first-order exact-recovery
    threshold is $r_n^2=2\log(n\delta)(1+o(1))$. The adaptive ingredients include signal strength estimation, a spectral label initializer, and constant factor estimation of the transition probability $\delta$.
\end{enumerate}


\section{Related literature}
\label{sec:related-literature}

Our work lies at the intersection of clustering in mixture models, inference in hidden Markov models, and change-point detection and localization.

\textbf{Clustering in Gaussian mixtures.}
The balanced two-component Gaussian mixture is the natural independent
benchmark for label recovery. A broad literature studies computationally
efficient procedures and sharp statistical guarantees, including spectral
methods \cite{KSH05,SBY09}, Lloyd-type algorithms
\cite{LH16}, semidefinite relaxations
\cite{MVW17,GV19,Royer17,Perchet18}, and related
methods. In particular, \cite{Ndaoud18} gives a sharp minimax analysis
under Hamming loss modulo the global label permutation. For minimax classes
satisfying $\|\theta\|\geq\Delta$, the relevant effective signal is
$r_n^2$ defined in~\eqref{eq:rn}. The same quantity governs
our high-dimensional lower and upper bounds. The essential difference is
that the labels are no longer independent: their persistence introduces the
additional scale $\delta$ and converts much of the recovery problem into
localization around random transition times.

\textbf{Inference and decision problems in hidden Markov models.}
Hidden Markov models provide a canonical framework for inference with
dependent latent states; see, for example, \cite{CMT05}. Classical
statistical theory has largely focused on estimation of the model
parameters. Consistency and asymptotic normality of likelihood-based
estimators were established under suitable ergodicity and forgetting
conditions by, among others, \cite{BickelRitovRyden1998} and
\cite{DoucMoulinesRyden2004}. Nonparametric identification and maximum
likelihood estimation for finite-state hidden Markov models were studied
by \cite{AlexandrovichHolzmannLeister2016}.

A related decision-theoretic literature exploits latent Markov dependence
for structured inference. For instance, \cite{SunCai2009} considers
large-scale multiple testing under a two-state hidden Markov model and
constructs oracle and asymptotically optimal data-driven procedures that
exploit dependence between the latent states. The inferential target there
is multiple testing rather than recovery of the entire latent sequence
under clustering loss.

The closest work to ours is \cite{GKN23}, which studies clustering
in a general nonparametric HMM framework. Their results are tailored to
strongly mixing chains, with transition probabilities bounded away from
zero. The regime considered here is complementary: $\delta$ may vanish
with $n$, so the usual mixing and forgetting constants are no longer
uniform and the dependence on $\delta$ becomes part of the statistical
difficulty itself. This leads to polynomial-risk regimes and recovery
thresholds that disappear when the mixing parameters are treated as
fixed constants. Our focus is also different from classical parameter
estimation: the primary target is sharp recovery of the latent state
sequence.

\textbf{Change-point detection and localization.}
When $\delta\ll1$, a typical hidden path is piecewise constant with relatively few changes,
which creates a close connection with change-point problems. Minimax detection and
localization rates for Gaussian sequences have been studied extensively; see, for example,
\cite{VFLR20} and the review \cite{yu2020}. In high dimensions,
projection-based reductions and sparse change-point detection are particularly relevant to our
setting. In \cite{WangSamworth2018} an informative sparse projection direction is estimated before
reducing the problem to univariate change-point estimation, while
\cite{chao} characterize the minimax testing rates for detecting a single sparse
change in a high-dimensional Gaussian mean vector. Finite-sample guarantees for detection and
localization of changes in high-dimensional mean vectors were obtained by
\cite{YuChen2021}. Change-point segmentation under temporal dependence has also been
considered in more general time-series settings; see, for example,
\cite{ZhaoJiangShao2022}.

The analogy with change-point analysis is useful but not exact. In our setting, both the number
and the locations of the changes are random and generated by the latent Markov chain; the
signal direction may be unknown and high-dimensional; and the loss concerns global recovery
of the latent sequence modulo a label permutation rather than detection or localization of a
fixed collection of deterministic change-points. In particular, the transition frequency
$\delta$ plays simultaneously the role of a stochastic change-point intensity and a fundamental
scale in the recovery risk.

\textbf{Estimation of the emission parameter.}
A complementary line of work concerns estimation of $\theta$ up to sign.
In the Gaussian location model, corresponding to perfectly constant
labels, the minimax estimation rate is $\sqrt{d/n}$; see \cite{wu2017lecture}.
In the i.i.d.\ Gaussian mixture model, \cite{WZ21} establish minimax
rates for estimating $\theta$ and analyze spectral and EM-type procedures.
For Markov-dependent labels, \cite{ZW22} study estimation
over a range of transition probabilities and show that temporal dependence
can improve estimation. A spectral procedure from that work was revisited
in \cite{KN24}, where an adaptive bucketed spectral
estimator was developed. These estimation results provide useful
ingredients for our adaptive clustering procedures, but the statistical
target here is different: the oracle part of the paper isolates the
intrinsic label-recovery difficulty, while the minimax part quantifies the
additional cost of learning an unknown high-dimensional signal direction.

\section{Setting and definitions}
\label{sec:setting_def}
\subsection{Notation}
\label{sec:notation}

We write $\mathbb N=\{1,2,\ldots\}$ and $[n]=\{1,\ldots,n\}$.  For
integers $i\le j$, let
\[
    X_{i:j}=(X_i,\ldots,X_j),
    \qquad
    \eta_{i:j}=(\eta_i,\ldots,\eta_j), \qquad \llbracket i,j\rrbracket=\{i,i+1,\ldots,j\}.
\]

For a vector $x$, $\|x\|$ denotes its Euclidean norm and
$\langle x,y\rangle=x^\top y$ the Euclidean inner product.  For a matrix
$A$, we write $\|A\|_{\mathrm{op}}$ for its operator
norm,  and $A^\top$ for its transpose.  If $A$ is
symmetric, $\lambda_{\max}(A)$ denotes its largest eigenvalue and
$v_{\max}(A)$ denotes a corresponding unit eigenvector. The symbols $\lfloor x\rfloor$ and $\lceil x\rceil$ denote the floor and
ceiling of $x$.

Probabilities and expectations under the model with parameters
$(\theta,\delta,\sigma)$ are denoted by
$\mathbb P_{\theta,\delta,\sigma}$ and
$\mathbb E_{\theta,\delta,\sigma}$ when it is useful to emphasize the
underlying law.  When the parameters are clear from context, we simply write
$\mathbb P$ and $\mathbb E$.  When the noise level is normalized to $\sigma=1$,
we may write $\mathbb P_{\theta,\delta}$ and $\mathbb E_{\theta,\delta}$ if the
parameter dependence needs to be displayed.

We write $Z_n=o_{\PP}(1)$ for convergence to zero in probability along
the parameter sequence under consideration.
For nonnegative quantities $a$ and $b$, the notation $a\lesssim b$ means
that $a\le Cb$ for a universal constant $C>0$, and $a\asymp b$ when $a\lesssim b$ and $b\lesssim a$.  Unless stated
otherwise, these constants are independent of $n,d,\theta,\delta,\sigma$ and
of any lag or block length.  Constants denoted by $c,c',c_0,C,C_0$, possibly
with different values at different occurrences, are positive and universal.

We denote by $\mathcal P_2[n]$ the set of partitions of $[n]$ into at
most two nonempty blocks.
Finally, 
we denote by $\Phi$ the distribution function
of a standard Gaussian random variable and $\Phi^c(x)=1-\Phi(x)$.

\subsection{Offline clustering}

The latent sequence $\eta_{1:n}\in\{-1,+1\}^n$ induces the partition
\[
    \Pi_n
    \coloneqq
    \Big\{\{i\in[n]:\eta_i=+1\},\;\{i\in[n]:\eta_i=-1\}\Big\}
    \setminus\{\varnothing\}.
\]
Thus, two indices belong to the same block of $\Pi_n$ if and only if their latent labels agree. The offline clustering problem is to recover $\Pi_n$ from the full observation sequence $X_{1:n}$.

\begin{definition}[Clusterer]
An $n$-clusterer is a measurable map
\[
    g:(\mathbb{R}^d)^n\to \mathcal{P}_2[n].
\]
We denote by $\mathcal{G}_n$ the set of all $n$-clusterers.
\end{definition}

For $\Pi,\Pi'\in\mathcal{P}_2[n]$, define the maximum-overlap loss
\[
    \ell_{\mathrm{cl}}(\Pi,\Pi')
    \coloneqq
    1-\frac{1}{n}\max_M\sum_{\{C,C'\}\in M}|C\cap C'|,
\]
where the maximum is over all matchings $M$ between the blocks of $\Pi$ and $\Pi'$; each block of either partition may appear in at most one matched pair. This loss is invariant under permutations of the cluster labels; see \cite{meila2001experimental,meila2005comparing}.

The clustering risk of $g\in\mathcal{G}_n$ is
\begin{equation}
\label{eq:risk-clustering-offline}
    \ClusterRisk(\theta,\delta,\sigma,g)
    \coloneqq
    \EE_{\theta,\delta,\sigma}
    \bigl[\ell_{\mathrm{cl}}(g(X_{1:n}),\Pi_n)\bigr].
\end{equation}
When the model parameters are known, the corresponding oracle Bayes clustering risk is
\[
    \inf_{g\in\mathcal{G}_n}\ClusterRisk(\theta,\delta,\sigma,g).
\]

It is convenient to formulate the same problem in terms of binary label prediction. This representation makes the global sign ambiguity explicit.

\begin{definition}[Classifier]
An $n$-classifier is a measurable map
\[
    h:(\mathbb{R}^d)^n\to\{-1,+1\}^n.
\]
We denote by $\mathcal{H}_n$ the set of all $n$-classifiers.
\end{definition}

Define $\pi_n:\{-1,+1\}^n\to\mathcal{P}_2[n]$ by
\[
    \pi_n(z_{1:n})
    \coloneqq
    \Big\{\{i\in[n]:z_i=+1\},\;\{i\in[n]:z_i=-1\}\Big\}
    \setminus\{\varnothing\}.
\]
Every classifier $h\in\mathcal{H}_n$ induces the clusterer $\pi_n\circ h$. Conversely, because $\mathcal{P}_2[n]$ contains only partitions with at most two blocks, every $n$-clusterer admits such a representation after an arbitrary labeling of its blocks; the two possible labelings differ by a global sign.

The classification risk of $h\in\mathcal{H}_n$ is
\begin{equation}
\label{eq:risk-classif:slow}
    \ClassifRisk(\theta,\delta,\sigma,h)
    \coloneqq
    \EE_{\theta,\delta,\sigma}\Bigg[
        \frac{1}{n}\sum_{i=1}^n
        \mathbf{1}\bigl\{h_i(X_{1:n})\neq\eta_i\bigr\}
    \Bigg].
\end{equation}
For every $h\in\mathcal{H}_n$, the clustering risk of the induced partition can be written as
\begin{equation}
\label{eq:risk-clustering:classif:slow}
    \ClusterRisk(\theta,\delta,\sigma,\pi_n\circ h)
    =
    \EE_{\theta,\delta,\sigma}\Big[\ell\left(h(X_{1:n}), \eta_{1:n}\right)
    \Big]
\end{equation}
where
\[
\ell\left(\widehat{\eta}, \eta\right) := \min_{s\in\{-1,+1\}}
        \frac{1}{n}\sum_{i=1}^n
        \mathbf{1}\bigl\{\widehat{\eta}_i\neq s\eta_i\bigr\}.
\]
Thus, the clustering loss is exactly the Hamming classification loss modulo a global sign. The oracle Bayes classification risk is
\[
    \inf_{h\in\mathcal{H}_n}\ClassifRisk(\theta,\delta,\sigma,h).
\]
The classification risk depends on the chosen orientation of the two latent states, whereas the clustering risk in \eqref{eq:risk-clustering:classif:slow} does not.

\subsection{Online clustering}
\label{sec:online-clustering}

We next consider the sequential setting, in which the assignment of index $i$ must be made when $X_i$ arrives and cannot subsequently be revised. At each horizon $n$, an online rule therefore produces a partition of $[n]$ from $X_{1:n}$, subject to consistency across horizons.

For $\Pi\in\mathcal{P}_2[n]$ and $1\le m\le n$, define the restriction of $\Pi$ to $[m]$ by
\[
    \Pi|_{[m]}
    \coloneqq
    \{C\cap[m]:C\in\Pi\}\setminus\{\varnothing\}.
\]

\begin{definition}[Online clusterer]
\label{def:online-clusterer}
An online clusterer is a sequence $\mathfrak{g}=(g^{(n)})_{n\ge1}$, with $g^{(n)}\in\mathcal{G}_n$, such that
\begin{equation}
\label{eq:partitions:consistency}
    g^{(n)}(x_{1:n})|_{[m]}
    =
    g^{(m)}(x_{1:m})
\end{equation}
for every $1\le m\le n$ and every $x_{1:n}\in(\mathbb{R}^d)^n$. We denote by $\mathcal{G}^{\mathrm{on}}$ the set of all online clusterers.
\end{definition}

For a fixed horizon $n\ge1$, the online clustering risk of $\mathfrak{g}\in\mathcal{G}^{\mathrm{on}}$ is
\begin{equation}
\label{eq:risk-clustering-online}
    \ClusterRiskOn(\theta,\delta,\sigma,\mathfrak{g})
    \coloneqq
    \ClusterRisk(\theta,\delta,\sigma,g^{(n)}),
\end{equation}
and the oracle Bayes online clustering risk is
\[
    \inf_{\mathfrak{g}\in\mathcal{G}^{\mathrm{on}}}
    \ClusterRiskOn(\theta,\delta,\sigma,\mathfrak{g}).
\]
The only distinction from offline clustering is the consistency constraint \eqref{eq:partitions:consistency}: the terminal rule $g^{(n)}$ must arise from a sequential procedure whose earlier assignments are irrevocable.

The corresponding classifier formulation imposes consistency on the labels themselves.

\begin{definition}[Online classifier]
\label{def:online-classifier}
An online classifier is a sequence $\mathfrak{h}=(h^{(n)})_{n\ge1}$, with $h^{(n)}\in\mathcal{H}_n$, such that
\begin{equation}
\label{eq:classifiers:consistency}
    h_i^{(n)}(x_{1:n})
    =
    h_i^{(m)}(x_{1:m})
\end{equation}
for every $1\le i\le m\le n$ and every $x_{1:n}\in(\mathbb{R}^d)^n$. We denote by $\mathcal{H}^{\mathrm{on}}$ the set of all online classifiers.
\end{definition}

Each $\mathfrak{h}\in\mathcal{H}^{\mathrm{on}}$ induces an online clusterer
\[
    \pi\circ\mathfrak{h}
    \coloneqq
    (\pi_n\circ h^{(n)})_{n\ge1}.
\]
For a fixed horizon $n\ge1$, its online classification risk is
\begin{equation}
\label{eq:risk-classif:online}
    \ClassifRiskOn(\theta,\delta,\sigma,\mathfrak{h})
    \coloneqq
    \ClassifRisk(\theta,\delta,\sigma,h^{(n)})
    =
    \EE_{\theta,\delta,\sigma}\Bigg[
        \frac{1}{n}\sum_{i=1}^n
        \mathbf{1}\bigl\{h_i^{(n)}(X_{1:n})\neq\eta_i\bigr\}
    \Bigg].
\end{equation}
By \eqref{eq:risk-clustering:classif:slow},
\begin{equation}
\label{eq:risk-clustering:classif-online}
    \ClusterRiskOn(\theta,\delta,\sigma,\pi\circ\mathfrak{h})
    =
    \EE_{\theta,\delta,\sigma}\Bigg[
        \min_{s\in\{-1,+1\}}
        \frac{1}{n}\sum_{i=1}^n
        \mathbf{1}\bigl\{h_i^{(n)}(X_{1:n})\neq s\eta_i\bigr\}
    \Bigg].
\end{equation}
The oracle Bayes online classification risk is
\[
    \inf_{\mathfrak{h}\in\mathcal{H}^{\mathrm{on}}}
    \ClassifRiskOn(\theta,\delta,\sigma,\mathfrak{h}).
\]

\subsection{Fixed-lag online prediction}
\label{sec:constant-lag-framework}

Finally, we consider an online prediction problem with delayed label feedback. Fix an integer $k\ge 1$. At time $i\ge k$, the statistician observes $X_{1:i}$ together with the revealed labels $\eta_{0:i-k}$ and must predict the current state $\eta_i$. Thus, the $k$ most recent states $\eta_{i-k+1:i}$ remain hidden, and future observations $X_{i+1},X_{i+2},\ldots$ are unavailable.

A $k$-lag predictor is a sequence $\widehat\eta=(\widehat\eta_i)_{i\ge k}$ such that each $\widehat\eta_i$ is measurable with respect to
\[
    \sigma(X_{1:i},\eta_{0:i-k}).
\]
This is a supervised prediction problem rather than a clustering problem, since some past labels are observed. We use it as an auxiliary model that isolates the statistical value of historical supervision under slow mixing.

When the model parameters are known, stationarity and the Markov property reduce the optimal risk to a stationary problem. Conditional on the most recently revealed state $\eta_{i-k}$, observations and labels preceding time $i-k$ contain no further information about $\eta_i$ beyond the intervening observations $X_{i-k+1:i}$. Define
\[
    \mathcal{H}^{\mathrm{lag}}_k
    \coloneqq
    \Bigl\{
        \psi:(\mathbb{R}^d)^k\times\{-1,+1\}\to\{-1,+1\}:
        \psi\text{ is measurable}
    \Bigr\}.
\]
The oracle Bayes fixed-lag online prediction risk is
\begin{equation}
\label{eq:risk-lag}
    \mathcal{R}^{\mathrm{lag}}_k(\theta,\delta,\sigma)
    \coloneqq
    \inf_{\psi\in\mathcal{H}^{\mathrm{lag}}_k}
    \PP_{\theta,\delta,\sigma}
    \bigl(\psi(X_{1:k},\eta_0)\neq\eta_k\bigr).
\end{equation}
Thus, $\mathcal{R}^{\mathrm{lag}}_k(\theta,\delta,\sigma)$ is the minimum error probability for predicting the current state from the most recently revealed state and the intervening block of observations.

\section{Main results}

We begin with the oracle problem, in which the signal vector and the transition probability are known. The delayed-label model is analyzed first because it isolates the value of recent label information and provides a useful benchmark for the online lower bound. We then compare online and offline clustering. The final part of the section removes oracle knowledge and develops adaptive high-dimensional procedures.

Throughout the oracle analysis, we normalize the noise level to $\sigma=1$. The oracle
results for an arbitrary $\sigma>0$ follow from their normalized counterparts by replacing $\theta$ with $\theta/\sigma$. Accordingly, we suppress $\sigma$ from the notation for the oracle risks.
\subsection{Fixed-lag online prediction}\label{sec:semi-supervised}

We begin by studying the fixed-lag online prediction risk
$\mathcal{R}^{\mathrm{lag}}_k(\theta,\delta)$ defined in
\eqref{eq:risk-lag}. We first establish a lower bound capturing the
three relevant signal regimes. We then construct explicit predictors
attaining the corresponding rates.

\begin{proposition}\label{prop:lower:lag}
Assume \(0<\delta\le 1/2\). For every \(k\ge1\),
\[
\mathcal R_k^{\mathrm{lag}}(\theta,\delta)
\gtrsim
\begin{cases}
\displaystyle
k\delta \wedge 1,
&
\displaystyle
\|\theta\|^2
\le
\frac{2\log(1/(k\delta))}{k}\wedge\frac{\log(1/\delta)}{2},
\\[1.4em]
\displaystyle
\frac{\delta}{\|\theta\|^2}
\log\!\left(1+\frac{\|\theta\|^2}{\delta}\right),
&
\displaystyle
\frac{2\log(1/(k\delta))}{k}
\le
\|\theta\|^2
\le
\frac{\log(1/\delta)}{2},
\\[1.4em]
\displaystyle
\delta\,
{\Phi}^c
\!\left(
\|\theta\|
-
\frac{\log(1/\delta)}{2\|\theta\|}
\right),
&
\displaystyle
\|\theta\|^2
\ge
\frac{\log(1/\delta)}{2}.
\end{cases}
\]
\end{proposition}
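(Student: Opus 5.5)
The plan is to lower bound the Bayes risk of predicting $\eta_k$ from $(X_{1:k},\eta_0)$ by the risk of a genie-aided problem in which the statistician is also told extra information. The extra information reduces the problem to a simple binary test whose Bayes error can be computed. Revealing information only decreases the Bayes risk, so any such reduction gives a valid lower bound.

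For simplicity take $\eta_0=+1$. Condition on the event $E$ that the path $\eta_{0:k}$ has at most one transition. Given the transition time $\tau$ (when there is one), the observations reduce by sufficiency to the one-dimensional statistics $Y_i=\langle X_i,\theta\rangle/\|\theta\|\sim\mathcal N(\eta_i\|\theta\|,1)$. The posterior log-odds for $\eta_k$ are therefore those of a change-point problem: hypothesis $H_0$, no change, has prior $\approx 1-k\delta$; the alternatives $H_j$, a change at time $j$, each have prior $\approx\delta$. Under $H_j$ the mean of $Y_{j:k}$ is flipped.

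The three regimes come from choosing which alternatives to reveal.

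\textbf{Weak regime.} Compare $H_0$ with the mixture of all $H_j$. The likelihood ratio of $H_j$ versus $H_0$ is $\exp(2\|\theta\|S_j-2(k-j+1)\|\theta\|^2)$ with $S_j=\sum_{i\ge j}Y_i$ centered appropriately. Its second moment under $H_0$ is at most $\exp(4(k-j+1)\|\theta\|^2)$. Under $\|\theta\|^2\le 2\log(1/(k\delta))/k$ the total variation between $H_0$ and the $\delta$-weighted mixture is at most a constant fraction of $k\delta$. The cleaner route is to restrict to changes in the last $m\asymp \log(1/\delta)/\|\theta\|^2\wedge k$ steps. There the prior mass is $m\delta$, and the bound $\|\theta\|^2\le\log(1/\delta)/2$ ensures that a single change still has likelihood ratio $e^{O(m\|\theta\|^2)}\lesssim 1/(m\delta)$. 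The Bayes error is then $\gtrsim m\delta$, which gives $k\delta\wedge1$ when $m=k$.

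\textbf{Intermediate regime.} Take $m\asymp\|\theta\|^{-2}\log(1+\|\theta\|^2/\delta)$, which is $\le k$ by the assumption $\|\theta\|^2\ge 2\log(1/(k\delta))/k$. Reveal all labels except $\eta_{k-m+1:k}$, so the problem reduces to a lag-$m$ problem. Then apply the weak-regime argument with $k$ replaced by $m$. One checks that $m\|\theta\|^2\asymp\log(\|\theta\|^2/\delta)\le 2\log(1/(m\delta))$ up to constants. This yields the bound $m\delta$, which is the stated rate $\frac{\delta}{\|\theta\|^2}\log(1+\|\theta\|^2/\delta)$.

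\textbf{Strong regime.} Take only the alternative $H_k$, a single flip at the last step, with prior $\asymp\delta$ against $H_0$ with prior $\asymp 1$. Reveal $\eta_{0:k-1}=+1$. The test is then a Gaussian test on $Y_k$ with prior odds $\delta/(1-\delta)$. The MAP threshold is $t=\log((1-\delta)/\delta)/(2\|\theta\|)$. The error under $H_k$, weighted by $\delta$, equals $\delta\,\Phi^c(\|\theta\|-t)$. This is exactly the third line, up to constants; one uses $\log((1-\delta)/\delta)\ge\log(1/\delta)-\log 2$ and absorbs the shift via Gaussian tail ratios, valid since $\|\theta\|\gtrsim1$ there.

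\textbf{Main obstacle.} The hardest step is the weak and intermediate lower bound: showing that the Bayes test cannot distinguish $H_0$ from the union of the $m$ alternatives better than error $\asymp m\delta$. This requires controlling the posterior mass of the alternatives on a constant-probability event under $H_0$. The approach is a second-moment (chi-square) bound on the mixture likelihood ratio $\sum_j\delta L_j$. Its mean is $m\delta$, and its variance is dominated by the diagonal terms $\delta^2 e^{4(k-j)\|\theta\|^2}$. The thresholds in the statement are arranged precisely so that this variance stays $\lesssim (m\delta)^2$. Alternatively, if the variance is too large, one truncates to the event $S_j\le$ its typical size.
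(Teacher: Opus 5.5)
Your architecture is the paper's. Revealing $\eta_{k-m}$ is exactly the paper's monotonicity $R_k\ge R_m$. Restricting to paths with at most one transition and controlling the likelihood ratios $L_j$ is its ``overlap bound''. Your strong-regime argument (reduce to $R_1$, compute the one-step MAP error, absorb the $\log 2/(2\|\theta\|)$ shift using $\|\theta\|\gtrsim1$) is the paper's argument verbatim.

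The gap is in the quantitative core. For lag $m$, set $\Lambda=\delta\sum_{j=1}^m L_j$. The restricted Bayes error is $(1-\delta)^{m-1}\mathbb E_0[\min(1-\delta,\Lambda)]\ge(1-\delta)^{m-1}\bigl(m\delta-2\,\mathbb E_0[\Lambda^2]\bigr)$. The paper's Markov bound under the alternatives controls the same quantity, since $\mathbb E_j[L_{j'}]=\mathbb E_0[L_jL_{j'}]$, and exactly $\mathbb E_0[L_jL_{j'}]=\exp\{4(m-\max(j,j')+1)\|\theta\|^2\}$.

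What is needed is therefore $\mathbb E_0[\Lambda^2]\le c\,m\delta$ for a small $c$. This holds when $m\delta\le c'$ and $\|\theta\|^2\le\frac{1}{5m}\log\frac{1}{m\delta}$.
\begin{itemize}
\item Your target ``variance $\lesssim(m\delta)^2$'' is not this criterion, and it fails on the intermediate scale. It would force $m\|\theta\|^2=O(1)$, whereas there $m\|\theta\|^2\asymp\log(1+\|\theta\|^2/\delta)$ is unbounded.
\item More seriously, the thresholds in the statement do not make even the correct criterion hold at lag $k$. On the weak-branch boundary $k\|\theta\|^2=2\log\frac{1}{k\delta}$, one gets $\mathbb E_0[\Lambda^2]/(k\delta)\gtrsim(k\delta)^{-7}/\log^2\frac{1}{k\delta}\gg1$ for small $k\delta$. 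So the step ``Bayes error $\gtrsim m\delta$, which gives $k\delta\wedge1$ when $m=k$'' fails.
\item Your scale $m\asymp\log(1/\delta)/\|\theta\|^2\wedge k$ is $\asymp k$ in this branch, so it does not help. The condition $\|\theta\|^2\le\frac12\log\frac1\delta$ is not the operative one there.
\end{itemize}

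The missing step is to run the bound at a lag that is a small constant fraction of the natural one, then transfer it with your own genie reduction.
\begin{itemize}
\item In the weak branch, take $j=\lfloor\alpha k\rfloor$ with $\alpha\le1/10$. Then $j\delta\le\alpha$ and $5j\|\theta\|^2\le10\alpha\log\frac{1}{k\delta}\le\log\frac{1}{j\delta}$. Hence $R_k\ge R_j\gtrsim j\delta\asymp k\delta$, using $R_1\gtrsim\delta$ when $\alpha k<2$.
\item In the intermediate branch, take $m=\lfloor\varepsilon(k\wedge\|\theta\|^{-2}\log(1+\|\theta\|^2/\delta))\rfloor$ with $\varepsilon$ small. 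This gives $5m\|\theta\|^2\le\frac12\log(1+\|\theta\|^2/\delta)\le\log\frac{1}{m\delta}$.
\end{itemize}
Your ``$m\|\theta\|^2\lesssim 2\log(1/(m\delta))$ up to constants'' hides exactly the constant that must be small. With the factor $2$ you get $m\delta\,e^{4m\|\theta\|^2}=(m\delta)^{-7}$, and the method breaks.

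Two further points need treatment. First, show that the branch hypothesis $\|\theta\|^2\ge\frac2k\log\frac{1}{k\delta}$ forces $k\ge\frac{2}{3\|\theta\|^2}\log(1+\|\theta\|^2/\delta)$, rather than asserting $m\le k$. The paper does this via $x\le ye^{y/2}$ with $x=\|\theta\|^2/\delta$ and $y=k\|\theta\|^2$. Second, handle the sub-case $\|\theta\|^2<\delta$ of the intermediate branch, where the target rate is of constant order.
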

We now describe matching predictors. In the weak signal regime $\|\theta\|^2\le2\log(1/(k\delta))/k\wedge\log(1/\delta)/2$, propagate the revealed label:
\[
    \widehat\eta_k=\eta_0.
\]
In the strong-signal rule $\|\theta\|^2\ge\log(1/\delta)/2$, use a plug-in recursion based on the one-step Bayes rule
\[
    \widehat\eta_1
    =
    \sign\left(
        \langle X_1,\theta\rangle
        +
        \eta_0\frac{\log((1-\delta)/\delta)}{2}
    \right),
\]
and, for $2\leq i\leq k$,
\[
    \widehat\eta_i
    =
    \sign\left(
        \langle X_i,\theta\rangle
        +
        \widehat\eta_{i-1}
        \frac{\log((1-\delta)/\delta)}{2}
    \right).
\]
Finally, in the remaining regime, use the moving-average predictor:
\[
    \widehat\eta_k
    =
    \sign\left\langle
        \frac1m
        \sum_{j=k-m+1}^k X_j,\theta
    \right\rangle.
\]
where
\[
    m
    =
    k\wedge
    \left\lceil
        \frac{2}{\|\theta\|^2}
        \log\left(1+\frac{\|\theta\|^2}{\delta}\right)
    \right\rceil
\]
These rules attain the corresponding upper rates.
 
\begin{proposition}\label{prop:upper:lag}
Assume \(0<\delta\le 1/2\). For all $k\geq 1$, 
\[
\PP_{\theta, \delta}(\hat{\eta}_k \neq \eta_k)
\lesssim
\begin{cases}
k\delta \wedge 1,
&
\displaystyle
\|\theta\|^2
\le
\frac{2\log(1/(k\delta))}{k}\wedge\frac{\log(1/\delta)}{2},
\\[1.2em]
\displaystyle
\frac{\delta}{\|\theta\|^2}\log\left(1+\frac{\|\theta\|^2}{\delta}\right),
&
\displaystyle
\frac{2\log(1/(k\delta))}{k}
\le
\|\theta\|^2
\le
\frac{\log(1/\delta)}{2},
\\[1.2em]
\displaystyle
\delta
\Phi^c
\left(
\|\theta\|-\frac{\log(1/\delta)}{2\|\theta\|}
\right),
&
\displaystyle
\|\theta\|^2 \geq \frac{\log(1/\delta)}{2}.
\end{cases}
\]
\end{proposition}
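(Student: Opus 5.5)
We treat the three regimes separately, one predictor per regime, and bound each error probability directly. By rotation invariance we project onto $u=\theta/\|\theta\|$: the statistics $\langle X_i,\theta\rangle=\eta_i\|\theta\|^2+\|\theta\|g_i$, with $g_i$ i.i.d.\ standard Gaussian and independent of the chain, are sufficient for every rule. The problem is therefore scalar with signal-to-noise $\|\theta\|$.

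\textbf{Weak regime.} For $\widehat\eta_k=\eta_0$, an error requires at least one transition among the $k$ steps. A union bound gives
\[
\PP(\eta_k\neq\eta_0)\le 1-(1-\delta)^k\le k\delta\wedge 1 .
\]
This needs no condition on $\theta$.

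\textbf{Intermediate regime.} Let $m$ be as in the statement and $W=\frac1m\sum_{j=k-m+1}^k\langle X_j,\theta\rangle$. Split on the event $E$ that $\eta$ is constant on $\llbracket k-m+1,k\rrbracket$, where $\PP(E^c)\le m\delta$.
\begin{itemize}
\item On $E$, $W=\eta_k\|\theta\|^2+\|\theta\|\bar g$ with $\bar g\sim\mathcal N(0,1/m)$, so the error is at most $\Phi^c(\|\theta\|\sqrt m)\le e^{-m\|\theta\|^2/2}$.
\item When $m$ is the ceiling term, $m\|\theta\|^2\ge 2\log(1+\|\theta\|^2/\delta)$, so the error on $E$ is at most $\delta/(\delta+\|\theta\|^2)\lesssim \frac{\delta}{\|\theta\|^2}\log(1+\|\theta\|^2/\delta)$, using $\log(1+x)\ge x/(1+x)$.
\item On $E^c$, the contribution is $m\delta\lesssim \frac{\delta}{\|\theta\|^2}\log(1+\|\theta\|^2/\delta)+\delta$. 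Since $\|\theta\|^2\le\log(1/\delta)/2\lesssim \log(1+\|\theta\|^2/\delta)$ in this range (when $\|\theta\|^2\ge\delta$; otherwise the rate is of order $1$ anyway), the extra $\delta$ from the ceiling is absorbed.
\item When $m=k$ is active, the ceiling term exceeds $k$. The $E^c$ term is then at most $k\delta$, which is below the target rate. The $E$ term is $\Phi^c(\|\theta\|\sqrt k)$; its exponent is controlled only through $k\|\theta\|^2\ge 2\log(1/(k\delta))$, which gives a bound $k\delta$ and is again within the rate.
\end{itemize}
The care needed here is bookkeeping in the boundary cases where $\|\theta\|^2$ is comparable to $\delta$ or to $\log(1/\delta)$. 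We use the elementary bounds above and constants.

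\textbf{Strong regime.} This is the main obstacle, because the recursion propagates errors. Set $a=\log((1-\delta)/\delta)/2$ and $t=\|\theta\|-a/\|\theta\|$. With $\widehat\eta_0=\eta_0$, consider the one-step error given the previous step's status.
\begin{itemize}
\item If $\widehat\eta_{i-1}=\eta_{i-1}$ and $\eta_i=\eta_{i-1}$ (probability $1-\delta$), an error needs $g_i\le-\|\theta\|-a/\|\theta\|$, with probability $\Phi^c(\|\theta\|+a/\|\theta\|)$.
\item If $\widehat\eta_{i-1}=\eta_{i-1}$ and there is a transition (probability $\delta$), the error probability is $\Phi^c(t)$.
\item By the Gaussian identity $e^{-(x+y)^2/2}=e^{-2xy}e^{-(x-y)^2/2}$ with $xy=a$, and $e^{-2a}=\delta/(1-\delta)$, we get $\Phi^c(\|\theta\|+a/\|\theta\|)\lesssim\delta\,\Phi^c(t)$, using Mills-ratio comparison. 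Note $t\ge0$ roughly in this regime, so both tails are comparable up to constants.
\end{itemize}
A fresh error therefore occurs with probability $p\lesssim\delta\Phi^c(t)$ per step. Next, once $\widehat\eta_{i-1}\neq\eta_{i-1}$, with no transition the recursion corrects at the next step with probability $1-\Phi^c(t)\ge 1/2$, since $t\ge0$ holds when $\|\theta\|^2\ge a$, up to constants which we handle by adjusting the threshold. Hence error runs have geometrically bounded length.

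Writing $e_i=\PP(\widehat\eta_i\neq\eta_i)$, the recursion
\[
e_i\le C\delta\Phi^c(t)+e_{i-1}\bigl(\Phi^c(t)+\delta\bigr)
\]
holds with $\Phi^c(t)+\delta\le c<1$. This yields $e_k\lesssim\delta\Phi^c(t)$ uniformly in $k$, which is the claim.
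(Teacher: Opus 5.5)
Your proof is correct and follows the paper's argument in all three regimes: propagation of $\eta_0$, the split $m\delta+e^{-m\|\theta\|^2/2}$ for the moving average with the same two cases for $m$, and a contracting one-step recursion for the plug-in Bayes rule driven by the Gaussian shift bound $\Phi^c(x+y)\le e^{-2xy}\Phi^c(x-y)$ with $xy=a$. The one step to tighten is your coefficient $\Phi^c(t)+\delta$ on $e_{i-1}$, since $t\ge0$ and $\delta\le1/2$ only bound it by $1$. The fix: if $\widehat\eta_{i-1}\neq\eta_{i-1}$ and a transition occurs, then $\widehat\eta_{i-1}=\eta_i$ and the error probability is $\Phi^c(\|\theta\|+a/\|\theta\|)$, so the coefficient is just $\Phi^c(t)\le 1/2$, which is what the paper gets by conditioning on whether $\widehat\eta_{i-1}=\eta_i$.
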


Combining Propositions~\ref{prop:lower:lag} and
\ref{prop:upper:lag} yields a concise characterization of the optimal
fixed-lag risk.
Define

$$
    k^\star
    =
    \left\lceil
        \frac{2}{\|\theta\|^2}
        \log\!\left(1+
            \frac{\|\theta\|^2}{2\delta}
        \right)
    \right\rceil.
$$

As the following theorem shows, $k^\star$ acts as an effective memory
horizon.

\begin{theorem}\label{thm:lag:risk}
For all \(k\ge 1\),
\[
\mathcal R_k^{\mathrm{lag}}(\theta,\delta)
\asymp
\begin{cases}
\displaystyle
\left(k\wedge \frac1\delta\right)\delta,
&
\displaystyle
\|\theta\|^2<\delta,
\\[1.2em]
\displaystyle
\left(k\wedge k^\star\right)\delta,
&
\displaystyle
\delta\le \|\theta\|^2\le \frac{\log(1/\delta)}{2},
\\[1.2em]
\displaystyle
\delta\,\Phi^c\!\left(
\|\theta\|-\frac{\log(1/\delta)}{2\|\theta\|}
\right),
&
\displaystyle
\|\theta\|^2\ge \frac{\log(1/\delta)}{2}.
\end{cases}
\]
\end{theorem}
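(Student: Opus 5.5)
The plan is to derive Theorem~\ref{thm:lag:risk} directly from Propositions~\ref{prop:lower:lag} and~\ref{prop:upper:lag}. These two results give matching upper and lower bounds, up to universal constants, in each of the three regimes they list. So the remaining work is a reconciliation of rates and regime boundaries. I would show that, within each regime of the theorem, the piecewise rate of the propositions equals the simplified expression up to constants. The third regime, $\|\theta\|^2\ge\log(1/\delta)/2$, is identical in all three statements, so nothing needs to be done there.

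For the middle range $\|\theta\|^2\le\log(1/\delta)/2$, write $t=\|\theta\|^2$ and compare the two proposition regimes according to whether $t\le 2\log(1/(k\delta))/k$.

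\textbf{Case $t\ge\delta$.} Since $t\le\log(1/\delta)/2\lesssim\log(1+t/\delta)$ when $\delta\le1/2$, we get
\[
k^\star\asymp \frac{1}{t}\log\Bigl(1+\frac{t}{\delta}\Bigr)\ge c.
\]
Indeed $\log(1+t/(2\delta))\asymp\log(1+t/\delta)$ once $t\ge\delta$. The proposition rate in the intermediate regime is then
\[
\frac{\delta}{t}\log\Bigl(1+\frac{t}{\delta}\Bigr)\asymp k^\star\delta.
\]
I then check that the boundary $t=2\log(1/(k\delta))/k$ corresponds to $k\asymp k^\star$.
\begin{itemize}
\item If $k\le c\,k^\star$, then $kt\lesssim\log(1+t/\delta)$. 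Using $t\ge\delta$, this gives $\log(1/(k\delta))\gtrsim kt$ after comparing $\log(1+t/\delta)$ with $\log(1/(k\delta))$; the argument uses $k\le k^\star\lesssim \log(t/\delta)/t$, hence $k\delta\le \delta\log(t/\delta)/t$. In that regime $k\delta\wedge1=k\delta=(k\wedge k^\star)\delta$.
\item If $k\gtrsim k^\star$, we land in the intermediate regime with rate $k^\star\delta=(k\wedge k^\star)\delta$.
\item In the transitional window $k\asymp k^\star$, both expressions are of order $k^\star\delta$, so the regimes glue without loss.
\end{itemize}
This monotonicity bookkeeping is the main technical point. Its key inequality is $k\delta\lesssim k^\star\delta$ whenever the weak condition holds. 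That follows because $2\log(1/(k\delta))/k\ge t$ forces $k\lesssim \log(1/(k\delta))/t$, so $k\delta\lesssim (\delta/t)\log(1/(k\delta))$. The map $u\mapsto u\log(1/u)$ and the substitution $u=k\delta$ then give $k\delta\lesssim(\delta/t)\log(t/\delta)$ when $t\ge\delta$.

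\textbf{Case $t<\delta$.} Here the intermediate rate is
\[
\frac{\delta}{t}\log\Bigl(1+\frac{t}{\delta}\Bigr)\asymp 1.
\]
The weak rate is $k\delta\wedge 1$, and both coincide with $(k\wedge1/\delta)\delta$. Note that the condition $t\le 2\log(1/(k\delta))/k$ fails only when $k\gtrsim 1/\delta$, where both rates are of order $1$. Combining the cases yields the stated two-sided bounds.

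The main obstacle is the careful matching near $k\asymp k^\star$ and $t\asymp\delta$. In those windows the logarithms $\log(1/(k\delta))$ and $\log(1+t/\delta)$ can be small, so constants must be tracked. I would handle this with the elementary inequalities $\log(1+x)\asymp x\wedge(1+\log x)$ and $\log(1+x/2)\asymp\log(1+x)$.
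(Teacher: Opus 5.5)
Your argument is correct, but it is organized differently from the paper's proof. You use Propositions~\ref{prop:lower:lag} and~\ref{prop:upper:lag} purely as black boxes and reduce the theorem to deterministic bookkeeping. In the propositions' weak cell $t\le 2\log(1/(k\delta))/k$ you show $k\lesssim k^\star$: with $u=k\delta$ and $x=t/\delta\ge1$, the constraint $ux\le2\log(1/u)$ forces $u\lesssim\log(1+x)/x$. Conversely, $k\le c\,k^\star$ forces the weak cell. Hence in the propositions' intermediate cell $k> c\,k^\star$, and the rate $(\delta/t)\log(1+t/\delta)\asymp k^\star\delta\asymp(k\wedge k^\star)\delta$.

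The paper takes only the strong regime from the propositions and re-proves the weak and intermediate regimes directly. Its upper bound uses the rule \eqref{eq:lag:predictor:intermediate}, which propagates $\eta_0$ when $k<k^\star$ and otherwise averages over a window of length exactly $k^\star$. Its lower bound applies the overlap bound $R_m\gtrsim m\delta$ at $m=\lfloor c\,(k\wedge k^\star)\rfloor$, together with monotonicity of $R_k$ in $k$.

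Your route is shorter and never reopens a testing argument. The paper's route avoids translating the implicit threshold $t=2\log(1/(k\delta))/k$ into $k\asymp k^\star$. It also shows that the $k^\star$-switching predictor is itself rate-optimal, which is the ``memory horizon'' claim stated after the theorem. Your reduction only inherits the predictors of Proposition~\ref{prop:upper:lag}, which switch at the implicit threshold. The elementary inequalities needed are of the same type in both.

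A few local repairs are needed:
\begin{itemize}
\item The chain $t\le\log(1/\delta)/2\lesssim\log(1+t/\delta)$ is false as written when $t\asymp\delta\ll1$. The bound you actually need, $t\lesssim\log(1+t/\delta)$, follows from $\log(1+t/\delta)\ge\log2$ for $t\le1$, and from $\log(1+t/\delta)\ge\log(1/\delta)\ge2t$ for $t>1$.
\item Your bound $k\delta\lesssim(\delta/t)\log(t/\delta)$ degenerates at $t=\delta$; it should read $\log(1+t/\delta)$.
\item Your second bullet states the implication backwards. What is needed is that the intermediate cell forces $k>c\,k^\star$, which is the contrapositive of your first bullet.
\item The cell boundary carries an exact constant, so the first bullet must deliver $\log(1/(k\delta))\ge kt/2$, not merely $\gtrsim kt$. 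Taking $c$ small gives this.
\end{itemize}
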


Theorem \ref{thm:lag:risk} describes how delayed supervision competes with the direct evidence contained
in the observations. In the weak and intermediate signal regimes, the optimal risk increases with
the lag \(k\) until the effective horizon is reached. Before this horizon, the most recently revealed
state remains sufficiently close to the target that propagating it is rate-optimal. Beyond this horizon, the optimal rate is attained by a predictor using only a recent observation window, without using the revealed historical label.

More precisely, when $
    \delta\le \|\theta\|^2\le \frac{\log(1/\delta)}{2},
$
the rate in Theorem~\ref{thm:lag:risk} is attained by the predictor
\begin{equation}\label{eq:lag:predictor:intermediate}
\widehat\eta_i
=
\begin{cases}
\eta_{i-k},
&
\text{if } k<k^\star,
\\[1.2em]
\displaystyle
\operatorname{sign}\!\left(
\left\langle
\frac1{k^\star}\sum_{j=i-k^\star+1}^{i}X_j,\theta
\right\rangle
\right),
&
\text{if } k\ge k^\star.
\end{cases}
\end{equation}
In the intermediate-signal regime,
\[
k^\star
\asymp
\frac1{\|\theta\|^2}
\log\left(1+\frac{\|\theta\|^2}{\delta}\right).
\]
Thus, \(k^\star\) is the time required for the accumulated observational evidence to match the
information carried by a recently revealed state. Labels older than order \(k^\star\) therefore provide no
further improvement in the optimal rate.

We now turn to online clustering. The sequential constraint prevents retrospective correction of an earlier assignment, so errors created near a transition can persist until enough evidence has accumulated. This produces an additional localization cost relative to the offline problem. The following theorem gives the lower bound.

\begin{theorem}
\label{Bayes_risk_lb_online}
Assume that $\delta \geq C_0 n^{-1/10}$ for $C_0$ a sufficiently large universal constant. The oracle Bayes risk of online clustering satisfies

$$
\inf_{\mathfrak{g}\in \mathcal{G}^{\mathrm{on}}} \ClusterRiskOn(\theta, \delta, \mathfrak{g}) \gtrsim
\begin{cases}
\displaystyle
1,
&
\displaystyle
\|\theta\|^2\leq2\delta,
\\[1.2em]
\displaystyle
\frac{\delta}{\lVert\theta\rVert^{2}}\left(\log\left(\frac{\lVert\theta\rVert^{2}}{2\delta}\right) + 1\right),
&
\displaystyle
2\delta
\leq
\|\theta\|^2
\leq
\frac{\log(1/\delta)}{2},
\\[1.2em]
\displaystyle
\delta\,
\Phi^c\!\left(
\|\theta\|
-\frac{\log(1/\delta)}{2\|\theta\|}
\right),
&
\displaystyle
\|\theta\|^2
\geq
\frac{\log(1/\delta)}{2}.
\end{cases}
$$
\end{theorem}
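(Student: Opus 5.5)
The plan is to reduce the online clustering lower bound to the fixed-lag lower bound of Proposition~\ref{prop:lower:lag}. This requires two things: removing the global sign ambiguity, and showing that an online rule at time $i$ is no better informed than a $k$-lag predictor with a suitable $k$.

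\textbf{Step 1: from clustering to classification.} Any online clusterer can be written as $\pi\circ\mathfrak h$ with $\mathfrak h$ an online classifier; label the first block by the sign of index $1$, which is consistent across horizons. By \eqref{eq:risk-clustering:classif-online} we must lower bound
\[
\EE\Big[\min_s \tfrac1n\textstyle\sum_i \mathbf 1\{h_i\ne s\eta_i\}\Big].
\]
The minimum over $s$ is handled by grouping indices into blocks. Partition $[n]$ into consecutive blocks $B_1,\dots,B_J$ of length $L\asymp 1/\delta$, chosen so that with constant probability each block contains a transition. Then give the statistician, in addition to $X_{1:n}$, every label $\eta_{j}$ at the left endpoint of each block. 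This extra information can only decrease the risk. Conditionally on these revealed labels, the sign $s$ is effectively determined, since it is pinned by the revealed labels up to a vanishing-probability event. Concretely, I would show
\[
\min_s\sum_i\mathbf 1\{h_i\neq s\eta_i\}\ \ge\ \sum_j \min_{s_j}\sum_{i\in B_j}\mathbf 1\{h_i\neq s_j\eta_i\},
\]
and then argue blockwise using the revealed anchor label. The condition $\delta\ge C_0 n^{-1/10}$ guarantees that $J=n\delta\to\infty$ polynomially, so concentration across the nearly independent blocks is available.

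\textbf{Step 2: online to fixed-lag.} For an online classifier, $h_i$ depends only on $X_{1:i}$. If we further reveal $\eta_{i-k}$ and all earlier labels, then by the Markov property, $\PP(h_i\ne\eta_i)\ge\mathcal R^{\mathrm{lag}}_k(\theta,\delta)$ for every $k\le i$. This holds for any fixed $k$, since more information only helps. The key is to choose $k$ so that the fixed-lag bound matches the target rate:
\begin{itemize}
\item In the weak regime $\|\theta\|^2\le2\delta$, take $k\asymp1/\delta$. Then $k\delta\wedge1\asymp1$, and the condition $\|\theta\|^2\le 2\log(1/(k\delta))/k$ holds for a suitable constant.
\item In the intermediate regime, take $k=\lceil 2\log(\|\theta\|^2/(2\delta))/\|\theta\|^2\rceil$, which is essentially $k^\star$. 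Proposition~\ref{prop:lower:lag} then yields $\frac{\delta}{\|\theta\|^2}\log(1+\|\theta\|^2/\delta)$, which matches the stated bound.
\item In the strong regime, take $k=1$, giving $\delta\Phi^c(\cdot)$. Here no sign issue arises: the per-index error near a transition is comparable to the one-step Bayes error, even allowing offline information.
\end{itemize}

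\textbf{Main obstacle.} The hardest part is combining Steps 1 and 2 in the presence of the global sign. Step 2 gives per-index classification lower bounds, but the clustering loss allows a data-dependent flip $s$. A rule that is wrong on an entire block is not penalized if the flip is chosen correctly.

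My approach is to use the online constraint. Once the labels of a block's anchor are fixed by past decisions, flipping $s$ cannot help within later segments. Formally, I would show that the optimal $s$ is determined up to $O(1)$ blocks, which contribute $O(L/n)=O(1/(n\delta))$. This is negligible compared with the rates above, using $\delta\ge C_0n^{-1/10}$: the term is at most $\delta/\|\theta\|^2\cdot\log$ whenever $\|\theta\|^2\lesssim\log(1/\delta)$, and it is checked directly in the strong regime. Failing a clean argument, I would fall back on a symmetrization over the two-point prior $\theta$ versus $-\theta$. Under this prior, the clustering risk equals the classification risk of an estimator that must also estimate the sign. The per-index lag bounds then hold on a set of indices whose fraction is bounded below.
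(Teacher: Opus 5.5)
\textbf{Step 2 is fine; Step 1 has a genuine gap.} Your Step 2 is the paper's argument. Revealing $\eta_{i-k}$ gives $\inf\ClassifRiskOn\ge(1-k/n)\,\mathcal R^{\mathrm{lag}}_k(\theta,\delta)$, and $k$ is then tuned in each of the three regimes. One small slip: your intermediate choice $k=\lceil 2\log(\|\theta\|^2/(2\delta))/\|\theta\|^2\rceil$ drops the $+1$ and collapses to $k=1$ when $\|\theta\|^2\approx 2\delta$, where the target is of order one. Any $k$ with $k\delta\ge 1$ and $k=o(n)$ puts you in the intermediate branch of Proposition~\ref{prop:lower:lag}, whose bound does not depend on $k$.

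The real problem is Step 1, and none of the mechanisms you propose removes the $\min_s$.
\begin{enumerate}
\item Revealing labels to the statistician does not change the loss. What you need is an oriented predictor $s'\widehat\eta_i$, with $s'$ computable from the revealed information, whose classification error exceeds the clustering error by at most a constant \emph{factor}. An additive slack such as your $O(1/(n\delta))$ cannot be absorbed: in the strong regime the target $\delta\Phi^c(\cdot)$ can be far smaller than $1/(n\delta)$.
\item Your anchors fail this test. Orienting by $J\asymp n\delta$ revealed labels gives only $\mathbf 1\{s'\neq s_\star\}\le\frac{2}{J}\sum_{\mathrm{anchors}}\mathbf 1\{s_\star\widehat\eta_j\neq\eta_j\}\lesssim \ell(\widehat\eta,\eta)/\delta$, because a rule may place its errors on the known anchor positions.
\item The blockwise inequality replaces one sign ambiguity by $J$ of them.
\item The anchor opening $B_{j+1}$ is a \emph{future} label for indices at the end of $B_j$. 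That is incompatible with the one-sided reduction in Step 2.
\item The symmetrization fallback gives nothing. Under the uniform prior on $\{\theta,-\theta\}$, the map $(\theta,\eta)\mapsto(-\theta,-\eta)$ preserves the law of $X$. Hence $\PP(\eta_i=1\mid X)=1/2$, and every classifier has classification risk exactly $1/2$.
\item Your claim that no sign issue arises in the strong regime is unsupported. The $\min_s$ is still in the loss, and offline information cannot be invoked there: it would lower the rate to $\delta\Phi^c(\|\theta\|)$.
\end{enumerate}

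\textbf{How the paper handles it, and how your idea can be completed.} The paper closes this step by citing the comparison of \cite{GKN23}, namely $\inf\ClusterRiskOn\ge(1-\widetilde\alpha_n)\inf\ClassifRiskOn$ with $\widetilde\alpha_n=O((\sqrt n\,\delta^5)^{-1})$. The paper says explicitly that the assumption $\delta\ge C_0n^{-1/10}$ serves to control this minimization over global label permutations, i.e.\ exactly this comparison, not concentration over $n\delta$ blocks.

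Your instinct to ``use the online constraint'' can be made rigorous by orienting with the whole \emph{past} rather than with sparse anchors.
\begin{itemize}
\item For $i\ge n/4+k$, reveal $\eta_{1:i-k}$ and let $s_i'$ minimize $\sum_{j\le i-k}\mathbf 1\{s\widehat\eta_j\neq\eta_j\}$.
\item Since $\widehat\eta_j$ depends only on $X_{1:j}$, the predictor $s_i'\widehat\eta_i$ is a function of $(X_{1:i},\eta_{1:i-k})$. By the Markov property and stationarity, its error probability is at least $\mathcal R^{\mathrm{lag}}_k$.
\item If $s_i'\neq s_\star$, then $s_\star\widehat\eta$ errs on at least half of $[1,i-k]$. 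Hence $\mathbf 1\{s_i'\neq s_\star\}\le\frac{2n}{i-k}\,\ell(\widehat\eta,\eta)\le 8\,\ell(\widehat\eta,\eta)$.
\item Summing over $i$ gives $(3/4-k/n)\,\mathcal R^{\mathrm{lag}}_k\le 9\,\EE\,\ell(\widehat\eta,\eta)$.
\end{itemize}
This is the multiplicative comparison your Step 1 is missing.
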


The condition on $\delta$ is used only to handle the additional minimization over global label permutations that appears in the definition of the Bayes clustering risk. More precisely, it allows us to establish the equivalence, up to universal constants, between the oracle Bayes risks for clustering and classification. This equivalence was proved for offline clustering in Theorem~9 of \cite{GKN23}, and the same argument extends to the online setting. In our subsequent analysis of offline clustering, we will weaken this restriction on $\delta$.

We now define a matching online construction. If
\(\|\theta\|^2<\frac12\log(1/\delta)\) and \(\|\theta\|^2\leq2\delta\),
set \(\widehat\eta_i=1\) for every \(i\).
In the intermediate regime
\(
    2\delta<\|\theta\|^2<\frac12\log(1/\delta),
\)
choose
\[
    k=
    \left\lceil
        \frac{2}{\|\theta\|^2}
        \left(
            \log\left(\frac{\|\theta\|^2}{2\delta}\right)+1
        \right)
    \right\rceil.
\]
and set
\begin{equation}
\label{online_procedure}
    \widehat\eta_i
    =
    \begin{cases}
        \operatorname{sign}(\langle X_i,\theta\rangle),
        & i\leq k,\\[0.8em]
        \displaystyle
        \operatorname{sign}\!\left(
            \left\langle
                \frac{1}{k}
                \sum_{j=i-k+1}^{i}X_j,
                \theta
            \right\rangle
        \right),
        & i>k.
    \end{cases}
\end{equation} 
In the strong regime \(\|\theta\|^2\geq\frac12\log(1/\delta)\), set
        \begin{equation}
    \hat{\eta}_{i} = 
    \begin{cases}
        \operatorname{sign}\!\left(\langle X_{i}, \theta\rangle\right) 
        & i =1 \\[1ex]
        \operatorname{sign}
\left(
\langle X_i , \theta \rangle
+
\widehat\eta_{i-1}
\frac{\log((1-\delta)/\delta)}{2}
\right) 
        & i > 1
    \end{cases}
    \end{equation}
    Let \(\mathfrak h\) be the online classifier determined by these rules,
and let \(\mathfrak g=\pi\circ\mathfrak h\) be its induced online
clusterer. The next theorem controls its clustering risk.
\begin{theorem}\label{Bayes_risk_ub_online}
Assume that \(1/n\leq\delta\leq1/2\). There exists a universal
constant \(C>0\) such that
\[
\ClusterRiskOn(\theta,\delta,\mathfrak g)
\leq C
\begin{cases}
1,
& \displaystyle
\|\theta\|^2\leq2\delta
\quad\text{and}\quad
\|\theta\|^2<\frac12\log(1/\delta),
\\[1.2em]
\displaystyle
\frac{\delta}{\|\theta\|^2}
\left(\log\left(\frac{\|\theta\|^2}{2\delta}\right)+1\right),
& \displaystyle
2\delta<\|\theta\|^2<\frac12\log(1/\delta),
\\[1.2em]
\displaystyle
\delta\Phi^c\left(
\|\theta\|-\frac{\log(1/\delta)}{2\|\theta\|}
\right),
& \displaystyle
\|\theta\|^2\geq\frac12\log(1/\delta).
\end{cases}
\]
\end{theorem}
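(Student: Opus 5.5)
The plan is to bound the clustering risk by the classification risk with the fixed orientation $s=+1$. This gives
\[
\ClusterRiskOn(\theta,\delta,\mathfrak g)\le \frac1n\sum_{i=1}^n \PP(\widehat\eta_i\neq\eta_i),
\]
so it suffices to bound the marginal error probabilities and average them. The exception is the weak regime, where the claimed bound is the trivial one: the loss is at most $1$ (indeed at most $1/2$), so there is nothing to prove. By stationarity of $(\eta_i)$, each term depends only on $i$ through boundary effects near $i=1$. I will treat the first few indices separately and control the bulk using a generic index.

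\textbf{Intermediate regime.} For $i>k$, the moving-average rule errs only in one of two ways.

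1. A transition occurs in the window $\llbracket i-k+1,i\rrbracket$. This has probability at most $(k-1)\delta$.
2. There is no transition, and the projected Gaussian average $\eta_i\|\theta\|^2+\|\theta\|k^{-1/2}Z$ has the wrong sign. This has probability at most $\Phi^c(\sqrt{k}\|\theta\|)\le e^{-k\|\theta\|^2/2}$.

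With the chosen $k$, we have $k\|\theta\|^2/2\ge \log(\|\theta\|^2/(2\delta))+1$, so the second term is at most $2\delta/(e\|\theta\|^2)$. Since $\|\theta\|^2>2\delta$, we have $k\asymp \|\theta\|^{-2}(\log(\|\theta\|^2/(2\delta))+1)$ up to the ceiling. The ceiling adds at most $\delta$, which is dominated because $\|\theta\|^2\lesssim\log(1/\delta)$; I should check this by comparing $\delta$ with $\delta\|\theta\|^{-2}$, and it is fine since $\|\theta\|^2\le\tfrac12\log(1/\delta)$ forces $\delta/\|\theta\|^{2}\gtrsim\delta/\log(1/\delta)$. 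If instead $\|\theta\|^2\ge1$, the term is $\delta$ times the logarithmic factor over $\|\theta\|^2$, and one checks directly that $\delta\lesssim \delta\|\theta\|^{-2}(\log(\|\theta\|^2/2\delta)+1)$ using $\log(\|\theta\|^2/\delta)\gtrsim \|\theta\|^2$ in this range. Adding the two error sources gives the claimed rate for each $i>k$.

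For the first $k$ indices, each error probability is at most $1$. Their total contribution to the average is $k/n$. Using $\delta\ge1/n$, this is at most $k\delta$, which is of the target order. This is where the hypothesis $\delta\ge1/n$ enters.

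\textbf{Strong regime.} This is the main obstacle, because the recursion propagates errors. Write $\lambda=\tfrac12\log((1-\delta)/\delta)$ and $p_i=\PP(\widehat\eta_i\ne\eta_i)$. Conditioning on whether $\widehat\eta_{i-1}=\eta_{i-1}$ and whether $\eta_i=\eta_{i-1}$, there are two events to track.

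1. \emph{Correct and consistent.} If $\widehat\eta_{i-1}=\eta_{i-1}=\eta_i$, an error needs $\|\theta\|^2+\lambda+\|\theta\|Z<0$. This has probability $\Phi^c(\|\theta\|+\lambda/\|\theta\|)$, which is at most $\delta\,\Phi^c(\|\theta\|-\lambda/\|\theta\|)$ by the Gaussian likelihood identity $e^{-2\lambda}\approx\delta$.
2. \emph{Right prior sign but a flip, or the reverse.} When the prior sign points the wrong way, an error has probability $a:=\Phi^c(\|\theta\|-\lambda/\|\theta\|)$. This happens either at a true transition (probability $\delta$) or after a previous error.

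This yields the recursion
\[
p_i\le \delta\,a+\delta\,a+p_{i-1}\,a .
\]
In the strong regime $\|\theta\|^2\ge 2\lambda-O(\delta)$, so $a\le \Phi^c(0^{-})\approx 1/2$ up to the $\log(1-\delta)$ correction; I would verify that $a\le c<1$ uniformly. The recursion then gives $p_i\lesssim \delta a+a^{i}$.

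The initial term $p_1=\Phi^c(\|\theta\|)$ decays geometrically through the factor $a^{i-1}$. Its total contribution to the average is $O(p_1/n)$, and I will need to show $\Phi^c(\|\theta\|)/n\lesssim\delta a$. This follows from $1/n\le\delta$ and $\Phi^c(\|\theta\|)\le\Phi^c(\|\theta\|-\lambda/\|\theta\|)$. Averaging over $i$ gives the claimed $\delta\,\Phi^c(\|\theta\|-\log(1/\delta)/(2\|\theta\|))$, after replacing $\lambda$ by $\tfrac12\log(1/\delta)$. That replacement changes the argument by $O(\delta/\|\theta\|)$ and costs only a constant factor.
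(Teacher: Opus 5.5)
Your proposal is correct and follows essentially the same route as the paper. You reduce to the fixed-orientation Hamming error, split the moving-average error into a transition term and a Gaussian term while absorbing the first $k$ indices via $k/n\le k\delta$, and in the strong regime you use the recursion $p_i\le C\delta a+a\,p_{i-1}$ with $a\le 1/2$ and the geometric tail bound $p_1/n\lesssim\delta a$. One small clean-up: since $\lambda=\tfrac12\log((1-\delta)/\delta)\le\tfrac12\log(1/\delta)\le\|\theta\|^2$, you get both $a\le 1/2$ and $a\le\Phi^c\bigl(\|\theta\|-\log(1/\delta)/(2\|\theta\|)\bigr)$ exactly, so you need neither the claim ``$\|\theta\|^2\ge 2\lambda-O(\delta)$'' (which carries a spurious factor $2$) nor a constant-factor argument for replacing $\lambda$.
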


Together, Theorem~\ref{Bayes_risk_lb_online} and Theorem~\ref{Bayes_risk_ub_online} characterize the online oracle risk up to universal multiplicative constants. The distinctive feature is the intermediate regime
\[
    2\delta<\|\theta\|^2\le \frac{\log(1/\delta)}{2},
\]
where the risk decays polynomially, with an additional logarithmic factor reflecting the delay required to detect a transition from one-sided data. Figure~\ref{fig:Bayes_risk:online:slow_mixing} summarizes the resulting phase diagram.

\begin{figure}[hbt!]
\centering
\includegraphics[width=0.99\textwidth]{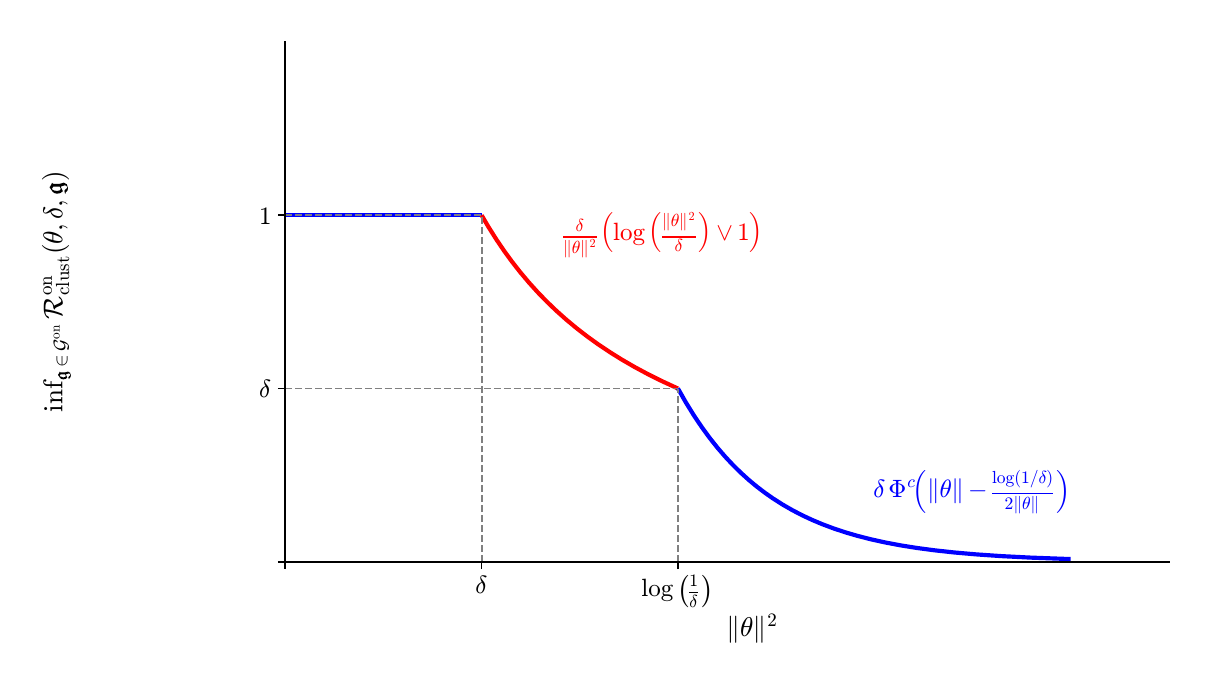}
\caption{Behavior of the Bayes risk of online clustering in the slowly mixing regime.}
\label{fig:Bayes_risk:online:slow_mixing}
\end{figure}
    
The lower-bound argument is closely connected with the delayed-label benchmark studied in Section \ref{sec:semi-supervised}. Roughly speaking, one may reveal suitably chosen past labels without changing the order of the optimal risk in regimes where those labels are already too old to improve the rate. This extra oracle information reduces the online problem to local fixed-lag prediction problems and makes the transition-localization cost explicit. When $\delta$ is bounded away from zero, the slowly mixing intermediate regime collapses, recovering the usual strongly mixing behavior.

\subsection{Offline clustering}

Offline clustering has access to observations on both sides of each candidate transition. This changes the localization problem qualitatively: instead of waiting for one-sided evidence to accumulate, the procedure can compare the two neighboring segments simultaneously. As a consequence, the logarithmic delay factor from the online intermediate regime disappears. We begin with the lower bound.
    \begin{theorem}\label{Bayes_risk_lb_offline}
        Assume that $\delta \geq c\sqrt{\frac{\log(n)}{n}}$ for a sufficiently large constant $c$. The oracle Bayes risk of offline clustering satisfies
        $$
        \inf_{\mathfrak{g}\in \mathcal{G}_n} \ClusterRisk(\theta, \delta, \mathfrak{g}) \gtrsim
        \begin{cases}
        \displaystyle
        1,
        &
        \displaystyle
        \|\theta\|^2\leq2\delta,
        \\[1.2em]
        \displaystyle
        \frac{\delta}{\lVert\theta\rVert^{2}},
        &
        \displaystyle
        2\delta
        \leq
        \|\theta\|^2
        \leq
        1,
        \\[1.2em]
        \displaystyle
        \delta\,
        \Phi^c\!\left(
        \|\theta\|\right),
        &
        \displaystyle
        \|\theta\|^2 > 1.
        \end{cases}
        $$
    \end{theorem}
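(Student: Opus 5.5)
The plan is to reduce clustering to classification and then lower bound the classification risk through local oracle problems around transitions. First, in the spirit of Theorem~9 of \cite{GKN23}, I would show that under $\delta\ge c\sqrt{\log n/n}$ the Bayes clustering risk is bounded below by a constant times the Bayes classification risk, up to an additive term that is negligible at the rates claimed. The mechanism is as follows. Split $[n]$ into about $\sqrt{n/\log n}$ blocks of length $L\asymp\sqrt{n\log n}$. Since $L\delta\gtrsim\log n$, each block contains transitions with high probability, so every block contains points of both labels. A global sign flip of the estimator then cannot help on one block without hurting another. One can therefore reveal the labels at block endpoints, which fixes the orientation, and this only decreases the risk. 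After this step it suffices to lower bound $\frac1n\sum_i\PP(\widehat\eta_i\neq\eta_i)$ for estimators that are given the extra information $\eta$ at the block boundaries.

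Next I would handle the regimes by further label-revealing arguments.

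\emph{Strong regime} ($\|\theta\|^2>1$). Reveal all labels except $\eta_i$, that is, give the oracle $\eta_{i-1}$, $\eta_{i+1}$ and all $X_j$ with $j\neq i$. Conditionally on $\eta_{i-1}\neq\eta_{i+1}$, an event of probability about $2\delta$, the prior on $\eta_i$ is symmetric. The problem then reduces to testing $\pm\theta$ from the single observation $X_i$, whose Bayes error is $\Phi^c(\|\theta\|)$. This gives the bound $\delta\,\Phi^c(\|\theta\|)$.

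\emph{Intermediate regime} ($2\delta\le\|\theta\|^2\le1$). Consider a window of length $m\asymp 1/\|\theta\|^2$, which satisfies $m\le 1/(2\delta)$. Reveal the labels at the two ends of the window, conditioned to be opposite; this happens with probability $\asymp m\delta$. Given this, the path inside the window has approximately one change-point, uniformly located. Localizing a single change from observations with per-step signal $\|\theta\|^2\asymp 1/m$ incurs a constant-order error on an order-$m$ fraction of the window, since the two hypotheses ``change at $\tau$'' and ``change at $\tau+m/2$'' differ by KL divergence of order $m\|\theta\|^2\lesssim1$. The resulting per-point risk is of order $(1/m)\cdot m\delta\cdot m\cdot(1/m)\asymp\delta m\asymp\delta/\|\theta\|^2$. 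To make this rigorous I would use Le Cam's two-point method, or Assouad's lemma over shifted change locations, applied to the posterior of $\eta_i$ for $i$ in the middle of the window.

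\emph{Weak regime} ($\|\theta\|^2\le 2\delta$). Take windows of length $m\asymp1/\delta$. Each window contains a constant number of transitions with constant probability, while the total information in a window satisfies $m\|\theta\|^2\lesssim1$. Hence a constant fraction of the labels cannot be resolved, which gives a risk of order $1$.

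The main obstacle is the clustering-to-classification reduction at the level $\delta\ge c\sqrt{\log n/n}$. One has to verify that revealing boundary labels removes the sign ambiguity without altering the local problems, and that the residual events (blocks with no transition) contribute less than $\delta\,\Phi^c(\|\theta\|)$. The latter may be delicate when $\|\theta\|$ is large. In that case I would first try to absorb those events using the fact that the block-level probability of having no transition is at most $e^{-L\delta}\le n^{-c}$.
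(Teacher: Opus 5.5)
Your three local lower bounds are sound and essentially match the paper's.
\begin{itemize}
\item The strong regime is exactly its argument: reveal $\eta_{-i}$ and keep the event $\{\eta_{i-1}\neq\eta_{i+1}\}$.
\item The intermediate regime is its two-point argument over single-change paths. The paper takes $k=\lfloor\|\theta\|^{-2}\rfloor$, a window of radius $2k$, and change points $3k$ apart, so $\mathrm{KL}=6k\|\theta\|^2\le 6$, then applies Bretagnolle--Huber.
\item For the weak regime the paper uses monotonicity of the Bayes risk in $\|\theta\|$ (garbling) and applies the intermediate bound at $\|\theta\|^2=2\delta$. Your direct window of length $\asymp 1/\delta$ works equally well.
\end{itemize}

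The gap is the clustering-to-classification step. You flag it as the crux but do not supply it, and the mechanism you sketch fails.
\begin{itemize}
\item Revealing $\eta_\partial$ at the block endpoints only enlarges the class of estimators; the loss still contains $\min_s$.
\item To pass to classification you must orient $\widehat\eta$ by $s_\partial$, its majority agreement with $\eta_\partial$. This gives $\frac1n\sum_i\mathbf 1\{s_\partial\widehat\eta_i\neq\eta_i\}\le\ell(\widehat\eta,\eta)+\mathbf 1\{s_\partial\neq s_\star\}$.
\item Since the bound must hold for every clusterer, you need $\PP(s_\partial\neq s_\star)\lesssim\EE\,\ell$ uniformly, and this fails. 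The event $\{s_\partial\neq s_\star\}$ only requires $s_\star\widehat\eta$ to be wrong on half of $\partial$, and $|\partial|\asymp\sqrt{n/\log n}\ll n$. An estimator can err on every endpoint while $\ell$ is only of order $|\partial|/n$ plus its bulk error.
\item The fact that every block contains a transition plays no role in this.
\end{itemize}
Your fallback also fails. The theorem puts no upper bound on $\|\theta\|$, and $\delta\Phi^c(\|\theta\|)\le e^{-\|\theta\|^2/2}$ falls below any $n^{-c}$ once $\|\theta\|^2\gtrsim\log n$. So a residual that does not decay in $\|\theta\|$ can never be absorbed.

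The paper closes this step in two ranges (Lemma~\ref{lem:Bayes_risk_new_lower_bound} and Corollary~\ref{prop:equivalence-from-two-comparisons}).
\begin{itemize}
\item For $\|\theta\|^2\le 8\log 2+1$ it uses the comparison of \cite{GKN23}. Its additive term $c'e^{-c''n\delta^2}$ is absorbed into the classification risk, which is $\gtrsim\delta$, because $n\delta^2\ge C\log(1/\delta)$. This is the only use of $\delta\ge c\sqrt{\log n/n}$.
\item For larger $\|\theta\|$ it orients with the signal itself, $\widehat s=\sign\sum_i\langle X_i,\theta\rangle h^\star_i(X)$. A wrong sign costs $\PP(\ell(h^\star,\eta)>1/4)\le 4\,\EE\ell$, plus at most $2^n e^{-n\|\theta\|^2/8}$ by a union bound over label vectors. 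The latter decays like $e^{-\alpha n\|\theta\|^2}$ and is absorbed into $\delta\Phi^c(\|\theta\|)$.
\end{itemize}
Alternatively, your revealing idea works if you orient with all labels outside the local window $I_i$, a set of size at least $n/2$. Then $\mathbf 1\{s_{I_i^c}\widehat\eta_i\neq\eta_i\}\le\mathbf 1\{s_\star\widehat\eta_i\neq\eta_i\}+4\ell(\widehat\eta,\eta)$, as in \eqref{eq:merged-outside-orientation-coordinate}. Here $A_i(I_i)$ is the Bayes risk for $\eta_i$ given $(X_{1:n},\eta_{I_i^c})$, and the inequality yields $\frac1n\sum_{i\in\mathcal I}A_i(I_i)\le 5\,\EE\ell$ with no additive term. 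Each of your local arguments already reveals all labels outside a window of length $o(n)$, or can do so without changing its value by the Markov property. This route would even weaken the hypothesis to $n\delta\ge C_0$, as in the proof of Theorem~\ref{thm:minimax_lb_high_dim_clustering}.
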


    Similarly to the online case, the condition on $\delta$ is meant to guarantee the equivalence between the Bayes risks of classification and clustering (see Lemma \ref{lem:Bayes_risk_new_lower_bound} and Corollary \ref{prop:equivalence-from-two-comparisons}).

    In order to match this lower-bound, we first describe a simple oracle construction based on estimating the labels immediately to the left and right of each target location. This procedure is useful for intuition and is already near-optimal, although it incurs an additional logarithmic factor in part of the parameter space.
    Choose \(k\) according to
\[
    k=
    \begin{cases}
    1,
    & \text{if }\|\theta\|^2\leq2\delta,
    \\[1.2ex]
    \displaystyle
    1\vee\left\lceil
        \frac{2}{\|\theta\|^2}
        \log\left(\frac{\|\theta\|^2}{2\delta}\right)
    \right\rceil,
    & \text{if }2\delta<\|\theta\|^2<2\log(1/\delta),
    \\[1.2ex]
    1,
    & \text{if }\|\theta\|^2\geq2\log(1/\delta).
    \end{cases}
\]
We use the convention \(\operatorname{sign}(0)=1\).
    For $a\in\llbracket k+1,n-k\rrbracket$, define the two pilot estimators
\[
    \widetilde\eta_{a-1}
    =\operatorname{sign}\left(\left\langle \frac{1}{k}\sum_{j=a-k}^{a-1}X_j,\theta\right\rangle\right),
    \qquad
    \widetilde\eta_{a+1}
    =\operatorname{sign}\left(\left\langle \frac{1}{k}\sum_{j=a+1}^{a+k}X_j,\theta\right\rangle\right).
\]
Then set
\begin{equation}\label{eq:first_clusterer}
    \widehat\eta_a
    =
    \begin{cases}
        \operatorname{sign}\left(\langle X_a,\theta\rangle+\widetilde\eta_{a-1}\log\left(\frac{1-\delta}{\delta}\right)\right),
        & \text{if } \widetilde\eta_{a-1}=\widetilde\eta_{a+1}\text{ and }a\in\llbracket k+1,n-k\rrbracket,\\[0.8em]
        \displaystyle
        \operatorname{sign}(\langle X_a,\theta\rangle),
        & \text{otherwise}.
    \end{cases}
\end{equation}
   The next theorem bounds the risk of the induced clusterer.
    \begin{theorem}\label{prop:risk_clust_procedure_ub}
        Assume that $\delta \geq 1/n$. The oracle Bayes risk of offline clustering satisfies

        $$
        \ClusterRisk(\theta, \delta, \pi_n\circ \hat{\eta}) \lesssim
        \begin{cases}
        \displaystyle
        1,
        &
        \displaystyle
        \|\theta\|^2\leq2\delta,
        \\[1.2em]
        \displaystyle
        \frac{\delta}{\lVert\theta\rVert^{2}}\left(\log\left(\frac{\lVert\theta\rVert^{2}}{2\delta}\right) + 1\right),
        &
        \displaystyle
        2\delta
        \leq
        \|\theta\|^2
        \leq
        1,
        \\[1.2em]
        \displaystyle
        \delta\,
        \Phi^c\!\left(
        \|\theta\|\right)\left(\log\left(\frac{\lVert\theta\rVert^{2}}{2\delta}\right) + 1\right),
        &
        \displaystyle
        1\leq \|\theta\|^2\leq 2\log(1/\delta)
        \\[1.2em]
        \displaystyle
        \delta\,
        \Phi^c\!\left(
        \|\theta\|\right),
        &
        \displaystyle
        \|\theta\|^2\geq 2\log(1/\delta).
        \end{cases}
        $$
    \end{theorem}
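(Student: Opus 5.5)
Since the clustering loss is the Hamming loss modulo a global sign, it is at most the classification loss. It therefore suffices to bound
\[
\frac1n\sum_{a=1}^n\PP(\widehat\eta_a\neq\eta_a),
\]
and by stationarity this reduces to bounding $\PP(\widehat\eta_a\neq\eta_a)$ for a typical interior index $a\in\llbracket k+1,n-k\rrbracket$. The $2k$ boundary indices use the single-observation rule; they contribute at most $2k/n\cdot\Phi^c(\|\theta\|)$, or simply $2k/n$. Since $\delta\ge 1/n$ and $k\lesssim \|\theta\|^{-2}\log(\|\theta\|^2/(2\delta))$, this is at most of order $k\delta$, which is within the claimed rates. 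The case $\|\theta\|^2\le 2\delta$ is trivial because the loss is bounded by $1$. In the regime $\|\theta\|^2\ge2\log(1/\delta)$ we have $k=1$, so the two pilots are single observations.

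\textbf{Decomposition.} Fix an interior $a$ and let $W=\{\eta_j \text{ constant on } \llbracket a-k,a+k\rrbracket\}$. Then $\PP(W^c)\le 2k\delta$. On $W^c$ the rule outputs one of the two decisions above. I would bound the error on $W^c$ by conditioning on the location of the (typically unique) transition:
\begin{enumerate}
\item If the pilots disagree, the rule is $\operatorname{sign}\langle X_a,\theta\rangle$, whose error is $\Phi^c(\|\theta\|)$ by independence of $\xi_a$ from the rest.
\item If the pilots agree, the error is at most $1$ in the intermediate regime. In the strong regime, the offending pilot must be wrong, or else the prior term pushes toward $\eta_a$.
\end{enumerate}
This gives a contribution $\lesssim k\delta\,\Phi^c(\|\theta\|)+k\delta$ in the intermediate regime. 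Here $k\delta\asymp(\delta/\|\theta\|^2)(\log(\|\theta\|^2/2\delta)+1)$, and the $\log$ vs.\ $\log+1$ discrepancy is absorbed by the ceiling and the term $1\vee$. In the regime $1\le\|\theta\|^2\le 2\log(1/\delta)$, the bound $\lesssim k\delta\,\Phi^c(\|\theta\|)$ requires showing that near a transition the error is driven by $X_a$ itself. The point is that agreeing-but-wrong pilots, when a transition lies on one side, need a window average to be misclassified. Such a misclassification has probability $\Phi^c(\sqrt{j}\|\theta\|)$-type bounds for the portion of the window of length $j$ on the correct side. Summing over transition positions gives $\lesssim\Phi^c(\|\theta\|)$ per unit of $\delta$.

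\textbf{Event $W$.} On $W$, the pilots agree and are correct except with probability $2\Phi^c(\sqrt k\|\theta\|)$. By the choice of $k$,
\[
\Phi^c(\sqrt k\|\theta\|)\le e^{-k\|\theta\|^2/2}\le 2\delta/\|\theta\|^2,
\]
which is within the intermediate rate. When both pilots are correct and equal to $\eta_a$, the error is
\[
\PP\bigl(\|\theta\|^2+\langle\xi_a,\theta\rangle<-\log((1-\delta)/\delta)\bigr)=\Phi^c\!\left(\|\theta\|+\frac{\log((1-\delta)/\delta)}{\|\theta\|}\right).
\]
By the Gaussian tail, this is at most of order $\delta\,\Phi^c(\|\theta\|)$, since $e^{-\|\theta\|^2/2-\log(1/\delta)-\dots}$ carries a factor $\delta$. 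The wrong-pilot term must also be compared to $\delta\Phi^c(\|\theta\|)$ in the strong regimes. When $\|\theta\|^2\ge2\log(1/\delta)$ we have $k=1$, and if the single-observation pilots $X_{a\pm1}$ are both wrong, the event has probability $\Phi^c(\|\theta\|)^2\le\delta\Phi^c(\|\theta\|)$ because $\Phi^c(\|\theta\|)\le\delta$. In the middle regime, $\Phi^c(\sqrt k\|\theta\|)\le\delta\Phi^c(\|\theta\|)$ should follow from $k\|\theta\|^2\ge2\log(\|\theta\|^2/2\delta)$, though this may need a slightly larger constant in $k$.

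\textbf{Main obstacle.} I expect the hardest step to be the sharp treatment of $W^c$ in the regime $1\le\|\theta\|^2\le2\log(1/\delta)$, where the crude bound $k\delta$ must be improved to $k\delta\,\Phi^c(\|\theta\|)$. I would handle it by splitting according to the distance $j$ from $a$ to the transition. The pilot on the contaminated side has mean $(k-2j)\|\theta\|^2/k$ in the wrong direction only when $j$ is small. A union bound over $j$, with Gaussian tails for each pilot and for $X_a$, should then yield the stated logarithmic factor. Multiple transitions in the window occur with probability $O(k^2\delta^2)$ and are negligible.
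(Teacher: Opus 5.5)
Your decomposition (no transition in the window, exactly one, two or more, plus the $2k$ boundary indices) is the same as the paper's. However, two of the estimates you plug into it fail for the estimator as defined. First, consider $W$ in the range $1\le s:=\|\theta\|^2\le 2\log(1/\delta)$. There you bound the wrong-pilot event by $2\Phi^c(\sqrt k\|\theta\|)$ and want $\Phi^c(\sqrt k\|\theta\|)\le\delta\Phi^c(\|\theta\|)$. With the prescribed $k=1\vee\lceil\frac2s\log\frac{s}{2\delta}\rceil$ this is false. Even the weaker bound by the target rate fails. Take $s$ solving $s=\log\frac{s}{2\delta}$; for small $\delta$ such an $s\approx\log(1/\delta)$ lies in this range. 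Then $k=2$ and $\Phi^c(\sqrt{2s})\asymp e^{-s}/\sqrt s\asymp\delta s^{-3/2}$, whereas $\delta\Phi^c(\sqrt s)(\log\frac{s}{2\delta}+1)\asymp\delta\sqrt s\,e^{-s/2}\asymp\delta^{3/2}$. Enlarging the constant in $k$ is not available, since the theorem concerns this particular $\widehat\eta$ with constant $2$.

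The fix is the observation you already make in item 1. If exactly one pilot is wrong, the pilots disagree and the rule is $\sign\langle X_a,\theta\rangle$. By conditional independence this costs at most $2q_kq_s\le 2e^{-(k+1)s/2}$, where $q_k=\Phi^c(\sqrt{ks})$ and $q_s=\Phi^c(\sqrt s)$. Two wrong pilots cost $q_k^2\le e^{-ks}$. Since $e^{-ks/2}\le 2\delta/s$ and $k\ge1$, both are $\lesssim(\delta/s)e^{-s/2}\lesssim\delta s^{-1/2}\Phi^c(\sqrt s)$, which is within the rate.

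Second, in the regime $s\ge2\log(1/\delta)$ (where $k=1$) you cannot discard two transitions as $O(k^2\delta^2)=O(\delta^2)$. There $\Phi^c(\|\theta\|)\le e^{-s/2}\le\delta$, so $\delta^2\ge\delta\Phi^c(\|\theta\|)$; for instance, when $s=4\log(1/\delta)$ the target is at most $\delta^3$. On $\{\eta_{a-1}=\eta_{a+1}=-\eta_a\}$ your heuristic that ``the offending pilot must be wrong, or else the prior pushes toward $\eta_a$'' also fails. Both pilots are typically correct, and the prior pushes toward $-\eta_a$. The resulting error is $q_-=\Phi^c(x-t)$ with $x=\|\theta\|$, $t=L_\delta/x$ and $L_\delta=\log\frac{1-\delta}{\delta}$.

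The missing step is a Mills-ratio comparison. Since $L_\delta<\log(1/\delta)\le x^2/2$ gives $t<x/2$, and $x\ge1$, you get $\Phi^c(x-t)/\Phi^c(x)\le\frac{1+x^2}{x(x-t)}e^{xt}\le4e^{L_\delta}\le4/\delta$. Hence $\delta^2q_-\lesssim\delta\Phi^c(\|\theta\|)$. The remaining sub-case, where some pilot is wrong, costs only $2\delta^2q_s$.

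In the middle regime the $O(k^2\delta^2)$ term is indeed negligible, but this too needs an argument. One shows $k\delta\lesssim e^{-s/2}$, which uses $s\le2\log(1/\delta)$. Finally, your one-transition analysis can be shortened. One pilot window is transition-free and carries $\eta_a$. If that pilot is correct, the output is either the pointwise rule or the correctly corrected rule, so the conditional error is at most $q_k+q_s\le2q_s$ for every transition position. No analysis of the contaminated pilot's mean $(k-2j)\|\theta\|^2/k$ is needed, and the logarithmic factor simply comes from the $2k$ possible transition positions.
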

This construction estimates the neighboring states before applying a local Bayes correction at the target. It therefore makes the source of the logarithmic loss transparent: the neighboring states must themselves first be recovered from noisy data.

\subsubsection{A local segmentation procedure}\label{sec:1dim:clustering}

The logarithmic loss in the intermediate regime is not intrinsic. Since the signal direction is known in the oracle setting, the observations may first be projected onto that direction, reducing the local problem to a scalar sequence. A segmentation rule can then search over trajectories with at most one change in a window around the target. For $k\in\mathbb{N}$, define the sets $\mathcal{S}^{+}_{k}$ and $\mathcal{S}^{-}_{k}$ as follows:
\begin{align*}
    \mathcal{S}^{+}_{k} &= \left\{ u \in \{-1,1\}^{2k+1} \;\middle|\; u_{k+1} = 1, \text{ piecewise constant with at most one change} \right\}, \\
    \mathcal{S}^{-}_{k} &= \left\{ u \in \{-1,1\}^{2k+1} \;\middle|\; u_{k+1} = -1, \text{ piecewise constant with at most one change} \right\}.
\end{align*}
Then, for $i\in\llbracket k+1, n-k\rrbracket$, we define the estimator $\hat{\eta}_i$ by:
\[
    \hat{u} \in \operatorname*{argmax}_{u \in \mathcal{S}^{+}_{k} \cup \mathcal{S}^{-}_{k}} \theta^\top X_{i-k:i+k}u, \quad \hat{\eta}_i := (\hat{u})_{k+1},
\]
otherwise we simply define $\hat{\eta}_i = \operatorname{sign}\left(\langle X_i,\theta\rangle\right)$. In the strong signal regime $\lVert\theta\rVert^{2}\geq 2\log\left(1/\delta\right)$, we keep the same $\hat{\eta}_i$ as in the previous construction \eqref{eq:first_clusterer}. 
The following proposition controls the clustering error of $g = \pi_n\circ\hat{\eta}$.

\begin{theorem}\label{thm:risk_clust_new_procedure_ub}
Assume that
  $  \delta\geq \frac{\log n}{n}$,
and, whenever
$
    2\delta<\lVert\theta\rVert^{2}<2\log(1/\delta),
$
choose
\[
    k=
    \left\lceil
        \frac{C_0}{\lVert\theta\rVert^{2}}
        \log\left(\frac{\lVert\theta\rVert^{2}}{\delta}\right)
    \right\rceil,
\]
where $C_0>0$ is a sufficiently large universal constant. Then
\[
\ClusterRisk(\theta,\delta,\pi_n\circ\widehat\eta)
\lesssim
\begin{cases}
    1,
    &
    \lVert\theta\rVert^{2}\leq 2\delta,
    \\[1.2ex]
    \displaystyle
    \frac{\delta}{\lVert\theta\rVert^{2}},
    &
    2\delta<\lVert\theta\rVert^{2}\leq1,
    \\[2ex]
    \displaystyle
    \delta\,\lVert\theta\rVert\,
    \Phi^c(\lVert\theta\rVert),
    &
    1<\lVert\theta\rVert^{2}<2\log(1/\delta).
\end{cases}
\]
\end{theorem}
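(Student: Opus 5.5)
We bound the clustering risk by the classification risk of the fixed orientation, $\ClusterRisk(\theta,\delta,\pi_n\circ\widehat\eta)\le \ClassifRisk(\theta,\delta,1,\widehat\eta)$. By stationarity this is at most $\frac{2k}{n}+\max_{i\in\llbracket k+1,n-k\rrbracket}\PP(\widehat\eta_i\neq\eta_i)$. Since $\delta\ge\log n/n$, the boundary term $2k/n$ satisfies
\[
\frac{k}{n}\lesssim \frac{\delta\log(\|\theta\|^2/\delta)}{\|\theta\|^2\log n}\lesssim \frac{\delta}{\|\theta\|^2},
\]
because $\|\theta\|^2/\delta\le n\,\|\theta\|^2$. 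In the strong-signal case this should also be absorbed by $\delta\|\theta\|\Phi^c(\|\theta\|)$; we plan to check that this holds when $\|\theta\|^2<2\log(1/\delta)$, where $\Phi^c(\|\theta\|)\gtrsim\delta/\sqrt{\log(1/\delta)}$ and $k=O(1)$. If it fails, we would fall back on sign estimates only at the boundary. The regime $\|\theta\|^2\le 2\delta$ is trivial. So it remains to bound the error probability at a fixed interior $i$.

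Fix an interior $i$ and write $Y_j=\langle X_j,\theta\rangle/\|\theta\|=\eta_j\|\theta\|+\varepsilon_j$. We split according to the number $N$ of transitions in the window $\llbracket i-k,i+k\rrbracket$.

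\textbf{At least two transitions.} This event has probability $\lesssim (k\delta)^2$. Since $k\delta\lesssim \delta\log(\|\theta\|^2/\delta)/\|\theta\|^2\lesssim 1$, we get $(k\delta)^2\lesssim k\delta\cdot(\delta/\|\theta\|^2)\log^2(\cdot)\cdot\delta$. Choosing $C_0$ and using $\delta/\|\theta\|^2\le1/2$, this reduces to showing $\delta\log^2(x)/x\lesssim 1$ for $x=\|\theta\|^2/\delta$, which gives a bound $\lesssim\delta/\|\theta\|^2$ in the intermediate regime. In the strong regime we need $(k\delta)^2\lesssim \delta\|\theta\|\Phi^c(\|\theta\|)$, which follows from $k=O(1)$ and $\Phi^c(\|\theta\|)\gtrsim \delta/\|\theta\|$.

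\textbf{At most one transition.} Then the true $\eta_{i-k:i+k}$ lies in $\mathcal S_k^+\cup\mathcal S_k^-$, and an error means that some $u$ with $u_{k+1}\neq\eta_i$ has score at least that of the truth. Such a $u$ differs from $\eta$ on a contiguous stretch containing $i$, and $u$ has at most one change point. We condition on the location $\tau$ of the true change, or on there being none, which happens with probability $\ge 1-2k\delta$.

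\emph{No change.} Any competitor flips at least the stretch from $i$ to one end, so it flips at least $k+1$ consecutive points. The probability that the flipped sum $\sum_{j\in S}Y_j\eta_j$ is $\le0$ is at most $e^{-|S|\|\theta\|^2/2}$. Summing over the $O(k)$ stretches of length $\ge k$ gives $\lesssim k\,e^{-k\|\theta\|^2/2}$, which is $\lesssim\delta/\|\theta\|^2$ for $C_0$ large.

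\emph{One change at distance $t$ from $i$.} Here $t$ has probability $\approx\delta$ for each value, and the competitors are stretches $S$ containing $i$ and the change segment. An error requires a partial-sum process with drift $\|\theta\|$ and unit increments to hit zero after at least $t$ steps, or to flip in the other direction; the latter costs $\ge k$ steps. By a maximal (Doob/Chernoff) inequality for Gaussian random walks, this probability is $\lesssim e^{-t\|\theta\|^2/2}$, uniformly over the stretch length. Summing over $t$ gives
\[
\delta\sum_{t\ge1}e^{-t\|\theta\|^2/2}\lesssim \delta\min\Bigl(\frac{1}{\|\theta\|^2},\,\Phi^c(\|\theta\|)\,\|\theta\|\Bigr).
\]
For the strong case, the $t=1$ term is computed precisely as $\PP(\mathcal N(0,1)>\|\theta\|)$, together with a union over lengths. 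This produces the extra $\|\theta\|$ factor through the Mills-ratio bound $e^{-x^2/2}\lesssim x\Phi^c(x)$.

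\textbf{Main obstacle.} The crux is the uniform-over-stretch control in the one-change case. Naive union bounds over all $O(k^2)$ competitors would reintroduce the logarithmic factor that this theorem removes. We therefore plan to argue through the maximum of a drifted random walk, using $\PP(\min_{m\ge t}\sum_{j\le m}(\|\theta\|+\varepsilon_j)\le0)\le e^{-t\|\theta\|^2/2}$ and a reflection-type bound, so that each transition contributes only $O(1/\|\theta\|^2)$ expected errors.
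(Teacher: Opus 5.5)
\textbf{Verdict.} Your architecture is the paper's, but one step fails as written. The shared structure is: project onto $\theta$; split by $0$, $1$, $\geq 2$ transitions in $\llbracket i-k,i+k\rrbracket$; control wrong-center competitors with a drifted Gaussian random-walk maximal inequality; sum over the location of the true change (giving $\delta/\|\theta\|^2$ or $\delta\|\theta\|\Phi^c(\|\theta\|)$); and absorb the $2k$ boundary points using $\delta\geq\log n/n$. Your opposite-side step is fine: an exponential-martingale bound gives order $\Phi^c(\|\theta\|\sqrt m)$ for each true-change distance $m$.

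\textbf{The gap: the no-change case.} There you union bound over the $O(k)$ wrong-center stretches and claim $k e^{-k\|\theta\|^2/2}\lesssim\delta/\|\theta\|^2$ for $C_0$ large.
\begin{itemize}
\item Write $s=\|\theta\|^2$ and $x=s/\delta$. Then $e^{-ks/2}\leq x^{-C_0/2}$, but $k\asymp C_0\log(x)/s$. So your claim amounts to $C_0\log(x)\,x^{-C_0/2}\lesssim\delta$.
\item This fails whenever $x$ stays bounded (or grows slower than every fixed power of $1/\delta$) while $\delta\to0$, which the hypotheses allow. For $x=100$, your bound diverges while the target is $1/100$.
\item No universal $C_0$ repairs this. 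Replacing the count $k$ by the geometric sum $\sum_{\ell>k}e^{-\ell s/2}\asymp e^{-ks/2}/s$ leaves the same $1/s$ loss.
\item The same problem arises for same-side competitors in the one-change case if you union bound them. Their disagreement set is also not contiguous: it is a prefix of the window plus a suffix, of total length at least $k+1$.
\end{itemize}

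\textbf{The fix.} Use the tool you reserve for the opposite-side case; this is what the paper does in its configurations 1 and 2.
\begin{itemize}
\item For a constant true path, a wrong-center competitor flips $\llbracket i-k,i\rrbracket$ plus an interval growing to the right (or the mirror image). The bad event is therefore $\{Z_{k+1}-(k+1)\|\theta\|+\sup_{p\geq0}(S_p-p\|\theta\|)\geq0\}$.
\item The identity $\PP(\sup_u(B_u-\mu u)\geq y)=e^{-2\mu y}$ then gives $Ce^{-cks}$ with no $1/s$ prefactor (the paper's \eqref{eq:drifted-walk-1}).
\item Same-side competitors, split by whether their change precedes or follows the true one, form two nested families with base size at least $k+1$. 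They contribute $Ck\delta e^{-cks}$.
\item Finally, $e^{-cks}\leq x^{-cC_0}\lesssim 1/x$ for all $x\geq 2$.
\end{itemize}

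\textbf{Smaller corrections for $1<s<2\log(1/\delta)$.}
\begin{itemize}
\item $k=O(1)$ is false there: for $s\asymp1$, $k\asymp\log(1/\delta)$. The two-transition term instead needs the paper's estimate $\delta k^2\lesssim e^{-s/2}$, which follows from $\log(1/\delta)\geq s/2$.
\item Bounding boundary errors by one fails; it would need roughly $\delta\gtrsim n^{-1/2}$. You must use that the estimator applies the pointwise rule there, contributing $\frac{2k}{n}\Phi^c(\|\theta\|)\lesssim\delta\Phi^c(\|\theta\|)$. Your fallback is the required argument, not an optional one.
\item The no-change term must be compared with $\delta e^{-s/2}$, not $\delta/s$. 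It passes, since $e^{-cks}\leq\delta^{cC_0}\leq\delta^2<\delta e^{-s/2}$.
\end{itemize}
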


\begin{remark}
The condition $\delta\geq(\log n)/n$ is only used to control the
$O(k)$ observations lying within distance $k$ of the two endpoints,
where the estimator is defined by the pointwise rule
\[
    \widehat\eta_i
    =
    \sign(\langle X_i,\theta\rangle).
\]
The interior local-segmentation argument itself does not require this
stronger lower bound on $\delta$. The condition could therefore be
removed by using an appropriate one-sided or truncated-window
segmentation rule near the endpoints. 
\end{remark}
\begin{remark}
Moreover, the multiplicative factor $\lVert\theta\rVert$ in the last regime is
asymptotically negligible when $\lVert\theta\rVert\to\infty$, which is the regime of
interest for sharp asymptotic recovery results. Indeed, by Mills'
inequality, there exists $\varepsilon_n\to0$ such that
\[
    \lVert\theta\rVert\,\Phi^c(\lVert\theta\rVert)
    \lesssim
    \Phi^c\!\left(\lVert\theta\rVert(1-\varepsilon_n)\right).
\]
\end{remark}

The local segmentation rule removes the
logarithmic factor incurred by the preceding construction in
the intermediate-signal regime. The key point is that the estimator
can borrow information from a large local window without assuming
that the entire window has a common label: when a transition occurs,
the rule can estimate its location rather than misclassifying all
observations on one side of it. In sufficiently strong signal regimes, neighboring labels can themselves be estimated at a cost below the scale of the localization error of the transition, so the simpler plug-in construction becomes effective again. Figure~\ref{fig:Bayes_risk:offline:slow_mixing} summarizes the oracle offline behavior.

\begin{figure}[hbt!]
\centering
\includegraphics[width=0.99\textwidth]{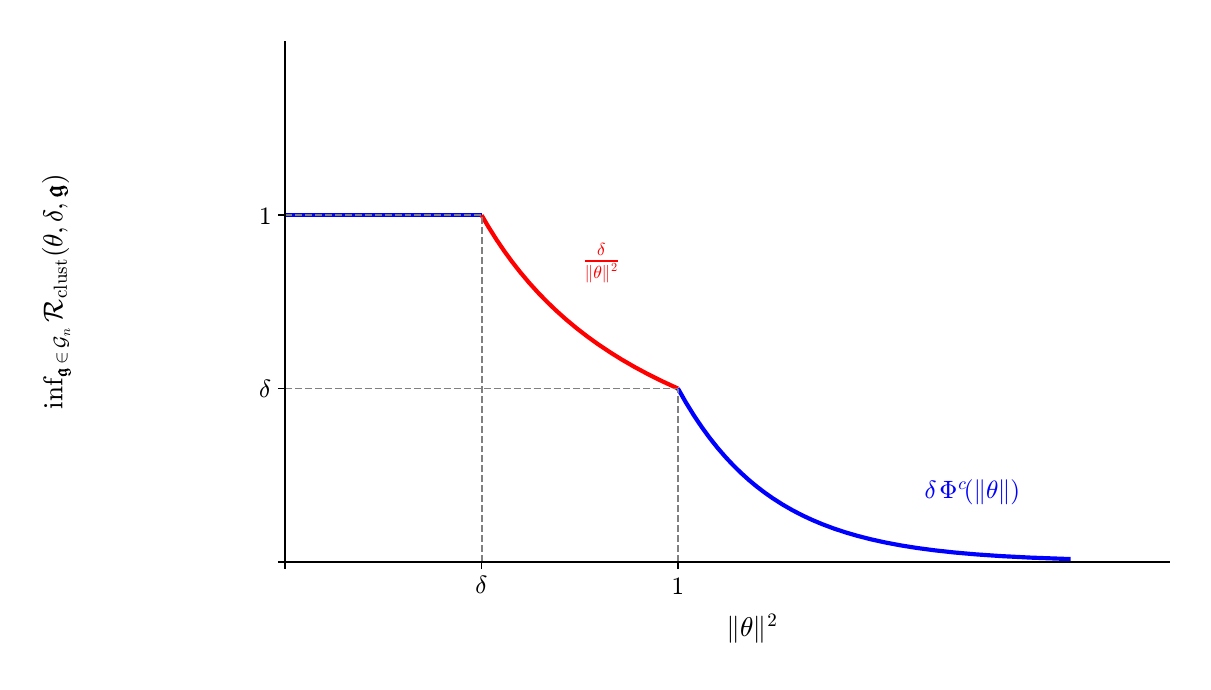}
\caption{Behavior of the Bayes risk of offline clustering in the slowly mixing regime.}
\label{fig:Bayes_risk:offline:slow_mixing}
\end{figure}

\begin{remark}[Almost full recovery in the oracle problem]
The weak-signal boundary is of order $\delta$: in both the online and offline oracle settings, the clustering risk tends to zero as soon as $\|\theta\|^2/\delta\to\infty$. Thus slow mixing lowers the separation required for almost full recovery relative to the i.i.d. Gaussian mixture, where the corresponding threshold is of constant order. The improvement comes from temporal persistence, which permits aggregation over long stretches carrying the same latent label.
\end{remark}

\subsection{Adaptive estimation of the nuisance parameters}
\label{sec:adaptation}

The oracle procedures depend on the signal strength \(\|\theta\|^2\) and on
the transition probability \(\delta\). We estimate both quantities by temporal
aggregation. Block averaging reduces the Gaussian noise while preserving a
nonnegligible fraction of the label signal as long as the block length remains
below the persistence scale \(1/\delta\).

Since this subsection concerns estimation up to universal constants,
we may retain the largest dyadic sample size \(N\le n\).
As \(N\asymp n\), this does not affect the stated rates, and we
continue to write \(n\) for the effective sample size.

\textbf{Estimating \(\|\theta\|^2\) and constructing an initializer.}
Let \(\mathcal L_n=\{1,2,4,\ldots,n\}\). For \(\ell\in\mathcal L_n\),
let \(k=n/\ell\) and define
\[
    \widetilde X_i^{(\ell)}
    =
    \frac1k\sum_{j=k(i-1)+1}^{ki}X_j,
    \qquad i=1,\ldots,\ell.
\]
Let \(\widetilde X^{(\ell)}\) be the \(d\times\ell\) matrix with columns
\(\widetilde X_1^{(\ell)},\ldots,\widetilde X_\ell^{(\ell)}\), and define
\[
 \overline\eta_i^{(\ell)}
 =\frac1k\sum_{j=k(i-1)+1}^{ki}\eta_j,
 \qquad i=1,\ldots,\ell.
\]
Write
\[
    \overline\eta^{(\ell)}
    =
    \bigl(
        \overline\eta_1^{(\ell)},\ldots,
        \overline\eta_\ell^{(\ell)}
    \bigr)^\top .
\]
Conditionally on the latent path,
\[
    \EE\!\left[
        \frac1\ell
        \bigl(\widetilde X^{(\ell)}\bigr)^\top
        \widetilde X^{(\ell)}
        \;\middle|\;
        \eta_{1:n}
    \right]
    =
    \frac{\|\theta\|^2}{\ell}
    \overline\eta^{(\ell)}
    \bigl(\overline\eta^{(\ell)}\bigr)^\top
    +
    \frac{d\sigma^2}{k\ell}I_\ell .
\]
By Lemma~\ref{lem:block-label-energy}, when \(\ell\ge n\delta\), the bucketed label
vector retains constant-order energy:
\[
    \EE\bigl\|\overline\eta^{(\ell)}\bigr\|^2
    \ge \frac{\ell}{3}.
\]
A high-probability version of this energy bound, together with spectral
concentration, shows that the leading eigenvalue of the centered Gram
matrix is of order \(\|\theta\|^2\) when the blocks are shorter than the
persistence scale and the signal dominates the spectral noise; see
Lemmas~\ref{lem:block-label-energy} and~\ref{lem:block-gram}.
For each \(\ell\), define
\[
    \widehat\theta^{\,2}(\ell)
    :=
    \left|
        \lambda_{\max}\left(
            \frac1\ell
            \bigl(\widetilde X^{(\ell)}\bigr)^\top
            \widetilde X^{(\ell)}
            -
            \frac{d\sigma^2}{n}I_\ell
        \right)
    \right|.
\]
We select the finest dyadic scale at which this spike dominates the
spectral noise:
\[
    \widehat\ell
    =
    \max\left\{
        \ell\in\mathcal L_n:
        \widehat\theta^{\,2}(\ell)
        \ge
        A\sigma^2\frac{\ell+\sqrt{d\ell}}{n}
    \right\},
\]
where \(A\) is a sufficiently large fixed universal constant, chosen as in
the proof of Proposition~\ref{prop:adaptive:estimation}; set
\(\widehat\ell=1\) if the set is empty. Set
\[
    \widehat\theta^{\,2}
    :=
    \widehat\theta^{\,2}(\widehat\ell),
\]
Let \(\widehat u\in\mathbb R^{\widehat\ell}\) be a unit leading
eigenvector of the centered Gram matrix at the selected scale. Expanding \(\widehat u\) back to the
original time scale, with \(\widehat k=n/\widehat\ell\), define
\[
    (\widehat u_n)_j
    =
    \frac{\widehat u_i}{\sqrt{\widehat k}},
    \qquad
    j\in
    \{\widehat k(i-1)+1,\ldots,\widehat k i\}.
\]

For later use, define the pointwise effective signal
\[
    r_n^2(\theta)
    :=
    \frac{\|\theta\|^4/\sigma^4}
         {\|\theta\|^2/\sigma^2+d/n},
\]
and set
\[
    \widehat\eta^{(0)}
    :=
    \sign(\widehat u_n).
\]
We use \(\sign(0)=1\), with the sign applied coordinatewise.

\begin{proposition}
\label{prop:adaptive:estimation}
There exist universal constants \(c,c',C>0\) such that, whenever
$
    \delta\ge\frac1{4n}
    $ and $
    r_n^2(\theta)\ge c'\delta,
$
one has
\[
    \PP\left(
        \left|
            \widehat\theta^{\,2}-\|\theta\|^2
        \right|
        \ge
        \frac9{10}\|\theta\|^2
    \right)
    \le
    C\exp\left(
        -cn\bigl(r_n^2(\theta)\wedge1\bigr)
    \right),
\]
and
\[
    \PP\left(
        \frac1n
        \left|
            \langle\widehat\eta^{(0)},\eta\rangle
        \right|
        \le
        \frac13
    \right)
    \le
    C\exp\left(
        -cn\bigl(r_n^2(\theta)\wedge1\bigr)
    \right).
\]
\end{proposition}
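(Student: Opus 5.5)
The plan is to analyse the block Gram matrix scale by scale and then show that the data-driven choice $\widehat\ell$ lands, with high probability, in a window of ``good'' scales. At a good scale the spike is well separated from the noise, so both the eigenvalue and the eigenvector are accurate. Normalise $\sigma=1$ and fix a dyadic scale $\ell$ with block length $k=n/\ell$. Write
\[
\frac1\ell(\widetilde X^{(\ell)})^\top\widetilde X^{(\ell)}-\frac{d}{n}I_\ell
=\frac{\|\theta\|^2}{\ell}\overline\eta\,\overline\eta^\top+E_\ell ,
\]
where $\overline\eta=\overline\eta^{(\ell)}$. The error $E_\ell$ consists of a cross term $\frac1\ell(\overline\eta\theta^\top W+W^\top\theta\overline\eta^\top)$ and a centred Wishart term $\frac1\ell(W^\top W-\frac{d\ell}{n}I)$, where $W$ has i.i.d.\ $\mathcal N(0,1/k)$ entries, $d\times\ell$. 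Standard Gaussian concentration (Lemma~\ref{lem:block-gram}) gives, with probability at least $1-e^{-ct}$,
\[
\|E_\ell\|_{\op}\lesssim \|\theta\|\sqrt{\frac{\|\overline\eta\|^2}{\ell}}\sqrt{\frac{\ell+t}{n}}+\frac{\ell+\sqrt{d\ell}+t+\sqrt{dt}}{n}.
\]
By Lemma~\ref{lem:block-label-energy}, and its high-probability version, whenever $\ell\ge n\delta$ (blocks shorter than the persistence scale) we have $\|\overline\eta\|^2\ge \ell/3$, and trivially $\|\overline\eta\|^2\le\ell$. The spike size $\|\theta\|^2\|\overline\eta\|^2/\ell$ is therefore in $[\|\theta\|^2/3,\|\theta\|^2]$, with failure probability exponentially small in $\ell$; I expect this to be of the form $e^{-c\ell}$ once $\ell\ge n\delta$.

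Next I identify the target scale. Let $\ell^\ast$ be the largest dyadic $\ell$ such that $\|\theta\|^2\ge A'(\ell+\sqrt{d\ell})/n$, with $A'\gg A$. Solving gives $\ell^\ast\asymp n(\|\theta\|^2\wedge\|\theta\|^4/(d/n))\asymp n\, r_n^2\wedge n$ up to constants. The assumption $r_n^2\ge c'\delta$ with $c'$ large then guarantees $\ell^\ast\ge n\delta$, so the label energy bound applies at every $\ell\le \ell^\ast$ with $\ell\ge n\delta$. There are two claims to establish.

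First, at $\ell^\ast$ the spike dominates the noise. Here Weyl's inequality gives $\widehat\theta^2(\ell^\ast)\in[\|\theta\|^2/3-\|E\|,\|\theta\|^2+\|E\|]$, which passes the threshold, so $\widehat\ell\ge\ell^\ast$.

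Second, for every dyadic $\ell>4\ell^\ast$, say, we need $\widehat\theta^2(\ell)<A(\ell+\sqrt{d\ell})/n$. There is a subtlety here: for $\ell\ge \ell^\ast$ the spike can still be as large as $\|\theta\|^2$, but $\|\theta\|^2< A'(\ell+\sqrt{d\ell})/n\cdot(\ell^\ast/\ell)^{1/2}$. One must choose $A$ relative to $A'$ so that the spike plus noise stays below $A(\ell+\sqrt{d\ell})/n$ once $\ell\ge C\ell^\ast$. Consequently $\widehat\ell\in[\ell^\ast,C\ell^\ast]$.

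On that range the spike is comparable to $\|\theta\|^2$ and at least a large constant times $\|E\|$, which yields $|\widehat\theta^2-\|\theta\|^2|\le\frac9{10}\|\theta\|^2$. The factor $9/10$ is loose enough to absorb the $1/3$ energy loss. Taking $t\asymp\ell^\ast\asymp n(r_n^2\wedge1)$ and a union bound over the $O(\log n)$ scales, which is absorbed into the exponent, gives the stated probability.

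For the initializer, at $\widehat\ell$ I would apply Davis--Kahan with gap $\asymp\|\theta\|^2$ to get $\min_s\|\widehat u-s\overline\eta/\|\overline\eta\|\|\le\varepsilon$, with $\varepsilon$ a small constant after enlarging $A$. Then I translate to the original scale: $\frac1n\langle\sign(\widehat u_n),\eta\rangle=\frac1\ell\sum_i\sign(\widehat u_i)\overline\eta_i$. A standard sign-rounding argument handles this: the blocks where $\sign(\widehat u_i)\ne\sign(\overline\eta_i)$ and $|\overline\eta_i|$ is large contribute to $\|\widehat u-\overline\eta/\|\overline\eta\|\|^2$ at least $\overline\eta_i^2/\ell$. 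This yields $\frac1\ell|\sum_i\sign(\widehat u_i)\overline\eta_i|\ge \frac1\ell\sum|\overline\eta_i|-C\varepsilon\ge \frac1\ell\|\overline\eta\|^2-C\varepsilon\ge 1/3+$ slack. Getting strictly above $1/3$ requires the energy bound slightly better than $1/3$ at the relevant scales, or a careful $\varepsilon$; I would use that $\ell\ge C\ell^\ast\ge Cn\delta$ gives $\|\overline\eta\|^2/\ell$ close to $1$.

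The main obstacle is the scale selection. The threshold test must reject all too-fine scales simultaneously, even though the label energy bound at those scales is only a lower bound. It must also do so uniformly over $\theta$, including the regime where $d/n$ dominates and the Wishart term is of order $\sqrt{d\ell}/n$. I would handle it by upper bounding the spike by $\|\theta\|^2$ deterministically, and by making the constants satisfy $A'\ge 100A$ and $A\ge 100 C_{\rm conc}$.
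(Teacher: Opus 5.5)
Your plan is the paper's proof in essentially the same order. Both arguments combine the per-scale Gram perturbation bound with block-label energy, and both exhibit a provably accepted scale $\ell^\ast\asymp n(r_n^2\wedge1)$ with $\ell^\ast\ge n\delta$ (the paper's $\ell_-$). Both then reject every scale whose noise level $h_\ell=(\ell+\sqrt{d\ell})/n$ is large compared with $\|\theta\|^2$, and apply Weyl's inequality with the energy loss absorbed into $9/10$. The eigenvector is handled by Davis--Kahan at $\widehat\ell$. The label energy is close to one there because $\ell^\ast\gg n\delta$ (from $c'$ large) and a Chernoff bound controls the number of transitions. Your sign-rounding step, where a mismatched block costs at least $\overline\eta_i^2/\ell$ in squared $\ell_2$ distance, is a valid substitute for the paper's $2\rho^2-1$ argument.

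Two steps need repair as written.

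\textbf{The union bound.} A common deviation level $t\asymp\ell^\ast$ at all scales, followed by a union bound over $O(\log n)$ scales, gives $(\log n)e^{-c\ell^\ast}$. The factor $\log n$ is not absorbed into $e^{-cn(r_n^2\wedge1)}$ when $nr_n^2$ is of order $\log\log n$. The hypotheses allow this, since they only give $nr_n^2\ge c'/4$. Instead, use $t=\ell$ at scale $\ell$, which are the $e^{-c\ell}$ bounds of Lemmas~\ref{lem:block-label-energy} and~\ref{lem:block-gram}. Use them only for $\ell\ge\ell^\ast$; this suffices because $\widehat\ell$ is a maximum. The failure probabilities then sum geometrically to $Ce^{-c\ell^\ast}$, as in the paper.

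\textbf{Accuracy at the selected scale.} Deducing accuracy from the fixed-ratio window $[\ell^\ast,C\ell^\ast]$ needs more than $A'\ge100A$ and $A\ge100C_{\rm conc}$. For $\ell\ge\ell^\ast$ you have $\sqrt{\ell/\ell^\ast}\,h_{\ell^\ast}\le h_\ell\le(\ell/\ell^\ast)\,h_{\ell^\ast}$. So rejection is only guaranteed once $C\gtrsim(A'/A)^2$. At $C\ell^\ast$, however, the crude bound is only $h_{C\ell^\ast}\lesssim(A'/A^2)\|\theta\|^2$, so you would additionally need $A'\ll A^2$. It is cleaner to argue directly from acceptance. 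Any accepted scale $\ell\ge\ell^\ast$ satisfies $Ah_\ell\le\|\theta\|^2+4\|\theta\|\sqrt{h_\ell}+4h_\ell$, hence $h_\ell\lesssim\|\theta\|^2/A$. This gives the eigenvalue and Davis--Kahan bounds at $\widehat\ell$ without reference to the window's length. It is what the paper's ordering of constants encodes: $c_+$ first, then $A$, then $c_-$.
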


Thus a constant-factor estimate of the squared signal magnitude and a
positively correlated label initializer are available throughout the recovery regime.
For later use, set
\[
    \widehat a_{\rm ad}
    :=
    \frac{\widehat\theta^{\,2}}{\sigma^2},
    \qquad
    \widehat r_{\rm ad}^{\,2}
    :=
    \frac{\widehat a_{\rm ad}^{\,2}}
         {\widehat a_{\rm ad}+d/n},
    \qquad
    \widehat\eta_{\rm ad}^{(0)}
    :=
    \widehat\eta^{(0)}.
\]
On the intersection of the two good events in Proposition~\ref{prop:adaptive:estimation},
\[
    \widehat a_{\rm ad}
    \asymp
    \frac{\|\theta\|^2}{\sigma^2},
    \qquad
    \widehat r_{\rm ad}^{\,2}
    \asymp
    r_n^2(\theta).
\]

\textbf{Estimating \(\delta\).}
Throughout this paragraph assume \(n\geq 4\), \(\delta\ge1/(2n)\) and
\(r_n^2(\theta)\ge C\delta\), for a sufficiently large universal constant
\(C\). After projection onto a direction correlated with \(\theta\), sign
disagreements between neighboring observations arise either from a latent
transition or from Gaussian sign errors. We reduce the latter by temporal
averaging and estimate the transition frequency from disagreements between
adjacent block sums.

To decouple direction estimation from transition counting, split the sample
into two consecutive halves \(S_1\) and \(S_2\). Apply
Proposition~\ref{prop:adaptive:estimation} to \(S_1\), and denote the
corresponding selected scale, unit eigenvector, and signal estimate by
\(\widehat\ell,\widehat u,\widehat\theta^{\,2}\), reusing these symbols
only within this paragraph. Write \(m=|S_1|=|S_2|=n/2\).
Then \(\delta\ge1/(4m)\) and
\(r_m^2(\theta)\ge r_n^2(\theta)/2\), so the first proposition applies
with sample size \(m\) after increasing \(C\). Define
\[
    \widehat r^{\,2}
    :=
    \frac{
        \widehat\theta^{\,4}/\sigma^4
    }{
        \widehat\theta^{\,2}/\sigma^2+2d/n
    }.
\]
The factor \(2d/n\) reflects the use of half the sample. With probability at
least
\[
    1-
    C\exp\left(
        -cn\bigl(r_n^2(\theta)\wedge1\bigr)
    \right),
\]
one has
\[
    \widehat r^{\,2}
    \asymp
    r_n^2(\theta).
\]

At the selected scale, define
\[
    \widehat v
    :=
    \widetilde X_{S_1}^{(\widehat\ell)}\widehat u,
    \qquad
    \widehat w
    :=
    \frac{\widehat v}{\|\widehat v\|},
\]
with an arbitrary deterministic unit vector if \(\widehat v=0\). Lemma~\ref{lem:projected-signal} establishes the additional projection bound
\begin{equation}
\label{eq:projected-signal}
    c r_n^2(\theta)
    \le
    \frac{
        \langle\theta,\widehat w\rangle^2
    }{\sigma^2}
    \le
    C r_n^2(\theta)
\end{equation}
jointly with the preceding bound for $\widehat r^{\,2}$, outside an event
of probability at most $C\exp\left(-cn(r_n^2(\theta)\wedge1)\right)$.

Conditionally on the first half of the sample, the projected observations indexed by
\(S_2\) satisfy
\[
    Z_j
    :=
    \langle X_j,\widehat w\rangle
    =
    \mu\eta_j+\sigma g_j,
    \qquad
    \mu
    :=
    \langle\theta,\widehat w\rangle,
\]
where the \(g_j\)'s are i.i.d.\ standard Gaussian and, conditionally on
\(\mathcal F_1=\sigma(X_j:j\in S_1)\), are independent of the latent labels
on \(S_2\). The latent chain on \(S_2\) need not be stationary under this
conditioning. The argument below uses its independent future transition
variables. On this joint good event,
\[
    \frac{\mu^2}{\sigma^2}
    \asymp
    \widehat r^{\,2}
    \asymp
    r_n^2(\theta).
\]

For a dyadic block size \(M\le m/2\) dividing \(m=|S_2|\), partition \(S_2\) into
consecutive blocks \(B_1^{(M)},\ldots,B_{|S_2|/M}^{(M)}\), and define
\[
    S_j^{(M)}
    :=
    \sum_{t\in B_j^{(M)}} Z_t.
\]
Given pair consecutive blocks, let
\[
    D_M
    :=
    \frac1{|S_2|/(2M)}
    \sum_{i=1}^{|S_2|/(2M)}
    \mathbf 1
    \left\{
        S_{2i-1}^{(M)}S_{2i}^{(M)}
        \le0
    \right\},
    \qquad
    \widehat\delta_M
    :=
    \frac{D_M}{M}.
\]
Let
 us denote by $\mathcal M:=\mathcal{L}_{m/2}$.
Fix a sufficiently small universal constant \(\gamma>0\), and then a
sufficiently large universal constant \(B>0\), as specified in the proof.
If \(\{M\in\mathcal M:M\widehat r^{\,2}\ge B\}\) is empty, set
\(\widehat\delta=1/n\) and stop. Otherwise, let
\[
 M_0=\min\{M\in\mathcal M:M\widehat r^{\,2}\ge B\},
 \qquad
 \mathcal M(\widehat r)
 =\{M_0,2M_0,4M_0,\ldots\}\cap\mathcal M.
\]
In particular, \(M_0=1\) when \(\widehat r^{\,2}\ge B\).
Define
\[
 \widehat M
 =\min\left\{M\in\mathcal M(\widehat r):
 D_M\ge \exp(-\gamma M\widehat r^{\,2})\right\}.
\]
If this set is empty, take \(\widehat M\) to be the largest element of
\(\mathcal M(\widehat r)\). Finally, set
\[
 \widehat\delta
 =\left(\frac{D_{\widehat M}}{\widehat M}\vee\frac1n\right)
   \wedge\frac12.
\]

\begin{proposition}
\label{prop:delta}
There exist universal constants \(c,C>0\) such that, whenever
$
    \delta\ge\frac1{2n}
    $ and $
    r_n^2(\theta)\ge C\delta,
$
one has
\[
    \PP\left(
        \frac{\widehat\delta}{\delta}
        \vee
        \frac{\delta}{\widehat\delta}
        \ge100
    \right)
    \le
    C\exp(-cn\delta).
\]
\end{proposition}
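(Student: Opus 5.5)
We work on the good event $\mathcal E_1$ from Lemma~\ref{lem:projected-signal} and the paragraph before it, where $\widehat r^{\,2}\asymp r_n^2(\theta)$ and $\mu^2/\sigma^2\asymp r_n^2(\theta)$. Its complement has probability at most $C\exp(-cn(r_n^2\wedge1))\le C\exp(-c'n\delta)$, because $r_n^2\ge C\delta$ and $\delta\le1/2$. Everything else is done conditionally on $\mathcal F_1$, so the $S_2$ observations form a univariate Gaussian HMM with signal $\mu$. Write $\rho:=\mu^2/\sigma^2$.

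\textbf{Step 1: the mean of $D_M$.} For a pair of consecutive blocks of length $M$, split according to whether the chain has a transition in the $2M$ positions.
\begin{itemize}
\item If there is no transition, $S_{2i-1}S_{2i}\le 0$ requires a Gaussian sign error of size $M\mu$ against noise of standard deviation $\sigma\sqrt M$. This has probability at most $2\Phi^c(\sqrt{M\rho})\le 2e^{-M\rho/2}$.
\item If there is a transition, its location is roughly uniform. With probability bounded below it falls close to the block boundary, and the two block sums then have opposite signs with probability $\gtrsim1$ once $M\rho\ge B$.
\end{itemize}
Conditional on $\mathcal F_1$, the transition indicators on $S_2$ are independent Bernoulli$(\delta)$; the initial state does not matter. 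Hence, for $M\delta\le1/4$ and $M\rho\ge B$,
\[
c_1 M\delta \le p_M:=\EE[D_M\mid\mathcal F_1]\le 2M\delta+2e^{-M\rho/2}.
\]
For the lower bound I would count only pairs with exactly one transition in the middle half of the window. For $M\delta\gtrsim1$ the quantity $p_M$ is instead bounded above by a constant strictly below $1$ and below by a constant.

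\textbf{Step 2: concentration.} Disjoint pairs depend only on disjoint sets of transition variables and noise. The indicators are therefore independent across pairs, apart from the state carried over from one pair to the next. That carried state does not affect the event "sign disagreement", up to a global flip within each pair. Bernstein/Chernoff then gives
\[
|D_M-p_M|\le p_M/10
\]
with failure probability $\exp(-c\,(m/M)\,p_M)$. Whenever $p_M\gtrsim M\delta$ this is at most $\exp(-cn\delta)$. Taking a union bound over the $O(\log n)$ dyadic scales costs only a constant, since $n\delta\ge1/2$; small $n\delta$ is handled by enlarging $C$.

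\textbf{Step 3: the selection rule.} Let $M^\ast$ be the smallest dyadic $M\ge M_0$ with $e^{-\gamma M\rho'}\lesssim M\delta$, where $\rho'=\widehat r^{\,2}\asymp\rho$. Since $\rho\ge C\delta$, we have $M^\ast\rho\asymp\log(\rho/\delta)\vee B$ and $M^\ast\delta\lesssim(\delta/\rho)\log(\rho/\delta)\le1/4$ for $C$ large.
\begin{itemize}
\item \emph{Scales $M\ge M^\ast$ with $M\delta\le 1/4$.} The noise term $e^{-M\rho/2}$ is dominated by $M\delta$, because $\gamma$ is small relative to the constants linking $\rho$ and $\rho'$. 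So $D_M/M\in[c_1\delta/2,\,3\delta]$.
\item \emph{Scales $M<M^\ast$.} Here the test must fail, i.e. $D_M<e^{-\gamma M\rho'}$. I expect this to be the main obstacle. The upper bound $D_M\le 2M\delta+3e^{-M\rho/2}$ must lie below $e^{-\gamma M\rho'}$. The noise part does, since $\gamma\ll 1/2$. The transition part requires $M\delta< e^{-\gamma M\rho'}$, which holds only for $M$ somewhat below $M^\ast$.
\end{itemize}
The dangerous case is an intermediate scale, where the selected $\widehat M$ may be smaller than $M^\ast$. I would handle it as follows. If the test passes at some $M$, then $D_M\ge e^{-\gamma M\rho'}$. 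The concentration bound gives $D_M\le 2p_M$. The noise contribution $e^{-M\rho/2}$ is at most $\tfrac12 D_M$ whenever $\gamma$ is small, so
\[
D_M\le 4\bigl(2M\delta+2e^{-M\rho/2}\bigr)
\]
forces $D_M\lesssim M\delta$. Combined with $D_M\ge c_1M\delta/2$, this shows that any accepted scale yields $\widehat\delta\asymp\delta$. The same concentration argument applies at an accepted scale, because there $p_M\ge e^{-\gamma M\rho'}/2\gtrsim M\delta$. Finally, $M^\ast$ itself passes the test, so the set of accepted scales is nonempty and $\widehat M\le M^\ast$ has $\widehat M\delta\le1/4$.

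Tracking the constants then gives the factor $100$. The empty-set fallbacks do not occur on the good event once $m/2\ge M^\ast$, which follows from $n\delta\gtrsim1$ after enlarging $C$.
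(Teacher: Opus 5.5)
\textbf{Gap: the union over scales.} Several of your ingredients match the paper. These are conditioning on $\mathcal F_1$, independence of pairs via the within-pair sign flip, the bounds $cM\delta\le p_M\le 2M\delta+2e^{-M\rho/2}$, and a first scale $M^\ast$ where $\tau_M:=e^{-\gamma M\widehat r^{\,2}}$ drops below a small multiple $\kappa M\delta$. Your handling of early acceptances, showing that any accepted $M\le M^\ast$ still has $D_M\asymp M\delta$, is a legitimate alternative to the paper's exclusion of early crossings.

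The problem is your claim that a union bound over the $O(\log n)$ dyadic scales ``costs only a constant, since $n\delta\ge1/2$''. This is false. Each scale contributes $e^{-cn\delta}$, so you obtain $C\log n\,e^{-cn\delta}$. Even restricting to $M_0\le M\le M^\ast$ you still get roughly $\log\log(\widehat r^{\,2}/\delta)$ scales. Neither is bounded by $C'e^{-c'n\delta}$ uniformly. For example, take $\delta=K/n$ with $K$ a large fixed constant and $\widehat r^{\,2}\asymp B$. Then there are about $\log\log n$ scales between $M_0$ and $M^\ast$, while the target bound is the fixed small number $Ce^{-cK}$.

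Relative concentration of $D_M$ around $p_M$ cannot rescue this. When transitions dominate $p_M$, the exponent $(m/M)p_M\asymp n\delta$ does not improve from one scale to the next. Relatedly, the sentence ``the same concentration argument applies at an accepted scale, because there $p_M\ge e^{-\gamma M\rho'}/2$'' is circular. The scale $\widehat M$ is data-dependent, and $p_{\widehat M}\ge\tau_{\widehat M}/2$ is only known once concentration at $\widehat M$ has already been established.

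\textbf{The missing device (the one the paper uses).} For scales before the crossing, replace two-sided concentration by a one-sided Chernoff bound at the threshold $\tau_M$. In your formulation, with $N_M=m/(2M)$, this reads $\PP(D_M\ge\tau_M\vee2p_M\mid\mathcal F_1)\le\exp\{-cN_M(\tau_M\vee p_M)\}$. Three facts then close the argument.
\begin{itemize}
\item For $M<M^\ast$, minimality of $M^\ast$ gives $\tau_M>\kappa M\delta$, hence $N_M\tau_M\gtrsim m\delta$.
\item Moving backwards along the grid, $N_{M/2}\tau_{M/2}=2e^{\gamma M\widehat r^{\,2}/2}N_M\tau_M\ge2N_M\tau_M$. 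The exponents at least double at each step, so the sum over early scales is a geometric series bounded by $Ce^{-cn\delta}$.
\item At an accepted $M<M^\ast$, the lower bound $D_M\gtrsim M\delta$ needs no concentration: the test itself gives $D_M\ge\tau_M>\kappa M\delta$.
\end{itemize}
Two-sided concentration is then needed only at $M^\ast$. In the paper it is needed on the crossing window $(M_-,M_+]$, which contains at most a universal number $J$ of grid points because $\tau_M/(\delta M)$ shrinks by a fixed factor at each doubling. With these ingredients your ``every accepted scale is accurate'' route goes through, with $\kappa$ chosen in $(1/100,1/32)$ so that $M^\ast$ passes the test and the factor $100$ is respected.
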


Thus \(\delta\) can be estimated within a universal factor throughout the
nontrivial recovery regime. The exponent \(n\delta\) is natural: even with
noiseless labels, only \(O(n\delta)\) transitions are observed, so consistent
estimation of \(\delta\) requires \(n\delta\to\infty\).

\subsection{Adaptive minimax clustering}
\label{sec:minimax}

\subsubsection{Minimax lower bound}

We now allow the signal direction to be unknown and potentially
high-dimensional. For \(\Delta>0\), define
\[
    \Psi_{n,d}(\delta,\Delta,\sigma)
    :=
    \inf_{\mathfrak g\in\mathcal G_n}
    \sup_{\|\theta\|\ge\Delta}
    \ClusterRisk(\theta,\delta,\sigma,\mathfrak g),
\]
and the associated effective signal
\begin{equation}
\label{effective_signal}
    r_n^2
    :=
    r_n^2(\Delta)
    =
    \frac{
        \Delta^4/\sigma^4
    }{
        \Delta^2/\sigma^2+d/n
    }.
\end{equation}
We reserve \(r_n(\theta)\) for the corresponding pointwise quantity.
\begin{theorem}
\label{thm:minimax_lb_high_dim_clustering}
Assume that \(C_0/n<\delta\le1/2\), where \(C_0\) is a sufficiently large
universal constant. Then there exists a universal constant \(c>0\) such that
\[
    \Psi_{n,d}(\delta,\Delta,\sigma)
    \ge
    c
    \begin{cases}
        1,
        & r_n^2\le\delta,
        \\[1ex]
        \displaystyle
        \frac{\delta}{r_n^2},
        & \delta<r_n^2\le1,
        \\[2ex]
        \displaystyle
        \delta\Phi^c\!\left(
            r_n(1+\varepsilon_n)
        \right),
        & r_n^2>1,
    \end{cases}
\]
where \(\varepsilon_n\to0\) as \(n,r_n\to\infty\).
\end{theorem}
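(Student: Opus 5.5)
The plan is to reduce the minimax clustering risk to a Bayes classification risk under a suitable prior on $\theta$, and then to exploit the Markov structure locally around transition points. First I will use the clustering/classification equivalence (the analogue of Lemma~\ref{lem:Bayes_risk_new_lower_bound} and Corollary~\ref{prop:equivalence-from-two-comparisons}, which require $\delta\gtrsim 1/n$ with a large constant $C_0$). With it, the minimax risk is bounded below, up to constants, by $\inf_h \EE_{\pi}\ClassifRisk$, where $\pi$ is a prior supported on $\{\|\theta\|\ge\Delta\}$ and the expectation is also over the chain. The global sign ambiguity is absorbed because a prior that is symmetric in $\theta\mapsto-\theta$ makes clustering and classification comparable once the path contains many transitions; this holds with high probability when $n\delta\ge C_0$.

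Two cases depend on which term dominates $r_n^2$.

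\textbf{Case $\Delta^2/\sigma^2\gtrsim d/n$.} Here $r_n^2\asymp\Delta^2/\sigma^2$, so I take $\theta$ known with $\|\theta\|=\Delta$. The oracle lower bound (Theorem~\ref{Bayes_risk_lb_offline}) then gives the three regimes $1$, $\delta/\|\theta\|^2$ and $\delta\Phi^c(\|\theta\|)$. Since $r_n\le\Delta/\sigma$, this implies the claim in the form $\delta\Phi^c(r_n(1+\varepsilon_n))$. The hypothesis $\delta\ge c\sqrt{\log n/n}$ of that theorem is too strong, so I will rerun its argument with the weaker equivalence valid for $\delta\ge C_0/n$. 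That argument reveals the labels outside a window around each transition and reduces to a two-sided change-point localization in which the offline risk per transition is $\asymp 1/\|\theta\|^2$.

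\textbf{Case $d/n\gtrsim\Delta^2/\sigma^2$.} Here $r_n^2\asymp n\Delta^4/(d\sigma^4)$. I use the prior $\theta=\Delta\sigma'\zeta/\sqrt d$ with $\zeta$ uniform on $\{\pm1\}^d$ (or Gaussian), as in \cite{Ndaoud18}. To predict $\eta_i$, I give the statistician extra information: all labels except those on a window $W$ around $i$ of length $\asymp 1/r_n^2\wedge 1/\delta$. The observations outside $W$, with known labels, yield a sufficient statistic $\bar\theta$ equal to $\theta$ plus $N(0,\sigma^2 I_d/n')$ noise with $n'\asymp n$. The local problem is then: observe $X_W$ with $\theta$ replaced by its posterior mean $\bar\theta$, and test the label at position $i$. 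The effective per-observation signal is $\langle\theta,\bar\theta\rangle^2/(\sigma^2\|\bar\theta\|^2)\approx\Delta^4/(\sigma^2(\Delta^2+d\sigma^2/n))=r_n^2$. The residual orthogonal component only adds noise, and the posterior uncertainty acts as independent extra noise when the window is short. A Le Cam two-point comparison, between the paths where a transition sits just before or just after $i$ within the window, then gives a one-dimensional change-point lower bound of order $\delta/r_n^2$ in the intermediate regime and $\delta\Phi^c(r_n(1+\varepsilon_n))$ in the strong regime. The $(1+\varepsilon_n)$ factor comes from the concentration of $\langle\theta,\bar\theta\rangle$ and $\|\bar\theta\|$, via a Gaussian tail comparison (Mills). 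In the weak regime $r_n^2\le\delta$, a window of size $1/\delta$ contains order one transitions, each locally undetectable, which gives constant risk.

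\textbf{Main obstacle.} The hardest step is making the ``plug-in $\bar\theta$'' reduction rigorous. The labels in $W$ and $\theta$ are jointly unknown, so the posterior on $\eta_W$ is a mixture over $\theta$. I would lower bound the Bayes error by that of an oracle who knows $\bar\theta$ and for whom the conditional law of $\theta-\bar\theta$ is Gaussian and independent of $\eta_W$. The likelihood ratio between the two competing local paths then becomes an explicit Gaussian quadratic form. Its fluctuation beyond the mean term must be controlled on windows of length $\asymp 1/r_n^2$, using $|W|\ll n$ and the assumption $\delta\ge C_0/n$. I would first try to compute this ratio exactly for Gaussian priors, where everything is conjugate, and bound the Hellinger affinity between the two hypotheses.
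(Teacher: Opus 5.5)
Your local picture is right: reveal the labels outside a window of length $\asymp 1/r_n^2\wedge 1/\delta$, compare a transition just before versus just after $i$, and read off $r_n^2$ as the effective per-observation signal. But the first step, disposing of the global sign, fails.

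\textbf{The sign reduction.} Lemma~\ref{lem:Bayes_risk_new_lower_bound} and Corollary~\ref{prop:equivalence-from-two-comparisons} concern a \emph{known} $\theta$. They need $n\delta^2\ge C\log(1/\delta)$, not $\delta\gtrsim 1/n$. Under a prior symmetric in $\theta\mapsto-\theta$ (both of your Case~2 priors), the comparison is not merely unproved but false. The joint law of $(\theta,\eta,X)$ is invariant under $(\theta,\eta)\mapsto(-\theta,-\eta)$, so $\PP(\eta_i=1\mid X)=1/2$ and the Bayes classification risk is identically $1/2$. ``Clustering $\gtrsim$ classification'' would then force $\Psi_{n,d}\gtrsim 1$ in every regime. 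The number of transitions is irrelevant to this symmetry.

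What is missing is a reduction that never passes through the global classification risk. For any $\widehat\eta$ and any $I$ with $|I^c|\ge n/2$, let $s_{I^c}$ be the sign that best aligns $\widehat\eta$ with $\eta$ on $I^c$. This sign is computable from $(X,\eta_{I^c})$. If $s_{I^c}\neq s_\star$ (the globally optimal sign), then $s_\star\widehat\eta$ errs on at least $|I^c|/2\ge n/4$ coordinates. Hence $\mathbf 1\{s_{I^c}\widehat\eta_i\neq\eta_i\}\le \mathbf 1\{s_\star\widehat\eta_i\neq\eta_i\}+4\ell(\widehat\eta,\eta)$. Summing gives $\Psi_{n,d}\ge \frac1{5n}\sum_i A_i(I_i)-\pi_\theta(\|\theta\|<\Delta)$, where $A_i(I_i)$ is the Bayes risk for $\eta_i$ given $(X_{1:n},\eta_{I_i^c})$. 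This holds for symmetric priors. It essentially needs $\delta\ge C_0/n$ only so that windows of radius $\lesssim 1/\delta$ leave $|I_i^c|\ge n/2$. It also replaces your appeal to Theorem~\ref{Bayes_risk_lb_offline}: with a point-mass prior you simply rerun its two-point arguments for $A_i$.

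\textbf{Case~1 and the sharp constant.} Case~1 uses an inequality in the wrong direction. The bound $r_n\le\Delta/\sigma$ gives $\Phi^c(\Delta/\sigma)\le\Phi^c(r_n)$. So, with $a=\Delta^2/\sigma^2$, the known-$\theta$ bound $\delta\Phi^c(\sqrt a)$ implies $\delta\Phi^c(r_n(1+\varepsilon_n))$ only when $\sqrt a=r_n\sqrt{1+d/(na)}\le r_n(1+\varepsilon_n)$, i.e.\ $d/(na)\to 0$. At $a=d/n$ one has $\sqrt a=\sqrt2\,r_n$, and the exponent is off by a factor $2$. The point mass is therefore usable only for $a\ge d\log n/n$ (the paper's threshold), and the unknown-direction argument must cover all $1<a<d\log n/n$.

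In that range, Le Cam through KL or Hellinger affinity cannot produce the sharp constant. For $\mathcal N(\pm\mu,1)$, Bretagnolle--Huber gives $\tfrac12 e^{-2\mu^2}$, whereas the true overlap is $2\Phi^c(\mu)$, which is $e^{-\mu^2/2}$ up to polynomial factors. You need the exact Bayes error. The paper does this as follows:
\begin{itemize}
\item it takes $I_i=\{i\}$ and restricts to $\{\eta_{i-1}=\eta_i\neq\eta_{i+1}\}$, so the Markov prior on $\eta_i$ is flat;
\item it computes the posterior odds in closed form under the Gaussian prior, giving the Bayes rule $\operatorname{sign}\langle X_i,S_i\rangle$ with $S_i=\sum_{j\neq i}\eta_jX_j$;
\item it evaluates the resulting Gaussian tail at $\langle\theta,S_i\rangle/(\sigma\|S_i\|)$ by concentration;
\item it makes the prior mass below $\Delta$ negligible through a vanishing variance inflation $\nu_n$.
\end{itemize}
Your plan does not address this leakage for the Gaussian prior. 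The Rademacher prior has no leakage but loses the conjugacy this exact computation needs. In the non-sharp regimes constant-factor KL bounds do suffice. There, the exact KL between the integrated laws $\mathcal N(0,\sigma^2 I_n+\alpha^2uu^\top)^{\otimes d}$, bounded by $C\,h(u,v)\,r_n^2$, is a cleaner substitute for your plug-in-$\bar\theta$ reduction.
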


\subsubsection{An adaptive iterative estimator}

We first construct a fully adaptive iterative estimator that mirrors the
oracle procedure of Section~\ref{sec:1dim:clustering}. It is rate-optimal up
to a logarithmic factor in the intermediate regime and becomes sharp at the
logarithmic signal scale.

Throughout this subsection we normalize \(\sigma=1\) and write
\[
    a
    :=
    \|\theta\|^2,
    \qquad
    r_n^2(\theta)
    :=
    \frac{a^2}{a+d/n}.
\]

Besides
\[
    \widehat\delta,
    \qquad
    \widehat a_{\rm ad},
    \qquad
    \widehat r_{\rm ad},
    \qquad
    \widehat\eta_{\rm ad}^{(0)}
\]
from Section~\ref{sec:adaptation}, we use a sharp spectral initialization
when \(r_n(\theta)\to\infty\). For a square matrix \(A\), let
\[
    H(A)
    :=
    A-\operatorname{diag}(A),
    \qquad
    G
    :=
    \frac1nH(X^\top X).
\]
Let \(\widehat v\) be a unit leading eigenvector of \(G\), and define
\[
    \widehat\eta_{\rm sp}^{(0)}
    :=
    \sign(\widehat v),
    \qquad
    \widehat a_{\rm sp}
    :=
    [\lambda_{\max}(G)]_+,
    \qquad
    \widehat r_{\rm sp}^{\,2}
    :=
    \frac{
        \widehat a_{\rm sp}^{\,2}
    }{
        \widehat a_{\rm sp}+d/n
    }.
\]
By the spectral perturbation bounds of \cite{Ndaoud18}, uniformly in the
label vector and whenever \(r_n(\theta)\to\infty\),
\[
    \frac{\widehat a_{\rm sp}}a
    =
    1+o_{\PP}(1),
    \qquad
    \frac{\widehat r_{\rm sp}}{r_n(\theta)}
    =
    1+o_{\PP}(1),
\]
and, after a global sign choice,
\[
    \frac1n
    \left|
        \langle
            \widehat\eta_{\rm sp}^{(0)},\eta
        \rangle
    \right|
    \ge
    1-
    \frac{C}{r_n^2(\theta)}
    -
    o(1)
\]
with high probability.

Fix a sufficiently large universal constant \(R_0\), and set
\[
    \bigl(
        \widehat\eta^{(0)},
        \widehat a_*,
        \widehat r_*
    \bigr)
    :=
    \begin{cases}
    \bigl(
        \widehat\eta_{\rm ad}^{(0)},
        \widehat a_{\rm ad},
        \widehat r_{\rm ad}
    \bigr),
    & \widehat r_{\rm ad}\le R_0,
    \\[1ex]
    \bigl(
        \widehat\eta_{\rm sp}^{(0)},
        \widehat a_{\rm sp},
        \widehat r_{\rm sp}
    \bigr),
    & \widehat r_{\rm ad}>R_0.
    \end{cases}
\]

For \(\rho,q>0\), define
\[
    K(\rho,q)
    :=
    \left\lceil
        \frac{C_0}{\rho^2\vee n^{-1}}
        \log\left(
            1+\frac{\rho^2}{2q}
        \right)
    \right\rceil,
\]
where \(C_0>0\) is a sufficiently large universal constant. 

\textbf{Choice of the local scale.}
If $
    \widehat r_{\rm ad}^{\,2}\leq2\widehat\delta,
$
the target bound is trivial after truncation at one and we keep the
preliminary estimator. Otherwise, define
\[
    k=
    \begin{cases}
    \displaystyle
    \left\lceil
        \frac{C_0}{\widehat r_{\rm ad}^{\,2}}
        \log\left(
            1+\frac{\widehat r_{\rm ad}^{\,2}}
                     {2\widehat\delta}
        \right)
    \right\rceil,
    &
    \widehat r_{\rm ad}\leq R_0
    \ \text{or}\
    \widehat r_{\rm sp}^{\,2}
    <
    2\log\left(\dfrac{100}{\widehat\delta}\right),
    \\[3ex]
    1,
    &
    \widehat r_{\rm ad}>R_0
    \ \text{and}\
    \widehat r_{\rm sp}^{\,2}
    \geq
    2\log\left(\dfrac{100}{\widehat\delta}\right).
    \end{cases}
\]
Here $C_0>0$ is a sufficiently large universal constant. The factor
$100$ compensates for the constant-factor uncertainty in
$\widehat\delta$, since on the event of
Proposition~\ref{prop:delta}, we have 
\[
    \frac{\delta}{100}
    \leq
    \widehat\delta
    \leq
    100\delta.
\]
Thus constant-factor estimation of $\delta$ is sufficient for the
regime selection. The sharp spectral estimate
$\widehat r_{\rm sp}$ is used only to decide when it is safe to pass
to the one-step rule $k=1$.

\textbf{Block-hollowed iterations.}
Use two deterministic partitions \(\mathcal P_0,\mathcal P_1\) of
\(\llbracket1,n\rrbracket\) into consecutive blocks, with all non-endpoint
blocks of length \(4k\), and with \(\mathcal P_1\) shifted by \(2k\) relative
to \(\mathcal P_0\). For each
\(a\in\llbracket k+1,n-k\rrbracket\), choose
\(B_a\in\mathcal P_0\cup\mathcal P_1\) such that
\[
    \llbracket a-k,a+k\rrbracket
    \subset
    B_a.
\]
For \(m\ge0\), define the leave-block-out direction
\[
    \widehat\theta_a^{(m)}
    :=
    \frac1{n-|B_a|}
    \sum_{j\notin B_a}
    X_j\widehat\eta_j^{(m)},
\]
and the two pilot estimators
\[
    \widetilde\eta_{a,-}^{(m)}
    :=
    \sign\left\langle
        \frac1k
        \sum_{j=a-k}^{a-1}X_j,
        \widehat\theta_a^{(m)}
    \right\rangle,
    \qquad
    \widetilde\eta_{a,+}^{(m)}
    :=
    \sign\left\langle
        \frac1k
        \sum_{j=a+1}^{a+k}X_j,
        \widehat\theta_a^{(m)}
    \right\rangle.
\]
Set
\[
    \widehat L_\delta
    :=
    \log\left(
        \frac{1-\widehat\delta}{\widehat\delta}
    \right),
    \qquad
    \widehat\kappa
    :=
    \frac{
        \widehat a_*
    }{
        \widehat r_*^{\,2}
    }
    \widehat L_\delta .
\]
In the diverging-signal regime,
\[
    \widehat\kappa
    =
    \frac{a}{r_n^2(\theta)}
    \left\{
        \log\left(\frac{1-\delta}{\delta}\right)
        \bigl(1+o_{\PP}(1)\bigr)
        +
        O(1)
    \right\}.
\]
The $O(1)$ term is the entire effect of estimating $\delta$ only up to
a constant factor; after normalization of the local Gaussian score it
contributes only $O_{\PP}(1/r_n(\theta))$.

For every \(a\in\llbracket k+1,n-k\rrbracket\), set
\[
    \widehat\eta_a^{(m+1)}
    =
    \begin{cases}
    \displaystyle
    \sign\left(
        \langle X_a,\widehat\theta_a^{(m)}\rangle
        +
        \widetilde\eta_{a,-}^{(m)}
        \widehat\kappa
    \right),
    &
    \widetilde\eta_{a,-}^{(m)}
    =
    \widetilde\eta_{a,+}^{(m)},
    \\[2ex]
    \displaystyle
    \sign\left(
        \langle X_a,\widehat\theta_a^{(m)}\rangle
    \right),
    &
    \widetilde\eta_{a,-}^{(m)}
    \neq
    \widetilde\eta_{a,+}^{(m)}.
    \end{cases}
\]
For boundary indices
\(a\notin\llbracket k+1,n-k\rrbracket\), define
\[
    \widehat\theta_a^{(m)}
    :=
    \frac1{n-1}
    \sum_{j\neq a}
    X_j\widehat\eta_j^{(m)}
\]
and use
\[
    \widehat\eta_a^{(m+1)}
    :=
    \sign\left(
        \langle X_a,\widehat\theta_a^{(m)}\rangle
    \right).
\]
\begin{theorem}
\label{upper:minimax}
Assume \(\delta\ge1/n\), and run the iterative procedure for any
\(m\ge4\log n\). There exist universal constants \(c,C>0\) and, along
every sequence such that \(n\to\infty\), \(r_n\to\infty\) a sequence \(\varepsilon_n\to0\) such that
\[
    \sup_{\|\theta\|\ge\Delta}
    \EE_{\theta,\delta}
    \left[
        \ell(\widehat\eta^{(m)},\eta)
    \right]
    \le
    C\exp(-cn\delta)
    +
    C
    \begin{cases}
    \displaystyle
    \left(
        \frac{
            \delta\log(1+r_n^2/\delta)
        }{
            r_n^2
        }
    \right)
    \wedge1,
    &
    r_n^2\le1,
    \\[2ex]
    \displaystyle
    \frac{
        \delta\log(r_n^2/\delta)
    }{
        r_n^2
    }
    \Phi^c\!\left(
        r_n(1-\varepsilon_n)
    \right)
    \wedge1,
    &
    1<r_n^2<2\log(1/\delta),
    \\[2ex]
    \displaystyle
    \delta
    \Phi^c\!\left(
        r_n(1-\varepsilon_n)
    \right),
    &
    r_n^2\ge2\log(1/\delta),
    \end{cases}
\]
where \(r_n=r_n(\Delta)\) is defined in \eqref{effective_signal}.
\end{theorem}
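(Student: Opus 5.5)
We prove Theorem~\ref{upper:minimax} by conditioning on a global good event and then comparing the iterative procedure with the oracle plug-in construction~\eqref{eq:first_clusterer}. Let $\mathcal E$ be the intersection of three events. The first is the good event of Proposition~\ref{prop:adaptive:estimation}, together with its sharp spectral analogue from \cite{Ndaoud18} when $r_n(\theta)\to\infty$. The second is the event of Proposition~\ref{prop:delta}, on which $\delta/100\le\widehat\delta\le100\delta$. The third is a high-probability event on which the number of latent transitions is at most $2n\delta$ and consecutive transitions are separated by gaps comparable to $1/\delta$ except on a small fraction of the path. The first two events fail with probability at most $C\exp(-cn(r_n^2\wedge1))+C\exp(-cn\delta)$. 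Since $r_n^2\gtrsim\delta$ in every nontrivial regime, this is at most $C\exp(-cn\delta)$. On $\mathcal E^c$ the loss is at most one. For the remainder we work on $\mathcal E$; there $\widehat a_*\asymp a$, $\widehat r_*\asymp r_n(\theta)$, and the selected $k$ is within constants of the oracle scale $\frac{C_0}{r_n^2}\log(1+r_n^2/\delta)$, or $k=1$ exactly when $r_n^2\ge2\log(1/\delta)(1+o(1))$. Monotonicity in $\|\theta\|$ then reduces the supremum over $\|\theta\|\ge\Delta$ to $r_n(\theta)\ge r_n(\Delta)$.

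The core step is a contraction argument for the iterates, using the leave-block-out structure. Fix $m$ and define the error fraction $\epsilon_m:=\ell(\widehat\eta^{(m)},\eta)$, after fixing the global sign. We decompose
\[
\widehat\theta_a^{(m)}=(1-2\epsilon_{m,a})\theta+W_a+E_a^{(m)} .
\]
Here $W_a=\frac1{n-|B_a|}\sum_{j\notin B_a}\eta_j\xi_j$ is independent of $X_{B_a}$. The term $E_a^{(m)}$ collects the noise carried by the mislabeled indices, and it is controlled uniformly by $\|\xi_{S}\|_{\rm op}$ over sets $S$ of size $\epsilon_m n$. Since $B_a$ is removed, $\langle X_j,\theta+W_a\rangle$ for $j\in B_a$ is, conditionally, Gaussian with mean $\eta_j(a+\langle\theta,W_a\rangle)$ and variance $a+\|W_a\|^2\approx a+d/n$. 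The normalized signal is therefore $r_n(\theta)(1+o(1))$. This reduces the local problem to the one-dimensional oracle problem with $\|\theta\|^2$ replaced by $r_n^2$, and explains why the rates are those of Theorem~\ref{prop:risk_clust_procedure_ub} with $r_n$ in place of $\|\theta\|$. The perturbation $E_a^{(m)}$ is handled as in \cite{Ndaoud18}: we bound the number of indices at which a shift of size $\|E^{(m)}\|$ flips a decision. This gives a recursion
\[
\epsilon_{m+1}\le \mathcal R^\star+\tfrac12\epsilon_m ,
\]
with high probability, where $\mathcal R^\star$ denotes the target rate. Starting from $\epsilon_0\le 1/3$, after $m\ge4\log n$ steps we get $\epsilon_m\lesssim\mathcal R^\star+n^{-2}$.

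To identify $\mathcal R^\star$, we redo the proof of Theorem~\ref{prop:risk_clust_procedure_ub} with the effective Gaussian model in place of the exact one. Away from transitions, both pilots are correct with probability at least $1-\exp(-ckr_n^2)\ge1-\delta/r_n^2$, by the choice of $C_0$. Within distance $k$ of a transition, there are about $n\delta k$ such indices, and each contributes an error bounded by one in the intermediate regime. This gives the factor $\delta\log(1+r_n^2/\delta)/r_n^2$. In the regime $1<r_n^2<2\log(1/\delta)$, such an index errs only when the pointwise Gaussian test fails, which gives an additional factor $\Phi^c(r_n(1-\varepsilon_n))$. In the strong regime with $k=1$, we compare $\widehat\kappa$ with its oracle value. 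By the displayed expansion, the $O(1)$ error from estimating $\delta$ only up to a constant factor and the error from $\widehat r_{\rm sp}/r_n=1+o_{\PP}(1)$ shift the threshold by $o(r_n)$. These shifts are absorbed into $\varepsilon_n$ and yield $\delta\Phi^c(r_n(1-\varepsilon_n))$. The boundary indices contribute at most $k/n\lesssim\delta$ times the pointwise error.

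We expect the main obstacle to be making the contraction uniform while the conditional independence holds only blockwise. The pilots and $\widehat\theta_a^{(m)}$ depend on $\widehat\eta^{(m)}$, which was computed from data inside $B_a$ at the previous iteration. Our first approach is to avoid tracking this dependence and instead bound $\sup_{|S|\le\epsilon n}\|\sum_{j\in S}\xi_j\|$ deterministically on a high-probability event. We would then show that a perturbation of $\widehat\theta_a$ of relative size $\epsilon_m$ changes at most $C\epsilon_m n\cdot\frac{1}{r_n^2}$ decisions plus the oracle errors. This is a counting argument in the style of \cite{Ndaoud18}. It has to be combined with the fact that the pilots average over $k$ points, which reduces the perturbation by a factor $\sqrt k$ and is what preserves the contraction factor $1/2$ in the slowly mixing regime.
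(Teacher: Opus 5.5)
\textbf{Verdict: genuine gap.} Your architecture is the paper's. You use a good event of probability $1-Ce^{-cn\delta}$, the leave-block-out oracle direction $\theta+W_a$ (independent of $X_{B_a}$), a counting argument in the style of \cite{Ndaoud18} on the block-hollowed Gram matrix, and a rerun of the transition decompositions of Theorem~\ref{prop:risk_clust_procedure_ub} with $r_n$ in place of $\|\theta\|$. Your third event on transition gaps is unnecessary. The gap is in the contraction step.

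\textbf{Main gap: no basin of attraction.} You run $\epsilon_{m+1}\le\mathcal R^\star+\frac12\epsilon_m$ starting from $\epsilon_0\le1/3$.
\begin{itemize}
\item In your own decomposition the signal in $\widehat\theta_a^{(m)}$ is $(1-2\epsilon_{m,a})\theta$. So every score carries a deterministic bias $2\epsilon_m a$, which is $2\epsilon_m r_n$ noise standard deviations.
\item Counting only controls the noise part $Z_{n,\mathcal P}x^{(m)}$: the number of flips is at most $\|Z_{n,\mathcal P}\|_{\mathrm{op}}^2\|x^{(m)}\|^2/\tau^2$, with $\|Z_{n,\mathcal P}\|_{\mathrm{op}}$ of order $a/r_n$.
\item A constant contraction factor therefore needs a margin $\tau\gtrsim a/r_n$. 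That margin gives a factor of order $1/K^2$, not your $C/r_n^2$. Getting $C/r_n^2$ would need a margin of order $a$, which loses the sharp constant.
\item A margin compatible with $\Phi^c(r_n(1-\varepsilon_n))$ can absorb the bias only if $\epsilon_m=o(1)$. At $\epsilon_m=1/3$ the normalized signal is a constant fraction of $r_n$, not $r_n(1+o(1))$. So the recursion with the sharp $\mathcal R^\star$ is false there.
\end{itemize}
The missing ingredient is a basin of attraction. When $\widehat r_{\rm ad}>R_0$, the procedure starts from $\widehat\eta^{(0)}_{\rm sp}$, whose error is at most $C/r_n^2+o(1)$. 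You list this event but never use it. The paper takes the margin $\tau_n=Ka/r_n+Ca/r_n^2$; the second term absorbs $2ae_m$ inside the basin. It then shows by induction that the iterates stay in the basin, with contraction factor $1/4$.

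\textbf{Bounded signal.} Counting gives no contraction when $r_n=O(1)$, because $\|Z_{n,\mathcal P}\|_{\mathrm{op}}/a\asymp1/r_n$ is not small. The paper does not contract there. It argues instead from the constant-correlation initializer, the projected SNR $\beta\asymp r_n^2$, and the non-sharp local bound.

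Your proposed remedy, a $\sqrt k$ gain from pilot averaging, is not available in operator norm when $x^{(m)}$ depends on the data. The rank-one part $\frac1n\eta(W^\top\theta)^\top$ of $Z_n$ is essentially unchanged by local averaging, because $\eta$ is locally constant. It shifts every score by the same amount relative to $\eta_i$, up to order $\sqrt a\,\epsilon_m\sqrt{\log(e/\epsilon_m)}$. That shift is comparable to $a$ when $a$ is bounded and $\epsilon_m$ is of constant order. The paper only uses that the local-averaging operators have bounded operator norm.

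\textbf{Quantifying ``with high probability''.} This must be made quantitative at the level of the very small target risk. Let $\mathcal A_i$ be the event that every oracle score at $i$ clears its threshold by $\tau_n$. The paper applies Markov's inequality to $\frac1n\sum_i\mathbf 1_{\mathcal A_i^c}$ at level $c/r_n^2$; this is also what keeps the iterates inside the basin. It yields $\EE[e_m]\lesssim r_n^2\sup_i\PP(\mathcal A_i^c)$, plus boundary and $e^{-cn\delta}$ terms. The extra factor $r_n^2$, together with the $e^{o(r_n^2)}$ and logarithmic factors, is absorbed into $\varepsilon_n$ through Mills' ratio.
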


The iterative estimator is sharp at the logarithmic and strong-signal
scales, but incurs an additional logarithmic factor in the intermediate
regime. We now remove this loss by cross-fitting.

\subsubsection{Optimal adaptive recovery}
\label{sec:adaptive:sharp}

The sharp construction uses three common ingredients: a coarse direction, a
sharp direction, and the one-dimensional segmentation rule of
Section~\ref{sec:1dim:clustering}.

For \(T\subset\llbracket n\rrbracket\), let \(\widetilde\eta^T\) denote
the preliminary labels obtained by applying
Proposition~\ref{prop:adaptive:estimation} to the observations indexed by
\(T\), and define the coarse direction
\[
    \widehat w_{\rm ad}(T)
    :=
    \frac{
        \sum_{j\in T}
        \widetilde\eta_j^T X_j
    }{
        \left\|
            \sum_{j\in T}
            \widetilde\eta_j^T X_j
        \right\|
    }.
\]
Whenever a denominator in this subsection vanishes, we use an arbitrary
deterministic unit vector.

For the sharp construction, let \(m:=|T|\) and let \(\widehat v_T\) be a
unit leading eigenvector of
\[
    \frac1m
    H(X_T^\top X_T).
\]
Define
\[
    \widetilde\eta_{\rm sp}^T
    =
    \sign(\widehat v_T), \qquad \widehat\theta_{\rm sp}(T)
    :=
    \frac1{\sqrt m}
    X_T\widehat v_T,
    \qquad
    \widehat w_{\rm sp}(T)
    :=
    \frac{
        \widehat\theta_{\rm sp}(T)
    }{
        \|\widehat\theta_{\rm sp}(T)\|
    }.
\]
The spectral perturbation argument of \cite{Ndaoud18}, together with
Gaussian concentration, gives, whenever \(r_m(\theta)\to\infty\),
\[
    \bigl|
        \langle
            \theta,\widehat w_{\rm sp}(T)
        \rangle
    \bigr|
    =
    r_m(\theta)\bigl(1-o(1)\bigr)
\]
with probability at least \(1-Ce^{-cm}\). In particular, if
\(m=n(1-o(1))\), then
\[
    \bigl|
        \langle
            \theta,\widehat w_{\rm sp}(T)
        \rangle
    \bigr|
    =
    r_n(\theta)\bigl(1-o(1)\bigr).
\]

For an interval \(S\), a unit vector \(w\), and a radius \(k\), let
\[
    \operatorname{Seg}_k(S,w)
\]
denote the label vector obtained by projecting \((X_i)_{i\in S}\) onto
\(w\) and applying the local segmentation rule of
Section~\ref{sec:1dim:clustering}. Whenever two local reconstructions
overlap, we orient them by majority agreement on an interior part of the
overlap, trimmed by \(k\) at each artificial boundary.

\textbf{Bounded signal.}
Fix a sufficiently large universal constant \(R_0\). When
\(\widehat r_{\rm ad}\le R_0\) and
\(\widehat r_{\rm ad}^{\,2}\le2\widehat\delta\), set
\(\widehat\eta^{\rm cf,w}:=\widehat\eta_{\rm ad}^{(0)}\). Otherwise,
divide the sample into three consecutive blocks \(A,B,C\) of comparable
sizes and set
\[
    S_1
    :=
    A\cup B,
    \qquad
    S_2
    :=
    B\cup C.
\]
Use the cross-fitted directions
\[
    \widehat w_1
    :=
    \widehat w_{\rm ad}(C),
    \qquad
    \widehat w_2
    :=
    \widehat w_{\rm ad}(A),
\]
and the radius
\[
    k_{\rm w}
    :=
    \left\lceil
        \frac{
            C_0
        }{
            \widehat r_{\rm ad}^{\,2}\vee n^{-1}
        }
        \log\left(
            1+
            \frac{
                \widehat r_{\rm ad}^{\,2}
            }{
                \widehat\delta
            }
        \right)
    \right\rceil.
\]
Set
\[
    \widehat\eta^{(a)}
    :=
    \operatorname{Seg}_{k_{\rm w}}
    (S_a,\widehat w_a),
    \qquad
    a=1,2,
\]
align the two reconstructions on their trimmed overlap, and denote the
combined label vector by \(\widehat\eta^{\rm cf,w}\). Since
\[
    r_{n/3}^2(\theta)
    \asymp
    r_n^2(\theta),
\]
the constant-factor loss in direction estimation is harmless in this
regime.

\textbf{Diverging intermediate signal.}
Fix constants
\[
    0<c_-<c_\star<c_+<2.
\]
Suppose \(\widehat r_{\rm ad}>R_0\) and
\[
    \widehat r_{\rm sp}^{\,2}
    <
    c_\star\log(1/\widehat\delta).
\]
Let
\[
    L_n
    :=
    \left\lceil
        \log(e+\widehat r_{\rm sp})
    \right\rceil,
    \qquad
    M_n
    :=
    \left\lfloor
        \frac n{L_n}
    \right\rfloor.
\]
Cover \(\llbracket n\rrbracket\) by \(J\asymp L_n\) consecutive
overlapping intervals \(S_1,\ldots,S_J\), each of length comparable to
\(M_n\), with consecutive overlaps also comparable to \(M_n\). For each
\(j\), let
\[
    T_j
    :=
    \llbracket n\rrbracket\setminus S_j,
    \qquad
    \widehat w_j
    :=
    \widehat w_{\rm sp}(T_j),
\]
and define
\[
    k_{\rm s}
    :=
    \left\lceil
        \frac{
            C_0
        }{
            \widehat r_{\rm ad}^{\,2}\vee n^{-1}
        }
        \log\left(
            1+
            \frac{
                \widehat r_{\rm ad}^{\,2}
            }{
                \widehat\delta
            }
        \right)
    \right\rceil,
    \qquad
    \widehat\eta^{(j)}
    :=
    \operatorname{Seg}_{k_{\rm s}}
    (S_j,\widehat w_j).
\]
Align consecutive reconstructions on their trimmed overlaps and denote
the combined label vector by \(\widehat\eta^{\rm cf,s}\). In this regime,
\[
    L_n
    \asymp
    \log r_n(\theta),
    \qquad
    M_n
    \asymp
    \frac n{\log r_n(\theta)},
    \qquad
    k_{\rm s}
    \asymp
    \frac1{r_n^2(\theta)}
    \log\left(
        \frac{r_n^2(\theta)}{\delta}
    \right)
    =
    o(M_n).
\]
Thus each local direction is estimated from \(n(1-o(1))\) observations (when $r_n(\theta) \to \infty$),
whereas only \(O(\log r_n(\theta))\) alignments are required.

\textbf{Data-driven selection.} 
Let \(\widehat\eta^{\rm cf,w}\) and \(\widehat\eta^{\rm cf,s}\) denote the
bounded-signal and diverging-signal cross-fitted estimators above, respectively,
and let \(\widehat\eta^{\rm it}\) denote the iterative estimator of
Theorem~\ref{upper:minimax}. The final estimator is
\[
    \widehat\eta
    :=
    \begin{cases}
    \widehat\eta^{\rm cf,w},
    &
    \widehat r_{\rm ad}
    \le
    R_0,
    \\[1ex]
    \widehat\eta^{\rm cf,s},
    &
    \widehat r_{\rm ad}
    >
    R_0
    \quad\text{and}\quad
    \widehat r_{\rm sp}^{\,2}
    <
    c_\star\log(1/\widehat\delta),
    \\[1ex]
    \widehat\eta^{\rm it},
    &
    \text{otherwise}.
    \end{cases}
\]
The cross-fitted construction is sharp throughout
\(r_n^2(\theta)\le c_+\log(1/\delta)\), whereas
Theorem~\ref{upper:minimax} is already sharp when
\(r_n^2(\theta)\ge c_-\log(1/\delta)\). The overlap
\(c_-<c_\star<c_+\), together with consistency of
\(\widehat r_{\rm sp}\) and constant-factor estimation of \(\delta\),
therefore makes the regime selection asymptotically harmless.

\begin{theorem}[Adaptive minimax recovery]
\label{upper:minimax:optimal}
Assume $
    \delta
    \ge
    \frac{\log n}{n}.$
There exist universal constants \(c,C>0\) such that the fully adaptive
estimator above satisfies
\[
    \sup_{\|\theta\|\ge\Delta}
    \EE_{\theta,\delta}
    \bigl[
        \ell(\widehat\eta,\eta)
    \bigr]
    \le
    C e^{-cn\delta}
    +
    C
    \begin{cases}
    \displaystyle
    \frac{\delta}{r_n^2}\wedge1,
    &
    r_n^2\le1,
    \\[2ex]
    \displaystyle
    \delta
    \Phi^c\!\left(
        r_n(1-\varepsilon_n)
    \right),
    &
    r_n^2>1,
    \end{cases}
\]
where \(r_n=r_n(\Delta)\) is defined in \eqref{effective_signal}. In the
Gaussian-tail regime, \(\varepsilon_n\to0\) along every sequence such that
\(n\to\infty\), \(r_n\to\infty\).
\end{theorem}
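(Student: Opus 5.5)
The proof splits into a good event on which all nuisance estimates behave, and a conditional risk bound for each of the three branches of the data-driven selection. Let $\mathcal E$ be the intersection of three events: the events of Proposition~\ref{prop:adaptive:estimation}, so that $\widehat a_{\rm ad}\asymp a$ and $\widehat r_{\rm ad}^{\,2}\asymp r_n^2(\theta)$; the event of Proposition~\ref{prop:delta}, so that $\delta/100\le\widehat\delta\le100\delta$; and, when $r_n(\theta)\to\infty$, the spectral events $\widehat r_{\rm sp}/r_n(\theta)=1+o(1)$ and $|\langle\theta,\widehat w_{\rm sp}(T_j)\rangle|=r_n(\theta)(1-o(1))$ simultaneously for all $J\asymp L_n$ complements $T_j$. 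A union bound gives $\PP(\mathcal E^c)\le Ce^{-cn\delta}+CL_ne^{-cn(r_n^2\wedge1)}$. When $r_n^2\ge C\delta$, the second term is at most $Ce^{-c'n\delta}$; when $r_n^2\lesssim\delta$, the target bound is of order one and there is nothing to prove. Since $\ell\le1$, the term $C e^{-cn\delta}$ absorbs $\mathcal E^c$, and it remains to bound $\EE[\ell(\widehat\eta,\eta)\mathbf 1_{\mathcal E}]$. Because $r_n(\theta)\ge r_n(\Delta)$ and the target rate is decreasing in the signal, it suffices to prove the bound pointwise in $\theta$ with $r_n(\theta)$ in place of $r_n$.

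\textbf{Bounded and diverging-intermediate branches.} The central step is a transfer of Theorem~\ref{thm:risk_clust_new_procedure_ub} to the case of an estimated direction. Condition on the sample used to build $\widehat w_a$ (block $C$ or $A$ in the bounded branch, $T_j$ in the diverging branch). The projected observations on $S_a$ are then the one-dimensional model $Z_i=\mu\eta_i+g_i$ with $\mu=\langle\theta,\widehat w_a\rangle$. The complication is that the chain on $S_a$, conditioned on the other block, is not stationary; the dependence enters only through the boundary labels, by the Markov property. I would therefore rerun the proof of Theorem~\ref{thm:risk_clust_new_procedure_ub} conditionally on the entrance state. That proof is local: it uses only that transitions inside windows of length $2k+1$ occur independently with probability $\delta$, so the interior analysis is unchanged. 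The result is a risk bound of order $\delta/\mu^2$ or $\delta\,\mu\,\Phi^c(\mu)$.

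\begin{itemize}
\item In the bounded branch, \eqref{eq:projected-signal} applied to $A$ or $C$ gives $\mu^2\asymp r_n^2(\theta)$. The radius $k_{\rm w}$ satisfies the condition of Theorem~\ref{thm:risk_clust_new_procedure_ub} up to constants, because $\widehat r_{\rm ad}^{\,2}$ and $\widehat\delta$ are within constant factors of their targets and $C_0$ is large. This yields $\delta/r_n^2\wedge1$.
\item In the diverging branch, $\mu=r_n(\theta)(1-o(1))$, which yields $\delta\,r_n\,\Phi^c(r_n(1-o(1)))\le\delta\,\Phi^c(r_n(1-\varepsilon_n))$ by the Mills-ratio remark.
\end{itemize}

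\textbf{Alignment step.} The segments must then be aligned. Each overlap has length $\asymp M_n\gg k_{\rm s}$ (respectively $\asymp n$), and the local error rate is $o(1)$. A misalignment therefore requires at least half of the trimmed overlap to be misclassified. By the window-wise independence of the segmentation errors across blocks separated by distance $2k$, together with a Chernoff bound, this has probability $e^{-cM_n/k}$. This is negligible for $\delta\ge\log n/n$, since $M_n/k_{\rm s}\gtrsim n r_n^2/(\log n\,\log(r_n^2/\delta))$, and the union over $J\asymp L_n$ alignments costs only a factor $L_n$. Trimming loses $O(Jk)$ indices, contributing $O(L_nk/n)$; because $n\delta\ge\log n$ and $k\lesssim\log(r_n^2/\delta)/r_n^2$, this is below the target rate.

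\textbf{Regime selection.} On $\mathcal E$, the selection is correct up to overlapping zones:
\begin{itemize}
\item $\widehat r_{\rm ad}\le R_0$ forces $r_n\lesssim R_0$;
\item $\widehat r_{\rm sp}^{\,2}<c_\star\log(1/\widehat\delta)$ forces $r_n^2\le c_+\log(1/\delta)$ asymptotically, since the factor 100 is $o(\log(1/\delta))$ when $r_n\to\infty$ in this zone;
\item otherwise $r_n^2\ge c_-\log(1/\delta)$, and Theorem~\ref{upper:minimax} applies. There, $\frac{\delta\log(r_n^2/\delta)}{r_n^2}\Phi^c(r_n(1-\varepsilon_n))\lesssim\delta\Phi^c(r_n(1-\varepsilon_n'))$, because the logarithmic prefactor is $e^{o(r_n^2)}$ when $r_n^2\gtrsim\log(1/\delta)$.
\end{itemize}

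\textbf{Main obstacle.} I expect the hardest step to be the conditional, non-stationary version of the segmentation bound, combined with the data-dependent radius $k$ and direction. My first attempt would be to show that the segmentation risk is monotone in $\mu^2$, and to bound it uniformly over $k$ in a dyadic range around the oracle choice. I would then condition on $(\widehat w,\widehat k,\widehat\delta)$, which are independent of $S_a$ except for $\widehat\delta$. For that dependence, I would recompute $\widehat\delta$ on the complementary sample, which changes only constants.
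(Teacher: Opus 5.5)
Your architecture matches the paper's: a good event of probability $1-O(e^{-cn\delta})$, a cross-fitted transfer of Theorem~\ref{thm:risk_clust_new_procedure_ub} with $\mu^2\asymp r_n^2$ in the bounded branch and $\mu=r_n(1-o(1))$ in the diverging branch, and overlapping-zone selection that hands $r_n^2\ge c_-\log(1/\delta)$ to Theorem~\ref{upper:minimax}.

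\textbf{The step that fails is the boundary bookkeeping.} You write that trimming loses $O(Jk)$ indices, contributing $O(L_nk/n)$, and that this is below the target rate. In the diverging branch it is not, because the target $\delta\Phi^c(r_n(1-\varepsilon_n))$ can be far smaller than $1/n$. This is exactly the regime behind Corollary~\ref{cor:exact-recovery2}. For instance, take $\delta=\log n/n$ and $r_n^2=c\log(1/\delta)$ with $0<c<c_-$. Then $k_{\rm s}=O(1)$ and $L_n\asymp\log\log n$, so $L_nk_{\rm s}/n\asymp\log\log n/n$. The target, by contrast, is $n^{-1-c(1-\varepsilon_n)^2/2}$ up to polylogarithmic factors. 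Charging even a handful of indices a constant error probability already breaks the rate.

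The repair is what the paper does:
\begin{itemize}
\item Trimming removes indices only from the majority vote used for alignment, not from the output. Since overlaps have size $\asymp M_n\gg k_{\rm s}$, every index farther than $k_{\rm s}$ from the two true endpoints is interior to some $S_j$ and inherits the interior bound.
\item The $O(k_{\rm s})$ genuine endpoint indices use the pointwise projected rule, with error $\Phi^c(\mu_j)\le\Phi^c(r_n(1-\beta_n))$.
\item Their contribution is therefore $\frac{k_{\rm s}}{n}\Phi^c(r_n(1-\beta_n))\lesssim\frac{\delta}{r_n^2}\Phi^c(r_n(1-\beta_n))$, using $k_{\rm s}\lesssim\log(1/\delta)/r_n^2$ and $n\delta\ge\log n$.
\end{itemize}
The $\Phi^c$ factor is indispensable. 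Your cruder $O(k/n)$ suffices only in the bounded branch.

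\textbf{Your alignment step is a genuinely different route.} The paper uses only the deterministic inequality \eqref{eq:alignment-deterministic-final} and Markov's inequality. This gives $\PP(\widehat\rho\neq\rho^\star)\le CR$ with no independence at all, at the cost of a factor $J\asymp\log r_n$ (and the $r_n$ prefactor), which Mills' inequality absorbs into $\varepsilon_n$. Your Chernoff bound over residue classes modulo $2k+1$ makes misalignment exponentially unlikely and removes the factor $J$. However, it needs more than you state. The segmentation errors are independent across distant windows only conditionally on the latent path and the fold direction, and their conditional means depend on the path: they are large within distance of order $\mu^{-2}$ of a transition. So you must also show that, outside an event of probability at most $e^{-cn\delta}$, the transitions and close transition pairs in the overlap keep the conditional mean error fraction below a small constant. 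This works because $\mu^2\ge C\delta$, and the resulting exponents, of order $|O|\mu^2\log(\mu^2/\delta)$ and $|O|/k$, are at least of order $n\delta$. Note also that a misalignment forces one reconstruction to err on a quarter of the overlap, not half.

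\textbf{Two smaller points.}
\begin{itemize}
\item The estimator's coarse direction is $\widehat w_{\rm ad}(T)\propto\sum_{j\in T}\widetilde\eta^T_jX_j$, built from $\sign(\widehat u_n)$. By contrast, \eqref{eq:projected-signal} concerns $\widehat v/\|\widehat v\|$ with $\widehat v=\widetilde X^{(\widehat\ell)}\widehat u$, so the citation does not apply directly. The bound $\mu^2\gtrsim r_n^2$ still holds, by a parallel block argument using $\widehat\ell/m\lesssim r_n^2$, but you need to give it.
\item The radii $k_{\rm w},k_{\rm s}$ also depend on $\widehat r_{\rm ad}$, which is computed from the full sample. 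Recomputing only $\widehat\delta$ on the complement therefore does not decouple the radius from $S_a$. Your uniform-in-$k$ bound is the fix that actually covers this; the paper does not address the point.
\end{itemize}
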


The assumption \(\delta\ge(\log n)/n\) is used only to control the two true
sample boundaries. The polynomial factors introduced by local segmentation
and alignment are absorbed into the vanishing perturbation of the Gaussian
tail.

\subsubsection{Recovery phase transitions}
\label{sec:adaptive:sharp:transition}

\begin{definition}[Almost full and exact recovery]
Consider a sequence of parameter classes
$\Theta(\Delta_n)=\{\theta\in\mathbb R^{d_n}:\|\theta\|\ge\Delta_n\}$,
with transition probabilities $\delta_n$ and known noise levels
$\sigma_n$. \underline{Almost full recovery} is possible if and only if there exists a sequence of
estimators $\widehat\eta\in\{-1,+1\}^n$ such that
\[
 \sup_{\theta\in\Theta(\Delta_n)}
 \mathbb E_{\theta,\delta_n,\sigma_n}
 [\ell(\widehat\eta,\eta)]\underset{n \to \infty}{\to} 0.
\]
Following the expected Hamming loss convention of \cite{Ndaoud18},
\underline{exact recovery} is
possible if and only if there exists a sequence of estimators such that
\[
 \sup_{\theta\in\Theta(\Delta_n)}
 n\mathbb E_{\theta,\delta_n,\sigma_n}
 [\ell(\widehat\eta,\eta)]\underset{n \to \infty}{\to} 0
\]
\end{definition}

\begin{corollary}[Almost full recovery]
\label{cor:almost-full-recovery}
Assume that \(n\delta_n\to\infty\). Then there exists a fully adaptive
polynomial-time estimator that
achieves almost full recovery
whenever $
    \frac{r_n^2}{\delta_n}
    \underset{n \to \infty}{\to}
    \infty. $
    
Conversely, if \(r_n^2/\delta_n\) remains bounded, the minimax clustering
risk is bounded away from zero. Hence \(r_n^2\gg\delta_n\) is the sharp
condition for almost full recovery.
\end{corollary}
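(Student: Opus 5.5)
The corollary follows by combining Theorem~\ref{upper:minimax:optimal} for the achievability direction with Theorem~\ref{thm:minimax_lb_high_dim_clustering} for the converse. There is one technical mismatch: Theorem~\ref{upper:minimax:optimal} assumes $\delta_n\ge(\log n)/n$, whereas the corollary only assumes $n\delta_n\to\infty$. The plan is to treat the regime $n\delta_n\ge\log n$ directly, and then handle the intermediate range $1\ll n\delta_n<\log n$ separately.

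\textbf{Achievability when $\delta_n\ge(\log n)/n$.} Theorem~\ref{upper:minimax:optimal} gives
\[
\sup_{\|\theta\|\ge\Delta_n}\EE\,\ell(\widehat\eta,\eta)\le Ce^{-cn\delta_n}+C\Bigl(\tfrac{\delta_n}{r_n^2}\wedge1\Bigr)
\]
when $r_n^2\le1$, and $Ce^{-cn\delta_n}+C\delta_n\Phi^c(r_n(1-\varepsilon_n))$ when $r_n^2>1$. The first term vanishes because $n\delta_n\to\infty$. The second term vanishes in the bounded-signal regime because $r_n^2/\delta_n\to\infty$. In the regime $r_n^2>1$ it is at most $C\delta_n\Phi^c(1/2)$ once $\varepsilon_n\le1/2$, and this is $o(1)$ provided $\delta_n\to0$. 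If instead $\delta_n$ stays bounded below along a subsequence, the hypothesis $r_n^2/\delta_n\to\infty$ forces $r_n\to\infty$, so $\Phi^c(r_n(1-\varepsilon_n))\to0$ anyway. Arguing along subsequences covers all cases. All ingredients (the spectral estimators, $\widehat\delta$, and segmentation) are polynomial time and use no knowledge of $\theta$ or $\delta_n$, so the estimator is fully adaptive.

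\textbf{Achievability when $1\ll n\delta_n<\log n$.} Here I would instead invoke Theorem~\ref{upper:minimax}, which only requires $\delta\ge1/n$. Its bound is $Ce^{-cn\delta}$ plus $C\,\delta\log(1+r_n^2/\delta)/r_n^2$ in the bounded regime; the analogous log-inflated quantity times $\Phi^c$ in the intermediate regime; and $\delta\Phi^c$ in the strong regime. Setting $x=r_n^2/\delta_n\to\infty$, the bounded-regime term is $\log(1+x)/x\to0$. The intermediate term is at most $C\,\delta_n\log(r_n^2/\delta_n)/r_n^2$, which also tends to $0$ for the same reason. The strong-regime term vanishes exactly as in the previous case. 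So the iterative estimator achieves almost full recovery throughout, and it is the safe choice here. To avoid this case split, one could even state achievability using Theorem~\ref{upper:minimax} alone, since only vanishing of the risk is needed and not sharpness.

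\textbf{Converse.} Suppose $r_n^2/\delta_n\le K$ along a subsequence. Since $n\delta_n\to\infty$, the assumption $\delta_n>C_0/n$ of Theorem~\ref{thm:minimax_lb_high_dim_clustering} eventually holds, so that theorem applies. If $r_n^2\le\delta_n$, the minimax risk is at least $c$. If $\delta_n<r_n^2\le1$, it is at least $c\,\delta_n/r_n^2\ge c/K$. If $r_n^2>1$, then $\delta_n\ge r_n^2/K>1/K$, and since $\delta_n\le1/2$ we get $r_n^2\le K/2$, so $r_n$ stays bounded. The lower bound $c\,\delta_n\Phi^c(r_n(1+\varepsilon_n))$ is then bounded below by a positive constant. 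The one delicate point is that $\varepsilon_n$ is only asserted to vanish as $r_n\to\infty$, so one must check that it stays bounded for bounded $r_n$. If that is not available, I would instead use monotonicity of the risk in $\Delta$: shrinking $\Delta$ reduces $r_n$ into the range $r_n^2\le1$, where the bound $c\,\delta_n/r_n^2\ge c\,\delta_n\ge c/K$ applies. This boundary bookkeeping in the converse is the only real obstacle. In every case the risk is bounded away from zero, which proves sharpness of the condition $r_n^2\gg\delta_n$.
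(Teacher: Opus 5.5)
Your proposal follows the intended route. The paper gives no separate proof of this corollary: it is meant as the combination of an adaptive upper bound with Theorem~\ref{thm:minimax_lb_high_dim_clustering}.

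\textbf{Achievability.} Routing achievability entirely through Theorem~\ref{upper:minimax}, as you suggest at the end, is the right formulation. That theorem only needs $\delta_n\ge 1/n$, which matches the hypothesis $n\delta_n\to\infty$ better than the $\delta\ge(\log n)/n$ requirement of Theorem~\ref{upper:minimax:optimal}. Its extra logarithmic factor is harmless for vanishing risk, since $\log(1+x)/x\to0$ with $x=r_n^2/\delta_n$. It also yields a single procedure. By contrast, your case split according to whether $\delta_n\ge(\log n)/n$ selects between two estimators using the unknown $\delta_n$, so it is not fully adaptive as written. A minor point: in the branch $r_n^2>1$ with $r_n$ bounded, $\varepsilon_n$ is not controlled, so drop ``once $\varepsilon_n\le1/2$'' and simply use $\Phi^c\le1$, which bounds the term by $C\delta_n$.

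\textbf{The fallback in the converse is wrong.} $\Psi_{n,d}(\delta,\Delta,\sigma)$ is nonincreasing in $\Delta$, because enlarging $\Delta$ shrinks the class $\{\|\theta\|\ge\Delta\}$. A lower bound obtained at a smaller radius $\Delta'<\Delta$ therefore only bounds $\Psi_{n,d}(\delta,\Delta',\sigma)$, which is at least $\Psi_{n,d}(\delta,\Delta,\sigma)$. It says nothing about the quantity you need. Nesting can only transfer lower bounds from larger radii; this is how the paper uses it at the end of the weak-signal part of that proof.

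\textbf{The primary argument already works.} In the proof of Theorem~\ref{thm:minimax_lb_high_dim_clustering}, the strong-signal bound is established for every $r_n^2>1$, not only for $r_n\to\infty$. This covers both the point-mass case $a\ge d\log n/n$ and the Gaussian-prior case. It uses the deterministic choice $\varepsilon_n=K_0/\sqrt{\log n}$, arranged to satisfy $\varepsilon_n\le1$, and requires only $n$ large, which holds since $\delta_n>C_0/n$ eventually. Hence, with your bound $r_n^2/\delta_n\le K$, in the case $r_n^2>1$ you have $\delta_n>1/K$ and $r_n\le\sqrt{K/2}$. This gives $\Psi_{n,d}\ge c\,\delta_n\Phi^c\bigl(r_n(1+\varepsilon_n)\bigr)\ge (c/K)\,\Phi^c(\sqrt{2K})>0$. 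So the converse holds without any fallback, and you should delete it.
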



\begin{corollary}[Sharp exact recovery threshold]
\label{cor:exact-recovery2}
Assume that  
$
    n\delta
    \ge
    C_0\log n
$
for a sufficiently large universal constant \(C_0\). If
\[
    \liminf_{n\to\infty}
    \frac{
        r_n^2
    }{
        2\log(n\delta)
    }
    >1,
\]
then there exists a fully adaptive polynomial-time estimator
\(\widehat\eta\) that achieves exact recovery. 
Conversely, if
\[
    \limsup_{n\to\infty}
    \frac{
        r_n^2
    }{
        2\log(n\delta)
    }
    <1,
\]
then exact recovery is impossible over the minimax parameter class:
\[
 \liminf_{n\to\infty}
 n\Psi_{n,d}(\delta,\Delta,\sigma)>0.
\]
\end{corollary}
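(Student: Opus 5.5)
The corollary follows from Theorem~\ref{upper:minimax:optimal} for the achievability part and from Theorem~\ref{thm:minimax_lb_high_dim_clustering} for the converse. In both directions the task is to compare the Gaussian-tail rate $\delta\Phi^c(r_n(1\pm\varepsilon_n))$ with $1/n$. Mills' ratio gives
\[
\log\bigl(n\delta\,\Phi^c(t)\bigr)=\log(n\delta)-\tfrac{t^2}{2}-\log t+O(1).
\]
So the threshold sits at $t^2\approx 2\log(n\delta)$, with a lower-order $\log t$ correction that only helps the upper bound and is absorbed in the converse.

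\textbf{Upper bound.} Assume $\liminf r_n^2/(2\log(n\delta))>1$. Then for large $n$ we have $r_n^2\ge 2(1+\eta)\log(n\delta)$ for some fixed $\eta>0$. Since $n\delta\ge C_0\log n\to\infty$, this gives $r_n\to\infty$ and $r_n^2>1$, so the second branch of Theorem~\ref{upper:minimax:optimal} applies. Because $\delta\ge C_0\log n/n\ge \log n/n$, the theorem's assumption holds, and
\[
n\sup_{\theta}\EE\,\ell(\widehat\eta,\eta)\le Cn e^{-cn\delta}+Cn\delta\,\Phi^c\bigl(r_n(1-\varepsilon_n)\bigr).
\]
For the first term, $n\delta\ge C_0\log n$ with $C_0>1/c$ gives $ne^{-cn\delta}\le n^{1-cC_0}\to0$. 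This is the step where $C_0$ must be large; the constant $c$ is universal, so the choice is legitimate. For the second term, since $\varepsilon_n\to0$, for large $n$ we have $r_n^2(1-\varepsilon_n)^2\ge 2(1+\eta/2)\log(n\delta)$. The Gaussian tail bound $\Phi^c(t)\le e^{-t^2/2}$ then gives $n\delta\,\Phi^c(\cdot)\le (n\delta)^{-\eta/2}\to0$. Both terms vanish, so the estimator achieves exact recovery. It is polynomial-time and fully adaptive by construction.

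\textbf{Converse.} Assume $\limsup r_n^2/(2\log(n\delta))<1$, so $r_n^2\le 2(1-\eta)\log(n\delta)$ eventually. Apply Theorem~\ref{thm:minimax_lb_high_dim_clustering}, whose assumption $\delta>C_0/n$ holds because $n\delta\ge C_0\log n$. I would split into cases.
\begin{itemize}
\item If $r_n^2\le\delta$, then $n\Psi\gtrsim n\to\infty$.
\item If $\delta<r_n^2\le1$, then $n\Psi\gtrsim n\delta/r_n^2\ge n\delta\to\infty$.
\item If $r_n^2>1$ with $r_n$ bounded, then $n\Psi\gtrsim n\delta\,\Phi^c(O(1))\to\infty$.
\item If $r_n\to\infty$, then $\varepsilon_n\to0$, so $r_n^2(1+\varepsilon_n)^2\le 2(1-\eta/2)\log(n\delta)$ eventually. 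Mills' lower bound $\Phi^c(t)\ge \frac{t}{1+t^2}\frac{e^{-t^2/2}}{\sqrt{2\pi}}$ with $t\lesssim\sqrt{\log(n\delta)}$ gives
\[
n\Psi\gtrsim (n\delta)^{\eta/2}/\sqrt{\log(n\delta)}\to\infty.
\]
\end{itemize}
Each subcase of $r_n^2>1$ can be handled along subsequences, since $\varepsilon_n\to0$ is only guaranteed when $r_n\to\infty$. Hence $\liminf n\Psi_{n,d}>0$; in fact $n\Psi_{n,d}$ diverges.

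\textbf{Main obstacle.} The delicate point is the uniformity of the vanishing perturbation $\varepsilon_n$. The upper-bound theorem's $\varepsilon_n$ must not depend on $\theta$ within the class, so that it can be absorbed by the fixed margin $\eta$. I would first check that in Theorem~\ref{upper:minimax:optimal} the sequence $\varepsilon_n$ depends only on $(n,r_n(\Delta))$, as its statement indicates. Given that, the fixed slack $\eta$ dominates $\varepsilon_n$ and the argument above closes.
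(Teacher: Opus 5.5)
Your proposal is correct and is the intended derivation: the paper gives no separate proof, and the corollary follows, as you argue, by combining Theorem~\ref{upper:minimax:optimal} (with $C_0>1/c$ so that $ne^{-cn\delta}\to0$, and the fixed margin absorbing $\varepsilon_n$) with Theorem~\ref{thm:minimax_lb_high_dim_clustering} and Mills' inequality. The one detail to make explicit is your bounded-$r_n$ subcase of the converse, which needs $r_n(1+\varepsilon_n)=O(1)$ even though the theorem statement only guarantees $\varepsilon_n\to0$ when $r_n\to\infty$. This holds because the lower-bound proof uses the deterministic choice $\varepsilon_n=K/\sqrt{\log n}\le1$; alternatively, since $\Psi_{n,d}$ is nonincreasing in $\Delta$, you can raise $r_n^2$ to $\log(n\delta)$ and reduce to your diverging case.
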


Consequently,
\(
    r_n^2
    =
    2\log(n\delta)
    \bigl(1+o(1)\bigr)
\)
is the sharp first-order exact-recovery threshold.
Thus the effective number of difficult locations is the expected number of
transitions, of order \(n\delta\), rather than the sample size \(n\). In
particular, provided \(n\delta\) is at least logarithmic in $n$, adaptation to both
the unknown signal and the unknown transition probability does not alter the
first-order threshold.

 \begin{supplement}

 Proofs of the oracle and scalar-model results are collected in Appendix A, while the high-dimensional minimax and adaptive results are proved in Appendix B.
 \end{supplement}


\bibliographystyle{spr-ims-nameyear} 

\bibliography{Bibliography.bib}

@article{meila2001experimental,
  title={An experimental comparison of model-based clustering methods},
  author={Meil{\u{a}}, Marina and Heckerman, David},
  journal={Machine learning},
  volume={42},
  pages={9--29},
  year={2001},
  publisher={Springer}
}

@inproceedings{meila2005comparing,
  title={Comparing clusterings: an axiomatic view},
  author={Meil{\u{a}}, Marina},
  booktitle={Proceedings of the 22nd international conference on Machine learning},
  pages={577--584},
  year={2005}
}

@incollection {MR3889693,
    AUTHOR = {Gr\"{u}n, Bettina},
     TITLE = {Model-based clustering},
 BOOKTITLE = {Handbook of mixture analysis},
    SERIES = {Chapman \& Hall/CRC Handb. Mod. Stat. Methods},
     PAGES = {157--192},
 PUBLISHER = {CRC Press, Boca Raton, FL},
      YEAR = {2019},
   MRCLASS = {62H30},
  MRNUMBER = {3889693},
}

@article{Ndaoud18,
author = {Mohamed Ndaoud},
title = {{Sharp optimal recovery in the two component Gaussian mixture model}},
volume = {50},
journal = {Ann. Statist.},
number = {4},
publisher = {Institute of Mathematical Statistics},
pages = {2096 -- 2126},
year = {2022},
doi = {10.1214/22-AOS2178},
}

@book{CMT05,
author = {Capp\'{e}, Olivier and Moulines, Eric and Ryden, Tobias},
title = {Inference in Hidden Markov Models (Springer Series in Statistics)},
year = {2005},
isbn = {0387402640},
publisher = {Springer-Verlag},
}

@article{LH16,
    author = " Lu, Yu and H. Zhou, Harrison ",
    title = "Statistical and Computational Guarantees of Lloyd’s Algorithm and Its Variants",
    journal = {arXiv preprint},
    DOI = {10.48550/arXiv.1612.02099},
    year = "2016",
}

@article{VFLR20,
  title={Optimal change-point detection and localization},
  author={Verzelen, Nicolas and Fromont, Magalie and Lerasle, Matthieu and Reynaud-Bouret, Patricia},
  journal={The Annals of Statistics},
  volume={51},
  number={4},
  pages={1586--1610},
  year={2023},
  publisher={Institute of Mathematical Statistics},
  doi={10.1214/23-AOS2297},
  url={https://projecteuclid.org/journals/annals-of-statistics/volume-51/issue-4/Optimal-change-point-detection-and-localization/10.1214/23-AOS2297.short}
}

@article{chao,
author = {Haoyang Liu and Chao Gao and Richard J. Samworth},
title = {{Minimax rates in sparse, high-dimensional change point detection}},
volume = {49},
journal = {The Annals of Statistics},
number = {2},
publisher = {Institute of Mathematical Statistics},
pages = {1081 -- 1112},
year = {2021},
doi = {10.1214/20-AOS1994},
URL = {https://doi.org/10.1214/20-AOS1994}
}

@misc{yu2020,
      title={A review on minimax rates in change point detection and localisation}, 
      author={Yi Yu},
      year={2020},
      eprint={2011.01857},
      archivePrefix={arXiv},
      primaryClass={math.ST},
      url={https://arxiv.org/abs/2011.01857}, 
}

@article{MVW17,
  title={Clustering subgaussian mixtures by semidefinite programming},
  author={Mixon, Dustin G and Villar, Soledad and Ward, Rachel},
  journal={Information and Inference: A Journal of the IMA},
  volume={6},
  number={4},
  pages={389--415},
  year={2017},
  publisher={Oxford University Press}
}

@inproceedings{KSH05,
  title={The spectral method for general mixture models},
  author={Kannan, Ravindran and Salmasian, Hadi and Vempala, Santosh},
  booktitle={International conference on computational learning theory},
  pages={444--457},
  year={2005},
  organization={Springer}
}

@article{SBY09,
  title={Data spectroscopy: Eigenspaces of convolution operators and clustering},
  author={Shi, Tao and Belkin, Mikhail and Yu, Bin},
  journal={The Annals of Statistics},
  pages={3960--3984},
  year={2009},
  publisher={JSTOR}
}

@book{G21,
  title={Introduction to high-dimensional statistics},
  author={Giraud, Christophe},
  year={2021},
  edition = {Second},
  publisher={Chapman and Hall/CRC}, 
}

@article{KN24,
    title={Adaptive Mean Estimation in the Hidden Markov sub-Gaussian Mixture Model},
    author={Karagulyan, Vahe and Ndaoud, Mohamed},
    year={2024},
    eprint={2406.12446},
    journal={arXiv},
    primaryClass={stat.ST}
}

@article{GKN23,
author = {{\'E}lisabeth Gassiat and Ibrahim Kaddouri and Zacharie Naulet},
title = {{Clustering risk in nonparametric hidden Markov and I.I.D. models}},
volume = {53},
journal = {The Annals of Statistics},
number = {6},
publisher = {Institute of Mathematical Statistics},
pages = {2409 -- 2429},
year = {2025},
doi = {10.1214/25-AOS2546},
URL = {https://doi.org/10.1214/25-AOS2546}
}

@article{WZ21,
  title={Randomly initialized EM algorithm for two-component Gaussian mixture achieves near optimality in O ($\sqrt{n}$) iterations},
  author={Wu, Yihong and Zhou, Harrison H},
  journal={Mathematical Statistics and Learning},
  volume={4},
  number={3},
  year={2021}
}

@article{ZW22,
  title={Mean estimation in high-dimensional binary Markov Gaussian mixture models},
  author={Zhang, Yihan and Weinberger, Nir},
  journal={Advances in Neural Information Processing Systems},
  volume={35},
  pages={19673--19686},
  year={2022}
}

@misc{wu2017lecture,
  author       = {Yihong Wu},
  title        = {Lecture Notes on Information-Theoretic Methods for High-Dimensional Statistics},
  year         = {2017},
  note         = {Lecture Notes for ECE598YW (UIUC), Version 16},
  url          = {https://yihongwu.stat.yale.edu/teaching/ece598yw-fall17/notes.html}
}

@article{BickelRitovRyden1998,
  author  = {Bickel, Peter J. and Ritov, Ya'acov and Ryd{\'e}n, Tobias},
  title   = {Asymptotic normality of the maximum-likelihood estimator
             for general hidden {M}arkov models},
  journal = {The Annals of Statistics},
  year    = {1998},
  volume  = {26},
  number  = {4},
  pages   = {1614--1635},
  doi     = {10.1214/aos/1024691255}
}

@article{DoucMoulinesRyden2004,
  author  = {Douc, Randal and Moulines, {\'E}ric and Ryd{\'e}n, Tobias},
  title   = {Asymptotic properties of the maximum likelihood estimator
             in autoregressive models with {M}arkov regime},
  journal = {The Annals of Statistics},
  year    = {2004},
  volume  = {32},
  number  = {5},
  pages   = {2254--2304},
  doi     = {10.1214/009053604000000021}
}

@article{AlexandrovichHolzmannLeister2016,
  author  = {Alexandrovich, Grigory and Holzmann, Hajo and Leister, Anna},
  title   = {Nonparametric identification and maximum likelihood estimation
             for hidden {M}arkov models},
  journal = {Biometrika},
  year    = {2016},
  volume  = {103},
  number  = {2},
  pages   = {423--434},
  doi     = {10.1093/biomet/asw001}
}

@article{SunCai2009,
  author  = {Sun, Wenguang and Cai, T. Tony},
  title   = {Large-scale multiple testing under dependence},
  journal = {Journal of the Royal Statistical Society:
             Series B (Statistical Methodology)},
  year    = {2009},
  volume  = {71},
  number  = {2},
  pages   = {393--424},
  doi     = {10.1111/j.1467-9868.2008.00694.x}
}

@article{WangSamworth2018,
  author  = {Wang, Tengyao and Samworth, Richard J.},
  title   = {High dimensional change point estimation via sparse projection},
  journal = {Journal of the Royal Statistical Society:
             Series B (Statistical Methodology)},
  year    = {2018},
  volume  = {80},
  number  = {1},
  pages   = {57--83},
  doi     = {10.1111/rssb.12243}
}

@article{YuChen2021,
  author  = {Yu, Mengjia and Chen, Xiaohui},
  title   = {Finite sample change point inference and identification
             for high-dimensional mean vectors},
  journal = {Journal of the Royal Statistical Society:
             Series B (Statistical Methodology)},
  year    = {2021},
  volume  = {83},
  number  = {2},
  pages   = {247--270},
  doi     = {10.1111/rssb.12406}
}

@article{ZhaoJiangShao2022,
  author  = {Zhao, Zifeng and Jiang, Feiyu and Shao, Xiaofeng},
  title   = {Segmenting time series via self-normalisation},
  journal = {Journal of the Royal Statistical Society:
             Series B (Statistical Methodology)},
  year    = {2022},
  volume  = {84},
  number  = {5},
  pages   = {1699--1725},
  doi     = {10.1111/rssb.12552}
}

@InProceedings{Perchet18,
  title = 	 {Hidden Integrality of SDP Relaxations for Sub-Gaussian Mixture Models},
  author =       {Fei, Yingjie and Chen, Yudong},
  booktitle = 	 {Proceedings of the 31st  Conference On Learning Theory},
  pages = 	 {1931--1965},
  year = 	 {2018},
  editor = 	 {Bubeck, Sébastien and Perchet, Vianney and Rigollet, Philippe},
  volume = 	 {75},
  series = 	 {Proceedings of Machine Learning Research},
  month = 	 {06--09 Jul},
  publisher =    {PMLR},
  url = 	 {https://proceedings.mlr.press/v75/fei18a.html}
}

@article{GV19,
  title={Partial recovery bounds for clustering with the relaxed K-means},
  author={Giraud, Christophe and Verzelen, Nicolas},
  DOI={},
  year={2018},
  journal={Math. Stat. Learn.},
  FJOURNAL = {Mathematical Statistics and Learning},
  volume = {3},
  number = {1},
  pages = {317–374},
  }

@inproceedings{Royer17,
 author = {Royer, Martin},
 booktitle = {Advances in Neural Information Processing Systems},
 editor = {I. Guyon and U. Von Luxburg and S. Bengio and H. Wallach and R. Fergus and S. Vishwanathan and R. Garnett},
 pages = {},
 publisher = {Curran Associates, Inc.},
 title = {Adaptive Clustering through Semidefinite Programming},
 url = {https://proceedings.neurips.cc/paper_files/paper/2017/file/3a15c7d0bbe60300a39f76f8a5ba6896-Paper.pdf},
 volume = {30},
 year = {2017}
}
\newpage
\appendix

\section{Oracle and Scalar Model Proofs}
Constants denoted by
\(c,C,c_0,C_0\) are positive numerical constants whose values may change from
line to line. For a binary vector \(\widehat\eta\in\{-1,+1\}^n\), recall that
\[
    \ell(\widehat\eta,\eta)
    =
    \min_{s\in\{-1,+1\}}
    \frac1n\sum_{j=1}^n
    \mathbf 1_{\{s\widehat\eta_j\neq\eta_j\}}.
\]

\subsection{Proof of Proposition~\ref{prop:lower:lag}}
To simplify notation, we write $R_k=\mathcal R_k^{\mathrm{lag}}(\theta,\delta)$. Throughout the proof, constants are numerical and may change from line to line. If \(\theta=0\), the observations contain no information about
the latent labels, and
\[
    R_k=\frac{1-(1-2\delta)^k}{2}
    \asymp k\delta\wedge1.
\]
Henceforth assume \(\theta\neq0\).

\textbf{The case \(k=1\).}
Conditionally on \(\eta_{i-1}\), the two possibilities
\(\eta_i=\eta_{i-1}\) and \(\eta_i=-\eta_{i-1}\) have respective prior
weights \(1-\delta\) and \(\delta\).  Let \(f_1\) and \(f_{-1}\) denote the
corresponding conditional densities of \(X_i\), that is,
\[
        f_1(x)=\frac{1}{(2\pi)^{d/2}}
        \exp\left(-\frac{\|x-\eta_{i-1}\theta\|^2}{2}\right),
        \qquad
        f_{-1}(x)=\frac{1}{(2\pi)^{d/2}}
        \exp\left(-\frac{\|x+\eta_{i-1}\theta\|^2}{2}\right).
\]
Then
\[
        \frac{(1-\delta)f_1(X_i)}{\delta f_{-1}(X_i)}
        =
        \frac{1-\delta}{\delta}
        \exp\bigl(2\eta_{i-1}\langle X_i,\theta\rangle\bigr).
\]
Hence the Bayes rule is
\[
        \eta_i^\star
        =
        \eta_{i-1}\operatorname{sign}\left(
        \eta_{i-1}\langle X_i,\theta\rangle
        +\frac{\log((1-\delta)/\delta)}{2}
        \right).
\]

Therefore,
\begin{align*}
R_1
&\gtrsim
\delta\,
\mathbb P\left(
\eta_{i-1}\langle X_i,\theta\rangle
+
\frac{\log((1-\delta)/\delta)}{2}
\ge 0
\,\middle|\,
\eta_i=-\eta_{i-1}
\right)
\\
&=
\delta\,
\mathbb P\left(
-\lVert\theta\rVert^2+
\eta_{i-1}\langle\xi_i,\theta\rangle
+
\frac{\log((1-\delta)/\delta)}{2}
\ge 0
\right)
\\
&=
\delta\,
\mathbb P\left(
\eta_{i-1}\left\langle\xi_i,\frac{\theta}{\lVert\theta\rVert}\right\rangle
\ge
\lVert\theta\rVert-
\frac{\log((1-\delta)/\delta)}{2\lVert\theta\rVert}
\right)
\\
&= \delta
\Phi^c
\left(
\|\theta\|-\frac{\log((1-\delta)/\delta)}{2\|\theta\|}
\right)\\
&\gtrsim\delta\Phi^c
\left(
\|\theta\|-\frac{\log(1/\delta)}{2\|\theta\|}
\right)
\end{align*}
where the last step uses \(0<\delta\le 1/2\).  Indeed,
\(\log((1-\delta)/\delta)=\log(1/\delta)+\log(1-\delta)\), and
\(-\log(1-\delta)\le \log 2\), so replacing $\log((1-\delta)/\delta)$ by
\(\log(1/\delta)\) changes the displayed Gaussian tail by at most a universal
constant.  In particular, if
\[
        \|\theta\|^2\le \frac{\log(1/\delta)}{2},
\]
then
\[
        R_1\gtrsim \delta.
\]

\textbf{Monotonicity in the lag.}
We next show that \(R_k\) is nondecreasing in \(k\).  Recall that
\[
        R_k
        :=
        \inf_{\widehat\eta}
        \PP
        \bigl(\widehat\eta(X_{1:k},\eta_0)\ne \eta_k\bigr),
\]
where the infimum is over all measurable estimators based on
\((X_{1:k},\eta_0)\). Fix \(1\le m<k\) and write \(k'=k-m\).  Let
\(\widehat\eta=\widehat\eta(X_{1:k},\eta_0)\) be arbitrary.  By the tower
property,
\[
\mathbb P(\widehat\eta\ne\eta_k)
=
\mathbb E\left[
\mathbb P\left(
\widehat\eta(X_{1:k},\eta_0)\ne\eta_k
\,\middle|\,
\eta_0,\ldots,\eta_{k'}
\right)
\right].
\]
Conditionally on \(\eta_0,\ldots,\eta_{k'}\), the future process
\((X_{k'+1:k},\eta_k)\) depends on the past only through \(\eta_{k'}\).
The observations \(X_{1:k'}\) are therefore irrelevant for the conditional
Bayes risk.  Moreover, giving the decision rule the additional
information \(\eta_{k'}\) can only decrease the optimal risk.  Hence
\[
\mathbb P\left(
\widehat\eta(X_{1:k},\eta_0)\ne\eta_k
\,\middle|\,
\eta_0,\ldots,\eta_{k'}
\right)
\ge
\inf_{\psi}
\mathbb P\left(
\psi(X_{k'+1:k},\eta_{k'})\ne\eta_k
\,\middle|\,
\eta_{k'}
\right).
\]
Taking expectations gives
\[
\mathbb P(\widehat\eta\ne\eta_k)
\ge
\inf_{\psi}
\mathbb P\left(
\psi(X_{k'+1:k},\eta_{k'})\ne\eta_k
\right).
\]
By stationarity and time-homogeneity, the last term is equal to
\[
\inf_{\psi}
\mathbb P\left(
\psi(X_{1:m},\eta_0)\ne\eta_m
\right)
=R_m.
\]
Since the bound holds for every \(\widehat\eta\), taking the infimum over
\(\widehat\eta\) yields \(R_k\ge R_m\). In particular,

\begin{equation*}
    \begin{split}
        R_k\geq R_1\gtrsim \delta\Phi^c\left(\|\theta\|-\frac{\log(1/\delta)}{2\|\theta\|}\right).
    \end{split}
\end{equation*}
This gives the claimed lower bound in the strong-signal regime
\(
        \|\theta\|^2
        \ge
        \log(1/\delta)/2.
\)
\textbf{The intermediate regime lower-bound.} 
For this lower-bound we use a binary-testing argument based on the overlap between the no-transition distribution and the mixture of single-transition distributions. Fix an integer \(k\ge1\), and consider an arbitrary estimator
\(\widehat\eta=\widehat\eta(X_{1:k},\eta_0)\) of \(\eta_k\). By symmetry it is
enough to condition on \(\eta_0=a\), with \(a\in\{-1,1\}\). We lower bound
the risk by retaining only the path with no transition and the paths with a
single transition.  For \(j=1,\ldots,k\), let
\[
        E_j(a)=\{\eta_0=\cdots=\eta_{j-1}=a,
                 \ \eta_j=\cdots=\eta_k=-a\},
\]
and let
\[
        E_0(a)=\{\eta_0=\cdots=\eta_k=a\}.
\]
Since \(\mathbb P(E_0(a)\mid \eta_0=a)=(1-\delta)^k\) and
\(\mathbb P(E_j(a)\mid \eta_0=a)=\delta(1-\delta)^{k-1}\), we get
\begin{align*}
\mathbb P(\widehat\eta\ne\eta_k)
&\ge
\frac{(1-\delta)^{k-1}}{2}
\sum_{a\in\{-1,1\}}
\Bigg[
(1-\delta)
\mathbb P\left(
        \widehat\eta(a,X_{1:k})=-a
        \,\middle|\,
        E_0(a)
\right)                                                       \\
&\hspace{5em}
+\delta\sum_{j=1}^k
\mathbb P\left(
        \widehat\eta(a,X_{1:k})=a
        \,\middle|\,
        E_j(a)
\right)
\Bigg].
\end{align*}
Let \(f_\mu\) denote the \(\mathcal N(\mu,I_d)\) density.  The last display is
bounded from below by the value of the binary testing problem between the
density corresponding to \(E_0(a)\) and the mixture of densities corresponding
to \(E_j(a)\), \(1\le j\le k\).  The Bayes rule for this restricted problem is
therefore
\[
\eta_k^\star(a,x_{1:k})
=
a\,\operatorname{sign}\left(
(1-\delta)\prod_{s=1}^k f_{a\theta}(x_s)
-
\delta\sum_{j'=1}^k
\prod_{s=1}^{j'-1}f_{a\theta}(x_s)
\prod_{s=j'}^k f_{-a\theta}(x_s)
\right),
\]
with an arbitrary convention when the expression in parentheses is zero.
Hence, for every estimator \(\widehat\eta\),
\begin{align*}
\mathbb P(\widehat\eta\ne\eta_k)
&\ge
\frac{(1-\delta)^{k-1}}{2}
\sum_{a\in\{-1,1\}}
\delta\sum_{j=1}^k
\mathbb P\left(
        \eta_k^\star(a,X_{1:k})=a
        \,\middle|\,
        E_j(a)
\right)                                                       \\
&\ge
(1-\delta)^{k-1}k\delta
\left(1-
\frac{\delta}{k(1-\delta)}
\sum_{j=1}^k\sum_{j'=1}^k
\mathbb E\left[
        L_{j'}(X_{1:k})
        \,\middle|\,
        E_j(1)
\right]
\right),
\end{align*}
where
\[
        L_{j'}(X_{1:k})
        =
        \frac{
        \prod_{s=1}^{j'-1}f_{\theta}(X_s)
        \prod_{s=j'}^k f_{-\theta}(X_s)
        }{
        \prod_{s=1}^k f_{\theta}(X_s)
        }
        =
        \exp\left(-2\left\langle\theta,\sum_{s=j'}^k X_s\right\rangle\right).
\]
On \(E_j(1)\), we have \(X_s=\eta_s\theta+\xi_s\), with
\(\eta_s\in\{-1,1\}\).  Therefore, for all \(j,j'\),
\begin{align*}
\mathbb E\left[
        L_{j'}(X_{1:k})
        \,\middle|\,
        E_j(1)
\right]
&=
\mathbb E\left[
\exp\left(
        -2\|\theta\|^2\sum_{s=j'}^k\eta_s
        -2\left\langle\theta,\sum_{s=j'}^k\xi_s\right\rangle
\right)
\,\middle|\,E_j(1)
\right]                                                       \\
&\le
\exp(2k\|\theta\|^2)
\mathbb E\exp\left(
        -2\left\langle\theta,\sum_{s=j'}^k\xi_s\right\rangle
\right)                                                       \\
&\le
\exp(4k\|\theta\|^2).
\end{align*}
Consequently,
\[
        R_k
        \ge
        (1-\delta)^{k-1}k\delta
        \left(
        1-\frac{k\delta}{1-\delta}\exp(4k\|\theta\|^2)
        \right).
\]
We now consider all regimes:
\begin{itemize}
    \item If \(k\delta\le c^{\prime}\) and
    \(
        \|\theta\|^2
        \le
        \frac{\log(1/(k\delta))}{5k},
    \)
    for a sufficiently small constant \(c^{\prime}>0\), then the factor in
    parentheses is bounded below by a positive numerical constant and
    \((1-\delta)^{k-1}\) is also bounded below.  Thus
    \[
        R_k\gtrsim k\delta .
    \]
    
    \item Suppose that
    $\|\theta\|^2
        \leq \delta\wedge\frac12\log(1/\delta)$ and $k\delta\geq c'$.
    If \(\delta\geq c'/2\), the one-step lower bound gives
    \[
        R_k\geq R_1\gtrsim\delta\gtrsim1.
    \]
    Otherwise, set $m=\left\lfloor\frac{c'}{\delta}\right\rfloor$.
    Then \(1\leq m\leq k\) and $\frac{c'}2\leq m\delta\leq c'$.
    Taking \(c'\) sufficiently small, we have
    \[
        5m\|\theta\|^2
        \leq5m\delta
        \leq5c'
        \leq\log(1/c')
        \leq\log\frac1{m\delta}.
    \]
    The first bullet therefore applies at lag \(m\), and monotonicity gives
    \[
        R_k\geq R_m\gtrsim m\delta\gtrsim1.
    \]
    Since
    \[
        \frac{\delta}{\|\theta\|^2}
        \log\left(1+\frac{\|\theta\|^2}{\delta}\right)
        \asymp1
        \qquad\text{for }0<\|\theta\|^2\leq\delta,
    \]
    this is the required bound.

    \item Suppose that
    \(
        \delta\leq\|\theta\|^2
        \leq\frac12\log(1/\delta).
    \)
    We first prove
    \[
        R_k\gtrsim
        \delta\left(
            k\wedge
            \frac1{\|\theta\|^2}
            \log\left(1+\frac{\|\theta\|^2}{\delta}\right)
        \right).
    \]
    Fix a sufficiently small numerical constant \(\varepsilon>0\).
    If
    \[
        k\wedge
        \frac1{\|\theta\|^2}
        \log\left(1+\frac{\|\theta\|^2}{\delta}\right)
        \leq\frac2\varepsilon,
    \]
    this follows from \(R_k\geq R_1\gtrsim\delta\).
    Otherwise, choose
    \[
        m=
        \left\lfloor
            \varepsilon\left(
                k\wedge
                \frac1{\|\theta\|^2}
                \log\left(1+\frac{\|\theta\|^2}{\delta}\right)
            \right)
        \right\rfloor.
    \]
    Then \(1\leq m\leq k\),
    \[
        m\asymp
        k\wedge
        \frac1{\|\theta\|^2}
        \log\left(1+\frac{\|\theta\|^2}{\delta}\right),
    \]
    and
    \[
        m\delta
        \leq
        \varepsilon\frac{\delta}{\|\theta\|^2}
        \log\left(1+\frac{\|\theta\|^2}{\delta}\right)
        \leq\varepsilon,
        \qquad
        m\|\theta\|^2
        \leq
        \varepsilon\log\left(1+\frac{\|\theta\|^2}{\delta}\right).
    \]
    Using
    \[
        \frac{\|\theta\|^2}{\delta}
        \geq
        \log\left(1+\frac{\|\theta\|^2}{\delta}\right)
        \sqrt{1+\frac{\|\theta\|^2}{\delta}},
    \]
    we obtain
    \[
    \begin{aligned}
        \log\frac1{m\delta}
        &\geq
        \log\left(
            \frac{\|\theta\|^2}
            {\varepsilon\delta
             \log(1+\|\theta\|^2/\delta)}
        \right) \geq
        \frac12\log\left(1+\frac{\|\theta\|^2}{\delta}\right)
        +\log\left(\frac1\varepsilon\right).
    \end{aligned}
    \]
    Taking \(\varepsilon\leq c'\wedge1/10\), we obtain
    \[
        m\delta\leq c',
        \qquad
        5m\|\theta\|^2
        \leq
        \frac12\log\left(1+\frac{\|\theta\|^2}{\delta}\right)
        \leq\log\frac1{m\delta}.
    \]
    Thus the first bullet and monotonicity yield
    \[
        R_k\geq R_m\gtrsim m\delta
        \asymp
        \delta\left(
            k\wedge
            \frac1{\|\theta\|^2}
            \log\left(1+\frac{\|\theta\|^2}{\delta}\right)
        \right).
    \]
        It remains to check that
    \[
        k\gtrsim
        \frac1{\|\theta\|^2}
        \log\left(1+\frac{\|\theta\|^2}{\delta}\right)
    \]
    under the conditions of the second branch of the proposition.
    Set
    \[
        x=\frac{\|\theta\|^2}{\delta},
        \qquad y=k\|\theta\|^2.
    \]
    Since
    \[
        \|\theta\|^2\ge\frac2k\log\frac1{k\delta},
    \]
    we have
    \[
        y\ge2\log\frac{x}{y},
        \qquad\text{hence}\qquad
        x\le y e^{y/2}.
    \]
    Consequently,
    \[
    \begin{aligned}
        \log(1+x)
        \le \log(1+y e^{y/2})
        \le \frac y2+\log(1+y)
        \le \frac32 y.
    \end{aligned}
    \]
    Therefore,
    \[
        k\ge
        \frac{2}{3\|\theta\|^2}
        \log\left(1+\frac{\|\theta\|^2}{\delta}\right).
    \]
    Combining this with the lower bound already established
    for \(\|\theta\|^2\ge\delta\) yields
    \[
        R_k\gtrsim
        \frac{\delta}{\|\theta\|^2}
        \log\left(1+\frac{\|\theta\|^2}{\delta}\right).
    \]

    Finally, suppose that \(0<\|\theta\|^2<\delta\).
    If \(k\delta\ge c'\), the preceding constant-order
    lower bound applies and gives the required result.
    If instead \(k\delta<c'\), the assumption
    \[
        \|\theta\|^2\ge\frac2k\log\frac1{k\delta}
    \]
    would imply
    \[
        k\delta>k\|\theta\|^2
        \ge2\log\frac1{k\delta},
    \]
    which is impossible for \(c'\le1/2\).
\end{itemize}

We finally prove the weak-signal branch.
Assume
\(
    \|\theta\|^2
    \le
    \frac{2}{k}\log\frac1{k\delta}
    \ \text{and}\
    \|\theta\|^2\le\frac12\log\frac1\delta.\
\)
In particular, \(k\delta\le1\).
Fix \(0<\alpha\le \min\{c'/2,1/10\}\). If \(k<2/\alpha\), the one-step bound gives
\[
    R_k\ge R_1\gtrsim\delta\gtrsim k\delta.
\]
Otherwise, set \(j=\lfloor\alpha k\rfloor\). Then
\(
    \frac{\alpha k}{2}\le j\le\alpha k,\ j\delta\le\alpha\le c',
\)
and
\[
    5j\|\theta\|^2
    \le 5\alpha k\|\theta\|^2
    \le 10\alpha\log\frac1{k\delta}
    \le \log\frac1{j\delta}.
\]
The overlap bound therefore applies at lag \(j\).
By monotonicity,
\[
    R_k\ge R_j\gtrsim j\delta\gtrsim k\delta.
\]
Since \(k\delta\le1\), this proves the first branch as stated.

\subsection{Proof of Proposition \ref{prop:upper:lag}}

We start first by analyzing the weak signal regime.

\textbf{Weak-signal regime.}
If
\[
\|\theta\|^2
\le
\frac{2\log(1/(k\delta))}{k}\wedge\frac{\log(1/\delta)}{2},
\]
take
\[
\widehat\eta_k=\eta_{0}.
\]
Then
\[
\mathbb P(\widehat\eta_k\neq\eta_k)\le k\delta\wedge1.
\]

\textbf{Intermediate regime.}
Assume
\[
\frac{2\log(1/(k\delta))}{k}
<
\|\theta\|^2
\le
\frac{\log(1/\delta)}{2}.
\]
Note that $k\geq 2$ in this regime.
Set
\[
m
=
k\wedge
\left\lceil
\frac{2}{\|\theta\|^2}\log\left(1 + \frac{\|\theta\|^2}{\delta}\right)
\right\rceil.
\]
Consider
\[
\widehat\eta_k
=
\operatorname{sign}
\left(
\left\langle
\frac{1}{m}
\sum_{j=k-m+1}^{k}X_j,
\theta
\right\rangle
\right).
\]
If there is at least one sign change among the hidden labels in this window,
the corresponding event has probability at most $m\delta$.
Otherwise, all observations in the window have the same hidden label, and
\begin{align*}
\mathbb P(\widehat\eta_k\ne\eta_k)
&\le
m\delta
+
\mathbb P\left(
\left\langle
\frac{1}{m}
\sum_{j=k-m+1}^{k}\xi_j,
\frac{\theta}{\|\theta\|}
\right\rangle
+
\|\theta\|
\le0
\right)
\\
&\le
m\delta
+
\exp\left(-\frac{m\|\theta\|^2}{2}\right).
\end{align*}
If
\[
m
=
\left\lceil
\frac{2}{\|\theta\|^2}\log\left(1 + \frac{\|\theta\|^2}{\delta}\right)
\right\rceil
\]
then

\[
        \exp\left(-\frac{m\|\theta\|^2}{2}\right)
        \le
        \left(1+\frac{\|\theta\|^2}{\delta}\right)^{-1}
        \le
        \frac{\delta}{\|\theta\|^2}
        \log\left(1+\frac{\|\theta\|^2}{\delta}\right),
\]
where the last inequality follows from
\(\|\theta\|^2/(\|\theta\|^2+\delta)\le \log(1+\|\theta\|^2/\delta)\).  Moreover,
\[
        m\delta
        \lesssim
        \frac{\delta}{\|\theta\|^2}
        \log\left(1+\frac{\|\theta\|^2}{\delta}\right),
\]
using the ceiling and the fact that
\(\|\theta\|^2\le \frac{\log(1/\delta)}{2}\). Hence, the desired bound follows in this case.

If instead \(m=k\), then the lower bound defining the intermediate regime gives
\[
        \exp\left(-\frac{k\|\theta\|^2}{2}\right)
        \le
        k\delta.
\]
Also, from the definition of \(m\),
\[
        k
        \le
        \left\lceil
        \frac{2}{\|\theta\|^2}
        \log\left(1+\frac{\|\theta\|^2}{\delta}\right)
        \right\rceil,
\]
and therefore, again using \(\|\theta\|^2\le \frac{\log(1/\delta)}{2}\),
\[
        k\delta
        \lesssim
        \frac{\delta}{\|\theta\|^2}
        \log\left(1+\frac{\|\theta\|^2}{\delta}\right).
\]
Thus in both cases,
\[
        \mathbb P(\widehat\eta_k\ne\eta_k)
        \lesssim
        \frac{\delta}{\|\theta\|^2}
        \log\left(1+\frac{\|\theta\|^2}{\delta}\right).
\]
\textbf{Strong signal regime.}
Assume
\[
        \|\theta\|^2
        \ge
        \frac{\log(1/\delta)}{2} .
\]
For \(k=1\), the one-step Bayes rule gives, by the calculation in the lower
bound proof,
\[
        \mathbb P(\widehat\eta_1\ne\eta_1)
        \lesssim
        \delta \Phi^c\left(
        \|\theta\|-
        \frac{\log(1/\delta)}{2\|\theta\|}
        \right) .
\]
We prove that the same bound propagates along the recursion.  More precisely,
for a sufficiently large numerical constant \(C_0\) and for $q = \Phi^c\left(
        \|\theta\|-
        \frac{\log(1/\delta)}{2\|\theta\|}
        \right)$, we claim that
\[
        \mathbb P(\widehat\eta_i\ne\eta_i)
        \le
        C_0\delta q,
        \qquad 1\le i\le k .
\]
The claim holds for \(i=1\).  Assume that it holds at time \(i-1\).  Then
\[
        \mathbb P(\widehat\eta_{i-1}\ne\eta_i)
        \le
        \mathbb P(\widehat\eta_{i-1}\ne\eta_{i-1})
        +\mathbb P(\eta_{i-1}\ne\eta_i)
        \le
        C_0\delta q+\delta .
\]
Conditionally on the past and on the hidden transition at time \(i\), the noise
\(\xi_i\) is independent and Gaussian.  If
\(\widehat\eta_{i-1}=\eta_i\), then the conditional error probability is
\[
        \Phi^c\left(\|\theta\|+\frac{\log((1-\delta)/\delta)}{2\|\theta\|}\right),
\]
whereas if \(\widehat\eta_{i-1}\ne\eta_i\), it is
\[
        \Phi^c\left(\|\theta\|-\frac{\log((1-\delta)/\delta)}{2\|\theta\|}\right).
\]
Therefore
\begin{align*}
\mathbb P(\widehat\eta_i\ne\eta_i)
&\le
\Phi^c\left(\|\theta\|+\frac{\log((1-\delta)/\delta)}{2\|\theta\|}\right)
+
(C_0\delta q+\delta)
\Phi^c\left(\|\theta\|-\frac{\log((1-\delta)/\delta)}{2\|\theta\|}\right).
\end{align*}
Put
\(
    b=\frac12\log\left((1-\delta)/\delta\right).
\)
Since \(b\leq \frac12\log(1/\delta)\),
\[
    \Phi^c\left(\|\theta\|-\frac{b}{\|\theta\|}\right)\leq q.
\]
Moreover, by shifting the Gaussian density,
\[
\begin{aligned}
    \Phi^c\left(\|\theta\|+\frac{b}{\|\theta\|}\right)
    =
    \int_{\|\theta\|-b/\|\theta\|}^{\infty}
        \phi\left(u+\frac{2b}{\|\theta\|}\right)\,du \leq
    e^{-2b}\Phi^c\left(\|\theta\|-\frac{b}{\|\theta\|}\right)
    \leq 2\delta q,
\end{aligned}
\]
where we used \(e^{-2b}=\delta/(1-\delta)\leq2\delta\).
Writing \(e_i=\mathbb P(\widehat\eta_i\neq\eta_i)\), the preceding
conditional-error decomposition therefore gives
\[
    e_i\leq 3\delta q+q e_{i-1},
    \qquad i\geq2.
\]
For the one-step Bayes initialization,
\[
    e_1
    =(1-\delta)\Phi^c\left(\|\theta\|+\frac{b}{\|\theta\|}\right)
      +\delta\Phi^c\left(\|\theta\|-\frac{b}{\|\theta\|}\right)
    \leq 2\delta q.
\]
In the present regime,
\(\|\theta\|^2\geq\frac12\log(1/\delta)\), so \(q\leq1/2\).
Consequently, \(e_{i-1}\leq6\delta q\) implies
\[
    e_i\leq3\delta q+6\delta q^2\leq6\delta q.
\]
Induction yields
\[
    \mathbb P(\widehat\eta_k\neq\eta_k)
    \leq6\delta\,
    \Phi^c\left(
        \|\theta\|-\frac{\log(1/\delta)}{2\|\theta\|}
    \right),
\]
which proves the strong-signal upper bound.

\subsection{Proof of Theorem~\ref{thm:lag:risk}}
Write
\[
        R_k=\mathcal R_k^{\mathrm{lag}}(\theta,\delta).
\]
The strong signal regime follows from Propositions~\ref{prop:lower:lag} and
\ref{prop:upper:lag}.  We prove the weak and intermediate signal regimes.

\textbf{Weak signal regime.}
Assume that
\[
        \|\theta\|^2<\delta\wedge\frac{\log(1/\delta)}{2}.
\]
For the upper bound, take \(\widehat\eta_k=\eta_0\).  Then
\[
        \mathbb P(\widehat\eta_k\ne\eta_k)
        \le k\delta\wedge1,
\]
and therefore
\[
        R_k\lesssim
        \left(k\wedge\frac1\delta\right)\delta.
\]

For the lower bound, recall that the overlap argument of Proposition \ref{prop:lower:lag} gives, for a sufficiently
small numerical constant \(c_0>0\),
\[
        R_m\gtrsim m\delta
\]
whenever
\[
        m\delta\le c_0
        \qquad\text{and}\qquad
        \|\theta\|^2
        \le
        \frac{\log(1/(m\delta))}{5m}.
\]
If \(k\delta\le c_0\), these conditions hold with \(m=k\), after decreasing
\(c_0\) if necessary, because \(\|\theta\|^2<\delta\).  Hence
\[
        R_k\gtrsim k\delta.
\]
If $k\delta>c_0$ and $\delta<c_0/2$, choose
\(
m=\left\lfloor\frac{c_0}{\delta}\right\rfloor.
\)
Then $1\le m\le k$ and
\(
\frac{c_0}{2}\le m\delta\le c_0.
\)
Taking $c_0$ sufficiently small, the preceding overlap bound
applies and gives
\[
R_k\ge R_m\gtrsim m\delta\gtrsim1.
\]
If instead $\delta\ge c_0/2$, the one-step lower bound gives
\[
R_k\ge R_1
\ge\delta\Phi^c(\|\theta\|)
\ge\frac{c_0}{2}\Phi^c(1),
\]
because $\|\theta\|^2<\delta\le1/2$. Thus the same constant
lower bound holds in this case.
Thus
\[
        R_k\asymp
        \left(k\wedge\frac1\delta\right)\delta.
\]

\textbf{Intermediate signal regime.}
Assume that
\[
        \delta\le \|\theta\|^2
        \le
        \frac{\log(1/\delta)}{2}.
\]
Recall that
\[
        k^\star
        =
        \left\lceil
        \frac{2}{\|\theta\|^2}
        \log\left(1+\frac{\|\theta\|^2}{2\delta}\right)
        \right\rceil .
\]

For the upper bound, if \(k<k^\star\), take \(\widehat\eta_k=\eta_0\).  Then
\[
        \mathbb P(\widehat\eta_k\ne\eta_k)
        \le k\delta.
\]
If \(k\ge k^\star\), take
\[
        \widehat\eta_k
        =
        \operatorname{sign}\left(
        \left\langle
        \frac1{k^\star}\sum_{j=k-k^\star+1}^{k}X_j,
        \theta
        \right\rangle
        \right).
\]
The probability of at least one transition in the averaging window is at most
\(k^\star\delta\).  On the complementary event, all labels in the window are
constant and equal to \(\eta_k\), so
\[
\mathbb P(\widehat\eta_k\ne\eta_k)
\le
k^\star\delta
+
\Phi^c\bigl(\sqrt{k^\star}\,\|\theta\|\bigr).
\]
By the definition of \(k^\star\),
\[
        \Phi^c\bigl(\sqrt{k^\star}\,\|\theta\|\bigr)
        \le
        \exp\left(-\frac{k^\star\|\theta\|^2}{2}\right)
        \le
        \left(1+\frac{\|\theta\|^2}{2\delta}\right)^{-1}
        \lesssim
        k^\star\delta.
\]
Consequently,
\[
        R_k\lesssim (k\wedge k^\star)\delta.
\]

For the lower bound, recall again that
\[
        R_m\gtrsim m\delta
\]
whenever
\[
        m\delta\le c_0
        \qquad\text{and}\qquad
        \|\theta\|^2
        \le
        \frac{1}{5m}\log\frac{1}{m\delta}.
\]
If \(k\wedge k^\star\) is bounded by a sufficiently large numerical constant,
then the one-step bound \(R_k\ge R_1\gtrsim\delta\) already gives
\[
        R_k\gtrsim (k\wedge k^\star)\delta.
\]
We may therefore assume that \(k\wedge k^\star\) is larger than this constant.
Fix a sufficiently small numerical constant \(a>0\) and set
\[
        m=\left\lfloor a\,(k\wedge k^\star)\right\rfloor.
\]
Then
\[
        1\le m\le k,
        \qquad
        m\asymp k\wedge k^\star,
        \qquad
        m\le a k^\star.
\]
We verify explicitly that the two conditions of the overlap bound hold at
scale \(m\).

Set
\[
        x=\frac{\|\theta\|^2}{2\delta},
        \qquad
        H=\log(1+x).
\]
Since \(\|\theta\|^2\ge\delta\), we have \(x\ge1/2\).  Furthermore, throughout
the present regime,
\[
        \|\theta\|^2\le 2H.
\]
Indeed, this is immediate when \(\|\theta\|^2\le2\log(3/2)\), since
\(H\ge\log(3/2)\); otherwise the bound
\(\|\theta\|^2\le \frac12\log(1/\delta)\) implies
\(\delta\le e^{-2\|\theta\|^2}\), and hence
\[
        H
        =
        \log\left(1+\frac{\|\theta\|^2}{2\delta}\right)
        \ge
        \log\left(\frac{\|\theta\|^2}{2}e^{2\|\theta\|^2}\right)
        \ge
        \frac12\|\theta\|^2.
\]
By the definition of \(k^\star\),
\[
        k^\star\|\theta\|^2
        \le
        2H+\|\theta\|^2
        \le
        4H.
\]
Consequently,
\[
        m\|\theta\|^2
        \le
        4aH.
\]
Also,
\[
\begin{aligned}
        k^\star\delta
        &\le
        \delta+\frac{2\delta}{\|\theta\|^2}H
        =
        \delta+\frac{H}{x}.
\end{aligned}
\]
Hence
\[
        m\delta
        \le
        a k^\star\delta
        \le
        \frac{2aH}{x}.
\]
The function \(H/x=\log(1+x)/x\) is bounded on \([1/2,\infty)\).  Thus,
after choosing \(a>0\) sufficiently small,
\[
        m\delta\le c_0.
\]

It remains to verify the signal condition.  From the preceding bound,
\[
        \log\frac1{m\delta}
        \ge
        \log\frac{x}{2aH}.
\]
Since \(x=e^H-1\) and
\[
        e^H-1
        \ge He^{H/2},
\]
we obtain
\[
        \log\frac{x}{2aH}
        \ge
        \frac H2+\log\frac1{2a}.
\]
In particular, if \(a\le1/2\), then
\[
        \log\frac1{m\delta}
        \ge
        \frac H2.
\]
On the other hand,
\[
        5m\|\theta\|^2
        \le
        20aH.
\]
Choosing \(a\le1/40\), and decreasing it further if necessary to ensure
\(m\delta\le c_0\), gives
\[
        5m\|\theta\|^2
        \le
        \frac H2
        \le
        \log\frac1{m\delta}.
\]
Equivalently,
\[
        \|\theta\|^2
        \le
        \frac{1}{5m}\log\frac1{m\delta}.
\]
Thus the overlap bound applies at scale \(m\).  By monotonicity,
\[
        R_k\ge R_m\gtrsim m\delta
        \asymp
        (k\wedge k^\star)\delta.
\]
Combining the upper and lower bounds proves
\[
        R_k\asymp (k\wedge k^\star)\delta
\]
throughout the intermediate signal regime.

\subsection{On the equivalence between the Bayes risks of classification and clustering}\label{sec:equiv}
    It was shown in \cite{GKN23} that the Bayes risks of clustering and classification ensure:
  \begin{align*}
      (1 - \widetilde{\alpha}_n) \inf_{h \in \mathcal{H}_n}\ClassifRisk(\theta,h) \leq \inf_{g \in \mathcal{G}_n}\ClusterRisk(\theta,g) \leq \inf_{h \in \mathcal{H}_n}\ClassifRisk(\theta,h),
  \end{align*}
  where $\widetilde{\alpha}_n = \mathcal{O}\left(\frac{1}{n^{1/2}\delta^{5}}\right)$. This ensures that as long as $\delta \gtrsim n^{-1/10}$, the two Bayes risks are of the same magnitude. This applies also to the online Bayes risks. The following
lemma and corollary sharpen this comparison in the offline two-state setting:
they replace the preceding requirement by the weaker sufficient condition
\[
        \delta\gtrsim \sqrt{\frac{\log n}{n}} .
\]

  \begin{lemma}\label{lem:Bayes_risk_new_lower_bound}
    Under model $\eqref{eq:model_hmm}$, the Bayes risks ensure
    \begin{equation*}
        \begin{split}
            \inf_{g \in \mathcal{G}_n}\ClusterRisk(\theta, \delta, g) &\geq\left(1 - \frac{c}{\delta\sqrt{n}}\right) \inf_{h \in \mathcal{H}_n}\ClassifRisk(\theta, \delta, h) -c^{\prime}e^{-c^{\prime\prime} n \delta^2}\\
            \inf_{g\in \mathcal{G}_n} \ClusterRisk(\theta, \delta, g) &\geq \frac{1}{5} \inf_{h\in \mathcal{H}_n} \ClassifRisk(\theta, \delta, h) - \frac{1}{5}e^{-n \left(\frac{\|\theta\|^2}{8} - \log 2\right)}
        \end{split}
    \end{equation*}
    where $c$, $c^\prime$ and $c^{\prime\prime}$ are absolute positive constants.
    \end{lemma}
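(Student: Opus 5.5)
The plan is to fix an arbitrary clusterer, write it as $\pi_n\circ\widehat h$ for some $\widehat h\in\mathcal H_n$, and turn it into a classifier by choosing its global orientation from the data. It then suffices to compare the two conditional risks given $X=X_{1:n}$. Set
\[
m:=\frac1n\sum_{i=1}^n\widehat h_i\eta_i,\qquad p_i:=\PP(\eta_i=1\mid X).
\]
Then $\min_s\frac1n\sum_i\mathbf 1\{s\widehat h_i\neq\eta_i\}=(1-|m|)/2$, and $\frac1n\sum_i\mathbf 1\{\widehat h_i\neq\eta_i\}=(1-m)/2$. Define the data-driven sign
\[
\widehat s:=\sign\Bigl(\sum_i\widehat h_i(2p_i-1)\Bigr)=\sign\,\EE[m\mid X],
\]
and the classifier $h:=\widehat s\,\widehat h$. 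Its conditional classification risk is $\tfrac12\bigl(1-|\EE[m\mid X]|\bigr)$. The conditional clustering risk of $\widehat h$ is $\tfrac12\bigl(1-\EE[|m|\mid X]\bigr)$. Both claimed inequalities therefore reduce to bounding the Jensen gap
\[
\EE[|m|\mid X]-|\EE[m\mid X]|\le 2\,\EE\bigl[|m-\EE[m\mid X]|\,\mathbf 1\{\sign m\neq\widehat s\}\mid X\bigr]
\]
in terms of $R_X:=\frac1n\sum_i\min(p_i,1-p_i)$. Taking expectations then compares with $\inf_h\ClassifRisk$, since $\EE R_X$ is exactly that infimum and $\EE\,\tfrac12(1-|\EE[m\mid X]|)\ge\EE R_X$.

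\textbf{First inequality.} I would bound $\operatorname{Var}(m\mid X)$ using the fact that, conditionally on $X$, $\eta$ is still a (nonhomogeneous) Markov chain. Its Dobrushin contraction coefficient is at most $1-2\delta$, because every transition kernel keeps mass at least $\delta$ on each state. This gives
\[
|\mathrm{Cov}(\eta_i,\eta_j\mid X)|\lesssim\sqrt{\operatorname{Var}(\eta_i\mid X)\operatorname{Var}(\eta_j\mid X)}\,(1-2\delta)^{|i-j|},\qquad \operatorname{Var}(\eta_i\mid X)=4p_i(1-p_i)\le4\min(p_i,1-p_i).
\]
Summing the geometric series yields $\operatorname{Var}(m\mid X)\lesssim R_X/(n\delta)$. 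Next I split according to whether $|\EE[m\mid X]|$ is large.
\begin{itemize}
\item Suppose $|\EE[m\mid X]|\ge 1-4R_X$, which holds automatically when the orientation is well determined. A sign flip of $m$ then needs a deviation of order $1$. Chebyshev, or better a Bernstein/Hoeffding bound for Markov chains with spectral gap $\asymp\delta$, bounds the gap by $\lesssim R_X/(\delta\sqrt n)$.
\item Otherwise $R_X$ is of constant order. There the gap is at most $\sqrt{\operatorname{Var}(m\mid X)}\lesssim 1/\sqrt{n\delta}\le c R_X/(\delta\sqrt n)$, up to the event where the conditional chain fails to concentrate. That event is handled by the Markov-chain Hoeffding inequality $\PP(|m-\EE m|>t)\le 2e^{-cn\delta t^2}$ with $t\asymp\delta$, which produces the additive term $c'e^{-c''n\delta^2}$.
\end{itemize}
Taking expectations gives the first bound.

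\textbf{Second inequality.} Here I would use the strong-signal mechanism: the global sign is identifiable from the data alone. Flipping the whole path changes the likelihood by $\exp(-2\langle\theta,\sum_i\eta_iX_i\rangle)$. A union bound over the $2^n$ label configurations, combined with Gaussian tails at level $\|\theta\|^2/8$ per coordinate, shows the following: outside an event of probability $e^{-n(\|\theta\|^2/8-\log2)}$, the posterior puts all but a negligible mass on paths $\eta$ with $\sign m=\widehat s$. On that event the gap vanishes up to a factor, and crudely $\EE[|m|\mid X]-|\EE[m\mid X]|\le \tfrac45\cdot\tfrac12(1-|\EE m|)$ after comparing $1-|m|$ with $1-m$. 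This yields the factor $1/5$ and the exponential remainder.

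\textbf{Main obstacle.} I expect the hardest step to be the multiplicative form of the first inequality. It requires the fluctuation of $m$ to be controlled by $R_X$ itself, not by a constant. So the covariance bound must carry the factor $\sqrt{\operatorname{Var}_i\operatorname{Var}_j}$, meaning a weighted Dobrushin estimate for the posterior chain rather than plain mixing. If that proves delicate, I would first try conditioning on the transition positions, and bounding the flip probability of $m$ using only coordinates with $\min(p_i,1-p_i)$ nonnegligible.
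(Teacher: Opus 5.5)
Your reduction to the conditional Jensen gap $\EE[|m|\mid X]-|\EE[m\mid X]|$, compared with $\tfrac12(1-|\EE[m\mid X]|)\ge R_X$, is sound. Your heuristic for the second inequality is also the right one: the global sign is identifiable from the data, via a union bound and Gaussian tails at level $\|\theta\|^2/8$. But the argument as written has a genuine gap.

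\textbf{Second inequality: the gap.} You claim that, off an event of probability $e^{-n(\|\theta\|^2/8-\log 2)}$, the posterior puts negligible mass on $\{\sign m\neq\widehat s\}$. This cannot hold for an arbitrary $\widehat h$. When $\widehat h$ is poorly aligned with the path, $|m|$ is small and its sign is not pinned down by $X$. This happens with probability far larger than $e^{-n(\|\theta\|^2/8-\log 2)}$. Such $X$ are harmless only because the clustering loss is large there, and the proof must use exactly that.

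Moreover, an additive bound on the wrong-sign mass cannot give the multiplicative conditional inequality $\mathrm{gap}\le\frac45\cdot\frac12(1-|\EE m|)$, because the conditional risk may be exponentially small. That step is asserted, not derived.

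The missing mechanism is the paper's, in three steps.
\begin{enumerate}
\item Split on $\{\ell(\widehat h,\eta)>1/4\}=\{|m|<1/2\}$. By Markov's inequality its probability is at most $4\,\EE\ell$; this is where the constant $1/5$ comes from.
\item Orient with the explicit statistic $\hat s=\sign\sum_i\widehat h_i\langle X_i,\theta\rangle$ instead of the posterior sign. Any $X$-measurable sign yields a classifier, so nothing is lost in bounding $\inf_h\ClassifRisk(\theta,\delta,h)$.
\item On $\{|m|\ge 1/2\}$, union bound over the $2^n$ possible \emph{values of $\widehat h(X)$}, not over latent paths. This decouples the data-dependent labels from the noise. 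For fixed $h$, $\sum_i h_i\langle\xi_i,\theta\rangle\sim\mathcal N(0,n\|\theta\|^2)$ independently of $\eta$, so a sign error on $\{|\langle h,\eta\rangle|\ge n/2\}$ has probability at most $e^{-n\|\theta\|^2/8}$.
\end{enumerate}
Together these give $\inf_h\ClassifRisk(\theta,\delta,h)\le 5\inf_g\ClusterRisk(\theta,\delta,g)+e^{-n(\|\theta\|^2/8-\log 2)}$.

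\textbf{First inequality: a different route.} The paper simply invokes \cite[Theorem~11]{GKN23}. Your route can be made to work, and it even gives more, but two steps need correcting.

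(i) The posterior kernels $K_t(a,b)=\PP(\eta_{t+1}=b\mid\eta_t=a,X)$ do \emph{not} keep mass $\ge\delta$ on each state: an $X_{t+1}$ strongly favouring $a$ makes $K_t(a,-a)\ll\delta$. The correct reason is that $K_t=D^{-1}QW$ is a Doob transform of $Q$ by positive weights $w$. Hence $\det K_t=(1-2\delta)w_+w_-/\bigl((Qw)_+(Qw)_-\bigr)\in[0,1-2\delta]$. For two states, $\EE[\eta_j\mid\eta_i,X]$ is affine in $\eta_i$ with slope $\prod_{t=i}^{j-1}\det K_t$. So for $i<j$, exactly, $\mathrm{Cov}(\eta_i,\eta_j\mid X)=\prod_{t=i}^{j-1}\det K_t\,\mathrm{Var}(\eta_i\mid X)$. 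This is the weighted estimate you flagged as the main obstacle, and it yields $\mathrm{Var}(m\mid X)\le 4R_X/(n\delta)$.

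(ii) ``Otherwise $R_X$ is of constant order'' does not follow from $|\EE m|<1-4R_X$: an arbitrary $\widehat h$ can be badly aligned while $R_X$ is tiny. Split instead on whether $|\EE[m\mid X]|\ge 1/2$.
\begin{itemize}
\item If $|\EE[m\mid X]|\ge 1/2$, a sign flip forces $|m-\EE m|\ge 1/2$, so the gap is at most $4\,\mathrm{Var}(m\mid X)\lesssim R_X/(n\delta)$.
\item If $|\EE[m\mid X]|<1/2$, it is the oriented risk $\frac12(1-|\EE m|)\ge\max(1/4,R_X)$ that is of constant order. The gap is at most $\sqrt{\mathrm{Var}(m\mid X)}\lesssim(n\delta)^{-1/2}$, a small fraction of it.
\end{itemize}
No Markov-chain Hoeffding inequality is then needed. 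You get the self-contained bound $\inf_g\ClusterRisk(\theta,\delta,g)\ge(1-C/\sqrt{n\delta})\inf_h\ClassifRisk(\theta,\delta,h)$ with no additive term. This is stronger than the stated first inequality, since $\sqrt{n\delta}\ge\delta\sqrt n$. On its own it would give the equivalence in the following corollary as soon as $n\delta$ exceeds a universal constant.
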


\begin{proof}
In what follows, when the indices are not specified, $X$ and $\eta$ stand for $X_{1:n}$ and $\eta_{1:n}$.\\
The first inequality follows from \cite[Theorem~11]{GKN23}. We focus on the second inequality. If $\theta=0$, the claimed bound is trivial. Hence, assume in the sequel that $\theta\neq0$.
For the Bayes clusterer (a minimizer of the risk of clustering) $g^\star_{\theta, \delta} \in \mathcal{G}_n$, there exists a classifier $h^{\star}_{\theta, \delta}\in\mathcal{H}_n$ such that $g^\star_{\theta, \delta} = \pi_n\circ h^\star_{\theta, \delta}$ (See Section \ref{sec:setting_def} for notations). The Bayes risk of clustering ensures 
$$
\inf_{g\in \mathcal{G}_n} \ClusterRisk(\theta, \delta, g) = \EE\left[\ell\left(h^{\star}_{\theta, \delta}(X), \eta\right)\right].
$$ 
In what follows, we omit $\theta$ and $\delta$ from the notation of $g^\star_{\theta, \delta}$ and $h^\star_{\theta, \delta}$ for simplicity.
Define
\begin{equation*}
    s^\star(X, \eta) := \argmin_{s \in \{-1,1\}} \frac{1}{n}\sum_{i=1}^n \mathbf{1}_{h^*_{i}(X) \neq s\eta_i}
\end{equation*}
as being the oracle sign that aligns the optimal classifier to the true hidden labels. Note that $s^\star(X, \eta) = \text{sign}\left( \sum_{i=1}^n \eta_i h_i^*(X) \right)$. This can be naturally estimated by projecting the observations onto the signal direction $\theta$:
\[
\hat{s}(X) = \text{sign}\left( \sum_{i=1}^n \langle X_i,\theta\rangle h_i^*(X) \right).
\]
We then define a plug-in classifier $\hat{h}(X) = \hat{s}(X) \cdot h^*(X)$. One has:
\begin{equation*}
    \begin{split}
        \inf_{h\in \mathcal{H}_n} \ClassifRisk(\theta, \delta, h) &\leq \EE\left[\frac{1}{n}\sum_{i=1}^n\mathbf{1}(\hat{h}_i(X)\ne \eta_i)\right]\\
        &\leq \EE\left[\frac{1}{n}\sum_{i=1}^n\mathbf{1}(s^\star(X, \eta) h_i^{\star}(X)\ne \eta_i)\right] + \PP\left(\hat{s}(X) \neq s^\star(X, \eta)\right)\\
        &= \inf_{g\in \mathcal{G}_n} \ClusterRisk(\theta, \delta, g) + \PP\left(\hat{s}(X) \neq s^\star(X, \eta)\right).
    \end{split}
\end{equation*}
Note that 
\begin{equation}
\label{eq:sign_control}
    \PP(\hat{s}(X) \neq s^*(X, \eta)) \le \PP\left(|\langle h^{\star}(X), \eta\rangle| < \frac{n}{2}\right) + \PP\left(\left\{\hat{s}(X) \neq s^\star(X, \eta)\right\} \cap \left\{|\langle h^{\star}(X), \eta\rangle| \ge \frac{n}{2}\right\}\right).
\end{equation}
For the first term of \eqref{eq:sign_control}, since 
\begin{equation*}
    \begin{split}
        |\langle h^{\star}(X), \eta\rangle| &= |n - 2\sum_{i=1}^{n}\mathbf{1}_{h^{\star}(X)\neq \eta_i}|
        = n(1 - 2\ell\left(h^{\star}(X), \eta\right)),
    \end{split}
\end{equation*}
one has
\[
\PP\left(|\langle h^{\star}(X), \eta\rangle| < \frac{n}{2}\right) = \PP\left(\ell\left(h^{\star}(X), \eta\right) > \frac{1}{4}\right) \le 4\EE[\ell\left(h^{\star}(X), \eta\right)] = 4 \inf_{g\in \mathcal{G}_n} \ClusterRisk(\theta, \delta, g).
\]
For the second term, since $\text{sign}\left(\langle h^{\star}(X), \eta\rangle\right) = s^\star(X, \eta)$ and $X_i=\theta\eta_i+\xi_i$, 
\begin{equation*}
    \begin{split}
        \{\hat{s}(X) \neq s^\star(X, \eta)\} &= \left\{\text{sign}\left(\langle h^{\star}(X), \eta\rangle\right) \sum_{i=1}^n \langle X_i,\theta\rangle h_i^\star(X) \le 0\right\}\\
         &=\left\{\|\theta\|^2 |\langle h^{\star}(X), \eta\rangle| + \text{sign}\left(\langle h^{\star}(X), \eta\rangle\right) \sum_{i=1}^n \langle \xi_i,\theta\rangle h_i^\star(X) \le 0\right\}.
    \end{split}
\end{equation*}
To control the second term, we consider a union bound over all the $2^n$ possible values of $h^{\star}(X)$ in order to avoid dealing with dependent terms:
\begin{equation*}
    \begin{split}
        &\PP\left(\left\{\hat{s}(X) \neq s^\star(X, \eta)\right\} \cap \left\{|\langle h^{\star}(X), \eta\rangle| \ge \frac{n}{2}\right\}\right)\\
        &\leq\sum_{h\in\left\{-1, 1\right\}^n}\PP\left( \left\{\text{sign}(\langle h, \eta\rangle) \sum_{i=1}^n \langle \xi_i,\theta\rangle h_i \le -\|\theta\|^2 |\langle h, \eta\rangle|\right\}\cap\left\{|\langle h, \eta\rangle| \ge \frac{n}{2}\right\}\right).
    \end{split}
\end{equation*}
For any fixed $h \in \left\{-1, 1\right\}^n$,
\begin{equation*}
    \text{sign}(\langle h, \eta\rangle) \sum_{i=1}^n \langle \xi_i,\theta\rangle h_i \mid \eta \sim \mathcal{N}(0, n\|\theta\|^2).
\end{equation*}
Thus, on the event $\left\{|\langle h, \eta\rangle| \ge \frac{n}{2}\right\}$,
\begin{equation*}
    \begin{split}
        \PP\left(\text{sign}(\langle h, \eta\rangle) \sum_{i=1}^n \langle \xi_i,\theta\rangle h_i \le -\|\theta\|^2 |\langle h, \eta\rangle| \mid\eta\right) &\le e^{-\frac{n\|\theta\|^2}{8}}.
    \end{split}
\end{equation*}
Consequently,
\[
\PP\left(\left\{\hat{s}(X) \neq s^\star(X, \eta)\right\} \cap \left\{|\langle h^{\star}(X), \eta\rangle| \ge \frac{n}{2}\right\}\right) \le e^{-n \left(\frac{\|\theta\|^2}{8} - \log 2\right)}.
\]
We finally obtain
\begin{equation*}
    \inf_{h\in \mathcal{H}_n} \ClassifRisk(\theta, \delta, h) \le 5 \inf_{g\in \mathcal{G}_n} \ClusterRisk(\theta, \delta, g) + e^{-n \left(\frac{\|\theta\|^2}{8} - \log 2\right)}.
\end{equation*}

\end{proof}
    \begin{corollary}
\label{prop:equivalence-from-two-comparisons}
 Suppose that
\[
    n\delta^2 \geq C\log\!\left(\frac1\delta\right)
\]
for a sufficiently large universal constant $C>0$. Then there exist universal
constants $0<c<\infty$ such that,
\[
    c\,\inf_{h\in\mathcal H_n}\ClassifRisk(\theta,\delta,h)
    \leq
    \inf_{g\in\mathcal G_n}\ClusterRisk(\theta,\delta,g)
    \leq
    \inf_{h\in\mathcal H_n}\ClassifRisk(\theta,\delta,h).
\]
In particular, the equivalence holds when $\delta \gtrsim\sqrt{\frac{\log(n)}{n}}$.
\end{corollary}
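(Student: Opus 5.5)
The upper bound $\inf_g\ClusterRisk\le\inf_h\ClassifRisk$ is immediate. Every classifier $h$ induces the clusterer $\pi_n\circ h$. By \eqref{eq:risk-clustering:classif:slow} its clustering risk is the expected Hamming loss minimized over the global sign, which is at most the classification risk of $h$. So the work is in the lower bound, which I would obtain by combining the two inequalities of Lemma~\ref{lem:Bayes_risk_new_lower_bound}. Write $R^{\rm cl}$ and $R^{\rm cs}$ for the two Bayes risks.

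\textbf{Step 1: a universal lower bound on $R^{\rm cs}$.} I need the additive error terms to be negligible relative to $R^{\rm cs}$. Revealing $\eta_{i-1}$ cannot increase the risk at index $i$, so $R^{\rm cs}\ge \mathcal R_1^{\rm lag}$ for every $\theta$. Here $\mathcal R_1^{\rm lag}$ is understood with the future observations also revealed; alternatively, I would use the one-step computation of Proposition~\ref{prop:lower:lag}, adapted by also conditioning on $\eta_{i+1}$. This gives $R^{\rm cs}\gtrsim \delta\,\Phi^c(c\|\theta\|+C)$ in general and $R^{\rm cs}\gtrsim\delta$ for bounded $\|\theta\|$. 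More crudely but safely, there is always the event that a transition occurs at an interior position $i$, which has probability $\asymp\delta$. On it the posterior for $\eta_i$ given $\eta_{i\pm1}$ and all data is nontrivial, so $R^{\rm cs}\gtrsim \delta\, e^{-C\|\theta\|^2}$.

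\textbf{Step 2: case split on $\|\theta\|^2$.}
\begin{itemize}
\item \emph{Small signal, $\|\theta\|^2\le C_1\log(1/\delta)$ with $C_1$ moderate.} Use the first inequality of the lemma. Under $n\delta^2\ge C\log(1/\delta)$ we have $\delta\sqrt n\ge \sqrt{C\log(1/\delta)}$, which is large, so $1-c/(\delta\sqrt n)\ge 1/2$. The additive term satisfies $e^{-c''n\delta^2}\le \delta^{c''C}$. Step 1 gives $R^{\rm cs}\gtrsim \delta\, e^{-CC_1\log(1/\delta)}=\delta^{1+CC_1}$. Taking $C$ large relative to $C_1$ and the Step 1 constant, the term $\delta^{c''C}$ is at most $\tfrac14 R^{\rm cs}$. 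Hence $R^{\rm cl}\ge \tfrac14 R^{\rm cs}$.
\item \emph{Large signal, $\|\theta\|^2\ge C_1\log(1/\delta)$.} Use the second inequality of the lemma. Its additive term is $e^{-n(\|\theta\|^2/8-\log2)}$, and I need it to be at most $\tfrac1{10}R^{\rm cs}$, where $R^{\rm cs}\gtrsim \delta e^{-C\|\theta\|^2}$. For $\|\theta\|^2\ge 16$ the exponent is at least $n\|\theta\|^2/16$. Since $n\ge C\log(1/\delta)/\delta^2\ge 32C$ by the hypothesis, this exponent dominates both $C\|\theta\|^2$ and $\log(1/\delta)$. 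Hence $R^{\rm cl}\ge \tfrac1{10}R^{\rm cs}$.
\end{itemize}
Choosing $C_1\ge 16/\log 2$ ensures the two cases overlap, since $\log(1/\delta)\ge\log 2$.

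\textbf{Main obstacle.} The delicate point is Step 1: I need a lower bound on the offline classification Bayes risk with an explicit Gaussian-type decay $\delta e^{-C\|\theta\|^2}$, valid for all $\theta$, and the paper so far provides this only for the lag and online problems. I would prove it by the genie argument. Reveal all labels except $\eta_i$. Then the posterior odds of $\eta_i$ depend only on $\langle X_i,\theta\rangle$ and on whether $\eta_{i-1}$ and $\eta_{i+1}$ agree. On the event $\eta_{i-1}\ne\eta_{i+1}$, which has probability $\asymp\delta$, the two hypotheses have equal prior weight. The error there is therefore $\Phi^c(\|\theta\|)\ge e^{-C\|\theta\|^2}$ up to constants, which suffices. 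The final statement about $\delta\gtrsim\sqrt{\log n/n}$ follows because then $n\delta^2\gtrsim\log n\ge\log(1/\delta)$, since $\delta\ge n^{-1/2}$.
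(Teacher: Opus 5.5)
Your proposal is correct and follows essentially the paper's proof. The paper also lower-bounds the classification Bayes risk by $\delta\,\Phi^c(\|\theta\|)$, by revealing $\eta_{-i}$ and restricting to $\{\eta_{i-1}\neq\eta_{i+1}\}$, and then feeds this into the first inequality of Lemma~\ref{lem:Bayes_risk_new_lower_bound} for moderate signal and into the second inequality for large signal.

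The differences are cosmetic. You split at $\|\theta\|^2\asymp\log(1/\delta)$ rather than at the constant $8\log 2+1$, so your small-signal case absorbs $e^{-c''n\delta^2}\le\delta^{c''C}$ against $\delta^{1+O(C_1)}$ instead of against $\delta$. Also, the hypothesis only gives $n\ge 4C\log 2$, not $n\ge 32C$; this is harmless because $C$ is free.
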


\begin{proof}
The upper bound is immediate. It remains to prove the reverse inequality up to a universal constant.
By Lemma~\ref{lem:Bayes_risk_new_lower_bound},
\begin{equation}
\label{eq:first-comparison-after-lemma}
    \inf_{g\in\mathcal G_n}\ClusterRisk(\theta,\delta,g)
    \geq
    \left(1-\frac{c}{\delta\sqrt n}\right)
    \inf_{h\in\mathcal H_n}\ClassifRisk(\theta,\delta,h)
    -c'e^{-c''n\delta^2},
\end{equation}
and
\begin{equation}
\label{eq:second-comparison-after-lemma}
    \inf_{g\in\mathcal G_n}\ClusterRisk(\theta,\delta,g)
    \geq
    \frac15 \inf_{h\in\mathcal H_n}\ClassifRisk(\theta,\delta,h)
    -\frac15 e^{-n\left(\frac{\|\theta\|^2}{8}-\log 2\right)}.
\end{equation}
Choose $C$ large enough so that
\[
    \frac{c}{\delta\sqrt n}\leq \frac14
    \qquad\text{and}\qquad
    c'e^{-c''n\delta^2}\leq c_1\delta,
\]
where $c_1>0$ is a sufficiently small universal constant. This is possible
because $n\delta^2\geq C\log(1/\delta)$.

We first consider the range $\|\theta\|^2\leq 8\log 2+1$.
In this range, the same argument in the proof of Theorem \ref{Bayes_risk_lb_offline} (See Equations \eqref{eq1:lb:offline} and \eqref{eq2:lb:offline}) implies
\[
    \inf_{h\in\mathcal H_n}\ClassifRisk(\theta,\delta,h)
    \gtrsim
    \delta.
\]
Thus, by taking $c_1$ sufficiently small,
\[
    c'e^{-c''n\delta^2}
    \leq
    \frac14 \inf_{h\in\mathcal H_n}\ClassifRisk(\theta,\delta,h).
\]
Plugging this into \eqref{eq:first-comparison-after-lemma} gives
\[
    \inf_{g\in\mathcal G_n}\ClusterRisk(\theta,\delta,g)
    \geq
    \frac12 \inf_{h\in\mathcal H_n}\ClassifRisk(\theta,\delta,h)
\]
after changing universal constants.

It remains to treat the range \(\|\theta\|^2>8\log 2+1\). Since \(\|\theta\|^2>8\log 2+1\), there exists a universal
constant \(\alpha>0\) such that
\[
    \frac{\|\theta\|^2}{8}-\log 2 \geq \alpha \|\theta\|^2.
\]
Hence
\[
    e^{-n\left(\frac{\|\theta\|^2}{8}-\log 2\right)}
    \leq e^{-\alpha n\|\theta\|^2}.
\]
On the other hand, the same argument used in the proof of Theorem~\ref{Bayes_risk_lb_offline} (See Equations \eqref{eq1:lb:offline} and \eqref{eq2:lb:offline}) implies
\[
    \inf_{h\in\mathcal H_n}\ClassifRisk(\theta,\delta,h)
    \geq
    c_0\delta
    \frac{\|\theta\|}{\|\theta\|^2+1}
    e^{-\|\theta\|^2/2}.
\]
Choosing the universal constant \(C\) in the condition
\(n\delta^2\geq C\log(1/\delta)\) sufficiently large, we have
\[
    e^{-\alpha n\|\theta\|^2}
    \leq
    \frac{c_0}{2}\delta
    \frac{\|\theta\|}{\|\theta\|^2+1}
    e^{-\|\theta\|^2/2}.
\]
Consequently,
\[
    \frac15
    e^{-n\left(\frac{\|\theta\|^2}{8}-\log 2\right)}
    \leq e^{-\alpha n\|\theta\|^2}
    \leq
    \frac1{10}
    \inf_{h\in\mathcal H_n}\ClassifRisk(\theta,\delta,h).
\]
Using \eqref{eq:second-comparison-after-lemma}, we obtain
\[
    \inf_{g\in\mathcal G_n}\ClusterRisk(\theta,\delta,g)
    \geq
    \frac1{10}\inf_{h\in\mathcal H_n}\ClassifRisk(\theta,\delta,h).
\]
Combining the bounded-signal and strong-signal cases yields
\[
    \inf_{g\in\mathcal G_n}\ClusterRisk(\theta,\delta,g)
    \geq
    c \inf_{h\in\mathcal H_n}\ClassifRisk(\theta,\delta,h)
\]
for a universal constant $c>0$. 
\end{proof}
    Since it is easier to study the Bayes risk of classification, we will focus in the offline clustering lower-bound proofs on the regime $\delta\gtrsim\sqrt{\frac{\log(n)}{n}}$: in this case, the bounds on the Bayes risk of classification can be directly extrapolated to the Bayes risk of clustering.

\subsection{Proof of Theorem~\ref{Bayes_risk_lb_online}}
Let
\[
        R_k=\mathcal R_k^{\mathrm{lag}}(\theta,\delta)
\]
denote the Bayes risk of the
\(k\)-lag prediction problem.  We first lower bound the online classification
risk in terms of these lag risks.  By stationarity of the hidden chain, we may
extend the process to two-sided time.  Fix an integer \(1\le k<n\).  Then
\begin{align*}
\inf_{\mathfrak h\in\mathcal H^{\mathrm{on}}}
\ClassifRiskOn(\theta,\delta,\mathfrak h)
&\ge
\frac1n\sum_{i=k+1}^n
\inf_{\widehat\eta_i(\eta_{i-k},X_{1:i})}
\mathbb P\bigl(\widehat\eta_i(\eta_{i-k},X_{1:i})\ne\eta_i\bigr).
\end{align*}
By stationarity,
\begin{equation}
\label{eq:online-class-lag-lower}
\inf_{\mathfrak h\in\mathcal H^{\mathrm{on}}}
\ClassifRiskOn(\theta,\delta,\mathfrak h)
\ge
\left(1-\frac{k}{n}\right)R_k .
\end{equation}

We now choose the lag.  The overlap bound established in
Proposition~\ref{prop:lower:lag} implies that there are numerical constants
\(c_0,c_1>0\) such that, whenever
\[
        k\delta\le c_0
        \qquad\text{and}\qquad
        \|\theta\|^2\le c_1\frac{\log(1/(k\delta))}{k},
\]
one has \(R_k\gtrsim k\delta\).  In particular, if
\(\|\theta\|^2\le 2\delta\), choose \(k\asymp1/\delta\) with
\(k\delta\le c_0\).  Then the preceding condition is satisfied, and
\(R_k\gtrsim1\).  Since \(\delta\ge c n^{-1/10}\), this choice has
\(k=o(n)\), and \eqref{eq:online-class-lag-lower} gives
\[
\inf_{\mathfrak h\in\mathcal H^{\mathrm{on}}}
\ClassifRiskOn(\theta,\delta,\mathfrak h)
\gtrsim 1 .
\]

Next suppose $2\delta<\|\theta\|^2\le \frac{\log(1/\delta)}{2}$.
If $\frac{1}{\|\theta\|^2}\left(\log\left(\frac{\|\theta\|^2}{2\delta}\right)+1\right)$ is bounded, then the one-step lower bound gives \(R_1\gtrsim\delta\), which is
already of the desired order.  We may therefore assume that this quantity is
larger than a sufficiently large numerical constant.

Fix a sufficiently small absolute constant \(a>0\), and choose
\[
        k
        =
        \left\lfloor
        \frac{a}{\|\theta\|^2}
        \left(\log\left(\frac{\|\theta\|^2}{2\delta}\right)+1\right)
        \right\rfloor.
\]
Under the preceding assumption, the expression inside the floor is at least
\(2\), after increasing the numerical threshold if necessary.  Hence
\[
        k
        \asymp
        \frac{1}{\|\theta\|^2}
        \left(\log\left(\frac{\|\theta\|^2}{2\delta}\right)+1\right).
\]

We now verify explicitly the two conditions required by the overlap bound.

Since $\frac{\|\theta\|^2}{2\delta}>1$,
\[
k\delta\leq a\frac{\delta}{\|\theta\|^2}
        \left(\log\left(\frac{\|\theta\|^2}{2\delta}\right)+1\right) \leq \frac{a}{2}
\]
Thus \(k\delta\le c_0\) after choosing \(a\le2c_0\).

Let $x = \frac{\|\theta\|^2}{2\delta}$ and $u = \log(x)$. The same upper bound on \(k\delta\) gives
\[
        \frac1{k\delta}
        \ge
        \frac{2x}{a(1+\log x)},
\]
and hence
\[
\begin{aligned}
        \log\frac1{k\delta}
        &\ge
        \log\left(\frac{2x}{a(1+\log x)}\right) =
        u-\log(1+u)+\log\frac2a.
\end{aligned}
\]
We claim that, for \(a\le1\), the last expression is at least
\(\frac12(1+u)\).  Indeed,
\[
\begin{aligned}
&u-\log(1+u)+\log\frac2a-\frac12(1+u) = \frac u2-\log(1+u)+\log\frac2a-\frac12.
\end{aligned}
\]
The function $u\longmapsto \frac u2-\log(1+u)$ attains its minimum over \([0,\infty)\) at \(u=1\), where its value is
\(\frac12-\log2\).  Consequently,
\[
\begin{aligned}
\frac u2-\log(1+u)+\log\frac2a-\frac12
&\ge
\left(\frac12-\log2\right)+\log\frac2a-\frac12 =
\log\frac1a
\ge0.
\end{aligned}
\]
It follows that
\[
        \log\frac1{k\delta}
        \ge
        \frac12
        \left(\log\left(\frac{\|\theta\|^2}{2\delta}\right)+1\right).
\]
On the other hand, the definition of \(k\) gives
\[
        k\|\theta\|^2
        \le
        a\left(\log\left(\frac{\|\theta\|^2}{2\delta}\right)+1\right).
\]
Therefore, after choosing \(a\le1/10\),
\[
\begin{aligned}
        5k\|\theta\|^2 \le
        5a\left(\log\left(\frac{\|\theta\|^2}{2\delta}\right)+1\right) \le
        \frac12
        \left(\log\left(\frac{\|\theta\|^2}{2\delta}\right)+1\right) \le
        \log\frac1{k\delta}.
\end{aligned}
\]
Equivalently,
\[
        \|\theta\|^2
        \le
        \frac{1}{5k}\log\frac1{k\delta}.
\]
Thus both conditions of the overlap bound hold at the chosen scale \(k\), and
we obtain
\[
        R_k
        \gtrsim
        k\delta
        \asymp
        \frac{\delta}{\|\theta\|^2}
        \left(\log\left(\frac{\|\theta\|^2}{2\delta}\right)+1\right).
\]
Since \(\delta\ge c n^{-1/10}\),
\[
        k
        \lesssim
        \frac{1}{\delta}\log\frac1\delta
        =o(n).
\]
Applying
\eqref{eq:online-class-lag-lower}, we obtain
\[
\inf_{\mathfrak h\in\mathcal H^{\mathrm{on}}}
\ClassifRiskOn(\theta,\delta,\mathfrak h)
\gtrsim
\frac{\delta}{\|\theta\|^2}
\left(\log\left(\frac{\|\theta\|^2}{2\delta}\right)+1\right).
\]

Finally, in the strong-signal regime $\|\theta\|^2\ge \frac{\log(1/\delta)}{2}$,
we take \(k=1\) in \eqref{eq:online-class-lag-lower}.  The one-step testing
bound from Proposition~\ref{prop:lower:lag} yields
\[
        R_1
        \gtrsim
        \delta\,
        \Phi^c\left(
        \|\theta\|-
        \frac{\log(1/\delta)}{2\|\theta\|}
        \right),
\]
and hence
\[
\inf_{\mathfrak h\in\mathcal H^{\mathrm{on}}}
\ClassifRiskOn(\theta,
\delta,\mathfrak h)
\gtrsim
\delta\,
\Phi^c\left(
\|\theta\|-
\frac{\log(1/\delta)}{2\|\theta\|}
\right).
\]

It remains only to pass from classification to clustering.  By the equivalence
between the online Bayes risks of classification and clustering recalled in
Section~\ref{sec:equiv},
\[
\inf_{\mathfrak g\in\mathcal G^{\mathrm{on}}}
\ClusterRiskOn(\theta,\delta,\mathfrak g)
\ge
(1-\widetilde\alpha_n)
\inf_{\mathfrak h\in\mathcal H^{\mathrm{on}}}
\ClassifRiskOn(\theta,\delta,\mathfrak h),
\]
where \(\widetilde\alpha_n=O((\sqrt n\,\delta^5)^{-1})\).  Under the assumption
\(\delta\ge c n^{-1/10}\), with \(c\) sufficiently large, we have
\(\widetilde\alpha_n\le1/2\).  Combining this comparison with the three
classification lower bounds above proves the theorem.

        \subsection{Proof of Theorem~\ref{Bayes_risk_ub_online}}
First, note that
\[
\ClusterRiskOn(\theta,\delta,\pi_n\circ\widehat\eta)
\le
\frac1n\sum_{i=1}^n
\mathbb P(\widehat\eta_i\ne\eta_i).
\]

\textbf{Intermediate signal regime.}
Assume $2\delta<\|\theta\|^2
        <
        \frac{\log(1/\delta)}{2}.$
Take
\[
        k
        =
        \left\lceil
        \frac{2}{\|\theta\|^2}
        \left(
        \log\left(\frac{\|\theta\|^2}{2\delta}\right)+1
        \right)
        \right\rceil.
\]
If \(k\ge n\), then the definition of
\(k\), together with \(\delta\ge n^{-1}\), implies that
\[
        \frac{\delta}{\|\theta\|^2}
        \left(
        \log\left(\frac{\|\theta\|^2}{2\delta}\right)+1
        \right)
        \gtrsim1,
\]
so the claimed upper bound follows from the trivial bound by one.  We may
therefore assume that \(k<n\). For \(i\le k\), the pointwise rule gives
\[
        \mathbb P(\widehat\eta_i\ne\eta_i)
        =
        \Phi^c(\|\theta\|)
        \le
        \exp\left(-\frac{\|\theta\|^2}{2}\right).
\]
For \(i>k\), let \(A_i\) be the event that the hidden chain has at least one
transition in the window \(\{i-k+1,\ldots,i\}\).  Then
\[
        \mathbb P(A_i)\le k\delta.
\]
On \(A_i^c\), all labels in the window are equal to \(\eta_i\), and therefore
\begin{align*}
\mathbb P(\widehat\eta_i\ne\eta_i)
&\le
k\delta
+
\mathbb P\left(
\left\langle
\frac1k\sum_{j=i-k+1}^i\xi_j,
\frac{\theta}{\|\theta\|}
\right\rangle
+\|\theta\|
\le0
\right) \le
k\delta+
\exp\left(-\frac{k\|\theta\|^2}{2}\right).
\end{align*}
Consequently,
\begin{align*}
\frac1n\sum_{i=1}^n
\mathbb P(\widehat\eta_i\ne\eta_i)
&\le
\frac{k}{n}\exp\left(-\frac{\|\theta\|^2}{2}\right)
+k\delta
+
\exp\left(-\frac{k\|\theta\|^2}{2}\right).
\end{align*}
Since \(n^{-1}\le\delta\), the first term is at most \(k\delta\).  Moreover,
by the definition of \(k\),
\[
        k\delta
        \lesssim
        \frac{\delta}{\|\theta\|^2}
        \left(
        \log\left(\frac{\|\theta\|^2}{2\delta}\right)+1
        \right)
\]
and
\[
        \exp\left(-\frac{k\|\theta\|^2}{2}\right)
        \le
        e^{-1}\frac{2\delta}{\|\theta\|^2}
        \lesssim
        \frac{\delta}{\|\theta\|^2}
        \left(
        \log\frac{\|\theta\|^2}{2\delta}+1
        \right).
\]
It follows that
\[
\ClusterRiskOn(\theta,\delta,\pi_n\circ\widehat\eta)
\lesssim
\frac{\delta}{\|\theta\|^2}
\left(
\log\left(\frac{\|\theta\|^2}{2\delta}\right)+1
\right).
\]

\textbf{Strong signal regime.}
Assume $\|\theta\|^2
        \ge
        \frac{\log(1/\delta)}{2}.$
Put
\[
        b=\frac12\log\left(\frac{1-\delta}{\delta}\right),
        \qquad
        q=
        \Phi^c\left(
        \|\theta\|-
        \frac{\log(1/\delta)}{2\|\theta\|}
        \right),
        \qquad
        e_i=\mathbb P(\widehat\eta_i\ne\eta_i).
\]
The initialization \(\widehat\eta_1=\operatorname{sign}\left(\langle X_1,\theta\rangle\right)\)
satisfies $e_1=\Phi^c(\|\theta\|)\le q$.
For \(i\ge2\), conditionally on the past and on the hidden transition at time
\(i\), the noise \(\xi_i\) is independent and Gaussian.  If
\(\widehat\eta_{i-1}=\eta_i\), the conditional error probability is
$\Phi^c\left(\|\theta\|+\frac{b}{\|\theta\|}\right)$, whereas if \(\widehat\eta_{i-1}\ne\eta_i\), it is \(\Phi^c\left(\|\theta\|-\frac{b}{\|\theta\|}\right)\).
Since
\[
        \mathbb P(\widehat\eta_{i-1}\ne\eta_i)
        \le e_{i-1}+\delta,
\]
we obtain
\[
        e_i
        \le
        \Phi^c\left(\|\theta\|+\frac{b}{\|\theta\|}\right)
        +(e_{i-1}+\delta)
        \Phi^c\left(\|\theta\|-\frac{b}{\|\theta\|}\right).
\]
The Gaussian-tail comparisons established in the proof of the strong-signal
lag bound give
\[
        \Phi^c\left(\|\theta\|+\frac{b}{\|\theta\|}\right)
        \lesssim \delta q
\]
and, since \(\log((1-\delta)/\delta)\le\log(1/\delta)\),
\[
        \Phi^c\left(\|\theta\|-\frac{b}{\|\theta\|}\right)
        \le q.
\]
Thus, for a numerical constant \(C>0\),
\[
        e_i\le C\delta q+q e_{i-1}.
\]
In the strong-signal regime the argument of \(q\) is nonnegative, so
\(q\le1/2\).  Iterating the preceding inequality yields
\[
        e_i
        \le
        2C\delta q+q^{i-1}e_1.
\]
Averaging over \(i\) gives
\begin{align*}
\frac1n\sum_{i=1}^n e_i
&\le
2C\delta q
+
\frac{e_1}{n}\sum_{i=1}^n q^{i-1} \le
2C\delta q+
\frac{2e_1}{n} \le
2C\delta q+2\delta q
\lesssim
\delta q,
\end{align*}
where the last inequality uses \(n^{-1}\le\delta\) and \(e_1\le q\).
Therefore
\[
\ClusterRiskOn(\theta,\delta,\pi_n\circ\widehat\eta)
\lesssim
\delta\,\Phi^c\left(
\|\theta\|-
\frac{\log(1/\delta)}{2\|\theta\|}
\right).
\]
Combining the three regimes completes the proof.

\subsection{Proof of Theorem~\ref{Bayes_risk_lb_offline}}

By Corollary \ref{prop:equivalence-from-two-comparisons}, it is enough
to prove the corresponding lower bound for the oracle classification risk.

\textbf{Strongsignal regime.}
For \(1\le i\le n\), write
\(\eta_{-i}=(\eta_j)_{j\ne i}\).
Revealing \(\eta_{-i}\) can only decrease the Bayes risk. Moreover,
conditionally on \(\eta_{-i}\), the observations \(X_j\), \(j\ne i\), carry
no additional information about \(\eta_i\). Hence
\begin{equation}
\label{eq1:lb:offline}
    \inf_{h\in\mathcal H_n}
\ClassifRisk(\theta,\delta,h)
\ge
\frac1n\sum_{i=1}^n B_i,
\end{equation}

where
\(
B_i
=
\mathbb E\left[
\min\left\{
\mathbb P(\eta_i=1\mid X_i,\eta_{-i}),
\mathbb P(\eta_i=-1\mid X_i,\eta_{-i})
\right\}
\right].
\)
Let \(f_\mu\) denote the \(\mathcal N(\mu,I_d)\) density. We first compute
its overlap for the two possible signals. Since
\[
        \frac{f_\theta(x)}{f_{-\theta}(x)}
        =\exp\bigl(2\langle x,\theta\rangle\bigr),
\]
the two densities agree on the hyperplane
\(\langle x,\theta\rangle=0\), and therefore
\begin{align*}
\int_{\mathbb R^d} f_\theta(x)\wedge f_{-\theta}(x)\,dx
&=
\mathbb P_\theta\bigl(\langle X,\theta\rangle\le0\bigr)
+
\mathbb P_{-\theta}\bigl(\langle X,\theta\rangle\ge0\bigr)
=2\,\Phi^c(\|\theta\|).
\end{align*}

Consider first an interior index \(2\le i\le n-1\). Conditionally on
\((\eta_{i-1},\eta_{i+1})=(a,b)\), the two unnormalized posterior densities
corresponding to \(\eta_i=1\) and \(\eta_i=-1\) are
\(
        Q_{a,1}Q_{1,b}f_\theta
        \text{ and }
        Q_{a,-1}Q_{-1,b}f_{-\theta},
\)
respectively. Averaging the conditional Bayes error over \((a,b)\), and
retaining only the two configurations \((a,b)=(-1,1)\) and \((a,b)=(1,-1)\),
gives

\begin{align*}
B_i
&\ge\int_{\mathbb R^d}
\delta(1-\delta)f_\theta(x)
\wedge
\delta(1-\delta)f_{-\theta}(x)\,dx
\ge
\delta\,\Phi^c(\|\theta\|).
\end{align*}
For the boundary indices \(i=1,n\), conditioning on the unique neighboring
state gives similarly
\begin{equation}
\label{eq2:lb:offline}
    B_i
    \ge
    \delta
    \int_{\mathbb R^d}f_\theta(x)\wedge f_{-\theta}(x)\,dx
    =
    2\delta\,\Phi^c(\|\theta\|).
\end{equation}
Thus \(B_i\ge\delta\Phi^c(\|\theta\|)\) for every \(i\), and
averaging over \(i\) proves the desired lower-bound.

\textbf{Intermediate signal regime.}
Assume
\(
        2\delta\le\|\theta\|^2\le1
\)
and set
\[
        k=\left\lfloor\frac{1}{\|\theta\|^2}\right\rfloor.
\]
Then
\[
        k\asymp\frac{1}{\|\theta\|^2},
        \qquad
        k\delta\le\frac{\delta}{\|\theta\|^2}\le\frac12.
\]
Moreover, the assumption
\(\delta\ge\sqrt{\log(n)/n}\) implies \(k=o(n)\), so, after changing
constants, we may assume that \(4k<n\).

For \(2k+1\le i\le n-2k\), reveal the two boundary states
\(\eta_{i-2k}\) and \(\eta_{i+2k}\), and define
\[
A_i
=
\inf_{\widehat\eta_i}
\mathbb P\left(
        \widehat\eta_i(X_{i-2k:i+2k},\eta_{i-2k},\eta_{i+2k})
        \ne\eta_i
\right).
\]
Giving this additional information can only decrease the Bayes risk. By
symmetry, it is enough to retain paths satisfying
\(\eta_{i-2k}=1\) and \(\eta_{i+2k}=-1\).

For \(r=1,\ldots,k\), let
\[
        j_r^-=i-2k+r-1,
        \qquad
        j_r^+=i+k+r-1,
\]
and let \(P_r^-\) and \(P_r^+\) denote the laws of
\(X_{i-2k:i+2k}\) under the two single-transition paths
\[
\begin{aligned}
P_r^-:
&\quad
\eta_{i-2k:j_r^-}=1,
\qquad
\eta_{j_r^-+1:i+2k}=-1,
\\
P_r^+:
&\quad
\eta_{i-2k:j_r^+}=1,
\qquad
\eta_{j_r^++1:i+2k}=-1.
\end{aligned}
\]
Under \(P_r^-\), one has \(\eta_i=-1\), whereas under \(P_r^+\), one has
\(\eta_i=1\). Each of these paths has probability
\[
        \frac12\,\delta(1-\delta)^{4k-1}.
\]
Consequently, the binary-testing bound gives
\[
        A_i
        \ge
        \frac{\delta(1-\delta)^{4k-1}}{2}
        \sum_{r=1}^k
        \bigl(1-\|P_r^--P_r^+\|_{\mathrm{TV}}\bigr).
\]

We now compute the Kullback--Leibler divergence explicitly. For
\(s\in\{i-2k,\ldots,i+2k\}\), let \(\mu_{r,s}^-\) and \(\mu_{r,s}^+\)
denote the means of \(X_s\) under \(P_r^-\) and \(P_r^+\), respectively.
The two paths agree outside the interval
\[
        \{j_r^-+1,\ldots,j_r^+\}.
\]
On this interval, the path defining \(P_r^-\) has label \(-1\), whereas the
path defining \(P_r^+\) has label \(1\). Thus
\[
        \mu_{r,s}^-=-\theta,
        \qquad
        \mu_{r,s}^+=\theta,
        \qquad
        \mu_{r,s}^--\mu_{r,s}^+=-2\theta
\]
for \(j_r^-+1\le s\le j_r^+\), while
\(\mu_{r,s}^-=\mu_{r,s}^+\) at every other coordinate. Moreover,
\[
        j_r^+-j_r^-
        =
        (i+k+r-1)-(i-2k+r-1)
        =3k,
\]
so there are exactly \(3k\) coordinates at which the two mean sequences
differ.

Conditionally on either path, the observations are independent with covariance
\(I_d\). Hence
\[
        P_r^-
        =
        \bigotimes_{s=i-2k}^{i+2k}
        \mathcal N(\mu_{r,s}^-,I_d),
        \qquad
        P_r^+
        =
        \bigotimes_{s=i-2k}^{i+2k}
        \mathcal N(\mu_{r,s}^+,I_d).
\]
The additivity of the Kullback--Leibler divergence for product measures and the
identity
\[
        \mathrm{KL}\bigl(\mathcal N(\mu,I_d)\,\|\,
        \mathcal N(\nu,I_d)\bigr)
        =
        \frac12\|\mu-\nu\|^2
\]
therefore give
\begin{align*}
        \mathrm{KL}(P_r^-\|P_r^+) =
        \frac12
        \sum_{s=i-2k}^{i+2k}
        \|\mu_{r,s}^--\mu_{r,s}^+\|^2
        =
        \frac12
        \sum_{s=j_r^-+1}^{j_r^+}
        \|-2\theta\|^2
        =
        \frac12(3k)(4\|\theta\|^2)
        =
        6k\|\theta\|^2
        \le6,
\end{align*}
where the last inequality follows from
\(k=\lfloor1/\|\theta\|^2\rfloor\), and hence
\(k\|\theta\|^2\le1\). By the Bretagnolle--Huber inequality,
\[
        1-\|P_r^--P_r^+\|_{\mathrm{TV}}
        \ge
        \frac12
        \exp\bigl(-\mathrm{KL}(P_r^-\|P_r^+)\bigr)
        \ge
        \frac12e^{-6}.
\]
Thus the overlap of each pair \((P_r^-,P_r^+)\) is bounded below by a positive
numerical constant, uniformly in \(r\). Since \(k\delta\le1/2\), the factor
\((1-\delta)^{4k-1}\) is also bounded below by a positive numerical constant.
Therefore, uniformly over \(2k+1\le i\le n-2k\),
\[
        A_i\gtrsim k\delta
        \asymp
        \frac{\delta}{\|\theta\|^2}.
\]
Averaging over the interior indices gives
\begin{align*}
\inf_{h\in\mathcal H_n}
\ClassifRisk(\theta,\delta,h)
&\ge
\frac1n\sum_{i=2k+1}^{n-2k}A_i
\gtrsim
\left(1-\frac{4k}{n}\right)
\frac{\delta}{\|\theta\|^2}
\gtrsim
\frac{\delta}{\|\theta\|^2}.
\end{align*}

\textbf{Weak signal regime.}
Assume
\(
        \|\theta\|^2\le2\delta.
\)
The oracle Bayes risk is nonincreasing in the signal magnitude. More precisely,
if \(\theta=t v\) and \(\theta_0=t_0v\), with \(0<t\le t_0\), then from an
observation \(X=\eta\theta_0+\xi\) one can generate
\[
        \frac{t}{t_0}X
        +
        \sqrt{1-\frac{t^2}{t_0^2}}\,Z,
        \qquad Z\sim\mathcal N(0,I_d),
\]
which has distribution \(\mathcal N(\eta\theta,I_d)\). Thus the experiment at
signal \(\theta\) is a garbling of the experiment at signal \(\theta_0\), and
its Bayes risk is no smaller. Applying the intermediate-regime bound at signal
strength \(\|\theta_0\|^2=2\delta\) therefore yields
\[
\inf_{h\in\mathcal H_n}
\ClassifRisk(\theta,\delta,h)
\ge
\inf_{h\in\mathcal H_n}
\ClassifRisk(\theta_0,\delta,h)
\gtrsim
\frac{\delta}{\|\theta_0\|^2}
\asymp1.
\]
The case \(\theta=0\) follows by the same monotonicity argument. 

\subsection{Proof of Theorem \ref{prop:risk_clust_procedure_ub}}

Throughout the proof, we assume that $\delta\geq 1/n$ and write
\[
    s:=\|\theta\|^2,
    \qquad
    L_\delta:=\log\left(\frac{1-\delta}{\delta}\right),
    \qquad
    q_s:=\Phi^c(\sqrt{s}).
\]
We work under the normalization $\sigma=1$.

Recall that the pilot estimators $\widetilde\eta_{a-1}$ and
$\widetilde\eta_{a+1}$ are constructed from the $k$ observations
immediately to the left and to the right of $a$, respectively.
Whenever the labels are constant on the corresponding block,
\begin{equation}
\label{eq:pilot-error}
    \PP\bigl(\widetilde\eta_{a\pm1}\neq\eta_{a\pm1}\bigr)
    =
    \Phi^c(\sqrt{ks})
    \leq e^{-ks/2}.
\end{equation}
Moreover, conditionally on the labels, the two pilot errors and the
noise appearing in $X_a$ are independent.

Recall also that
\[
\widehat\eta_a
=
\begin{cases}
\displaystyle
\sign\left(
    \langle X_a,\theta\rangle
    +\widetilde\eta_{a-1}L_\delta
\right),
&
\widetilde\eta_{a-1}=\widetilde\eta_{a+1},
\\[2mm]
\sign\bigl(\langle X_a,\theta\rangle\bigr),
&
\widetilde\eta_{a-1}\neq\widetilde\eta_{a+1}.
\end{cases}
\]

\textbf{Intermediate signal regime.}
Assume first that
\[
    2\delta < s < 2\log(1/\delta),
\]
and set
\[
    k
    :=
    1\vee
    \left\lceil
        \frac{2}{s}\log\left(\frac{s}{2\delta}\right)
    \right\rceil.
\]
If $2k\ge n$, all coordinates are treated by the pointwise rule.
For $n\ge4$, the inequalities
\[
\frac n2\le k
\le1+\frac2s\log\left(\frac{s}{2\delta}\right)
\]
and $n\delta\ge1$ imply
\[
\frac{\delta}{s}
\left(1+\log\frac{s}{2\delta}\right)
\ge\frac18.
\]
For $s\le1$, the claimed bound then follows from the trivial
risk bound. For $s>1$, the claimed bound is at least a constant
multiple of the pointwise risk $\Phi^c(\sqrt s)$.
The cases $n=2,3$ follow directly from $\delta\ge1/n$.
We may therefore assume $2k<n$ in the interior analysis.
Fix $a\in\llbracket k+1,n-k\rrbracket$ and define
\[
    N_a
    :=
    \sum_{j=a-k}^{a+k-1}
    \mathbf 1\{\eta_{j+1}\neq\eta_j\},
\]
the number of transitions in the $2k$ edges surrounding $a$.
Since the transition indicators are i.i.d.\ Bernoulli$(\delta)$,
\begin{equation*}
    \PP(N_a\geq2)
    \leq
    \binom{2k}{2}\delta^2
    \leq2k^2\delta^2.
\end{equation*}

Set
\[
    q_k:=\Phi^c(\sqrt{ks}),
    \qquad
    q_+:=
    \Phi^c\left(
        \sqrt{s}+\frac{L_\delta}{\sqrt{s}}
    \right).
\]

On the event $\{N_a=0\}$, all the labels in
$\eta_{a-k:a+k}$ coincide with $\eta_a$. If both pilot estimators are
correct, the procedure uses the correct Markov correction and its error
probability is $q_+$. If exactly one pilot is wrong, the two pilots
disagree and the procedure reduces to
$\sign(\langle X_a,\theta\rangle)$; by conditional independence, the
corresponding contribution is at most $2q_kq_s$. Finally, the event
that both pilots are wrong has probability at most $q_k^2$. Therefore
\begin{equation*}
    \PP(\widehat\eta_a\neq\eta_a,\ N_a=0)
    \leq
    q_+ +2q_kq_s+q_k^2.
\end{equation*}

Consider now $\{N_a=1\}$. In this case at least one of the two
length-$k$ blocks adjacent to $a$ is constant and has all its labels
equal to $\eta_a$. We call the corresponding pilot the clean pilot.
By \eqref{eq:pilot-error}, its error probability is at most $q_k$.

If the clean pilot is correct, then, irrespective of the other pilot,
the final decision is either the local classifier
$\sign(\langle X_a,\theta\rangle)$ or the classifier using the correct
Markov correction. In either case its error probability is at most
$q_s$. Hence
\begin{equation*}
\begin{split}
    \PP(\widehat\eta_a\neq\eta_a,\ N_a=1)
    \leq
    \PP(N_a=1)(q_s+q_k) \leq
    2k\delta(q_s+q_k).
\end{split}
\end{equation*}
Combining the inequalities above, we obtain
\begin{equation}
\label{eq:generic-intermediate-bound}
\begin{split}
    \PP(\widehat\eta_a\neq\eta_a)
    &\leq
    q_+
    +2q_kq_s
    +q_k^2
    +2k\delta(q_s+q_k)
    +2k^2\delta^2
\\
    &\leq
    q_+
    +2e^{-(k+1)s/2}
    +e^{-ks}
    +4k\delta e^{-s/2}
    +2k^2\delta^2.
\end{split}
\end{equation}

We now simplify the right-hand side. By the definition of $k$,
\begin{equation}
\label{eq:pilot-balance}
    e^{-ks/2}\leq \frac{2\delta}{s},
\end{equation}
and
\begin{equation*}
    k\delta
    \lesssim
    \frac{\delta}{s}
    \left(
        1+\log\frac{s}{2\delta}
    \right).
\end{equation*}

We shall also use
\begin{equation}
\label{eq:kdelta-gaussian}
    k\delta\lesssim e^{-s/2}.
\end{equation}
Indeed, if $s\leq2$, writing $x=s/(2\delta)\geq1$ gives
\(
    k\delta
    \lesssim
    \delta+\frac{2\delta}{s}\log\frac{s}{2\delta}
    =
    \delta+\frac{\log x}{x}
    \lesssim1
    \lesssim e^{-s/2}.
\)
If $s>2$, let
\(
    u:=\log(1/\delta)-\frac{s}{2}\geq0.
\)
Then
\[
\begin{split}
    k\delta e^{s/2}
    &\lesssim
    \delta e^{s/2}
    +
    \frac{2\delta e^{s/2}}{s}
    \log\frac{s}{2\delta} =
    e^{-u}
    +
    \frac{2e^{-u}}{s}
    \left(
        \frac{s}{2}+u+\log\frac{s}{2}
    \right)
    \lesssim1.
\end{split}
\]
This proves \eqref{eq:kdelta-gaussian}. In particular,
\begin{equation*}
    (k\delta)^2
    \lesssim
    k\delta e^{-s/2}.
\end{equation*}

Moreover, using \eqref{eq:pilot-balance} and $k\geq1$,
\(
    e^{-(k+1)s/2}
    \leq
    \frac{2\delta}{s}e^{-s/2},
\)
and
\(
    e^{-ks}
    =
    e^{-ks/2}e^{-ks/2}
    \leq
    \frac{2\delta}{s}e^{-s/2}.
\)

Finally,
\begin{align*}
    q_+
    &\leq
    \exp\left\{
        -\frac12
        \left(
            \sqrt{s}+\frac{L_\delta}{\sqrt{s}}
        \right)^2
    \right\} =
    e^{-s/2}e^{-L_\delta}e^{-L_\delta^2/(2s)} \lesssim
    \delta e^{-s/2}.
\end{align*}
Combining the previous bounds with
\eqref{eq:generic-intermediate-bound}, we conclude that
\begin{equation}
\label{eq:intermediate-exponential}
    \PP(\widehat\eta_a\neq\eta_a)
    \lesssim
    \frac{\delta}{s}
    \left(
        1+\log\frac{s}{2\delta}
    \right)e^{-s/2}.
\end{equation}

If $2\delta\leq s\leq1$, we simply use $e^{-s/2}\leq1$ and obtain
\begin{equation*}
    \PP(\widehat\eta_a\neq\eta_a)
    \lesssim
    \frac{\delta}{s}
    \left(
        1+\log\frac{s}{\delta}
    \right).
\end{equation*}

If instead
\(
    1<s\leq2\log(1/\delta),
\)
Mills' inequalities imply
\(
    e^{-s/2}
    \lesssim
    \sqrt{s}\,\Phi^c(\sqrt{s}).
\)
Hence \eqref{eq:intermediate-exponential} gives
\[
\begin{split}
    \PP(\widehat\eta_a\neq\eta_a)
    &\lesssim
    \frac{\delta}{\sqrt{s}}
    \left(
        1+\log\frac{s}{\delta}
    \right)
    \Phi^c(\sqrt{s})\leq
    C\delta
    \left(
        1+\log\frac{s}{\delta}
    \right)
    \Phi^c(\sqrt{s}),
\end{split}
\]
where in the last inequality we used $s>1$. Thus
\begin{equation*}
    \PP(\widehat\eta_a\neq\eta_a)
    \lesssim
    \delta
    \left(
        1+\log\frac{s}{\delta}
    \right)
    \Phi^c(\sqrt{s}).
\end{equation*}

\textbf{Strong signal regime.}
Assume now that
\(
    s \geq 2\log(1/\delta).
\)
In this regime we take $k=1$. Thus
\[
    \widetilde\eta_{a\pm1}
    =
    \sign\langle X_{a\pm1},\theta\rangle
\]
and
\begin{equation}
\label{eq:strong-pilot-error}
    \PP(\widetilde\eta_{a\pm1}\neq\eta_{a\pm1})
    =
    q_s
    \leq e^{-s/2}
    \leq\delta.
\end{equation}
Thus a pilot error is no more likely than an actual transition of the
hidden Markov chain.

Let
\[
    U:=\mathbf 1\{\eta_{a-1}\neq\eta_a\},
    \qquad
    V:=\mathbf 1\{\eta_a\neq\eta_{a+1}\}.
\]
Then $U$ and $V$ are independent Bernoulli$(\delta)$.

If $U=V=0$, both true neighbors coincide with $\eta_a$. When both
pilots are correct, the error probability is
\[
    q_+
    =
    \Phi^c\left(
        \sqrt{s}+\frac{L_\delta}{\sqrt{s}}
    \right).
\]
If exactly one pilot is wrong, the two pilots disagree and the local
classifier is used, while the event that both pilots are wrong has
probability $q_s^2$. Consequently,
\begin{equation}
\label{eq:strong-no-change}
    \PP(\widehat\eta_a\neq\eta_a\mid U=V=0)
    \leq q_+ +3q_s^2.
\end{equation}

If $U\neq V$, the two true neighbors are different. When both pilots
are correct, the procedure uses the local classifier and has error
$q_s$. The probability that at least one pilot is wrong is at most
$2q_s$. Hence
\begin{equation}
\label{eq:strong-one-change}
    \PP(\widehat\eta_a\neq\eta_a\mid U\neq V)
    \leq3q_s.
\end{equation}

Finally, if $U=V=1$, both true neighbors are equal to $-\eta_a$. When
both pilots are correct, the error probability is
\[
    q_-:=
    \Phi^c\left(
        \sqrt{s}-\frac{L_\delta}{\sqrt{s}}
    \right),
\]
whereas the probability that at least one pilot is wrong is at most
$2q_s$. Therefore
\begin{equation}
\label{eq:strong-two-changes}
    \PP(\widehat\eta_a\neq\eta_a\mid U=V=1)
    \leq q_-+2q_s.
\end{equation}

Combining \eqref{eq:strong-no-change}--\eqref{eq:strong-two-changes},
\begin{equation}
\label{eq:strong-decomposition}
    \PP(\widehat\eta_a\neq\eta_a)
    \leq
    q_+
    +3q_s^2
    +6\delta q_s
    +\delta^2q_-
    +2\delta^2q_s.
\end{equation}

Set
\[
    x:=\sqrt{s},
    \qquad
    t:=\frac{L_\delta}{\sqrt{s}}.
\]
Since
\(
    L_\delta<\log(1/\delta)<s/2,
\)
we have $0\le t <x/2$. First,
\[
\begin{split}
    \Phi^c(x+t)
    &=
    \int_x^\infty\phi(u+t)\,du\leq
    e^{-xt-t^2/2}\Phi^c(x)\leq
    e^{-L_\delta}\Phi^c(x)
    \lesssim
    \delta\Phi^c(x).
\end{split}
\]
Thus
\begin{equation*}
    q_+\lesssim\delta q_s.
\end{equation*}

On the other hand, Mills' inequalities and $t<x/2$ yield
\[
\begin{split}
    \frac{\Phi^c(x-t)}{\Phi^c(x)}
    &\leq
    C\frac{x}{x-t}
    \exp\left(xt-\frac{t^2}{2}\right)
\leq
    Ce^{L_\delta}
    \lesssim
    \frac1\delta.
\end{split}
\]
Hence
\begin{equation*}
    \delta^2q_-
    \lesssim
    \delta q_s.
\end{equation*}
Finally, by \eqref{eq:strong-pilot-error},
\[
    q_s^2\leq\delta q_s.
\]
Plugging these estimates into \eqref{eq:strong-decomposition}, we get
\begin{equation*}
    \PP(\widehat\eta_a\neq\eta_a)
    \lesssim
    \delta\Phi^c(\sqrt{s}).
\end{equation*}

\textbf{Boundary points.}
For the first and last $k$ coordinates, we use the local classifier
$\sign\langle X_a,\theta\rangle$, whose error probability is $q_s$.

Since $\delta\geq1/n$, we have $k/n\leq k\delta$. Therefore the boundary
contribution is absorbed by the preceding bounds in the intermediate
regime. In the strong regime, $k=1$ and
\[
    \frac{2}{n}\Phi^c(\sqrt{s})
    \leq
    2\delta\Phi^c(\sqrt{s}).
\]

We conclude that
\[
\ClusterRisk(\theta,\delta,\pi_n\circ\widehat\eta)
\lesssim
\begin{cases}
\displaystyle
\frac{\delta}{s}
\left(
    1+\log\frac{s}{\delta}
\right),
&
2\delta\leq s\leq1,
\\[3mm]
\displaystyle
\delta
\left(
    1+\log\frac{s}{\delta}
\right)
\Phi^c(\sqrt{s}),
&
1<s\leq2\log(1/\delta),
\\[3mm]
\displaystyle
\delta\Phi^c(\sqrt{s}),
&
s>2\log(1/\delta).
\end{cases}
\]
This concludes the proof.

\subsection{Proof of Theorem \ref{thm:risk_clust_new_procedure_ub}}
Write
\[
    t:=\|\theta\|,
    \qquad
    s:=t^2.
\]
The regime $s\leq2\delta$ is immediate from the trivial bound by one,
so throughout the proof we assume
\[
    2\delta<s<2\log(1/\delta).
\]

For an interior index
\(
    i\in\llbracket k+1,n-k\rrbracket,
\)
project the observations onto the known signal direction and write
\[
    Y_j
    :=
    \frac{\langle\theta,X_j\rangle}{\|\theta\|}
    =
    t\eta_j+\xi_j,
    \qquad
    \xi_j\stackrel{\mathrm{i.i.d.}}{\sim}\mathcal{N}(0,1).
\]
Thus the local estimator maximizes
\(
    L_i(u)
    :=
    \sum_{j=i-k}^{i+k}u_jY_j
\)
over
$\mathcal S_k^+\cup\mathcal S_k^-$.

For two paths $u,v\in\{-1,1\}^{2k+1}$, let
\(
    D(u,v)
    :=
    \#\{j:u_j\neq v_j\}.
\)
Conditionally on the true path $v$,
\[
    L_i(u)-L_i(v)
    \sim
    \mathcal{N}\!\left(-2tD(u,v),\,4D(u,v)\right).
\]
Consequently,
\begin{equation}
\label{eq:pairwise-gaussian-comparison}
    \PP_v\bigl(L_i(u)\geq L_i(v)\bigr)
    =
    \Phi^c\!\left(
        t\sqrt{D(u,v)}
    \right).
\end{equation}

We first record a simple maximal estimate that will only be needed in
the weak-signal regime.

\medskip
\noindent
{\bf Gaussian random-walk bound.}
Let $(G_j)_{j\geq1}$ be i.i.d.\ $\mathcal{N}(0,1)$ and
$S_m=\sum_{j=1}^mG_j$. There exist universal constants $c,C>0$ such
that, for every $t>0$ and $q\geq1$,
\begin{equation}
\label{eq:drifted-walk-1}
    \PP\left(
        Z_q-tq+
        \sup_{m\geq0}(S_m-tm)
        \geq0
    \right)
    \leq
    Ce^{-ct^2q},
\end{equation}
where $Z_q\sim \mathcal{N}(0,q)$ is independent of $(S_m)_m$.

Moreover, if $Z_m\sim N(0,m)$ is independent of $(S_q)_q$, then
\begin{equation}
\label{eq:drifted-walk-2}
    \PP\left(
        \sup_{q\geq1}(S_q-tq)
        \geq tm-Z_m
    \right)
    \leq
    Ce^{-ct^2m}.
\end{equation}

Indeed, embed the Gaussian random walk into a standard Brownian
motion $B$. We use the classical identity
\[
    \PP\left(
        \sup_{u\geq0}(B_u-tu)\geq x
    \right)
    =
    e^{-2tx},
    \qquad x\geq0.
\]
For example, on the event $Z_q\leq tq/2$, the event in
\eqref{eq:drifted-walk-1} requires
\[
    \sup_{u\geq0}(B_u-tu)\geq tq/2.
\]
Hence
\[
    \PP\left(
        Z_q-tq+\sup_{m\geq0}(S_m-tm)\geq0
    \right)
    \leq
    \Phi^c\left(\frac{t\sqrt q}{2}\right)
    +
    e^{-t^2q},
\]
which proves \eqref{eq:drifted-walk-1}. The proof of
\eqref{eq:drifted-walk-2} is identical after splitting according to
$Z_m\leq tm/2$.

\medskip
We now return to the local segmentation problem. Let
\[
    B_i
    :=
    \left\{
        \text{at least two transitions occur in }
        \eta_{i-k:i+k}
    \right\}.
\]
Since the transition indicators are i.i.d.\
Bernoulli$(\delta)$,
\begin{equation}
\label{eq:two-transitions}
    \PP(B_i)
    \leq
    2k^2\delta^2.
\end{equation}

On $B_i^c$, the true local path belongs to
$\mathcal S_k^+\cup\mathcal S_k^-$. Hence, if the center is
misclassified, there exists a candidate path with the opposite center
whose score is not smaller than the score of the true path.

We distinguish three configurations.

\medskip
\noindent
{\bf 1. The true path is constant.}

Any path with the opposite center differs from the true path on a
whole half-window containing the center, hence on at least $k+1$
coordinates. As the location of the candidate change-point varies,
the additional disagreement coordinates form a nested interval.
Therefore \eqref{eq:drifted-walk-1} gives
\begin{equation}
\label{eq:constant-path-bound}
    \PP\left(
        \widehat\eta_i\neq\eta_i,\,
        \eta_{i-k:i+k}\text{ constant}
    \right)
    \leq
    Ce^{-csk}.
\end{equation}

\medskip
\noindent
{\bf 2. The true and candidate change-points lie on the same side of
$i$.}

For every fixed true change-point, a wrong-center candidate again
differs from the true path on the whole half-window lying on the
opposite side of $i$, which contains $k+1$ coordinates. Varying the
candidate change-point only adds a nested interval of disagreements.
Thus the same maximal estimate yields a conditional error probability
at most $Ce^{-csk}$.

Constant candidates with the wrong center label also disagree with
the true path on at least \(k+1\) coordinates and satisfy the same
bound; for non-constant candidates, the nested-family argument is
applied separately according to whether the candidate change-point
lies before or after the true change-point.

There are at most $2k$ possible locations of the unique true
transition, each having probability at most $\delta$. Consequently,
\begin{equation}
\label{eq:same-side-bound}
    \PP\left(
        \widehat\eta_i\neq\eta_i,\,
        B_i^c,\,
        \text{change-points on the same side}
    \right)
    \leq
    Ck\delta e^{-csk}.
\end{equation}

\medskip
\noindent
{\bf 3. The true and candidate change-points lie on opposite sides of
$i$.}

Assume for definiteness that the true change occurs to the right of
\(i\) and the candidate change to the left; the other configuration
is symmetric.
By global sign symmetry, we may also assume that the true center
label is \(\eta_i=-1\).
Let $m\geq0$ denote the distance from $i$ to the true
change-point and let $q\geq1$ denote the distance from $i$ to the
candidate change-point. The two paths disagree on an interval of
length $m+q$. Hence, for fixed $(m,q)$,
\begin{equation}
\label{eq:opposite-pair}
    \PP\left(
        L_i(u)\geq L_i(\eta)
        \mid m,q
    \right)
    =
    \Phi^c\!\left(
        t\sqrt{m+q}
    \right).
\end{equation}

We treat separately the cases $s\leq1$ and $s>1$.

Suppose first that $s\leq1$. If
\[
    m\leq s^{-1},
\]
we simply bound the conditional error probability by one. Since each
possible location of the true transition has probability at most
$\delta$, the contribution of these change-points is bounded by
\begin{equation}
\label{eq:near-transition}
    C\delta
    \left(1+\frac1s\right)
    \leq
    C\frac{\delta}{s}.
\end{equation}

Assume now that $m>s^{-1}$. Writing
\(
    S_q=\sum_{j=i-q+1}^{i}\xi_j, 
    Z_m=\sum_{j=i+1}^{i+m}\xi_j,
\)
the existence of a wrong-center candidate on the opposite side
implies
\[
    \sup_{q\geq1}
    \left(S_q-tq\right)
    \geq
    tm-Z_m.
\]
By \eqref{eq:drifted-walk-2},
\[
    \PP\left(
        \text{some opposite-side candidate wins}\mid m
    \right)
    \leq
    Ce^{-csm}.
\]
It follows that
\begin{equation}
\label{eq:far-transition}
    \delta
    \sum_{m>s^{-1}}
    Ce^{-csm}
    \leq
    C\frac{\delta}{s}.
\end{equation}

Combining \eqref{eq:near-transition} and
\eqref{eq:far-transition}, we obtain
\begin{equation}
\label{eq:opposite-weak}
    \PP\left(
        \widehat\eta_i\neq\eta_i,\,
        B_i^c,\,
        \text{opposite-side changes}
    \right)
    \leq
    C\frac{\delta}{s},
    \qquad s\leq1.
\end{equation}

Suppose now that $s>1$. In this regime a direct union bound is
sufficient. From \eqref{eq:opposite-pair},
\[
\begin{split}
    &\PP\left(
        \widehat\eta_i\neq\eta_i,\,
        B_i^c,\,
        \text{opposite-side changes}
    \right) \leq
    C\delta
    \sum_{m=0}^{k}
    \sum_{q=1}^{k}
    \Phi^c\!\left(
        t\sqrt{m+q}
    \right).
\end{split}
\]
For $t\geq1$, Mills' inequalities imply
\[
    \frac{
        \Phi^c(t\sqrt{\ell})
    }{
        \Phi^c(t)
    }
    \leq
    \frac{C}{\sqrt{\ell}}
    \exp\left(
        -\frac{s}{2}(\ell-1)
    \right),
    \qquad \ell\geq1.
\]
Since at most $\ell$ pairs $(m,q)$ satisfy $m+q=\ell$,
\[
\begin{split}
    \sum_{m\geq0}\sum_{q\geq1}
    \Phi^c\!\left(t\sqrt{m+q}\right)
    &\leq
    C\Phi^c(t)
    \sum_{\ell\geq1}
    \sqrt{\ell}\,
    e^{-\frac{s}{2}(\ell-1)}\leq
    C\Phi^c(t),
\end{split}
\]
because $s\geq1$. Thus
\begin{equation}
\label{eq:opposite-strong}
    \PP\left(
        \widehat\eta_i\neq\eta_i,\,
        B_i^c,\,
        \text{opposite-side changes}
    \right)
    \leq
    C\delta\Phi^c(t).
\end{equation}

Putting together
\eqref{eq:two-transitions},
\eqref{eq:constant-path-bound},
\eqref{eq:same-side-bound},
\eqref{eq:opposite-weak}, and
\eqref{eq:opposite-strong}, we obtain the generic interior bound
\begin{equation}
\label{eq:generic-interior-bound}
    \PP(\widehat\eta_i\neq\eta_i)
    \leq
    C\left[
        \delta^2k^2
        +(1+k\delta)e^{-csk}
    \right]
    +
    \begin{cases}
        \displaystyle C\frac{\delta}{s},
        & s\leq1,
        \\[2mm]
        \displaystyle C\delta\Phi^c(t),
        & s>1.
    \end{cases}
\end{equation}

We now use the choice
\[
    k
    =
    \left\lceil
        \frac{C_0}{s}
        \log\frac{s}{\delta}
    \right\rceil.
\]

If $2\delta<s\leq1$, put
\[
    x:=\frac{\delta}{s}\leq\frac12.
\]
Then
\(
    sk\gtrsim
    C_0\log(1/x),\
    k\delta
    \lesssim
    x\log(1/x),
\)
and
\(
    \delta^2k^2
    \lesssim
    x^2\log^2(1/x)
    \lesssim x.
\)
Choosing $C_0$ sufficiently large also gives
\[
    (1+k\delta)e^{-csk}
    \lesssim x.
\]
Hence
\begin{equation}
\label{eq:interior-weak-final}
    \PP(\widehat\eta_i\neq\eta_i)
    \leq
    C\frac{\delta}{s},
    \qquad
    2\delta<s\leq1.
\end{equation}

Consider next
\(
    1<s<2\log(1/\delta).
\)
Write
\[
    q:=\log(1/\delta),
    \qquad
    L:=\log(s/\delta)=q+\log s.
\]
Since $s<2q$, we have $q>s/2$. Moreover,
\[
    k\asymp \frac{L}{s}.
\]
For $t\geq1$, Mills' inequalities give
\begin{equation}
\label{eq:mills-tphi}
    t\Phi^c(t)\asymp e^{-s/2}.
\end{equation}
Using $q\geq s/2$,
\[
\begin{split}
    \delta k^2
    &\lesssim
    e^{-q}
    \frac{(q+\log s)^2}{s^2}\lesssim
    e^{-s/2}.
\end{split}
\]
Consequently,
\begin{equation}
\label{eq:two-change-absorbed}
    \delta^2k^2
    \lesssim
    t\,\delta\Phi^c(t).
\end{equation}
Furthermore, choosing $C_0$ sufficiently large,
\(
    e^{-csk}
    \leq
    e^{-cC_0L}
    \lesssim
    \delta e^{-s/2},
\)
and $k\delta$ is uniformly bounded. Therefore
\begin{equation}
\label{eq:exp-absorbed}
    (1+k\delta)e^{-csk}
    \lesssim
    t\,\delta\Phi^c(t).
\end{equation}
Since $t\geq1$, the term
$\delta\Phi^c(t)$ in
\eqref{eq:generic-interior-bound} is also absorbed. We conclude that
\begin{equation}
\label{eq:interior-strong-final}
    \PP(\widehat\eta_i\neq\eta_i)
    \leq
    Ct\,\delta\Phi^c(t),
    \qquad
    1<s<2\log(1/\delta).
\end{equation}

It remains to control the two sample boundaries. For
\(
    i\notin\llbracket k+1,n-k\rrbracket
\),
the estimator uses the pointwise rule, and therefore
\[
    \PP(\widehat\eta_i\neq\eta_i)
    =
    \Phi^c(t).
\]
There are at most $2k$ such indices. Hence their contribution to the
normalized Hamming risk is at most
\begin{equation}
\label{eq:boundary-contribution}
    \frac{2k}{n}\Phi^c(t).
\end{equation}

The condition
\(
    \delta\geq\frac{\log n}{n}
\)
is used only at this point. Since
\(
    \log(1/\delta)\leq\log n
\)
and, throughout the regime considered here,
\[
    \log(s/\delta)
    \lesssim
    \log(1/\delta),
\]
we have
\[
    \frac{k}{n}
    \lesssim
    \frac{\delta}{s}.
\]
For $s\leq1$, \eqref{eq:boundary-contribution} is therefore bounded by
$C\delta/s$. For $s>1$,
\[
    \frac{k}{n}\Phi^c(t)
    \lesssim
    \frac{\delta}{s}\Phi^c(t)
    \leq
    t\,\delta\Phi^c(t).
\]

Averaging the pointwise bounds over $i$ and using the definition of
the clustering loss proves
\[
\ClusterRisk(\theta,\delta,\pi_n\circ\widehat\eta)
\lesssim
\begin{cases}
    1,
    & s\leq2\delta,
    \\[1mm]
    \delta/s,
    & 2\delta<s\leq1,
    \\[1mm]
    \sqrt{s}\,\delta\Phi^c(\sqrt{s}),
    & 1<s<2\log(1/\delta),
\end{cases}
\]
as claimed.

\section{High-Dimensional and Adaptive Recovery}
\subsection{Proof of Theorem \ref{thm:minimax_lb_high_dim_clustering}}
Let \(\pi_\eta\) be the stationary Markov law with transition probability
\(\delta\), and let \(\pi_\theta\) be any probability measure on
\(\mathbb R^d\), independent of the latent chain. Set
\(\pi=\pi_\theta\otimes\pi_\eta\). We write \(\mathbb E_\pi\) for
expectation under the joint law obtained by drawing \((\theta,\eta)\sim\pi\)
and then \(X_j=\eta_j\theta+\sigma\xi_j\). The reduction below applies
to both the Gaussian and the point-mass priors used subsequently.
All constants are universal, and \(C_0\) in the assumption
\(n\delta\geq C_0\) is taken sufficiently large.

\begin{lemma}
    Let \(\mathcal I\subset[n]\) be a collection of target indices, and for each
\(i\in\mathcal I\) let \(I_i\subset[n]\) be a set containing \(i\), with
\(|I_i^c|\geq n/2\).  Define
\begin{equation*}
    A_i(I_i)
    :=
    \inf_{T_i:(\mathbb R^d)^n\times\{-1,+1\}^{I_i^c}\to[-1,1]}
    \EE_\pi
    \left[
        \frac{1-\eta_iT_i(X_{1:n},\eta_{I_i^c})}{2}
    \right].
\end{equation*}
Then,
\begin{equation}
\label{eq:merged-common-reduction}
    \Psi_{n,d}(\delta,\Delta,\sigma)
    \geq
    \frac1{5n}\sum_{i\in\mathcal I} A_i(I_i)
    -\pi_\theta(\|\theta\|<\Delta).
\end{equation}
\end{lemma}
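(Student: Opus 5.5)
The plan is to pass in three moves: from the minimax risk to a Bayes risk under \(\pi\), then from clustering to an oriented (sign-free) classification risk, and finally from that risk to a sum of local problems in which the labels \(\eta_{I_i^c}\) are revealed. The last move is the easy one, because extra information can only lower each local Bayes risk.

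\emph{Step 1 (minimax to Bayes).} For any clusterer \(\mathfrak g=\pi_n\circ h\), the supremum over \(\|\theta\|\ge\Delta\) dominates the \(\pi_\theta\)-average restricted to that set. The loss is bounded by one, so
\[
\sup_{\|\theta\|\ge\Delta}\ClusterRisk(\theta,\delta,\sigma,\mathfrak g)\ge \EE_\pi[\ell(h(X),\eta)]-\pi_\theta(\|\theta\|<\Delta).
\]
The remaining work is to show \(\EE_\pi[\ell(h(X),\eta)]\ge \frac1{5n}\sum_{i\in\mathcal I}A_i(I_i)\) for every measurable \(h\).

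\emph{Step 2 (remove the global sign).} Write \(\ell(h,\eta)=\frac1n\sum_j \mathbf 1\{s^\star h_j\ne\eta_j\}\) with \(s^\star=\sign\langle h,\eta\rangle\). The oracle sign \(s^\star\) depends on all of \(\eta\), so \(s^\star h_i\) is not an admissible \(T_i\). The fix is to replace \(s^\star\), for each target \(i\), by a sign computed only from the revealed labels,
\[
\widehat s_i:=\sign\Big(\sum_{j\in I_i^c}\eta_jh_j(X)\Big),
\]
and to set \(T_i:=\widehat s_ih_i(X)\). This is measurable in \((X_{1:n},\eta_{I_i^c})\). Then
\[
\frac{1-\eta_iT_i}{2}\le \mathbf 1\{s^\star h_i\ne\eta_i\}+\mathbf 1\{\widehat s_i\ne s^\star\}.
\]
Suppose \(\widehat s_i\ne s^\star\). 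The part of \(\langle h,\eta\rangle\) over the \(|I_i^c|\ge n/2\) revealed coordinates then has the opposite sign to the full sum, which forces at least \(|I_i^c|/2\ge n/4\) disagreements with \(s^\star\eta\) among those coordinates. Hence \(\ell(h,\eta)\ge 1/4\), so \(\mathbf 1\{\widehat s_i\ne s^\star\}\le 4\ell(h,\eta)\) pointwise.

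\emph{Step 3 (sum up).} Sum over \(i\in\mathcal I\), divide by \(n\), and take \(\EE_\pi\):
\[
\frac1n\sum_{i\in\mathcal I}A_i(I_i)\le \frac1n\sum_{i\in\mathcal I}\EE_\pi\Big[\frac{1-\eta_iT_i}{2}\Big]\le \EE_\pi[\ell]+\frac{4|\mathcal I|}{n}\EE_\pi[\ell]\le 5\,\EE_\pi[\ell],
\]
using \(|\mathcal I|\le n\). Taking the infimum over \(\mathfrak g\) and combining with Step 1 gives \eqref{eq:merged-common-reduction}.

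The main obstacle is Step 2, the combinatorial claim that a sign flip between the partial and full overlaps forces loss at least \(1/4\). I would check it directly: let \(D\) be the number of disagreements with \(s^\star\eta\) on \(I_i^c\). The partial sum has sign \(-s^\star\) only if \(D\ge |I_i^c|/2\). Ties, where the partial sum is zero, should be absorbed by the convention \(\sign(0)=1\) without changing the bound. Only this step uses the hypothesis \(|I_i^c|\ge n/2\).
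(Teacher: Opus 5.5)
Your proposal is correct and follows the paper's proof essentially step for step. Your sign \(\widehat s_i=\sign\bigl(\sum_{j\in I_i^c}\eta_jh_j\bigr)\) is the paper's outside-orientation sign \(s_{I^c}\) (a minimizer of disagreements on \(I_i^c\)), your pointwise bound \(\mathbf 1\{\widehat s_i\neq s^\star\}\le 4\ell\) is the paper's key inequality, and the minimax-to-Bayes step with the leakage term \(\pi_\theta(\|\theta\|<\Delta)\) is identical; the only cosmetic difference is that you do that last step first.
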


\begin{proof}
    Fix an arbitrary binary classifier \(\widehat\eta=\widehat\eta(X_{1:n})\).  Let
\[
    s_\star\in\argmin_{s\in\{-1,+1\}}
    \sum_{j=1}^n\mathbf 1_{\{s\widehat\eta_j(X_{1:n})\neq\eta_j\}}.
\]
Hence
\[
    \ell(\widehat\eta,\eta)
    =
    \frac1n\sum_{j=1}^n
    \mathbf 1_{\{s_\star\widehat\eta_j(X_{1:n})\neq\eta_j\}}.
\]
Let \(I\subset[n]\) be any index set with \(|I^c|\geq n/2\).  Define the
outside-orientation sign
\[
    s_{I^c}\in\argmin_{s\in\{-1,+1\}}
    \sum_{j\in I^c}
    \mathbf 1_{\{s\widehat\eta_j(X_{1:n})\neq\eta_j\}}.
\]
Then, for every \(i\in I\),
\begin{equation}
\label{eq:merged-outside-orientation-coordinate}
    \mathbf 1_{\{s_{I^c}\widehat\eta_i\neq\eta_i\}}
    \leq
    \mathbf 1_{\{s_\star\widehat\eta_i\neq\eta_i\}}
    +4\ell(\widehat\eta,\eta).
\end{equation}
Indeed, if \(s_{I^c}=s_\star\), the claim is immediate.  If \(s_{I^c}\neq s_\star\),
then by the definition of \(s_{I^c}\),
\[
    \sum_{j\in I^c}
    \mathbf 1_{\{s_\star\widehat\eta_j\neq\eta_j\}}
    \geq \frac{|I^c|}{2}.
\]
Consequently,
\[
    \mathbf 1_{\{s_{I^c}\neq s_\star\}}
    \leq
    \frac{2}{|I^c|}
    \sum_{j\in I^c}
    \mathbf 1_{\{s_\star\widehat\eta_j\neq\eta_j\}}
    \leq
    \frac4n
    \sum_{j=1}^n
    \mathbf 1_{\{s_\star\widehat\eta_j\neq\eta_j\}}
    =4\ell(\widehat\eta,\eta),
\]
where we used \(|I^c|\geq n/2\).  Since
\[
    \mathbf 1_{\{s_{I^c}\widehat\eta_i\neq\eta_i\}}
    \leq
    \mathbf 1_{\{s_\star\widehat\eta_i\neq\eta_i\}}
    +\mathbf 1_{\{s_{I^c}\neq s_\star\}},
\]
\eqref{eq:merged-outside-orientation-coordinate} follows.

Now for each
\(i\in\mathcal I\),
\[
    A_i(I_i)
    \leq
    \EE_\pi
    \left[
        \frac{1-\eta_i s_{I_i^c}\widehat\eta_i(X_{1:n})}{2}
    \right]
    =
    \EE_\pi
    \left[
        \mathbf 1_{\{s_{I_i^c}\widehat\eta_i(X_{1:n})\neq\eta_i\}}
    \right].
\]
Summing \eqref{eq:merged-outside-orientation-coordinate} over
\(i\in\mathcal I\), dividing by \(n\), we obtain
\[
    \frac1n\sum_{i\in\mathcal I}A_i(I_i)
    \leq
    5\mathbb E_\pi[\ell(\widehat\eta(X_{1:n}),\eta)].
\]
Therefore,
\[
    \inf_{\widehat\eta}
    \mathbb E_\pi[\ell(\widehat\eta(X_{1:n}),\eta)]
    \geq
    \frac1{5n}\sum_{i\in\mathcal I}A_i(I_i).
\]
Finally, since \(0\leq \ell \leq1\), for every fixed \(\widehat\eta\),
\[
\begin{aligned}
    \sup_{\|\theta\|\geq\Delta}
    \mathbb E_\theta[\ell(\widehat\eta(X_{1:n}),\eta)]
    &\geq
    \int_{\{\|\theta\|\geq\Delta\}}
    \mathbb E_\theta[\ell(\widehat\eta(X_{1:n}),\eta)]
    \,d\pi_\theta(\theta) \geq
    \mathbb E_\pi[\ell(\widehat\eta(X_{1:n}),\eta)]
    -\pi_\theta(\|\theta\|<\Delta).
\end{aligned}
\]
Taking the infimum over \(\widehat\eta\) proves
\eqref{eq:merged-common-reduction}.
\end{proof}

\textbf{Weak signal regime.}
Assume \(r_n^2\leq1\), and use the Gaussian prior
\(\pi_\theta=\mathcal N(0,\alpha^2I_d)\), with its variance specified below.
Set
\[
 k:=\left\lfloor\frac1\delta\wedge\frac1{r_n^2}\right\rfloor,
 \qquad \mathcal I_k:=\{2k+1,\ldots,n-2k\},
 \qquad I_i:=\{i-2k,\ldots,i+2k\}.
\]
Since \(\delta\leq1/2\) and \(r_n^2\leq1\),
\[
 1\leq k\leq\frac1\delta\leq\frac n{C_0},\qquad
 k\delta\leq1,\qquad kr_n^2\leq1,\qquad
 k\asymp\frac1\delta\wedge\frac1{r_n^2}.
\]
For sufficiently large \(C_0\), these inequalities imply
\(|\mathcal I_k|\geq n/2\) and
\(|I_i^c|=n-4k-1\geq n/2\), so
\eqref{eq:merged-common-reduction} applies.

Fix \(i\in\mathcal I_k\).  For
\(t\in\{i-2k,\dots,i+2k-1\}\), define the local one-change cylinder
\[
    C_t:=
    \{\eta_{i-2k}=\cdots=\eta_t=+1,
      \eta_{t+1}=\cdots=\eta_{i+2k}=-1\}.
\]
The cylinders \(C_t\) are disjoint and
\begin{equation*}
    \mathbb P_\pi(C_t)
    =
    \frac12\delta(1-\delta)^{4k-1}
    \geq
    c\delta(1-\delta)^{4k}.
\end{equation*}
Let \(m\in\llbracket1,k\rrbracket\).  On \(C_{i-m}\),
\(\eta_i=-1\), whereas on \(C_{i+m}\), \(\eta_i=+1\).  Let \(P_{i+m}\) denote
the conditional law of \((X_{1:n},\eta_{I_i^c})\) given \(C_{i+m}\), and define \(P_{i-m}\)
analogously.  Restricting the expectation in \(A_i(I_i)\) to these disjoint
cylinders, we get
\begin{equation*}
\begin{aligned}
    A_i(I_i)
    &\geq
    c\delta(1-\delta)^{4k}
    \sum_{m=1}^k
    \inf_{T:(\mathbb R^d)^n\times\{-1,+1\}^{I_i^c}\to[-1,1]}
    \Bigg\{
    \mathbb E_{P_{i+m}}\left[\frac{1-T}{2}\right]
    +
    \mathbb E_{P_{i-m}}\left[\frac{1+T}{2}\right]
    \Bigg\}.
\end{aligned}
\end{equation*}
Thus
\begin{equation}
\label{eq:merged-Ai-TV}
    A_i(I_i)
    \geq
    c\delta(1-\delta)^{4k}
    \sum_{m=1}^k
    \left(1-\|P_{i+m}-P_{i-m}\|_{\rm TV}\right).
\end{equation}

We now bound the total variation distance. For a deterministic path
\(u\in\{-1,+1\}^n\), let \(P_u^X\) be the marginal law of \(X_{1:n}\) after
integrating out \(\theta\sim \mathcal{N}(0,\alpha^2I_d)\) in
\(X_j=u_j\theta+\sigma\xi_j\).  If \(X\) is viewed as a \(d\times n\) matrix,
then the rows are i.i.d. \(\mathcal{N}(0,\Sigma_u)\), where
\[
    \Sigma_u=\sigma^2I_n+\alpha^2uu^\top.
\]
Setting \(\beta=\alpha^2/\sigma^2\), one has
\(\Sigma_u=\sigma^2(I_n+\beta uu^\top)\).  Since \(\|u\|^2=n\),
\(\det(\Sigma_u)=\sigma^{2n}(1+\beta n)\), which is independent of \(u\).

We detail the computation of the Kullback--Leibler divergence between the two
integrated laws associated with deterministic label paths
\(u,v\in\{-1,+1\}^n\).  Under the path \(u\), after integrating out
\(\theta\sim \mathcal{N}(0,\alpha^2 I_d)\), each row of the \(d\times n\) observation
matrix \(X\) is distributed as a centered Gaussian vector in \(\mathbb R^n\)
with covariance
\[
    \Sigma_u=\sigma^2I_n+\alpha^2uu^\top.
\]
The rows are independent.  Hence
\[
    P_u^X=\mathcal{N}(0,\Sigma_u)^{\otimes d},
    \qquad
    P_v^X=\mathcal{N}(0,\Sigma_v)^{\otimes d}.
\]
For two centered Gaussian laws on \(\mathbb R^n\), one has
\begin{equation}
\label{eq:kl-row-product-expanded}
    \mathrm{KL}(P_u^X,P_v^X)
    =
    \frac d2
    \left[
        \operatorname{tr}(\Sigma_v^{-1}\Sigma_u)-n
    \right].
\end{equation}
It remains to compute the trace.  By the Sherman--Morrison formula,
\[
    \Sigma_v^{-1}
    =
    \frac1{\sigma^2}
    \left(
        I_n-
        \frac{\beta}{1+\beta n}vv^\top
    \right).
\]
Thus
\begin{align*}
    \Sigma_v^{-1}\Sigma_u
    &=
    I_n
    +\beta uu^\top
    -\frac{\beta}{1+\beta n}vv^\top
    -\frac{\beta^2}{1+\beta n}vv^\top uu^\top.
\end{align*}
Hence
\begin{align*}
    \operatorname{tr}(\Sigma_v^{-1}\Sigma_u)-n
    &=
    \frac{\beta^2}{1+\beta n}
    \left(n^2-\langle u,v\rangle^2\right).
\end{align*}
Substituting this into \eqref{eq:kl-row-product-expanded}, we obtain the exact
identity
\begin{equation}
\label{eq:kl-exact-label-paths-expanded}
    \mathrm{KL}(P_u^X,P_v^X)
    =
    \frac d2
    \frac{\beta^2}{1+\beta n}
    \left(n^2-\langle u,v\rangle^2\right).
\end{equation}
We now express this in terms of the Hamming distance.  Let
\[
    h(u,v):=\sum_{i=1}^{n} 1_{u_i\neq v_i}.
\]
For each coordinate, \(u_jv_j=1\) if \(u_j=v_j\), and \(u_jv_j=-1\) if
\(u_j\neq v_j\).  Hence
\[
    \langle u,v\rangle
    =
    \sum_{j=1}^n u_jv_j
    =
    (n-h(u,v))-h(u,v)
    =
    n-2h(u,v).
\]
and
\begin{equation*}
    n^2-\langle u,v\rangle^2=
    4h(u,v)(n-h(u,v))\leq 4nh(u, v).
\end{equation*}
Combining this with \eqref{eq:kl-exact-label-paths-expanded} gives
\begin{equation*}
    \mathrm{KL}(P_u^X,P_v^X)
    \leq
    2d\frac{\beta^2 n}{1+\beta n}h(u,v).
\end{equation*}
Finally, we rewrite the coefficient in terms of the effective signal. Set
\[
    a:=\frac{\Delta^2}{\sigma^2}.
\]
We shall use the Gaussian prior with variance
\(
    \alpha^2=\Delta^2/d(1-\nu)
\)
with \(\nu\in(0,1/2)\).
Then \(\beta=a/[d(1-\nu)]\).  Since \(\nu\in(0,1/2)\) is fixed,
\(1/(1-\nu)\) is bounded above and below by universal constants.  Therefore
\[
    d\frac{\beta^2n}{1+\beta n}
    \asymp
    d\frac{(a/d)^2n}{1+an/d}\asymp r_n^2.
\]
We obtain for a universal constant \(C>0\), 
\begin{equation*}
    \mathrm{KL}(P_u^X,P_v^X)
    \leq
    C h(u,v) r_n^2.
\end{equation*}
Conditional on either cylinder \(C_{i+m}\) or \(C_{i-m}\), the outside labels $\eta_{I^c_i}$
have the same conditional law.  Indeed, both local patterns have endpoint values
\(\eta_{i-2k}=+1\) and \(\eta_{i+2k}=-1\), so by the Markov property the law of
\(\eta_{I_i^c}\) depends on the segment $I_i$ only through these endpoints.  Let
this common law be denoted by \(\mu_i\).  For each outside configuration
\(w\), let \(u_w^{(i+m)}\) and \(u_w^{(i-m)}\) be the corresponding full label paths
obtained by concatenating \(w\) with the local patterns of \(C_{i+m}\) and
\(C_{i-m}\), respectively. Then
\[
    h\bigl(u_w^{(i+m)},u_w^{(i-m)}\bigr)=2m
    \qquad\text{for every }w.
\]
Moreover, the two conditional laws of the observed pair
\((X_{1:n},\eta_{I_i^c})\) can be written as:
\[
    P_{i+m}(dx,w)=\mu_i(w)P^X_{u_w^{(i+m)}}(dx),
    \qquad
    P_{i-m}(dx,w)=\mu_i(w)P^X_{u_w^{(i-m)}}(dx).
\]
Thus,
\[
\begin{aligned}
    \mathrm{KL}(P_{i+m},P_{i-m})\leq 
     \sum_{w}
    \mathrm{KL}\bigl(P^X_{u_w^{(i+m)}},P^X_{u_w^{(i-m)}}\bigr)
    \,\mu_i(w)\leq
    \sum_{w} C h\bigl(u_w^{(i+m)},u_w^{(i-m)}\bigr)r_n^2\,\mu_i(w) \leq Cmr_n^2,
\end{aligned}
\]
after changing the universal constant \(C\). For \(m\leq k\),
\(mr_n^2\leq1\), so the Bretagnolle--Huber inequality yields
\[
 1-\|P_{i+m}-P_{i-m}\|_{\mathrm{TV}}
 \geq\frac12\exp\{-\mathrm{KL}(P_{i+m},P_{i-m})\}
 \geq\frac12e^{-C}.
\]
Furthermore, \(\log(1-\delta)\geq-2\delta\) for
\(0<\delta\leq1/2\), and hence
\((1-\delta)^{4k}\geq e^{-8k\delta}\geq e^{-8}\).
Substituting into \eqref{eq:merged-Ai-TV} gives
\begin{equation}
\label{eq:merged-Ai-intermediate-final}
 A_i(I_i)\geq c\delta k,\qquad i\in\mathcal I_k.
\end{equation}
Since \(\|\theta\|^2/\alpha^2\sim\chi_d^2\), the
usual lower-tail inequality gives
\begin{equation}
\label{eq:merged-prior-leakage}
    \pi_\theta(\|\theta\|<\Delta)
    \leq
    \exp\left(-\frac d4\nu^2\right).
\end{equation}
Combining \eqref{eq:merged-common-reduction},
\eqref{eq:merged-Ai-intermediate-final} and \eqref{eq:merged-prior-leakage} and using \(|\mathcal I_k|\geq n/2\),
we obtain
\[
    \Psi_{n,d}(\delta,\Delta,\sigma)
    \geq
    c\delta k-
    \exp\left(-\frac d4\nu^2\right).
\]
When $n r^2_n \geq C_0$, choosing \(\nu\) as a fixed sufficiently small numerical constant gives
\[
    \Psi_{n,d}(\delta,\Delta,\sigma)
    \geq
    c\left(1\wedge\frac{\delta}{r_n^2}\right)-e^{-cd}.
\]
We can separate two regimes:
\begin{itemize}
    \item[-] In the regime \(a\leq d/n\), \(nr_n^2\geq C_0\) and \(r_n^2\leq d/n\) which also implies \(d\geq C_0\). Thus, by choosing \(C_0\) sufficiently large,
the prior leakage \(e^{-cd}\) is at most one half of the positive term
\(c(1\wedge\delta/r_n^2)\) in the preceding lower bound. Consequently,
\[
 \Psi_{n,d}(\delta,\Delta,\sigma)
 \geq c\left(1\wedge\frac{\delta}{r_n^2}\right).
\]
    \item[-] 
In the regime \(a>d/n\), we have \(a/2<r_n^2\leq a\), and hence
\(a<2\) since \(r_n^2\leq1\). Fix \(\theta_0\) with
\(\|\theta_0\|=\Delta\). The proof of
\eqref{eq:merged-common-reduction} uses no Gaussian property of the prior
and therefore also applies to the point-mass prior at \(\theta_0\), for
which the leakage term is zero. Write \(A_i^{(0)}(I_i)\) for
\(A_i(I_i)\) evaluated under this fixed signal. Thus
\[
    \Psi_{n,d}(\delta,\Delta,\sigma)
    \geq \frac1{5n}\sum_{i\in\mathcal I}A_i^{(0)}(I_i).
\]
Choose
\[
    k_0:=1\vee\left\lfloor\frac1\delta\wedge\frac1a\right\rfloor,
    \qquad
    \mathcal I:=\{2k_0+1,\ldots,n-2k_0\},
    \qquad
    I_i:=\{i-2k_0,\ldots,i+2k_0\}.
\]
Then \(k_0\asymp\delta^{-1}\wedge a^{-1}\),
\(k_0\delta\leq1\), and \(k_0a\leq2\). Under
\(n\delta\geq C_0\), with \(C_0\) sufficiently large,
\(|\mathcal I|\geq n/2\) and \(|I_i^c|\geq n/2\).
Use the same pairs of one-change cylinders as above, now with window
radius \(2k_0\). For \(1\leq m\leq k_0\), let
\(P_{i+m}^{(0)}\) and \(P_{i-m}^{(0)}\) be the conditional laws of
\((X_{1:n},\eta_{I_i^c})\) under the fixed signal \(\theta_0\).
The outside labels have the same law under the two cylinders, and the
conditional Gaussian means differ by \(2\theta_0\) at exactly \(2m\)
coordinates. Therefore
\[
    \mathrm{KL}\bigl(P_{i+m}^{(0)},P_{i-m}^{(0)}\bigr)
    =4m\frac{\|\theta_0\|^2}{\sigma^2}
    =4ma\leq8.
\]
The Bretagnolle--Huber inequality gives
\(1-\|P_{i+m}^{(0)}-P_{i-m}^{(0)}\|_{\mathrm{TV}}
\geq e^{-8}/2\). Moreover,
\((1-\delta)^{4k_0-1}\geq c\), since \(k_0\delta\leq1\) and
\(\delta\leq1/2\). The same cylinder-testing argument as in
\eqref{eq:merged-Ai-TV} consequently yields
\(A_i^{(0)}(I_i)\geq c\delta k_0\). Averaging over \(i\in\mathcal I\),
we obtain
\[
    \Psi_{n,d}(\delta,\Delta,\sigma)
    \geq c\delta k_0
    \geq c\left(1\wedge\frac{\delta}{a}\right)
    \geq c\left(1\wedge\frac{\delta}{r_n^2}\right).
\]
\end{itemize}
Finally, suppose \(r_n^2(\Delta)<C_0/n\). Choose \(\Delta'\geq\Delta\)
such that \(r_n^2(\Delta')=C_0/n\); this is possible because
\(r_n^2(\cdot)\) is continuous and strictly increasing from zero to
infinity. Since \(C_0/n\leq\delta\leq1/2\), the weak-signal bound
already established applies at \(\Delta'\). The parameter classes are
nested, so
\[
 \Psi_{n,d}(\delta,\Delta,\sigma)
 \geq\Psi_{n,d}(\delta,\Delta',\sigma)
 \geq c\left(1\wedge\frac{n\delta}{C_0}\right)=c.
\]
\textbf{Strong signal regime.}
\label{step:strong-signal-regime}
Assume \(r_n^2>1\), and recall
\[
 a:=\frac{\Delta^2}{\sigma^2},\qquad
 r_n^2=\frac{a^2}{a+d/n}\leq a.
\]
In particular, \(a>1\). We first record a fixed-signal bound and then
treat the regime in which the unknown direction affects the rate.

Fix \(\theta_0\) with \(\|\theta_0\|=\Delta\), and use the point-mass
prior at \(\theta_0\) in \eqref{eq:merged-common-reduction}, with
\(\mathcal I=\{2,\ldots,n-1\}\) and \(I_i=\{i\}\). Denote the
corresponding local risks by \(A_i^{(0)}\). On
\(\{\eta_{i-1}\neq\eta_{i+1}\}\), the conditional probabilities of
\(\eta_i=1\) and \(\eta_i=-1\), given \(\eta_{-i}\), are equal.
For fixed \(\theta_0\), observations other than \(X_i\) carry no
additional information about \(\eta_i\) given \(\eta_{-i}\).
The conditional Bayes error is therefore \(\Phi^c(\sqrt a)\).
Since this event has probability \(2\delta(1-\delta)\), the reduction,
with no prior leakage, gives
\begin{equation}
\label{eq:fixed-signal-pointmass-lower}
 \Psi_{n,d}(\delta,\Delta,\sigma)
 \geq\frac{n-2}{5n}\,2\delta(1-\delta)\Phi^c(\sqrt a)
 \geq c\delta\Phi^c(\sqrt a).
\end{equation}
Here \(n\geq4\) follows by taking \(C_0\) sufficiently large.
If \(a\geq d\log n/n\), then
\[
 \sqrt a=r_n\sqrt{1+\frac{d}{na}}
 \leq r_n\left(1+\frac1{\log n}\right),
\]
and the desired lower bound follows from
\eqref{eq:fixed-signal-pointmass-lower}.

It remains to consider \(1<a<d\log n/n\). Use the Gaussian prior
\[
 \pi_\theta=\mathcal N(0,\alpha_n^2I_d),\qquad
 \alpha_n^2=\frac{\Delta^2}{d(1-\nu_n)},
\]
where \(\nu_n\) will be specified below. Again take
\(\mathcal I=\{2,\ldots,n-1\}\), \(I_i=\{i\}\), and write
\(A_i=A_i(\{i\})\). Then
\[
 \Psi_{n,d}(\delta,\Delta,\sigma)
 \geq\frac1{5n}\sum_{i=2}^{n-1}A_i
       -\pi_\theta(\|\theta\|<\Delta).
\]
We restrict the local Bayes error to the event
\(E_i:=\{\eta_{i-1}=\eta_i\neq\eta_{i+1}\}\), whose probability is
\(\delta(1-\delta)\). On this event the neighboring states are distinct.
Writing \(S_i:=\sum_{j\neq i}\eta_jX_j\), Gaussian integration shows
that the posterior odds for \(\eta_i=1\) against \(\eta_i=-1\), when
the neighboring states are distinct, are
\[
 \exp\left\{\frac{2\langle X_i,S_i\rangle}
 {n\sigma^2+\sigma^4/\alpha_n^2}\right\}.
\]
Thus the Bayes rule is \(\operatorname{sign}\langle X_i,S_i\rangle\)
on this event, and
\[
 A_i\geq\mathbb P_\pi\bigl(E_i,\ \langle\eta_iX_i,S_i\rangle<0\bigr).
\]
Put \(u=\theta/\sigma\), \(b=\|u\|^2\), and
\[
 Z:=\frac1{\sqrt{n-1}}\sum_{j\neq i}\eta_j\xi_j,
 \qquad W:=u+\frac{Z}{\sqrt{n-1}}.
\]
Conditionally on \((\theta,\eta)\), \(Z\sim\mathcal N(0,I_d)\) is
independent of \(\xi_i\). Conditionally on \((\theta,\eta,Z)\),
\(-\langle\eta_i\xi_i,W/\|W\|\rangle\) is standard normal, with a
law independent of \(Z\). The conditional error probability is therefore
\(\Phi^c(\langle u,W\rangle/\|W\|)\). Its expectation over \(Z\)
depends on \(\theta\) but not on the latent path. Consequently,
\[
 A_i\geq\delta(1-\delta)\,
 \mathbb E_{\theta,Z}\left[
 \Phi^c\left(\frac{\langle u,W\rangle}{\|W\|}\right)\right].
\]
The zero-denominator event has probability zero.

For \(0<\beta_n<1/2\) and \(0<\xi_n<1\), define
\[
 \mathcal A_\theta:=
 \left\{\|Z\|^2\geq d(1-\xi_n),\quad
 \frac{|\langle Z,u\rangle|}{\sqrt{n-1}}\leq\beta_n b\right\}.
\]
On this event, the numerator is positive and
\[
 \frac{\langle u,W\rangle}{\|W\|}
 \leq\frac{(1+\beta_n)b}
 {\sqrt{(1-2\beta_n)b+d(1-\xi_n)/(n-1)}}.
\]
Gaussian and chi-square tail bounds give
\[
 \mathbb P(\mathcal A_\theta\mid\theta)
 \geq1-e^{-d\xi_n^2/4}
          -2e^{-\beta_n^2b(n-1)/2}.
\]
Take \(0<\gamma_n<1/2\), \(\gamma_n\geq4\nu_n\), and define
\(\mathcal B=\{|\|\theta\|^2/\Delta^2-1|\leq\gamma_n\}\).
Since \(\|\theta\|^2/\alpha_n^2\sim\chi_d^2\),
\[
 \pi_\theta(\mathcal B^c)\leq2e^{-c d\gamma_n^2},\qquad
 \pi_\theta(\|\theta\|<\Delta)\leq e^{-d\nu_n^2/4}.
\]
Restricting directly to \(\mathcal B\cap\mathcal A_\theta\), where
\((1-\gamma_n)a\leq b\leq(1+\gamma_n)a\), yields
\[
 A_i\geq c\delta\,p_n\,
 \Phi^c\left(
 \frac{a(1+\beta_n)(1+\gamma_n)}
 {\sqrt{a(1-2\beta_n)(1-\gamma_n)+d(1-\xi_n)/(n-1)}}
 \right),
\]
provided the following factors are nonnegative:
\[
 p_n:=
 \left(1-2e^{-c d\gamma_n^2}\right)
 \left(1-e^{-d\xi_n^2/4}
          -2e^{-\beta_n^2(1-\gamma_n)a(n-1)/2}\right).
\]
Choose explicitly
\[
 \beta_n=(na)^{-1/4},\qquad \xi_n=d^{-1/4},\qquad
 \nu_n=\sqrt{\frac{na}{d\log^2n}},\qquad \gamma_n=4\nu_n.
\]
Because \(a>1\) and \(d>na/\log n\), these choices satisfy, uniformly
in the present regime,
\[
 \beta_n\leq n^{-1/4},\quad
 \xi_n\leq(\log n/n)^{1/4},\quad
 \gamma_n\leq\frac4{\sqrt{\log n}}.
\]
Moreover,
\[
 d\xi_n^2=\sqrt d\longrightarrow\infty,\qquad
 \beta_n^2a(n-1)\asymp\sqrt{na}\longrightarrow\infty,
 \qquad d\gamma_n^2=\frac{16na}{\log^2n}\longrightarrow\infty.
\]
Thus, after increasing the universal constant \(C_0\) if necessary,
all required parameter restrictions hold and \(p_n\geq1/2\).
Since \(d/(n-1)\geq d/n\), the argument of the Gaussian tail is at most
\[
 r_n\{1+C(\beta_n+\gamma_n+\xi_n)\}
 \leq r_n(1+\varepsilon_n),\qquad
 \varepsilon_n:=\frac K{\sqrt{\log n}},
\]
for a sufficiently large universal \(K\). Taking \(C_0\) larger also
ensures \(\varepsilon_n\leq1\). The common reduction now gives
\[
 \Psi_{n,d}(\delta,\Delta,\sigma)
 \geq c\delta\Phi^c\bigl(r_n(1+\varepsilon_n)\bigr)
       -\exp\left\{-\frac{na}{4\log^2n}\right\}.
\]
To absorb the last term, Mills' inequality, \(r_n>1\), and
\(\delta\geq C_0/n\) imply
\[
 \log\frac1{\delta\Phi^c(r_n(1+\varepsilon_n))}
 \leq C(r_n^2+\log n).
\]
On the other hand, \(r_n^2\leq a\) and \(a>1\) give
\[
 \frac{na/(4\log^2n)}{r_n^2+\log n}
 \geq\frac{n}{4\log^2n(1+\log n)}\longrightarrow\infty.
\]
The prior leakage is therefore negligible relative to the positive term,
uniformly in this regime. Increasing \(C_0\) once more absorbs it and yields
\[
 \Psi_{n,d}(\delta,\Delta,\sigma)
 \geq c\delta\Phi^c\bigl(r_n(1+\varepsilon_n)\bigr).
\]
The same choice \(\varepsilon_n=K/\sqrt{\log n}\) also covers the
fixed-signal case \(a\geq d\log n/n\), since
\(\varepsilon_n\geq1/\log n\). Hence the strong-signal lower bound
holds with a deterministic \(\varepsilon_n\to0\) as \(n\to\infty\).

\subsection{Concentration properties of the block Gram matrix}\label{sec:concentration:lemmas}

Throughout this subsection, $n$ is dyadic, as in
Section~\ref{sec:adaptation}. For $\ell\in\mathcal L_n$, let
$k=n/\ell$, and let $I_1,\ldots,I_\ell$ be the consecutive
blocks of size $k$. The block-average matrix satisfies
\[
    \widetilde X^{(\ell)}
    =
    \theta\bigl(\overline\eta^{(\ell)}\bigr)^\top
    +\frac{\sigma}{\sqrt{k}}W,
    \qquad
    \overline\eta_r^{(\ell)}
    =\frac1k\sum_{i\in I_r}\eta_i,
\]
where $W\in\mathbb R^{d\times\ell}$ has independent standard
Gaussian entries and is independent of the latent chain.
For a fixed scale $\ell$, we abbreviate
$\overline\eta^{(\ell)}$ by $\overline\eta$.
We also write
\[
    a=\frac{\|\theta\|^2}{\sigma^2},
    \qquad q=\frac dn,
    \qquad r_n^2=r_n^2(\theta)=\frac{a^2}{a+q}.
\]

\begin{lemma}[Energy of the block labels]
\label{lem:block-label-energy}
Assume that
\(
    \delta\le \frac{\ell}{n}.
\)
Then there exist universal constants $c,C>0$ such that
\[
    \PP\left(
        \frac{\ell}{4}
        \le
        \|\overline\eta\|^2
        \le
        \ell
    \right)
    \ge
    1-C\exp(-c\ell).
\]
\end{lemma}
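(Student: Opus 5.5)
The upper bound $\|\overline\eta\|^2\le\ell$ is deterministic, since each block average satisfies $|\overline\eta_r|\le1$. Only the lower bound needs an argument. The plan is to show that a constant fraction of the $\ell$ blocks contain no transition at all. Such a clean block has $\overline\eta_r^2=1$, so it suffices to prove that at least $\ell/4$ blocks are transition-free.

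For each block $I_r$, let $T_r$ be the indicator that some transition $\eta_{j+1}\neq\eta_j$ occurs between two consecutive indices of $I_r$. There are $k-1$ such internal edges. The transition indicators along the chain are i.i.d.\ Bernoulli$(\delta)$, independent of $\eta_1$; this is the same fact used in the proof of Theorem~\ref{prop:risk_clust_procedure_ub}. Because the blocks use disjoint sets of edges, the variables $T_1,\dots,T_\ell$ are independent. Each satisfies
\[
\PP(T_r=1)=1-(1-\delta)^{k-1}\le (k-1)\delta\le k\delta=\frac{n\delta}{\ell}\le 1
\]
under the assumption $\delta\le\ell/n$. This bound by $1$ is useless as it stands, so it must be sharpened. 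Since $\delta\le 1/k$, we have $(1-\delta)^{k-1}\ge(1-1/k)^{k-1}\ge e^{-1}$. Hence each block is clean with probability $p:=\PP(T_r=0)\ge e^{-1}>1/3$.

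Let $N:=\#\{r:T_r=0\}$. This is a sum of $\ell$ independent Bernoulli variables, each with mean at least $e^{-1}$. Hoeffding's inequality gives
\[
\PP(N<\ell/4)\le \exp\bigl(-2\ell(e^{-1}-1/4)^2\bigr)=\exp(-c\ell).
\]
On the complementary event, $\|\overline\eta\|^2\ge\sum_{r:T_r=0}\overline\eta_r^2=N\ge\ell/4$, which proves the lemma with $C=1$.

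The only delicate step is checking that the constant $e^{-1}$ exceeds $1/4$ uniformly, including the edge case $k=1$ where every block is trivially clean. The key point is that the per-block clean probability stays bounded below when $k\delta$ is as large as $1$, which is exactly the regime allowed by the hypothesis $\delta\le\ell/n$. Stationarity plays no role, because the law of the transition indicators does not depend on $\eta_1$.
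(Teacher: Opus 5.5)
Your proof is correct and follows essentially the paper's argument: independence across blocks because they use disjoint sets of internal transition variables, a clean-block probability of at least $(1-\delta)^{k-1}\ge e^{-1}$ from $k\delta\le1$, and then Hoeffding's inequality. The only difference is cosmetic: you apply Hoeffding to the count of transition-free blocks and use $\|\overline\eta\|^2\ge N$, whereas the paper applies it directly to the independent bounded variables $\overline\eta_1^2,\ldots,\overline\eta_\ell^2$ with mean sum at least $\ell/3$.
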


\begin{proof}
Introduce the transition variables
\[
    \varepsilon_i:=\eta_i\eta_{i+1},
    \qquad 1\le i\le n-1.
\]
Under the stationary symmetric two-state Markov model, the variables
$(\varepsilon_i)_{i=1}^{n-1}$ are independent and satisfy
\(
    \PP(\varepsilon_i=-1)=\delta,\quad
    \PP(\varepsilon_i=1)=1-\delta.
\)

For each block $I_r$, the quantity $\overline\eta_r^2$ is invariant
under a simultaneous change of sign of all labels in the block.
Consequently, $\overline\eta_r^2$ depends only on the transition
variables $\varepsilon_i$ corresponding to edges entirely contained
in $I_r$. Since different blocks involve disjoint sets of transition
variables, the random variables
\(
    \overline\eta_1^2,\ldots,\overline\eta_\ell^2
\)
are independent. Moreover,
\(
    0\le \overline\eta_r^2\le1.
\)

If no transition occurs inside $I_r$, then all labels in this block
are identical and therefore $\overline\eta_r^2=1$. It follows that
\[
    \EE[\overline\eta_r^2]
    \ge
    (1-\delta)^{k-1}.
\]
Since $\delta k\le1$, we have, for $k\ge2$,
\(
    (1-\delta)^{k-1}
    \ge e^{-1}
    \ge\frac13,
\)
while the claim is immediate for $k=1$. Hence
\[
    \EE\|\overline\eta\|^2
    =
    \sum_{r=1}^\ell\EE[\overline\eta_r^2]
    \ge \frac{\ell}{3}.
\]

By Hoeffding's inequality,
\[
    \PP\left(
        \|\overline\eta\|^2
        -
        \EE\|\overline\eta\|^2
        \le -t
    \right)
    \le
    \exp\left(-c\frac{t^2}{\ell}\right).
\]
Taking $t=\ell/12$ gives
\[
    \PP\left(
        \|\overline\eta\|^2<\frac{\ell}{4}
    \right)
    \le
    C\exp(-c\ell).
\]
\end{proof}

We next turn to the corresponding Gram matrix. Define
\[
    G_\ell
    :=
    \frac1\ell
    \bigl(\widetilde X^{(\ell)}\bigr)^\top
    \widetilde X^{(\ell)}
    -
    \frac{d\sigma^2}{n}I_\ell.
\]
\begin{lemma}[Concentration of the block Gram matrix]
\label{lem:block-gram}
There exist universal constants $c,C>0$ such that, for every dyadic
$\ell\le n$, with probability at least
\(
    1-C\exp(-c\ell),
\)
one has
\begin{equation}
\left\|
    G_\ell
    -
    \frac{\|\theta\|^2}{\ell}
    \overline\eta\overline\eta^\top
\right\|_{\op}
\le
    4\sigma\|\theta\|
    \sqrt{\frac{\ell}{n}}
 +
    4\sigma^2
    \frac{\ell+\sqrt{d\ell}}{n}.
    \label{eq:block-gram-perturbation}
\end{equation}
If, in addition, $\delta\le\frac{\ell}{n}$,
then, with probability at least $1-C\exp(-c\ell)$,
\begin{equation}
\left|
    \lambda_{\max}(G_\ell)-\|\theta\|^2
\right|
\le
    \frac34\|\theta\|^2
    +
    4\sigma\|\theta\|
    \sqrt{\frac{\ell}{n}}
 +
    4\sigma^2
    \frac{\ell+\sqrt{d\ell}}{n}.
    \label{eq:block-gram-eigenvalue}
\end{equation}
Consequently, there exists a universal constant
$c_0>0$ such that, whenever
\[
    \delta
    \le
    \frac{\ell}{n}
    \le
    c_0 r_n^2,
\]
then with probability
at least $1-C\exp(-c\ell)$:
\begin{equation}
    \left|
        \lambda_{\max}(G_\ell)-\|\theta\|^2
    \right|
    \le
    \frac9{10}\|\theta\|^2,
\label{eq:block-gram-signal-eigenvalue}
\end{equation}
and, for any unit top eigenvector $\widehat u_\ell$ of $G_\ell$,
\begin{equation}
    \left|
        \left\langle
            \widehat u_\ell,
            \frac{\overline\eta}{\|\overline\eta\|}
        \right\rangle
    \right|^2
    \ge\frac34.
\label{eq:block-gram-alignment}
\end{equation}
Conversely, there exists a universal constant
$C_0>0$ such that, whenever
\(
    \frac{\ell}{n}\ge C_0r_n^2,
\)
\begin{equation}
    \left|\lambda_{\max}(G_\ell)\right|
    \le
    10\sigma^2
    \frac{\ell+\sqrt{d\ell}}{n}
\label{eq:block-gram-noise-regime}
\end{equation}
with probability at least $1-C\exp(-c\ell)$.
\end{lemma}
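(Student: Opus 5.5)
Conditionally on the latent path, the decomposition $\widetilde X^{(\ell)}=\theta\overline\eta^\top+\frac{\sigma}{\sqrt k}W$ gives
\[
G_\ell-\frac{\|\theta\|^2}{\ell}\overline\eta\overline\eta^\top
=\frac{\sigma}{\ell\sqrt k}\bigl(\overline\eta\,\theta^\top W+W^\top\theta\,\overline\eta^\top\bigr)
+\frac{\sigma^2}{k\ell}\bigl(W^\top W-dI_\ell\bigr),
\]
using $k\ell=n$. I bound the two terms separately and then combine them.

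\textbf{Step 1: the cross term.} Its operator norm is at most $\frac{2\sigma}{\ell\sqrt k}\|\overline\eta\|\,\|W^\top\theta\|$. Here $W^\top\theta\sim\mathcal N(0,\|\theta\|^2I_\ell)$, so $\|W^\top\theta\|\le 2\|\theta\|\sqrt\ell$ with probability $1-e^{-c\ell}$. Since $\|\overline\eta\|\le\sqrt\ell$, the cross term is bounded by $4\sigma\|\theta\|/\sqrt k=4\sigma\|\theta\|\sqrt{\ell/n}$.

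\textbf{Step 2: the Wishart term.} The standard Davidson--Szarek bound for the singular values of the $d\times\ell$ Gaussian matrix $W$, applied at deviation $t\asymp\sqrt\ell$, gives
\[
\|W^\top W-dI_\ell\|_{\op}\le C(\ell+\sqrt{d\ell})
\]
with probability $1-e^{-c\ell}$. Multiplying by $\sigma^2/n$ yields \eqref{eq:block-gram-perturbation}. The constant $4$ is obtained by tracking $(\sqrt d+\sqrt\ell+t)^2-d$; if it is not exactly achievable, one enlarges the constant, which is harmless downstream.

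\textbf{Step 3: the eigenvalue bound.} The rank-one matrix has top eigenvalue $\|\theta\|^2\|\overline\eta\|^2/\ell\in[\|\theta\|^2/4,\|\theta\|^2]$ on the event of Lemma~\ref{lem:block-label-energy}, which requires $\delta\le\ell/n$. Weyl's inequality then gives \eqref{eq:block-gram-eigenvalue}, since the deviation from $\|\theta\|^2$ is at most $\frac34\|\theta\|^2$.

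\textbf{Step 4: the signal regime.} When $\ell/n\le c_0r_n^2$, I check that the perturbation is at most $\frac{\|\theta\|^2}{20}$. This uses $r_n^2\le a$ and $r_n^2\le a^2/q$:
\[
\sigma\|\theta\|\sqrt{\ell/n}\le\sqrt{c_0}\,\sigma\|\theta\| r_n\le\sqrt{c_0}\,\|\theta\|^2,
\qquad
\sigma^2\ell/n\le c_0\sigma^2 a=c_0\|\theta\|^2,
\]
\[
\sigma^2\sqrt{d\ell}/n=\sigma^2\sqrt{q\,\ell/n}\le\sigma^2\sqrt{c_0\, q\, r_n^2}\le\sqrt{c_0}\,\sigma^2a .
\]
The eigenvalue bound \eqref{eq:block-gram-signal-eigenvalue} needs more care than Weyl alone, which only gives $\frac34+\frac1{20}<\frac9{10}$. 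The alignment bound \eqref{eq:block-gram-alignment} follows from Davis--Kahan applied to the rank-one spike: the eigengap is at least $\|\theta\|^2/4$ and the perturbation is at most $\|\theta\|^2/20$, so $\sin^2\angle\le(4/20)^2\cdot C$, and $c_0$ is shrunk until $\sin^2\angle\le\frac14$. This is the main obstacle: matching the exact constants $\frac9{10}$ and $\frac34$ from Weyl and Davis--Kahan forces $c_0$ to be small, and the gap is only $\|\theta\|^2/4$. I would use the sharper form $\sin\angle\le\|E\|_{\op}/(\lambda-\|E\|_{\op})$.

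\textbf{Step 5: the noise regime.} When $\ell/n\ge C_0r_n^2$, write $\lambda_{\max}(G_\ell)\le\|\theta\|^2+\|E\|_{\op}$ and $\lambda_{\max}(G_\ell)\ge-\|E\|_{\op}$. It remains to show that $\|\theta\|^2$ and $\sigma\|\theta\|\sqrt{\ell/n}$ are dominated by $\sigma^2(\ell+\sqrt{d\ell})/n$, splitting into cases.
\begin{itemize}
\item If $a\ge q$, then $r_n^2\ge a/2$, so $\ell/n\ge C_0a/2$. Hence $\|\theta\|^2\le\frac{2}{C_0}\sigma^2\ell/n$ and $\sigma\|\theta\|\sqrt{\ell/n}\le\sqrt{2/C_0}\,\sigma^2\ell/n$.
\item If $a<q$, then $r_n^2\ge a^2/(2q)$, so $a\le\sqrt{2q\,\ell/(nC_0)}$ and $\sigma^2a\lesssim C_0^{-1/2}\sigma^2\sqrt{d\ell}/n$. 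The cross term satisfies
\[
\sigma\|\theta\|\sqrt{\ell/n}=\sigma^2\sqrt{a\,\ell/n}\le\sigma^2\tfrac12\bigl(a+\ell/n\bigr),
\]
which is already controlled.
\end{itemize}
With $C_0$ large, the total is at most $10\sigma^2(\ell+\sqrt{d\ell})/n$, giving \eqref{eq:block-gram-noise-regime}.
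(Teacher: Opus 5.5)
Your proposal is correct and follows the paper's proof essentially step for step. It uses the same signal/cross/Wishart decomposition with the same $\chi^2$ and singular-value concentration, the label-energy lemma plus Weyl for the eigenvalue, and Davis--Kahan for the alignment. In the noise regime you compare $a$ with $h(t)=t+\sqrt{qt}$ by splitting into $a\ge q$ and $a<q$; the paper instead solves the quadratic $a^2-(t/C_0)a-tq/C_0\le 0$ to get $a\le h(t)/\sqrt{C_0}$, which is equivalent.

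The ``obstacle'' you flag in Step 4 is not one. Weyl alone already gives a deviation of at most $\frac34+\frac1{20}=\frac45\le\frac9{10}$; the paper uses $\frac34+\frac3{20}$ and lands exactly on $\frac9{10}$. Likewise, Davidson--Szarek with deviation $\sqrt\ell$ yields exactly the constant $4$ in the Wishart bound, so you do not need to enlarge it.
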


\begin{proof}
Expanding the Gram matrix gives
\[
    \bigl(\widetilde X^{(\ell)}\bigr)^\top
\widetilde X^{(\ell)}
    =
    \|\theta\|^2
    \overline\eta\overline\eta^\top
    +
    \frac{\sigma}{\sqrt{k}}
    \left(
        \overline\eta\theta^\top W
        +
        W^\top\theta\overline\eta^\top
    \right)
    +
    \frac{\sigma^2}{k}W^\top W.
\]
Since $k\ell=n$, it follows that
\begin{align*}
G_\ell
-
\frac{\|\theta\|^2}{\ell}
\overline\eta\overline\eta^\top
=
\frac{\sigma}{\ell\sqrt{k}}
\left(
    \overline\eta\theta^\top W
    +
    W^\top\theta\overline\eta^\top
\right) +
\frac{\sigma^2}{n}
\left(
    W^\top W-dI_\ell
\right).
\end{align*}
Therefore
\begin{align}
\left\|
G_\ell
-
\frac{\|\theta\|^2}{\ell}
\overline\eta\overline\eta^\top
\right\|_{\op}
\le{}&
\frac{2\sigma}{\ell\sqrt{k}}
\|\overline\eta\|
\|W^\top\theta\|
 +
\frac{\sigma^2}{n}
\|W^\top W-dI_\ell\|_{\op}.
\label{eq:gram-decomposition-bound}
\end{align}

When $\theta\neq0$,
\(
    \frac{W^\top\theta}{\|\theta\|}
    \sim \mathcal{N}(0,I_\ell),
\)
and hence standard Gaussian concentration yields
\(
    \|W^\top\theta\|
    \le
    2\|\theta\|\sqrt{\ell}
\)
with probability at least $1-\exp(-c\ell)$. The corresponding term
vanishes identically when $\theta=0$. Moreover, standard bounds for the extreme singular values of a
Gaussian matrix imply
\[
    \|W^\top W-dI_\ell\|_{\op}
    \le
    4\left(
        \ell+\sqrt{d\ell}
    \right)
\]
with probability at least $1-C\exp(-c\ell)$. Since
\[
    \|\overline\eta\|\le\sqrt{\ell}
\]
deterministically, we obtain
\[
\left\|
G_\ell
-
\frac{\|\theta\|^2}{\ell}
\overline\eta\overline\eta^\top
\right\|_{\op}
\le
4\sigma\|\theta\|\sqrt{\frac{\ell}{n}}
+
4\sigma^2
\frac{\ell+\sqrt{d\ell}}{n}.
\]
This proves \eqref{eq:block-gram-perturbation}. Suppose now that
\(
    \delta\le\frac{\ell}{n}.
\)
By Lemma~\ref{lem:block-label-energy}, outside an event of probability
at most $C\exp(-c\ell)$,
\[
    \frac14
    \le
    \frac{\|\overline\eta\|^2}{\ell}
    \le1.
\]
The rank-one matrix
\[
    S_\ell
    :=
    \frac{\|\theta\|^2}{\ell}
    \overline\eta\overline\eta^\top
\]
has unique non-zero eigenvalue
\(
    \lambda_\star
    =
    \|\theta\|^2
    \|\overline\eta\|^2/\ell.
\)
Consequently,
\(
    \frac14\|\theta\|^2
    \le
    \lambda_\star
    \le
    \|\theta\|^2
\)
and
\(
    |\lambda_\star-\|\theta\|^2|
    \le
    \frac34\|\theta\|^2.
\)
Weyl's inequality together with
\eqref{eq:block-gram-perturbation} therefore gives
\eqref{eq:block-gram-eigenvalue}.

We now consider the regime
\[
    \frac{\ell}{n}\le c_0r_n^2.
\]
Set
\[
    t:=\frac{\ell}{n}.
\]
Recall that
\[
    r_n^2=\frac{a^2}{a+q},
    \qquad
    a=\frac{\|\theta\|^2}{\sigma^2},
    \qquad
    q=\frac dn.
\]
The inequality $t\le c_0r_n^2$ yields
\[
    \frac{t}{a}
    \le
    c_0\frac{a}{a+q}
    \le c_0
\]
and
\[
    \frac{\sqrt{qt}}{a}
    \le
    \sqrt{
        c_0\frac{q}{a+q}
    }
    \le\sqrt{c_0}.
\]
It follows that
\begin{align*}
&
4\sigma\|\theta\|\sqrt{t}
+
4\sigma^2(t+\sqrt{qt})
\\
&\qquad\le
\|\theta\|^2
\left(
    8\sqrt{c_0}+4c_0
\right).
\end{align*}
Choosing $c_0>0$ sufficiently small, the right-hand side is at most
$3\|\theta\|^2/20$. Combining this with
\eqref{eq:block-gram-eigenvalue} gives
\[
    \left|
        \lambda_{\max}(G_\ell)-\|\theta\|^2
    \right|
    \le
    \frac9{10}\|\theta\|^2,
\]
which proves \eqref{eq:block-gram-signal-eigenvalue}.
We next prove the eigenvector statement. Let
\[
    v_\star
    :=
    \frac{\overline\eta}{\|\overline\eta\|}.
\]
Then
\[
    S_\ell
    =
    \lambda_\star v_\star v_\star^\top,
    \qquad
    \lambda_\star
    \ge
    \frac14\|\theta\|^2.
\]
By decreasing $c_0$ further if necessary,
\eqref{eq:block-gram-perturbation} ensures that
\[
    \|G_\ell-S_\ell\|_{\op}
    \le
    \frac14\lambda_\star.
\]
Since $S_\ell$ has spectral gap $\lambda_\star$, the
Davis--Kahan theorem implies
\[
    \sin\angle(\widehat u_\ell,v_\star)
    \le
    \frac{2\|G_\ell-S_\ell\|_{\op}}
         {\lambda_\star}
    \le\frac12.
\]
Hence
\[
    1-
    |\langle\widehat u_\ell,v_\star\rangle|^2
    \le\frac14,
\]
which proves \eqref{eq:block-gram-alignment}.
It remains to consider scales above the effective signal scale.
Assume that
\[
    t=\frac{\ell}{n}\ge C_0r_n^2.
\]
Then
\[
    \frac{a^2}{a+q}
    \le
    \frac{t}{C_0},
\]
and therefore
\[
    a^2-\frac{t}{C_0}a-\frac{tq}{C_0}\le0.
\]
Solving this quadratic inequality gives
\[
    a
    \le
    \frac{t}{C_0}
    +
    \sqrt{\frac{tq}{C_0}}.
\]
If
\[
    h(t):=t+\sqrt{qt},
\]
we thus have, for $C_0\ge1$,
\[
    a\le\frac{h(t)}{\sqrt{C_0}}.
\]
Since $t\le h(t)$,
\[
    \sqrt{at}
    \le
    \frac{h(t)}{C_0^{1/4}}.
\]
On the event of
\eqref{eq:block-gram-perturbation}, using
$\|\overline\eta\|^2/\ell\le1$,
\begin{align*}
|\lambda_{\max}(G_\ell)|
&\le
\|\theta\|^2
+
4\sigma\|\theta\|\sqrt t
+
4\sigma^2h(t)
\\
&=
\sigma^2
\left(
    a+4\sqrt{at}+4h(t)
\right)
\\
&\le
\sigma^2h(t)
\left(
    \frac1{\sqrt{C_0}}
    +
    \frac4{C_0^{1/4}}
    +4
\right).
\end{align*}
Choosing $C_0$ sufficiently large makes the quantity in parentheses
smaller than $10$. Since
\[
    h(t)
    =
    \frac{\ell+\sqrt{d\ell}}{n},
\]
we obtain
\[
    |\lambda_{\max}(G_\ell)|
    \le
    10\sigma^2
    \frac{\ell+\sqrt{d\ell}}{n},
\]
which proves \eqref{eq:block-gram-noise-regime}.
\end{proof}

\subsection{Proof of Proposition~\ref{prop:adaptive:estimation}}

Use the notation of Section~\ref{sec:concentration:lemmas}, and set
\[
    t_\ell=\ell/n,\qquad
    H(t)=t+\sqrt{qt},\qquad
    h_\ell=\sigma^2H(t_\ell).
\]
Let $c_0$ be the constant defined in Lemma \ref{lem:block-gram} and choose $0<c_+<\min\{c_0,1\}$ sufficiently small such that
\begin{equation}
    4\sigma\|\theta\|\sqrt t+4\sigma^2H(t)
    \le \frac1{20}\|\theta\|^2,
    \qquad t\le c_+r_n^2.
    \label{eq:small-scale-perturbation}
\end{equation}
This follows from $r_n^2=a^2/(a+q)$, since the left-hand side
divided by $\|\theta\|^2$ is at most
$8\sqrt{c_+}+4c_+$.
Next choose
\[
    A>c_+^{-1}+4c_+^{-1/2}+4,
\]
and then $0<c_-<c_+$ such that
$A(c_-+\sqrt{c_-})\le1/20$.

Define
\[
    \ell_-=\max\{\ell\in\mathcal L_n:
                      \ell\le n\min(c_-r_n^2,1)\}.
\]
Under $\delta\ge1/(4n)$ and $r_n^2\ge c'\delta$, taking $c'$
sufficiently large ensures that this set is nonempty and
\begin{equation}
    \ell_-\ge\frac n2\min(c_-r_n^2,1)
             \ge c n(r_n^2\wedge1),
    \qquad
    \delta\le t_{\ell_-}\le c_-r_n^2.
    \label{eq:ell-minus-size}
\end{equation}
Indeed, if $c_-r_n^2<1$, dyadic rounding gives
$t_{\ell_-}\ge c_-r_n^2/2\ge\delta$; otherwise $\ell_-=n$.

Since $\ell_-\ge n\delta$, Lemmas~\ref{lem:block-label-energy}
and~\ref{lem:block-gram} apply at every dyadic scale
$\ell\ge\ell_-$: the block-energy and Gram perturbation bounds
hold with failure probability at most $Ce^{-c\ell}$. Taking the intersection of these events over the
scales $\ell\in\mathcal L_n:\,\ell\ge\ell_-$ gives an event $\mathcal E$ on which all these bounds
hold simultaneously, with
\[
    \PP(\mathcal E^c)
    \le C\sum_{\ell\in\mathcal L_n:\,\ell\ge\ell_-}e^{-c\ell}
    \le Ce^{-c\ell_-}
    \le Ce^{-cn(r_n^2\wedge1)}.
\]
We work on $\mathcal E$ below.

\medskip\noindent
\emph{Localization of the selected scale.}
At $\ell_-$, the block-energy bound and
\eqref{eq:small-scale-perturbation}, together with Weyl's
inequality, give
\[
    \lambda_{\max}(G_{\ell_-})
    \ge\left(\frac14-\frac1{20}\right)\|\theta\|^2
    =\frac15\|\theta\|^2.
\]
Moreover, $t_{\ell_-}\le c_-r_n^2$ and the choice of $c_-$ yield $\widehat\ell\ge\ell_-$ because
\[
    Ah_{\ell_-}
    \le A(c_-+\sqrt{c_-})\|\theta\|^2
    \le\frac1{20}\|\theta\|^2\leq \lambda_{\max}(G_{\ell_-}) = \widehat\theta^{\,2}(\ell_-).
\]

For $t\ge c_+r_n^2$, writing $x=a/(a+q)$ and using $c_+ < 1$ gives
\[
    \frac{H(t)}a
    \ge c_+x+\sqrt{c_+(1-x)}
    \ge c_+.
\]
Consequently, $a\le H(t)/c_+$ and since $H(t)\geq t$,
$\sqrt{at}\le H(t)/\sqrt{c_+}$. The general perturbation bound
therefore yields, at every such scale,
\[
    \widehat\theta^{\,2}(\ell)
    =
    |\lambda_{\max}(G_\ell)|\le
    \|\theta\|^2
    +
    4\sigma\|\theta\|\sqrt{t_\ell}
    +
    4\sigma^2H(t_\ell)\le
    \sigma^2H(t_\ell)
    \left(
        \frac1{c_+}
        +
        \frac4{\sqrt{c_+}}
        +4
    \right)<
    Ah_\ell,
\]

These scales cannot satisfy the selection criterion. If
$c_+r_n^2\ge1$, the upper bound below instead holds
deterministically. Hence
\begin{equation}
    n\delta \leq \ell_-\le\widehat\ell,
    \qquad
    \delta\le\frac{\widehat\ell}{n}\le c_+r_n^2.
    \label{eq:selected-scale-range}
\end{equation}

\medskip\noindent
\emph{Signal estimation and spectral alignment.}
At the selected scale, the block-energy bound and
\eqref{eq:small-scale-perturbation} give
\[
    |\lambda_{\max}(G_{\widehat\ell})-\|\theta\|^2|
    \le\left(\frac34+\frac1{20}\right)\|\theta\|^2
    =\frac45\|\theta\|^2.
\]
In particular, $\lambda_{\max}(G_{\widehat\ell})>0$, so
$\widehat\theta^{\,2}=\lambda_{\max}(G_{\widehat\ell})$.
Together with the bound on $\PP(\mathcal E^c)$, this proves
the first assertion.

Write $\overline\eta=\overline\eta^{(\widehat\ell)}$.
Since $c_+<c_0$, the alignment conclusion of
Lemma~\ref{lem:block-gram} also gives
\begin{equation}
    \left|\left\langle
        \widehat u,\frac{\overline\eta}{\|\overline\eta\|}
    \right\rangle\right|^2\ge\frac34.
    \label{eq:selected-bucket-alignment}
\end{equation}

It remains to transfer the result to the original labels.
If $\ell_-=n$, then $\widehat\ell=n$ and
$\|\overline\eta\|^2/\widehat\ell=1$.

Suppose now that $\ell_-<n$. Let
\[
    N_{\rm tr}
    :=
    \sum_{i=1}^{n-1}
    \mathbf 1\{\eta_{i+1}\neq\eta_i\}\sim\operatorname{Bin}(n-1,\delta).
\]
Since $\ell_-<n$, we have
\(
    \ell_-
    \ge
    \frac{c_-}{2}nr_n^2.
\)
Using $r_n^2\ge c'\delta$ and taking $c'$ sufficiently large,
\(
    \EE[N_{\rm tr}]
    \le n\delta
    \le
    \frac{\ell_-}{50}.
\)
A standard Chernoff bound then gives
\begin{equation}
    \PP\left(
        N_{\rm tr}>\frac{\ell_-}{10}
    \right)
    \le
    Ce^{-c\ell_-}
    \le
    Ce^{-cn(r_n^2\wedge1)}.
\label{eq:transition-count-bound}
\end{equation}

Every block without an internal transition has squared label
average equal to one, and at most $N_{\rm tr}$ blocks contain
an internal transition. Therefore, on $\mathcal E$ and outside
the event in \eqref{eq:transition-count-bound},
\begin{equation}
    \frac{\|\overline\eta\|^2}{\widehat\ell}
    \ge 1-\frac{N_{\rm tr}}{\widehat\ell}
    \ge\frac9{10}.
    \label{eq:strong-bucket-energy}
\end{equation}
This lower bound also holds when $\ell_-=n$. By definition of the expanded vector $\widehat u_n$,
\(
    \|\widehat u_n\|=1
\)
and
\begin{align*}
\rho^2
    :=\left|
    \left\langle
        \widehat u_n,\frac{\eta}{\sqrt n}
    \right\rangle
\right|^2
=
\frac1{\widehat\ell}
\left|
    \left\langle
        \widehat u,\overline\eta
    \right\rangle
\right|^2
=
\left|
    \left\langle
        \widehat u,
        \frac{\overline\eta}{\|\overline\eta\|}
    \right\rangle
\right|^2
\frac{\|\overline\eta\|^2}{\widehat\ell}\ge\frac{3}{4}\frac{9}{10}.
\end{align*}

Choose $s\in\{-1,1\}$ such that
$\langle s\widehat u_n,\eta/\sqrt n\rangle=\rho$, and let
$M=\{i:\widehat\eta_i^{(0)}\ne s\eta_i\}$. 
For $i\in M$, $s\eta_i(\widehat u_n)_i\le0$, so
Cauchy--Schwarz yields
\[
    \rho
    \le\frac1{\sqrt n}\sum_{i\notin M}|(\widehat u_n)_i|
    \le\sqrt{1-\frac{|M|}{n}}.
\]
Note that $\rho > 1/2$ and $|M| < n/2$.
Thus $|M|/n\le1-\rho^2$ and 
\[
    \frac1n|\langle\widehat\eta^{(0)},\eta\rangle|
    = \frac{(n - |M|) - |M|}{n}
    \ge2\rho^2-1
    \ge\frac7{20}>\frac13.
\]
The total exceptional probability is at most
$Ce^{-cn(r_n^2\wedge1)}$, proving the second assertion.

\subsection{Proof of Proposition \ref{prop:delta}}
The following lemma highlights a data-driven projection retaining the effective signal.
\begin{lemma}
\label{lem:projected-signal}
Let $m=n/2$, and apply the construction of
Proposition~\ref{prop:adaptive:estimation} to the first half of the
sample. Define
\[
    \widehat a
    :=
    \frac{\widehat\theta^{\,2}}{\sigma^2},
    \qquad
    \widehat r^{\,2}
    :=
    \frac{\widehat a^2}{\widehat a+d/m}.
\]
At the selected scale $\widehat\ell$, let $\widehat u$ denote the
bucket-level leading eigenvector and set
\[
    \widehat v
    :=
    \widetilde X_{S_1}^{(\widehat\ell)}\widehat u,
    \qquad
    \widehat w
    :=
    \frac{\widehat v}{\|\widehat v\|},
\]
with an arbitrary unit vector on the null event
$\{\widehat v=0\}$.

There exist universal constants $c,C>0$ such that, whenever $\delta\geq 1/(2n)$ and
$r_n^2\ge C\delta$,
\[
    \PP(\mathcal E_{\rm proj}^c)
    \le
    C\exp\{-cn(r_n^2\wedge1)\},
\]
where \(\mathcal E_{\rm proj}\) is the event that
\[
    c r_n^2
    \le
    \widehat r^{\,2}
    \le
    C r_n^2,\ \text{and }
    c r_n^2
    \le
    \frac{\langle\theta,\widehat w\rangle^2}{\sigma^2}
    \le
    C r_n^2.
\]
\end{lemma}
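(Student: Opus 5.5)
The plan is to work on the first half $S_1$ (sample size $m=n/2$) with the good event of Proposition~\ref{prop:adaptive:estimation}. That proposition applies with sample size $m$, since $\delta\ge 1/(4m)$ and $r_m^2\ge r_n^2/2\ge (C/2)\delta$. Intersect its event with the event $\mathcal E$ from its proof, on which the block-energy and Gram perturbation bounds hold at every dyadic scale $\ell\ge\ell_-$. This intersection fails with probability at most $C\exp\{-cn(r_n^2\wedge1)\}$.

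\textbf{Step 1: the bound on $\widehat r^{\,2}$.} On this event $\widehat a\in[a/10,19a/10]$. The map $x\mapsto x^2/(x+q)$ is increasing, and rescaling $x$ by a constant factor changes its value by at most a constant factor. Hence $\widehat r^{\,2}\asymp a^2/(a+2d/n)\asymp r_n^2$.

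\textbf{Step 2: decomposing the projected direction.} Write, at the selected scale $\widehat\ell$ with $k=m/\widehat\ell$,
\[
\widehat v=\theta\langle\overline\eta,\widehat u\rangle+\frac{\sigma}{\sqrt k}W\widehat u .
\]
Then
\[
\langle\theta,\widehat v\rangle=\|\theta\|^2\langle\overline\eta,\widehat u\rangle+\frac{\sigma}{\sqrt k}\langle W^\top\theta,\widehat u\rangle .
\]
From the proof of Proposition~\ref{prop:adaptive:estimation} we have $\|\overline\eta\|^2\ge \widehat\ell/4$ and $|\langle\widehat u,\overline\eta/\|\overline\eta\|\rangle|^2\ge3/4$. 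So the first term has absolute value at least $c\|\theta\|^2\sqrt{\widehat\ell}$. The second term is at most $(\sigma/\sqrt k)\,2\|\theta\|\sqrt{\widehat\ell}$, using the bound $\|W^\top\theta\|\le2\|\theta\|\sqrt\ell$ at every scale $\ell\ge\ell_-$. By \eqref{eq:selected-scale-range}, $\widehat\ell/m\le c_+r_m^2$, so $\sigma\|\theta\|/\sqrt k=\sigma\|\theta\|\sqrt{\widehat\ell/m}\le\sqrt{c_+}\,\|\theta\|^2$ is a small fraction of $\|\theta\|^2$. Hence $|\langle\theta,\widehat v\rangle|\asymp\|\theta\|^2\sqrt{\widehat\ell}$, and the same computation gives the upper bound.

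\textbf{Step 3: the norm of $\widehat v$.} This is the main obstacle, because $\widehat u$ depends on $W$. I would use the identity $\|\widehat v\|^2=\widehat u^\top(\widetilde X^\top\widetilde X)\widehat u=\widehat\ell\bigl(\lambda_{\max}(G_{\widehat\ell})+d\sigma^2/m\bigr)$. This avoids any fresh concentration for $W\widehat u$. On the good event $\lambda_{\max}(G_{\widehat\ell})\asymp\|\theta\|^2$, so $\|\widehat v\|^2\asymp\widehat\ell\,\sigma^2(a+d/m)$.

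\textbf{Conclusion.} Combining Steps 2 and 3,
\[
\frac{\langle\theta,\widehat w\rangle^2}{\sigma^2}\asymp\frac{a^2\widehat\ell}{\widehat\ell(a+2d/n)}\asymp r_n^2 .
\]
The event $\{\widehat v=0\}$ is excluded on the good event, since $\|\widehat v\|>0$ there. The total failure probability is the sum of those of Proposition~\ref{prop:adaptive:estimation} and of the union over dyadic scales already controlled in its proof. Both are $C\exp\{-cn(r_n^2\wedge1)\}$, using $m\asymp n$ and $r_m^2\asymp r_n^2$.
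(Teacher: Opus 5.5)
Your proposal is correct and follows the paper's proof essentially step for step. Both arguments get the constant-factor bound on $\widehat r^{\,2}$ from $\widehat\theta^{\,2}\asymp\|\theta\|^2$, use the decomposition $\widehat v=\theta\langle\overline\eta,\widehat u\rangle+\sigma k^{-1/2}W\widehat u$ with the noise term controlled by $\|W_\ell^\top\theta\|\le2\|\theta\|\sqrt\ell$ uniformly over $\ell\ge\ell_-$ together with $\widehat\ell/m\le c_+r_m^2\le c_+a$, and use the identity $\|\widehat v\|^2=\widehat\ell\,(\lambda_{\max}(G_{\widehat\ell})+d\sigma^2/m)$ to avoid the dependence of $\widehat u$ on $W$; the only cosmetic difference is that you fold the union bound for $\|W_\ell^\top\theta\|$ into the event $\mathcal E$ of the proposition's proof, which is legitimate because the Gram perturbation bound is derived on exactly that Gaussian event, whereas the paper restates it explicitly.
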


\begin{proof}
Since $m=n/2$,
\[
    r_m^2
    :=
    \frac{a^2}{a+d/m}
    \asymp
    r_n^2,
    \qquad
    a=\frac{\|\theta\|^2}{\sigma^2}.
\]
The first conclusion follows immediately from
Proposition~\ref{prop:adaptive:estimation}, since on its good event
\(
    \widehat\theta^{\,2}\asymp\|\theta\|^2.
\)

We prove the second assertion. Write
\(k=m/\widehat\ell,\ \overline\eta=\overline\eta^{(\widehat\ell)}.
\)
Then
\[
    \widehat v
    =
    \theta\langle\overline\eta,\widehat u\rangle
    +
    \frac{\sigma}{\sqrt{k}}W\widehat u.
\]
On the good event constructed in the proof of
Proposition~\ref{prop:adaptive:estimation},
\(|\langle\overline\eta,\widehat u\rangle|
    \ge c\sqrt{\widehat\ell}.\)

For each dyadic scale \(\ell\), let \(W_\ell\) denote its
Gaussian noise matrix. Gaussian concentration and a union
bound give
\[
    \|W_\ell^\top\theta\|
    \le 2\|\theta\|\sqrt{\ell}
\]
simultaneously over \(\ell\ge\ell_-(m)\), outside an event of
probability at most \(C\exp\{-cm(r_m^2\wedge1)\}\), where
\[
    \ell_-(m)
    :=
    \max\left\{
        \ell\in\mathcal L_m:
        \ell\le m\min\{c_-r_m^2,1\}
    \right\}.
\]
The probability bound follows by the same argument of \eqref{eq:ell-minus-size}.
On the localization event, \(\widehat\ell\ge\ell_-(m)\),
so this bound applies at the selected scale.

Hence
\begin{align*}
|\langle\theta,\widehat v\rangle|
\ge
\|\theta\|^2
|\langle\overline\eta,\widehat u\rangle|
-
\frac{\sigma}{\sqrt{k}}
|\langle W^\top\theta,\widehat u\rangle|
\ge
c\|\theta\|^2\sqrt{\widehat\ell}
-
C\sigma\|\theta\|
\frac{\widehat\ell}{\sqrt m}.
\end{align*}
The scale localization proved in
Proposition~\ref{prop:adaptive:estimation} implies that
\(
    \frac{\widehat\ell}{m}
    \le c_+r_m^2
    \le c_+a.
\)
This yields,

\[
    \sigma\|\theta\|
    \frac{\widehat\ell}{\sqrt m}
    \le
    \sigma\|\theta\|\sqrt{c_+a\widehat\ell}
    =
    \sqrt{c_+}\|\theta\|^2\sqrt{\widehat\ell}.
\]

The sufficiently small choice of $c_+$ in the adaptive
estimation proof makes the noise term smaller than half
the signal term. The corresponding upper bound follows
from the triangle inequality and
$|\langle\overline\eta,\widehat u\rangle|
\le\sqrt{\widehat\ell}$.
Hence

Choosing $c_+$ small enough and using the triangle inequality gives
\[
    |\langle\theta,\widehat v\rangle|
    \asymp
    \|\theta\|^2\sqrt{\widehat\ell}.
\tag{31}
\label{eq:theta-vhat}
\]

On the other hand, since $\widehat u$ is also a leading eigenvector
of the uncentered Gram matrix,
\begin{align*}
    \|\widehat v\|^2
    &=
    \widehat u^\top
    \bigl(\widetilde X_{S_1}^{(\widehat\ell)}\bigr)^\top
    \widetilde X_{S_1}^{(\widehat\ell)}
    \widehat u =
    \widehat\ell
    \left[
        \lambda_{\max}(G_{\widehat\ell})
        +
        \frac{d\sigma^2}{m}
    \right].
\end{align*}
The same good event gives
\(
    \lambda_{\max}(G_{\widehat\ell})
    \asymp
    \|\theta\|^2,
\)
and consequently
\[
    \|\widehat v\|^2
    \asymp
    \widehat\ell\,\sigma^2
    \left(
        a+\frac dm
    \right).
\tag{32}
\label{eq:norm-vhat}
\]
Combining \eqref{eq:theta-vhat} and
\eqref{eq:norm-vhat},
\[
    \frac{\langle\theta,\widehat w\rangle^2}{\sigma^2}
    =
    \frac{\langle\theta,\widehat v\rangle^2}
         {\sigma^2\|\widehat v\|^2}
    \asymp
    \frac{a^2}{a+d/m}
    =
    r_m^2
    \asymp
    r_n^2.
\]
\end{proof}

For the next Lemma, we use the notation $\mu = \langle \theta, \widehat w\rangle$ and $\beta = \mu^2 /\sigma^2$.
\begin{lemma}[Sign disagreements at a fixed scale]
\label{lem:delta-fixed-scale}
Use the notation of Section~\ref{sec:adaptation}, and set
\[
    \mathcal F_1=\sigma(X_j:j\in S_1),
    \qquad
    \beta=\frac{\langle\theta,\widehat w\rangle^2}{\sigma^2}.
\]
For each $M\in\mathcal M$, the indicators
\[
    B_{i,M}
    :=\mathbf 1\{S_{2i-1}^{(M)}S_{2i}^{(M)}\le0\},
    \qquad i=1,\ldots,\frac{m}{2M},
\]
are conditionally i.i.d.\ Bernoulli given $\mathcal F_1$.
Their common conditional mean
$p_M:=\EE[B_{i,M}\mid\mathcal F_1]$ satisfies
\begin{equation}
    p_M\le2\delta M+Ce^{-c_1M\beta}
    \label{eq:pM-upper}
\end{equation}
for every $M\in\mathcal M$, and
\begin{equation}
    p_M\ge c_2\delta M
          \bigl(1-Ce^{-c_1M\beta}\bigr)
    \label{eq:pM-lower}
\end{equation}
whenever $M=1$ or $\delta M\le c_0$,
where $c_0,c_1,c_2,C>0$ are universal constants.
\end{lemma}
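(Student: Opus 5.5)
The plan is to work conditionally on $\mathcal F_1$, so that $\widehat w$ and hence $\mu$, $\beta$ are fixed. Given $\mathcal F_1$, the projected observations on $S_2$ are $Z_t=\mu\eta_t+\sigma g_t$, with $g_t$ i.i.d.\ standard Gaussian and independent of the labels on $S_2$. The first step is the conditional i.i.d.\ structure. Write the labels on $S_2$ as $\eta_t=\eta_{t_0}\prod_{s<t}\varepsilon_s$, where the transition variables $\varepsilon_s$ are independent Bernoulli$(\delta)$ signs, independent of $\eta_{t_0}$, of $\mathcal F_1$ and of the $g$'s.

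The event $\{S_{2i-1}S_{2i}\le0\}$ is invariant under a global flip of the labels together with the corresponding sign flip of the noise. Since the Gaussian noise is symmetric, $B_{i,M}$ therefore has the same conditional law as a function of the relative labels inside the pair $B_{2i-1}\cup B_{2i}$. This uses only the transition variables internal to that pair (including the edge between the two blocks) and the noise on the pair. Distinct pairs use disjoint sets of such variables, so the indicators are conditionally independent. By time-homogeneity of the transitions they are also identically distributed. The one point needing care is that the starting label's law may be nonstationary under the conditioning; the sign-invariance argument removes its influence.

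For the upper bound \eqref{eq:pM-upper}, I split on whether the $2M$ labels in the pair are constant. The probability that they are not is at most $(2M-1)\delta\le2\delta M$. On the constant event, $S_{2i-1}$ and $S_{2i}$ are independent $\mathcal N(\pm M\mu,M\sigma^2)$ with the same sign of mean. A sign disagreement then requires one of them to have the wrong sign, which has probability at most $2\Phi^c(\sqrt{M\beta})\le 2e^{-M\beta/2}$.

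For the lower bound \eqref{eq:pM-lower}, I restrict to the event that the pair has exactly one transition, located on the edge joining the two blocks. This event has probability $\delta(1-\delta)^{2M-2}$, which is at least $c\delta$ when $\delta M\le c_0$. On this event the two block sums have opposite-sign means $\pm M\mu$. They agree in sign only if one of them errs, which has probability at most $2e^{-M\beta/2}$.

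This edge event alone gives only order $\delta$, not $\delta M$. To obtain $\delta M$, I would also keep the events of a single transition within the central half, that is, at distance at most $M/2$ from the middle edge. On such an event the block containing the change still has mean of magnitude at least $0$ with the opposite-leaning sign… Here lies the main obstacle: a change inside a block shrinks that block's mean. My fix is to count single transitions at positions $M/2<j\le M$ in the first block measured from its start, and symmetrically in the second block. Then each block's mean is dominated by its majority label, with magnitude at least $|\mu|(2\cdot\frac{M}{2}-M)$ — which is zero in the worst case, so I would instead restrict to positions in the last quarter or first quarter, giving mean magnitude at least $M|\mu|/2$. There are about $M/2$ such positions, each with probability at least $c\delta$. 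Summing over them gives
\[
p_M\ge c\delta M\bigl(1-2e^{-M\beta/8}\bigr).
\]
For $M=1$, the edge event alone suffices. In all cases the constants are adjusted to match the stated $c_1,c_2,C$.
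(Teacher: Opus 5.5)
Your proposal is correct and follows essentially the same route as the paper. The conditional i.i.d.\ structure comes from flipping each pair's labels and noise by the pair's starting label; the paper makes this hold jointly, not just marginally, by noting that the rotated noises $\zeta_t=\eta_{a_i}g_t$ are i.i.d.\ $\mathcal N(0,1)$ given $\mathcal F_1$ and the whole latent path. The upper bound comes from splitting on whether one of the $2M-1$ internal edges carries a transition. The lower bound comes from summing over disjoint single-transition events in the last quarter of the first block, where the two block means have opposite signs and magnitudes at least $M|\mu|/2$ and $M|\mu|$. Do clean up the exploratory middle of your lower-bound paragraph. Also note that $M=2$ needs the central-edge event on its own (as the paper does), since the quarter window is empty there; that event already has probability of order $\delta\asymp\delta M$.
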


\begin{proof}

We first justify the conditional independence for a fixed scale $M$.
Write $\varepsilon_j=\eta_j\eta_{j+1}$. The future transition
variables $(\varepsilon_j)_{j\ge m}$ are independent of
$\mathcal F_1$ and are i.i.d., with
$\PP(\varepsilon_j=-1)=\delta$.

Let $a_i$ be the first index of the $i$th pair of adjacent blocks,
$B_{2i-1}^{(M)}\cup B_{2i}^{(M)}$. For every index $t$ in this pair,
\[
    \eta_t=\eta_{a_i}\prod_{j=a_i}^{t-1}\varepsilon_j,
    \qquad
    \zeta_t:=\eta_{a_i}g_t.
\]
Conditionally on $\mathcal F_1$ and the entire latent path,
the multipliers $\eta_{a_i}$ are fixed signs. Gaussian symmetry
therefore gives
\[
    \mathcal L\bigl((\zeta_t)_{t\in S_2}
        \mid\mathcal F_1,\eta_{1:n}\bigr)
    =\mathcal N(0,I_m).
\]
Since this conditional law does not depend on the latent path,
the rotated noises are also independent of that path
conditionally on $\mathcal F_1$.

Moreover,
\[
    \eta_{a_i}Z_t
    =\mu\prod_{j=a_i}^{t-1}\varepsilon_j+\sigma\zeta_t,
\]
with the empty product equal to one. Multiplying both block sums
by $\eta_{a_i}$ leaves their product unchanged. Hence $B_{i,M}$
depends only on the transitions
$\varepsilon_{a_i},\ldots,\varepsilon_{a_i+2M-2}$
internal to its pair and the noises
$\zeta_{a_i},\ldots,\zeta_{a_i+2M-1}$; it no longer depends
on the starting label $\eta_{a_i}$.
Different pairs use disjoint sets of these independent inputs,
and each indicator is the same function of inputs with the
same conditional distribution. Thus the variables $(B_{i,M})_i$
are conditionally i.i.d.\ Bernoulli given $\mathcal F_1$.

For the upper bound, first note that the probability that at least one transition occurs among the
$2M-1$ relevant edges is at most $2\delta M$. On the complementary event, both latent blocks have the same sign,
and each block sum has conditional distribution
\[
    \pm M\mu+\sigma\sqrt M\,G,
    \qquad G\sim \mathcal{N}(0,1).
\]
Therefore the probability that the two observed block sums have
opposite signs is bounded by
\[
    2\Phi^c(\sqrt{M\beta})
    \le
    C e^{-c_1M\beta}.
\]
 Hence
\eqref{eq:pM-upper} follows.

For the lower bound, assume first that $M\ge4$.
Consider the event that the unique transition within the pair
occurs after position $r\in\{3M/4,\ldots,M-1\}$, numbering
positions from the start of the pair. Its probability is
\[
    \frac M4\delta(1-\delta)^{2M-2}
    \ge\frac{\delta M}{8},
    \qquad \delta M\le\frac1{16}.
\]
On this event, the two latent block sums have opposite signs
and magnitudes at least $M/2$ and $M$. Conditionally on any
such latent configuration, a Gaussian tail bound gives
\[
    \PP\bigl(S_{2i-1}^{(M)}S_{2i}^{(M)}\le0
             \mid\mathcal F_1,\eta_{1:n}\bigr)
    \ge 1-2\Phi^c\!\left(\frac{\sqrt{M\beta}}2\right).
\]
For $M=2$, instead require a transition at the block boundary
and no other transition within the pair. Its probability is
$\delta(1-\delta)^2\ge\delta/2=\delta M/4$
when $\delta M\le1/16$.
For $M=1$, the transition event has probability $\delta$,
without any additional restriction on $\delta$.
In both cases, the same conditional Gaussian bound applies.
Consequently, for every admissible dyadic $M$ with either
$M=1$ or $\delta M\le1/16$,
\[
    p_M\ge\frac{\delta M}{8}
    \left[
        1-2\Phi^c\!\left(\frac{\sqrt{M\beta}}2\right)
    \right].
\]
This proves \eqref{eq:pM-lower}, with $c_0=1/16$ and
suitable universal constants. Moreover, choosing a universal
$B_*>0$ such that
$2\Phi^c(\sqrt{B_*}/2)\le1/2$ gives
\begin{equation}
    p_M\ge\frac{\delta M}{16},
    \qquad M\beta\ge B_*,
    \label{eq:pM-numerical-lower}
\end{equation}
under the same restrictions on $M$ and $\delta$.
\end{proof}

Use the notation of Section~\ref{sec:adaptation}, and write
\[
    s=r_n^2(\theta),\qquad
    h=\widehat r^{\,2},\qquad
    \beta=\frac{\langle\theta,\widehat w\rangle^2}{\sigma^2}.
\]
Let $\mathcal F_1=\sigma(X_j:j\in S_1)$. By
Lemma~\ref{lem:projected-signal}, there are universal constants
$a_*,b_*>0$ such that the $\mathcal F_1$-measurable event
\[
    \mathcal E_{\rm proj}
    :=\{a_*s\le h\le b_*s,\ a_*s\le\beta\le b_*s\}
\]
has probability at least $1-Ce^{-cn(s\wedge1)}$.
We work conditionally on $\mathcal F_1$, on this event.

For $M\in\mathcal M$, set $N_M=m/(2M)$.
Lemma~\ref{lem:delta-fixed-scale} implies that $N_MD_M$
is conditionally binomial with parameters $N_M,p_M$.

Choose $\gamma>0$ sufficiently small that
$c_1a_*\ge2\gamma b_*$, where $c_1$ is from
Lemma~\ref{lem:delta-fixed-scale}.
Fix $K=16$ and $\kappa=1/128$, and choose $B$ sufficiently
large that, with $\rho=e^{-\gamma B}$,
\[
    C_1\rho\le\frac18,\qquad
    C_1K\rho\le1,\qquad
    \frac{a_*}{b_*}B\ge B_*,
\]
where $C_1$ is the coefficient in \eqref{eq:pM-upper}. Here $B_*$ is the universal constant in
\eqref{eq:pM-numerical-lower}, established in the proof of
Lemma~\ref{lem:delta-fixed-scale}.
All these constants are fixed before choosing the separation
constant in $s\ge C\delta$.

Since $\delta\ge1/(2n)$,
\[
    (n/4)h\ge a_*C/8\ge B
\]
for sufficiently large $C$. Thus the starting scale $M_0$
and the restricted grid $\mathcal M(\widehat r)$ defined in
the main text are nonempty on $\mathcal E_{\rm proj}$.
For $M\in\mathcal M(\widehat r)$, write
$\tau_M=e^{-\gamma Mh}$. The preceding choices give
\begin{equation}
    e^{-c_1M\beta}\le\tau_M^2,\qquad
    \tau_M\le\rho,\qquad M\beta\ge B_*.
    \label{eq:noise-below-tau-square}
\end{equation}

\medskip\noindent
\emph{Localization of the crossing window.}
Define
\[
    M_-=\max\{M\in\mathcal M(\widehat r):
                         \tau_M\ge K\delta M\},
    \qquad
    M_+=\min\{M\in\mathcal M(\widehat r):
                         \tau_M\le\kappa\delta M\}.
\]
If the first set is empty, set $M_-=M_0/2$.
These scales are deterministic under conditioning on
$\mathcal F_1$.

To verify that \(M_+\) exists, let \(M_{\max}=n/4\) and
\(z=\delta M_{\max}\ge1/8\). Then
\[
    \tau_{M_{\max}}
    \le e^{-\gamma a_*sM_{\max}}
    \le e^{-\gamma a_*Cz}
    \le\kappa z
\]
for a sufficiently large separation constant \(C\).
Thus the defining set for \(M_+\) is nonempty.

Since $\tau_{2M}=\tau_M^2$ and $\tau_M\le\rho<1$,
\begin{equation}
    \frac{\tau_{2M}}{2\delta M}
    =\frac{\tau_M}{\delta M}\frac{\tau_M}{2}
    \le\frac{\rho}{2}\frac{\tau_M}{\delta M}.
    \label{eq:ratio-geometric}
\end{equation}
Thus the ratio $\tau_M/(\delta M)$ decreases by a factor
at most $\rho/2$ at each doubling of $M$.
Let $M_{\mathrm{start}}$ be the first grid point after $M_-$.
By definition of $M_-$, its ratio is less than $K$.
This also holds when the defining set is empty, since then
$M_-=M_0/2$ and $M_{\mathrm{start}}=M_0$.

After $j$ doublings, provided the resulting scale remains
in the grid,
\[
    \frac{\tau_{2^jM_{\mathrm{start}}}}
         {\delta\,2^jM_{\mathrm{start}}}
    <K\left(\frac{\rho}{2}\right)^j.
\]
Since $M_+$ is the first scale where this ratio is at most
$\kappa$, and $M_+$ exists, the window $M_-<M\le M_+$
contains at most
\[
    J:=1+\left\lceil
        \frac{\log(K/\kappa)}{\log(2/\rho)}
    \right\rceil
\]
grid points. This bound is universal because $K$, $\kappa$,
and $\rho$ are fixed universal constants.

Put \(x=s/\delta\) and \(L=1+\log x\).
If \(M_+=M_{0}>1\), minimality of \(M_{0}\) gives
\[
    M_+h<2B,
    \qquad M_+\le\frac{2B}{a_*s}.
\]
If \(M_+>M_{0}\), set \(M'=M_+/2\) and \(y=sM'\).
Minimality of \(M_+\) implies
\[
    e^{-\gamma M'h}>\kappa\delta M',
    \qquad
    \gamma a_*y<\log\frac{x}{\kappa y}.
\]
For \(y\ge1\), the last expression is at most
\(\log(x/\kappa)\); for \(y<1\), \(y\) is already bounded.
Hence \(y\lesssim L\). Combining the two cases gives,
whenever \(M_+>1\),
\begin{equation}
    M_+\le C_*\frac{L}{s},
    \qquad
    \delta M_+\le
    C_*\frac{1+\log x}{x}
    \le\min\{c_0,1/16\},
    \label{eq:Mplus-persistence}
\end{equation}
after increasing the separation constant.
Here \(C_*\) is universal and \(c_0\) is the constant in
Lemma~\ref{lem:delta-fixed-scale}.

\emph{Excluding early crossings.}
For $M\in\mathcal M(\widehat r)$ with $M\le M_-$,
\eqref{eq:pM-upper}, \eqref{eq:noise-below-tau-square},
and the choices of $K$ and $B$ give
\begin{equation}
    p_M
    \le2\delta M+C_1\tau_M^2
    \le(2/K+C_1\rho)\tau_M
    \le\frac{\tau_M}{4}.
    \label{eq:pM-below-tau}
\end{equation}

Since $N_MD_M$ is binomial with parameters $N_M$ and $p_M$,
a standard Chernoff inequality gives
\[
    \PP(D_M\ge\tau_M\mid S_1)
    \le
    \exp(-cN_M\tau_M).
\tag{46}
\label{eq:false-crossing}
\]
If the set defining \(M_-\) is nonempty, then at \(M=M_-\),
\[
    N_{M_-}\tau_{M_-}
    =
    \frac{m}{2M_-}\tau_{M_-}
    \ge
    \frac{Km\delta}{2}
    \ge
    c n\delta.
\tag{47}
\label{eq:early-exponent}
\]
Furthermore, when passing from $M$ to $M/2$,
\[
    N_{M/2}\tau_{M/2}
    =
    2e^{\gamma M\widehat r^{\,2}/2}
    N_M\tau_M
    \ge
    2N_M\tau_M.
\]
Hence the exponents in
\eqref{eq:false-crossing} grow at least geometrically as one moves
backwards along the grid. Summing these bounds over the early scales and using
$n\delta\ge1/2$, we obtain

\[
    \PP\left(
        \exists M\in\mathcal M(\widehat r):
        M\le M_-,
        \ D_M\ge\tau_M
        \,\middle|\,S_1
    \right)
    \le
    C e^{-cn\delta}.
\tag{48}
\label{eq:no-early-crossing}
\]
If the set defining \(M_-\) is empty, there are no
pre-critical scales to control.

\medskip\noindent
\emph{Accuracy at the selected scale.}
For \(M\in\mathcal M(\widehat r)\) with \(M_-<M\le M_+\), we have
\(\tau_M<K\delta M\), \(M\beta\ge B_*\), and either
\(M=1\) or \(\delta M\le1/16\).
Therefore, by \eqref{eq:pM-numerical-lower},
\eqref{eq:pM-upper}, and
\eqref{eq:noise-below-tau-square},
\begin{equation}
    \frac{\delta M}{16}\le p_M
    \le2\delta M+C_1\tau_M^2
    \le(2+C_1K\rho)\delta M
    \le3\delta M.
    \label{eq:pM-transition-regime}
\end{equation}
Since \(N_MD_M\) is conditionally binomial,
\[
    \PP\left(
        |D_M-p_M|>\frac12p_M
        \,\middle|\,\mathcal F_1
    \right)
    \le2e^{-cN_Mp_M}
    \le2e^{-cn\delta},
\]
where \(N_Mp_M\ge n\delta/64\).
There are at most \(J\) scales in this window.
A union bound thus gives, with conditional probability
at least \(1-Ce^{-cn\delta}\),
\begin{equation}
    \frac{\delta M}{32}\le D_M\le\frac92\delta M
    \quad\text{for every }M\in\mathcal M(\widehat r)
    \text{ with }M_-<M\le M_+.
    \label{eq:DM-transition-regime}
\end{equation}

On the event in \eqref{eq:DM-transition-regime},
\[
    D_{M_+}\ge\frac{\delta M_+}{32}
    >\kappa\delta M_+\ge\tau_{M_+}.
\]
Thus the procedure stops by \(M_+\).
Combining this with \eqref{eq:no-early-crossing}, outside
an event of conditional probability at most \(Ce^{-cn\delta}\),
\[
    M_-<\widehat M\le M_+,
    \qquad
    \frac{\delta}{32}
    \le\frac{D_{\widehat M}}{\widehat M}
    \le\frac92\delta.
\]
Since \(1/n\le2\delta\) and \(\delta\le1/2\), truncation
to \([1/n,1/2]\) preserves
\[
    \frac{\delta}{32}\le\widehat\delta\le\frac92\delta.
\]
Both ratios are therefore strictly smaller than \(100\). Finally, since $s\ge C\delta$ and $\delta\le1/2$,
\[
    \PP(\mathcal E_{\rm proj}^c)
    \le Ce^{-cn(s\wedge1)}
    \le Ce^{-cn\delta}.
\]

\subsection{Proof of Theorem \ref{upper:minimax}}

We work under the normalization $\sigma=1$ and write
\[
    a:=\|\theta\|^2,
    \qquad
    r_n^2:=\frac{a^2}{a+d/n}.
\]
For $u\in\{-1,1\}^n$, let
\[
    e(u,\eta)
    :=
    \frac1n\sum_{i=1}^n
    \mathbf 1\{u_i\neq\eta_i\}.
\]
As usual, the global sign of an estimator is chosen so that its
correlation with $\eta$ is nonnegative.

We denote by $\mathcal E_{\rm ad}$ the event on which the adaptive
estimators of Section~\ref{sec:adaptation} satisfy
\begin{equation}
\label{eq:adaptive-event-proof}
    \frac{\delta}{100}\leq\widehat\delta\leq100\delta,
    \qquad
    c a\leq\widehat a_{\rm ad}\leq Ca,
    \qquad
    c r_n^2\leq\widehat r_{\rm ad}^{\,2}\leq Cr_n^2,
\end{equation}
and
\begin{equation}
\label{eq:adaptive-init-correlation}
    \frac1n
    \left|
        \langle
            \widehat\eta_{\rm ad}^{(0)},\eta
        \rangle
    \right|
    \geq c_0
\end{equation}
for some universal $c_0>0$. By the results of
Section~\ref{sec:adaptation},
\begin{equation}
\label{eq:adaptive-event-prob}
    \PP(\mathcal E_{\rm ad}^c)
    \leq
    Ce^{-cn\delta}.
\end{equation}
On the complement of this event we use the trivial loss bound. Since
the normalized Hamming loss is bounded by one, this contributes at most
$Ce^{-cn\delta}$.
We now work on $\mathcal E_{\rm ad}$.

\medskip
\noindent
{\bf Block hollowing and spectral control.}
Let $\mathcal P=(B_1,\ldots,B_N)$ be a partition of $[n]$ and let
$P_B$ denote the coordinate projection onto a block $B$. Define
\[
    H_{\mathcal P}(A)
    :=
    A-\sum_{B\in\mathcal P}P_BAP_B.
\]
Thus $H_{\mathcal P}(A)$ removes the entries whose two indices belong
to the same block.

We shall use repeatedly the elementary inequality
\begin{equation}
\label{eq:block-hollow-op-proof}
    \|H_{\mathcal P}(A)\|_{\op}
    \leq 2\|A\|_{\op}
\end{equation}
for every symmetric matrix $A$. Indeed,
\[
    \left\|
        \sum_{B\in\mathcal P}P_BAP_B
    \right\|_{\op}
    =
    \max_{B\in\mathcal P}
    \|P_BAP_B\|_{\op}
    \leq
    \|A\|_{\op}.
\]
Conditionally on $\eta$,
\[
    X=\theta\eta^\top+W,
\]
 the decomposition used in \cite{Ndaoud18} gives
\begin{equation}
\label{eq:hollowed-iid-decomposition}
    \frac1nH(X^\top X)
    =
    \frac an\eta\eta^\top+Z_n,
\end{equation}
where $Z_n$ is a centered perturbation up to the deterministic
diagonal correction absorbed into its definition. The operator-norm
bounds of \cite{Ndaoud18} imply, on an event
$\mathcal E_{\rm op}$,
\begin{equation}
\label{eq:Zn-bound}
    \|Z_n\|_{\op}
    \leq
    C\frac{a}{r_n}
    +
    o(a).
\end{equation}
The precise negligible term will only be relevant when $r_n\to\infty$.

Applying \eqref{eq:block-hollow-op-proof} to the centered noise
matrix gives the same control, up to a universal factor, after block
hollowing. More precisely, for every partition used by the
procedure,
\begin{equation}
\label{eq:block-Z-bound}
    \|Z_{n,\mathcal P}\|_{\op}
    \leq
    C\frac{a}{r_n}+o(a),
\end{equation}
where
\[
    \frac1nH_{\mathcal P}(X^\top X)
    =
    \frac anH_{\mathcal P}(\eta\eta^\top)
    +
    Z_{n,\mathcal P}.
\]
Only two shifted partitions are used, so the preceding bounds hold
simultaneously on the same event after modifying the universal
constants.

For an index $i$ and the block $B_i$ selected in the construction,
the oracle leave-block-out direction is
\[
    \theta_i^\star
    :=
    \frac1{n-|B_i|}
    \sum_{j\notin B_i}\eta_jX_j.
\]
Conditionally on $\eta$,
\begin{equation}
\label{eq:oracle-leave-block}
    \theta_i^\star
    =
    \theta+
    \frac{Z_i}{\sqrt{n-|B_i|}},
    \qquad
    Z_i\sim \mathcal{N}(0,I_d),
\end{equation}
and $\theta_i^\star$ is independent of
$\{X_j:j\in B_i\}$.

This independence will be used to analyze the local oracle rule.

\medskip
\noindent
{\bf The two initializations and the two signal estimators.}
The constant-factor estimators
$(\widehat a_{\rm ad},\widehat r_{\rm ad})$ are used to distinguish the
bounded-signal regime and to choose the local window length up to
universal constants. In the diverging-signal regime, the sharp estimator
$\widehat r_{\rm sp}$ is used only to decide when it is safe to switch
to the one-step rule $k=1$.

When $\widehat r_{\rm ad}\leq R_0$, the algorithm starts from
$\widehat\eta_{\rm ad}^{(0)}$.

When $\widehat r_{\rm ad}>R_0$, it starts instead from
\[
    \widehat\eta_{\rm sp}^{(0)}
    =
    \sign(\widehat v),
\]
where $\widehat v$ is a leading eigenvector of
$n^{-1}H(X^\top X)$. Since the result of \cite{Ndaoud18} is uniform
over $\eta\in\{-1,1\}^n$, its proof applies conditionally on the
Markovian labels. Hence, after fixing the global sign,
\begin{equation}
\label{eq:spectral-initialization-proof}
    e(\widehat\eta_{\rm sp}^{(0)},\eta)
    \leq
    \frac{C}{r_n^2}+\nu_n,
    \qquad
    \nu_n=o(1),
\end{equation}
in the regime $r_n\to\infty$.

We also recall the definition
\[
    \widehat a_{\rm sp}
    :=
    \left[
        \lambda_{\max}
        \left(
            \frac1nH(X^\top X)
        \right)
    \right]_+.
\]
Since the rank-one matrix
$a\eta\eta^\top/n$ has leading eigenvalue $a$,
Weyl's inequality and \eqref{eq:hollowed-iid-decomposition} yield
\[
    |\widehat a_{\rm sp}-a|
    \leq
    \|Z_n\|_{\op}.
\]
Consequently, whenever $r_n\to\infty$,
\begin{equation}
\label{eq:a-sp-consistency-proof}
    \frac{\widehat a_{\rm sp}}{a}
    =
    1+o_{\PP}(1).
\end{equation}
Defining
\[
    \widehat r_{\rm sp}^{\,2}
    :=
    \frac{\widehat a_{\rm sp}^{\,2}}
         {\widehat a_{\rm sp}+d/n},
\]
we also have
\begin{equation}
\label{eq:r-sp-consistency-proof}
    \frac{\widehat r_{\rm sp}}{r_n}
    =
    1+o_{\PP}(1).
\end{equation}
Indeed, if $\widehat a_{\rm sp}=a(1+u_n)$ with
$u_n=o_{\PP}(1)$, then
\[
    \frac{\widehat r_{\rm sp}^{\,2}}{r_n^2}
    =
    (1+u_n)^2
    \frac{a+d/n}{a(1+u_n)+d/n}
    =
    1+o_{\PP}(1).
\]

The constant-factor estimate of $\delta$ is already sufficient for
all the tuning steps. Indeed, on $\mathcal E_{\rm ad}$,
\begin{equation}
\label{eq:delta-log-sandwich-proof}
    \log\frac1\delta
    \leq
    \log\frac{100}{\widehat\delta}
    \leq
    \log\frac{10000}{\delta}.
\end{equation}
Moreover, with
\[
    L_\delta
    :=
    \log\frac{1-\delta}{\delta},
    \qquad
    \widehat L_\delta
    :=
    \log\frac{1-\widehat\delta}{\widehat\delta},
\]
we have the uniform additive bound
\begin{equation}
\label{eq:Ldelta-additive-proof}
    \bigl|
        \widehat L_\delta-L_\delta
    \bigr|
    \leq
    \log 200.
\end{equation}
Indeed, the ratio $\delta/\widehat\delta$ is between $1/100$ and $100$,
while both $1-\delta$ and $1-\widehat\delta$ belong to $[1/2,1]$.

Combining \eqref{eq:a-sp-consistency-proof},
\eqref{eq:r-sp-consistency-proof}, and
\eqref{eq:Ldelta-additive-proof}, we obtain
\begin{equation}
\label{eq:kappa-consistency-proof}
    \widehat\kappa
    :=
    \frac{\widehat a_{\rm sp}}
         {\widehat r_{\rm sp}^{\,2}}
    \widehat L_\delta
    =
    \frac{a}{r_n^2}
    \left\{
        L_\delta\bigl(1+o_{\PP}(1)\bigr)
        +
        O(1)
    \right\}.
\end{equation}
Thus sharp estimation of $\delta$ is not needed: its constant-factor
uncertainty produces only an additive $O(1)$ perturbation of the
log-likelihood correction.

When $r_n$ stays bounded, we use instead the adaptive quantities
$(\widehat a_{\rm ad},\widehat r_{\rm ad})$. Only constant-factor
control is required in that regime.

\medskip
\noindent
{\bf Deterministic stability in the sharp regime.}
We first assume $r_n\to\infty$. On $\mathcal E_{\rm ad}$, the
data-driven choice of the initialization eventually selects the
spectral initialization, so \eqref{eq:spectral-initialization-proof}
holds.

Fix one of the block partitions and put
\[
    G_{\mathcal P}
    :=
    \frac1nH_{\mathcal P}(X^\top X).
\]
For
\[
    x^{(m)}
    :=
    \widehat\eta^{(m)}-\eta
\]
we have
\begin{equation}
\label{eq:iterative-decomp-proof}
    G_{\mathcal P}\widehat\eta^{(m)}
    =
    G_{\mathcal P}\eta
    +
    Z_{n,\mathcal P}x^{(m)}
    +
    \frac an
    H_{\mathcal P}(\eta\eta^\top)x^{(m)}.
\end{equation}

If $i$ belongs to a block $B$, then
\[
    \left[
        \frac an
        H_{\mathcal P}(\eta\eta^\top)x^{(m)}
    \right]_i
    =
    \frac an\eta_i
    \sum_{j\notin B}
        \eta_j
        \bigl(
            \widehat\eta_j^{(m)}-\eta_j
        \bigr),
\]
so that
\begin{equation}
\label{eq:signal-perturbation-proof}
    \left|
        \left[
        \frac an
        H_{\mathcal P}(\eta\eta^\top)x^{(m)}
        \right]_i
    \right|
    \leq
    2a\,e_m,
    \qquad
    e_m:=e(\widehat\eta^{(m)},\eta).
\end{equation}

The central score and the two pilot scores are either coordinates of
$G_{\mathcal P}\widehat\eta^{(m)}$ or local averages of such
coordinates. Let $A_1,\ldots,A_q$ denote the corresponding linear
operators. Here $q$ is a universal constant and
\[
    \|A_j\|_{\op}\leq C.
\]
By \eqref{eq:block-Z-bound},
\[
\begin{split}
    \|A_jZ_{n,\mathcal P}x^{(m)}\|_2^2
    \leq
    C\|Z_{n,\mathcal P}\|_{\op}^2
    \|x^{(m)}\|_2^2 \leq
    C\frac{a^2}{r_n^2}
    \|x^{(m)}\|_2^2.
\end{split}
\]
Since
\(
    \|x^{(m)}\|_2^2=4ne_m,
\)
we obtain
\begin{equation}
\label{eq:l2-perturb-proof}
    \|A_jZ_{n,\mathcal P}x^{(m)}\|_2^2
    \leq
    C\frac{a^2}{r_n^2}ne_m.
\end{equation}

Let
\[
    \tau_n
    :=
    K\frac{a}{r_n}
    +
    C\frac{a}{r_n^2},
\]
where $K$ is a sufficiently large universal constant. The second term
absorbs the deterministic perturbation
\eqref{eq:signal-perturbation-proof} inside the basin
\[
    e_m
    \leq
    \frac{C_1}{r_n^2}+\nu_n.
\]

For each interior index $i\in\llbracket k+1,n-k\rrbracket$, let
$\mathcal A_i$ denote the event that every oracle score involved in the
local decision at $i$ is separated from its corresponding decision
boundary by at least $\tau_n$, with the perturbation of the Markov
threshold described in \eqref{eq:kappa-consistency-proof}. For the
boundary indices, only the central leave-one-out score is used; we
define the corresponding stability event $\mathcal A_i$ analogously.

On $\mathcal A_i$, a disagreement between the oracle update and the
actual update can therefore occur only if one of the noise
perturbations in \eqref{eq:l2-perturb-proof} exceeds a constant
multiple of $a/r_n$. Hence
\[
\begin{split}
    \sum_{i=1}^n
    \mathbf 1\left\{
        |(A_jZ_{n,\mathcal P}x^{(m)})_i|
        >
        c\frac{a}{r_n}
    \right\}\leq
    C\frac{r_n^2}{a^2}
    \|A_jZ_{n,\mathcal P}x^{(m)}\|_2^2\leq
    \frac1{8q}\,ne_m
\end{split}
\]
after choosing $K$ sufficiently large. Summing over the finitely many
scores gives
\begin{equation}
\label{eq:contraction-proof}
    e_{m+1}
    \leq
    \frac14e_m
    +
    \frac{C}{n}
    \sum_{i=1}^n
    \mathbf 1_{\mathcal A_i^c}.
\end{equation}

Exactly as in the proof of the iterative result of
\cite{Ndaoud18}, the spectral initialization
\eqref{eq:spectral-initialization-proof} and
\eqref{eq:contraction-proof} show by induction that the iterates stay
inside the same basin. Iterating, for $m\geq4\log n$,
\begin{equation}
\label{eq:iteration-final-deterministic}
    e_m
    \leq
    n^{-2}
    +
    \frac{C}{n}
    \sum_{i=1}^n
    \mathbf 1_{\mathcal A_i^c}.
\end{equation}

Let
\[
    I_k:=\llbracket k+1,n-k\rrbracket,
    \qquad
    \partial I_k:=[n]\setminus I_k,
\]
so that $|\partial I_k|\leq2k$. As in \cite{Ndaoud18}, introduce the
event
\[
    \mathcal C
    :=
    \left\{
        \frac1n
        \sum_{i=1}^n
        \mathbf 1_{\mathcal A_i^c}
        \leq
        \frac{c}{r_n^2}
    \right\}.
\]
Markov's inequality gives
\[
    \PP(\mathcal C^c)
    \leq
    Cr_n^2
    \left[
        \sup_{i\in I_k}\PP(\mathcal A_i^c)
        +
        \frac{k}{n}
        \sup_{i\in\partial I_k}\PP(\mathcal A_i^c)
    \right].
\]
The boundary rule is the usual central leave-one-out classifier, so
\eqref{eq:q1-proof} below also applies to
$\sup_{i\in\partial I_k}\PP(\mathcal A_i^c)$. Combining this observation
with \eqref{eq:iteration-final-deterministic} yields the master estimate
\begin{equation}
\label{eq:master-proof-new}
    \EE[e_m]
    \leq
    Cr_n^2
    \sup_{i\in I_k}\PP(\mathcal A_i^c)
    +
    C\frac{k}{n}
    \Phi^c\!\left(
        r_n(1-\varepsilon_n)
    \right)
    +
    Ce^{-cn\delta}.
\end{equation}
Since $\delta\geq1/n$, the boundary term will always be absorbed by the
corresponding local-window or strong-signal rate.

It remains to bound the oracle probability for interior indices.

\medskip
\noindent
{\bf Sharp supervised and pilot probabilities.}
Let $B_i$ be the block used at site $i$ and put $L_i=|B_i|$.
In the sharp regimes,
\[
    k=O(\log n),
    \qquad
    L_i=O(k)=o(n).
\]
From \eqref{eq:oracle-leave-block},
\[
    \theta_i^\star
    =
    \theta+\frac{Z_i}{\sqrt{n-L_i}}.
\]
Therefore the central oracle score is the same supervised score as in
\cite{Ndaoud18}, with $n$ replaced by $n-L_i$. Since
\[
    \frac{a^2}{a+d/(n-L_i)}
    =
    r_n^2(1+o(1)),
\]
the sharp supervised analysis of \cite{Ndaoud18} gives, for some
$\varepsilon_n\to0$,
\begin{equation}
\label{eq:q1-proof}
    r_n^2q_1
    \leq
    C\Phi^c\!\left(
        r_n(1-\varepsilon_n)
    \right),
\end{equation}
where $q_1$ denotes the probability that the central oracle score lies
on the wrong side of its decision boundary or within the margin
$\tau_n$.

If a pilot window of length $k$ contains no transition, then,
after multiplication by the true label,
\[
    \frac1k\sum_{j\in I} \eta_jX_j
    =
    \theta+\frac{Z}{\sqrt{k}},
\]
independently of $\theta_i^\star$. Its effective squared SNR is
\begin{equation}
\label{eq:pilot-snr-proof}
    R_{n,k}^2
    =
    \frac{k a^2}
    {a+(ka+d)/(n-L_i)}
    =
    k r_n^2(1+o(1)).
\end{equation}
Consequently,
\begin{equation}
\label{eq:qk-proof}
    q_k
    \leq
    C\exp(-c k r_n^2),
\end{equation}
where $q_k$ includes the same stability margin.

\medskip
\noindent
{\bf The local-window regime.}
Assume first that $r_n\to\infty$ and that the data-driven rule selects
$k>1$. Since $\widehat r_{\rm ad}>R_0$ eventually in this regime, the
definition of the estimator implies
\[
    \widehat r_{\rm sp}^{\,2}
    <
    2\log\left(
        \frac{100}{\widehat\delta}
    \right).
\]
Together with \eqref{eq:r-sp-consistency-proof} and
\eqref{eq:delta-log-sandwich-proof}, this gives
\begin{equation}
\label{eq:local-window-upper-proof}
    r_n^2
    \leq
    2\log\frac1\delta
    +
    o(r_n^2).
\end{equation}
Indeed, if $r_n\to\infty$ while the local-window rule is selected, then
necessarily $\delta\to0$, and the additive constant
$2\log 10000$ is $o(r_n^2)$.

On $\mathcal E_{\rm ad}$, constant-factor estimation of
$r_n^2$ and $\delta$ yields
\[
    k
    \asymp
    \frac1{r_n^2}
    \log\frac{r_n^2}{\delta}.
\]
By increasing the universal constant in the definition of $k$ if
necessary, \eqref{eq:qk-proof} gives
\begin{equation}
\label{eq:qk-delta-over-r}
    q_k
    \leq
    C\frac{\delta}{r_n^2}.
\end{equation}

Let $N_i$ denote the number of transitions of the Markov chain in the
two pilot windows surrounding an interior index $i$. Since the transition
indicators are i.i.d.\ Bernoulli$(\delta)$,
\[
    \PP(N_i=1)
    \leq Ck\delta,
    \qquad
    \PP(N_i\geq2)
    \leq C(k\delta)^2.
\]

We use the same $0/1/\geq2$ transition decomposition as in the
one-dimensional oracle analysis. On $\{N_i=0\}$, both pilot blocks
have the correct common label. Writing
\[
    q_+
    :=
    \PP\left(
        \text{the Markov-corrected central oracle score is wrong
        or within its stability margin}
    \right),
\]
an error can only occur through the corrected central score, through
two pilot failures, or through a pilot failure together with a central
failure. Thus
\[
    \PP(\mathcal A_i^c,N_i=0)
    \leq
    q_+
    +
    Cq_kq_1
    +
    Cq_k^2.
\]
On $\{N_i=1\}$, at least one pilot window is clean and carries the true
label at $i$, so
\[
    \PP(\mathcal A_i^c,N_i=1)
    \leq
    Ck\delta(q_1+q_k).
\]
Finally,
\[
    \PP(\mathcal A_i^c,N_i\geq2)
    \leq
    C(k\delta)^2.
\]
Therefore
\begin{equation}
\label{eq:intermediate-decomp-proof}
    \PP(\mathcal A_i^c)
    \leq
    q_+
    +
    Cq_kq_1
    +
    Cq_k^2
    +
    Ck\delta(q_1+q_k)
    +
    C(k\delta)^2.
\end{equation}

The Gaussian shift inequality gives
\begin{equation}
\label{eq:qplus-intermediate-proof}
    q_+
    \leq
    \delta\,
    \exp\{o(r_n^2)\}\,q_1.
\end{equation}
Moreover, \eqref{eq:local-window-upper-proof} implies
\begin{equation}
\label{eq:kdelta-exp-proof}
    k\delta
    \leq
    \exp\left(
        -\frac{r_n^2}{2}
        +
        o(r_n^2)
    \right)
\end{equation}
up to polynomial factors in $r_n$. Combining
\eqref{eq:q1-proof},
\eqref{eq:qk-delta-over-r},
\eqref{eq:intermediate-decomp-proof}, and
\eqref{eq:kdelta-exp-proof}, and using Mills' inequalities to absorb
the polynomial and $\exp\{o(r_n^2)\}$ factors into a vanishing
deterioration of the Gaussian argument, we obtain
\begin{equation}
\label{eq:intermediate-local-proof}
    r_n^2
    \sup_{i\in I_k}
    \PP(\mathcal A_i^c)
    \leq
    Ck\delta\,
    \Phi^c\!\left(
        r_n(1-\varepsilon_n)
    \right)
\end{equation}
for some $\varepsilon_n\to0$.

Since $\delta\geq1/n$, the boundary term in
\eqref{eq:master-proof-new} is bounded by the same right-hand side.
Using
\[
    k\delta
    \asymp
    \frac{\delta}{r_n^2}
    \log\frac{r_n^2}{\delta}
\]
in \eqref{eq:master-proof-new} gives
\begin{equation}
\label{eq:intermediate-result-proof}
    \EE[e_m]
    \leq
    Ce^{-cn\delta}
    +
    C
    \frac{\delta\log(r_n^2/\delta)}{r_n^2}
    \Phi^c\!\left(
        r_n(1-\varepsilon_n)
    \right).
\end{equation}

The local-window rule may still be selected when
$r_n^2$ is slightly larger than $2\log(1/\delta)$. In that case,
\eqref{eq:local-window-upper-proof} implies
\[
    r_n^2
    =
    2\log(1/\delta)\bigl(1+o(1)\bigr),
\]
and hence
\[
    \frac1{r_n^2}
    \log\frac{r_n^2}{\delta}
    =
    O(1).
\]
Thus \eqref{eq:intermediate-result-proof} already gives the strong-signal
rate
\[
    \EE[e_m]
    \leq
    Ce^{-cn\delta}
    +
    C\delta
    \Phi^c\!\left(
        r_n(1-\varepsilon_n)
    \right)
\]
throughout this transition region.

\medskip
\noindent
{\bf The one step strong signal regime.}
Still assuming $r_n\to\infty$, suppose that the data-driven rule selects
$k=1$. By construction,
\begin{equation}
\label{eq:k-one-selection-proof}
    \widehat r_{\rm sp}^{\,2}
    \geq
    2\log\left(
        \frac{100}{\widehat\delta}
    \right).
\end{equation}
Together with \eqref{eq:r-sp-consistency-proof} and the first inequality
in \eqref{eq:delta-log-sandwich-proof}, this implies
\begin{equation}
\label{eq:strong-regime-lower-proof}
    r_n^2
    \geq
    2\log\frac1\delta
    -
    o(r_n^2)
    =
    2L_\delta
    -
    o(r_n^2).
\end{equation}
Thus a one-step pilot has error at most
$\delta\exp\{o(r_n^2)\}$, which is the accuracy needed below.

We first consider an interior index $i\in\{2,\ldots,n-1\}$. Let
\[
    U_i
    :=
    \mathbf 1\{\eta_{i-1}\neq\eta_i\},
    \qquad
    V_i
    :=
    \mathbf 1\{\eta_i\neq\eta_{i+1}\}.
\]
The variables $U_i$ and $V_i$ are independent Bernoulli$(\delta)$.

Conditionally on $\theta_i^\star$, put
\[
    \mu_i
    :=
    \frac{
        \langle\theta,\theta_i^\star\rangle
    }{
        \|\theta_i^\star\|
    }.
\]
Since $|B_i|=O(1)$,
\begin{equation}
\label{eq:mu-rn-proof}
    \mu_i
    =
    r_n(1+o_{\PP}(1)).
\end{equation}

The ideal correction in the unnormalized score is
\[
    \kappa_i^\star
    :=
    \frac{\|\theta_i^\star\|^2}
         {\langle\theta,\theta_i^\star\rangle}
    L_\delta.
\]
Using \eqref{eq:oracle-leave-block},
\[
    \kappa_i^\star
    =
    \frac{a}{r_n^2}
    L_\delta(1+o_{\PP}(1)).
\]
The key point is that the estimate of $\delta$ need not be sharp.
Indeed, \eqref{eq:kappa-consistency-proof} and
\eqref{eq:mu-rn-proof} imply
\begin{equation}
\label{eq:kappa-oracle-est-proof}
    \frac{\widehat\kappa}{\|\theta_i^\star\|}
    =
    \frac{L_\delta}{\mu_i}
    \bigl(1+o_{\PP}(1)\bigr)
    +
    O_{\PP}\!\left(
        \frac1{r_n}
    \right).
\end{equation}
The last term is precisely the effect of the constant-factor uncertainty
in $\widehat\delta$ and is negligible in the Gaussian argument.

Set
\[
    q_0:=\Phi^c(\mu_i).
\]
Absorbing the perturbations in
\eqref{eq:kappa-oracle-est-proof} into the Gaussian arguments, put
\[
    q_+
    :=
    \Phi^c\left(
        \mu_i+
        \frac{L_\delta}{\mu_i}(1+o(1))
        +
        O\!\left(\frac1{r_n}\right)
    \right),
    \qquad
    q_-
    :=
    \Phi^c\left(
        \mu_i-
        \frac{L_\delta}{\mu_i}(1+o(1))
        +
        O\!\left(\frac1{r_n}\right)
    \right).
\]

Conditioning on $(U_i,V_i)$ exactly as in the one-dimensional proof,
\begin{equation}
\label{eq:strong-decomp-proof-new}
    \PP(\mathcal A_i^c\mid\theta_i^\star)
    \leq
    C\left(
        q_+
        +
        q_0^2
        +
        \delta q_0
        +
        \delta^2q_-
    \right).
\end{equation}
Indeed, if $U_i=V_i=0$, both neighboring labels coincide with
$\eta_i$; if $U_i\neq V_i$, the neighbors disagree and the central
classifier is used; and if $U_i=V_i=1$, both neighbors are equal to
$-\eta_i$.

By \eqref{eq:strong-regime-lower-proof} and Gaussian Mills inequalities,
\begin{equation}
\label{eq:strong-tail-relations-proof}
    q_+
    \leq
    \delta\,e^{o(r_n^2)}q_0,
    \qquad
    \delta^2q_-
    \leq
    \delta\,e^{o(r_n^2)}q_0,
\end{equation}
and also
\[
    q_0^2
    \leq
    \delta\,e^{o(r_n^2)}q_0.
\]
Taking expectations and using the sharp supervised bound
\eqref{eq:q1-proof}, all factors $e^{o(r_n^2)}$ are absorbed into a
vanishing deterioration of the Gaussian argument. Therefore
\begin{equation}
\label{eq:strong-local-proof}
    r_n^2
    \sup_{2\leq i\leq n-1}
    \PP(\mathcal A_i^c)
    \leq
    C\delta
    \Phi^c\!\left(
        r_n(1-\varepsilon_n)
    \right).
\end{equation}

For the two boundary indices $i\in\{1,n\}$, the estimator uses only the
central leave-one-out rule. By \eqref{eq:q1-proof}, their total
contribution to the normalized Hamming risk is at most
\[
    \frac{C}{n}
    \Phi^c\!\left(
        r_n(1-\varepsilon_n)
    \right)
    \leq
    C\delta
    \Phi^c\!\left(
        r_n(1-\varepsilon_n)
    \right),
\]
because $\delta\geq1/n$. Equivalently, this is exactly the boundary term
in \eqref{eq:master-proof-new} with $k=1$. Hence
\begin{equation}
\label{eq:strong-result-proof}
    \EE[e_m]
    \leq
    Ce^{-cn\delta}
    +
    C\delta
    \Phi^c\!\left(
        r_n(1-\varepsilon_n)
    \right).
\end{equation}

\noindent
{\bf Bounded signal regime.}
It remains to consider sequences for which $r_n$ stays bounded. In
this regime no sharp Gaussian approximation is needed. The procedure
is initialized with the adaptive estimator
$\widehat\eta_{\rm ad}^{(0)}$ constructed in
Section~\ref{sec:adaptation}. On $\mathcal E_{\rm ad}$,
after fixing the global sign,
\[
    \frac1n
    \langle
        \widehat\eta_{\rm ad}^{(0)},\eta
    \rangle
    \geq c_0
\]
for some universal $c_0>0$.

A constant correlation is sufficient here. Indeed, the projected
direction obtained from the labels
$\widehat\eta_{\rm ad}^{(0)}$ has effective squared signal-to-noise
ratio comparable to $r_n^2$. More precisely, the projection result of
Section~\ref{sec:adaptation} yields, on $\mathcal E_{\rm ad}$,
\[
    c r_n^2
    \leq
    \beta
    \leq
    C r_n^2,
\]
where $\beta$ is the squared signal-to-noise ratio of the corresponding
one-dimensional projected model. We may therefore apply the non-sharp one-dimensional local analysis
with signal strength $\beta\asymp r_n^2$. With
\[
    k
    \asymp
    \frac1{r_n^2}
    \log\left(
        1+\frac{r_n^2}{\delta}
    \right),
\]
the probability that a pilot based on a constant block is incorrect
is bounded by
\(
    Ce^{-c k r_n^2},
\)
whereas the probability that at least one transition occurs in a pilot
window is bounded by $Ck\delta$. Consequently,
 \begin{equation}\label{eq:coarse-result-proof}
      \EE[e(\widehat\eta^{(m)},\eta)]
    \leq
    C\left(
        k\delta
        +
        e^{-c k r_n^2}
    \right)
    +
    Ce^{-cn\delta}.  
\end{equation}
By the definition of $k$,
\[
    k\delta
    +
    e^{-c k r_n^2}
    \leq
    C
    \frac{\delta}{r_n^2}
    \left(
        1+\log\frac{r_n^2}{\delta}
    \right)
    \wedge1.
\]
Since $r_n$ stays bounded,
\(
    \Phi^c\!\left(r_n(1-\varepsilon_n)\right)
    \asymp 1
\)
and the preceding bound has the required order.
Finally, in the bounded-signal regime the first and last $k$
coordinates are treated by the one-sided or central rule. Since
$\delta\geq1/n$, their contribution is absorbed into the preceding
bounds. Combining
\eqref{eq:intermediate-result-proof},
\eqref{eq:strong-result-proof}, and
\eqref{eq:coarse-result-proof} proves the result.
\subsection{Proof of Theorem~\ref{upper:minimax:optimal}}
Throughout the proof, we normalize $\sigma=1$ and write
\[
    a:=\|\theta\|^2,
    \qquad
    r_m^2:=\frac{a^2}{a+d/m},
    \qquad m\geq1.
\]
All Hamming losses are understood up to a global sign.

We first introduce the event $\mathcal E_{\rm ad}$ on which the
adaptive estimators of Section~\ref{sec:adaptation} satisfy
\begin{equation}
\label{eq:adaptive-good-final}
    c\delta
    \leq
    \widehat\delta
    \leq
    C\delta,
    \qquad
    c r_n^2
    \leq
    \widehat r_{\rm ad}^{\,2}
    \leq
    C r_n^2,
\end{equation}
and, whenever $r_n^2\gtrsim\delta$, the preliminary label estimator
has a nonvanishing correlation with $\eta$. The results of
Section~\ref{sec:adaptation} imply
\begin{equation}
\label{eq:adaptive-good-proba-final}
    \PP(\mathcal E_{\rm ad}^c)
    \leq
    Ce^{-cn\delta}.
\end{equation}
Since the clustering loss is bounded by one, the contribution of
$\mathcal E_{\rm ad}^c$ is therefore absorbed by the first term in the
statement.

\medskip
\noindent
{\bf Sharp estimation of the signal direction.}
We first record a consequence of the spectral initialization of
\cite{Ndaoud18}. Let $T\subset\llbracket n\rrbracket$ be deterministic
with $m:=|T|$, let
\[
    \widetilde\eta_{\rm sp}^T
    =
    \sign(\widehat v_T),
\]
where $\widehat v_T$ is a leading eigenvector of
\(
    H(X_T^\top X_T),
\)
and, after fixing its global sign, define
\[
    e_T
    :=
    \frac1m
    d_H\left(
        \widetilde\eta_{\rm sp}^T,\eta_T
    \right).
\]
Whenever $r_m\to\infty$, the spectral initialization result of
\cite{Ndaoud18}, which is uniform over the label vector, yields an
event $\mathcal E_{\rm sp}(T)$ such that
\begin{equation}
\label{eq:spectral-labels-final}
    e_T=o(1)
\end{equation}
on $\mathcal E_{\rm sp}(T)$, while
\begin{equation}
\label{eq:spectral-event-prob-final}
    \PP\bigl(\mathcal E_{\rm sp}(T)^c\bigr)
    \leq
    Ce^{-cm}.
\end{equation}

Define
\[
    \widehat\theta_T
    :=
    \frac1m
    \sum_{j\in T}
    \widetilde\eta_{{\rm sp},j}^T X_j,
    \qquad
    \widehat w_{\rm sp}(T)
    :=
    \frac{\widehat\theta_T}
         {\|\widehat\theta_T\|},
\]
and introduce the corresponding oracle center
\[
    \theta_T^\star
    :=
    \frac1m
    \sum_{j\in T}
    \eta_jX_j
    =
    \theta+\frac1mW_T\eta_T.
\]
Writing
\[
    h_T
    :=
    \widetilde\eta_{\rm sp}^T-\eta_T,
\]
we have
\[
    \|h_T\|^2=4me_T
\]
and
\[
    \widehat\theta_T-\theta_T^\star
    =
    \frac{\langle h_T,\eta_T\rangle}{m}\theta
    +
    \frac1mW_Th_T.
\]
Therefore, on the usual Gaussian operator-norm event,
\[
\begin{split}
    \|
        \widehat\theta_T-\theta_T^\star
    \|
    \leq
    2e_T\sqrt a
    +
    \frac{\|W_T\|_{\op}}m
    \|h_T\|\leq
    2e_T\sqrt a
    +
    C\sqrt{e_T}
    \left(
        1+\sqrt{\frac dm}
    \right).
\end{split}
\]
Since
\(
    r_m^2
    =
    \frac{a^2}{a+d/m},
\)
we have
\[
    \sqrt{a+\frac dm}
    =
    \frac a{r_m},
    \qquad
    r_m\leq\sqrt a,
    \qquad
    \sqrt{\frac dm}\leq\frac a{r_m}.
\]
It follows that
\[
\begin{split}
    \frac{
        \|\widehat\theta_T-\theta_T^\star\|
    }{
        \sqrt{a+d/m}
    }
    &\leq
    2e_T\frac{r_m}{\sqrt a}
    +
    C\sqrt{e_T}
    \left(
        \frac{r_m}{a}
        +
        \frac{r_m}{a}\sqrt{\frac dm}
    \right)
\\
    &\leq
    2e_T
    +
    C\sqrt{e_T}
    \left(
        \frac1{r_m}+1
    \right)
    =
    o(1).
\end{split}
\]
Hence
\begin{equation}
\label{eq:center-oracle-equivalence-final}
    \widehat\theta_T
    =
    \theta_T^\star
    +
    o\left(
        \sqrt{a+d/m}
    \right).
\end{equation}

Conditionally on $\eta_T$,
\[
    \theta_T^\star
    =
    \theta+\frac Z{\sqrt m},
    \qquad
    Z\sim N(0,I_d).
\]
Standard Gaussian concentration gives, whenever $r_m\to\infty$,
\(
    \langle\theta,\theta_T^\star\rangle
    =
    a(1+o(1)),
\)
and
\[
    \|\theta_T^\star\|^2
    =
    a+\frac dm
    +
    o\left(
        a+\frac dm
    \right).
\]
Together with
\eqref{eq:center-oracle-equivalence-final}, this yields
\begin{equation}
\label{eq:sharp-direction-final}
    \left|
        \langle
            \theta,\widehat w_{\rm sp}(T)
        \rangle
    \right|
    =
    \frac{a}{\sqrt{a+d/m}}
    (1-o(1))
    =
    r_m(1-o(1)).
\end{equation}
This property will only be used in regimes where $r_m\to\infty$.

\medskip
\noindent
{\bf Alignment.}
We next isolate the deterministic alignment argument. Let $S$ and
$S'$ be two overlapping intervals carrying local reconstructions
$\widehat\eta$ and $\widehat\eta'$, and let
$\nu,\nu'\in\{-1,1\}$ denote their theoretical orientations. Let
$O\subset S\cap S'$ be the interval used for alignment.

The correct relative orientation is
\[
    \rho^\star:=\nu\nu'.
\]
Let
\[
    \widehat\rho
    \in
    \argmin_{\rho\in\{-1,1\}}
    \sum_{i\in O}
    \1\{
        \widehat\eta_i
        \neq
        \rho\widehat\eta'_i
    \}.
\]
If $\widehat\rho\neq\rho^\star$, then the two correctly oriented
reconstructions must disagree on at least half of $O$. Hence
\begin{equation}
\label{eq:alignment-deterministic-final}
    \frac{|O|}{2}
    \1\{
        \widehat\rho\neq\rho^\star
    \}
    \leq
    \sum_{i\in O}
    \1\{
        \nu\widehat\eta_i\neq\eta_i
    \}
    +
    \sum_{i\in O}
    \1\{
        \nu'\widehat\eta'_i\neq\eta_i
    \}.
\end{equation}
Consequently, if the two local estimators have expected pointwise
error bounded by $R$ on $O$ and $|O|$ is comparable with their block
lengths, then
\begin{equation}
\label{eq:alignment-proba-final}
    \PP(
        \widehat\rho\neq\rho^\star
    )
    \leq CR.
\end{equation}
No independence between the two local reconstructions is required.

If $J$ local reconstructions are aligned successively, then, on the
event that all $J-1$ relative alignments are correct, all of them have
one common global orientation. Therefore,
\begin{equation}
\label{eq:alignment-master-final}
    \EE[
        \ell(\widehat\eta,\eta)
    ]
    \leq
    CJR,
\end{equation}
where the constant also absorbs the local classification errors
themselves. For $J=2$, there is no additional loss beyond a universal
constant.

\medskip
\noindent
{\bf Weak signal: $r_n^2\leq1$.}
If
\(
    r_n^2\leq c\delta
\)
for a sufficiently small universal constant $c$, the desired result is
immediate from the trivial bound by one. We therefore assume
\(
    r_n^2\gtrsim\delta.
\)

Let $A,B,C$ be the three consecutive blocks appearing in the
definition of the estimator, each of cardinality $n/3+O(1)$, and put
\[
    S_1=A\cup B,
    \qquad
    S_2=B\cup C.
\]
The direction used on $S_1$ is estimated exclusively from $C$, whereas
the direction used on $S_2$ is estimated exclusively from $A$.
Consequently, conditional on the latent path, each estimated direction depends only on Gaussian noises outside the block to which the corresponding segmentation rule is applied, and is therefore independent of the Gaussian noises within that block.

Let
\[
    \mu_a
    :=
    \left|
        \langle\theta,\widehat w_a\rangle
    \right|,
    \qquad a=1,2.
\]
By the constant-correlation property of the adaptive initializer,
\[
    \mu_a^2
    \geq
    c r_{n/3}^2,
    \qquad a=1,2,
\]
on the corresponding adaptive good event. Moreover,
\[
    r_{n/3}^2
    =
    \frac{a^2}{a+3d/n}
    \geq
    \frac13 r_n^2,
\]
and therefore
\begin{equation}
\label{eq:weak-effective-signal-final}
    \mu_a^2
    \geq
    c' r_n^2,
    \qquad a=1,2.
\end{equation}

Conditionally on $\widehat w_a$ and the latent path $\eta$, the projected observations satisfy
\[
    \langle X_i,\widehat w_a\rangle
    =
    \nu_a\eta_i\mu_a+\xi_i,
    \qquad
    \xi_i\stackrel{\rm iid}{\sim}N(0,1),
\]
where
\[
    \nu_a
    :=
    \sign\langle\theta,\widehat w_a\rangle.
\]

On $\mathcal E_{\rm ad}$, the radius used by the procedure satisfies
\[
    k_{\rm w}
    \asymp
    \frac1{r_n^2}
    \log\left(
        1+\frac{r_n^2}{\delta}
    \right).
\]
Using the proof of
Theorem~\ref{thm:risk_clust_new_procedure_ub}, with
\eqref{eq:weak-effective-signal-final}, gives for every interior point
of $S_a$
\begin{equation}
\label{eq:weak-local-final}
    \PP\left(
        \widehat\eta_i^{(a)}
        \neq
        \nu_a\eta_i
    \right)
    \leq
    C\left(
        \frac{\delta}{r_n^2}\wedge1
    \right).
\end{equation}
Indeed, only a constant-factor lower bound on the one-dimensional
effective signal is needed in this regime.

The overlap $B$ has cardinality of order $n$. After removing
$O(k_{\rm w})$ observations from each end, its interior still has
cardinality comparable to $n$. The alignment inequality
\eqref{eq:alignment-deterministic-final} therefore gives
\[
    \PP(\text{wrong alignment})
    \leq
    C\left(
        \frac{\delta}{r_n^2}\wedge1
    \right).
\]
All observations away from the two sample boundaries can be
assigned to one of the two reconstructions whose complete local
neighborhood contains that observation.

It remains to control the $O(k_{\rm w})$ observations adjacent to the
two endpoints. Their total contribution to the normalized
Hamming risk is at most $Ck_{\rm w}/n$. Since
\(
    \delta\geq\log n /n,
\)
we have
\[
    \frac{k_{\rm w}}n
    \lesssim
    \frac1{nr_n^2}
    \log\frac1\delta
    \lesssim
    \frac{\delta}{r_n^2}.
\]
We conclude that
\begin{equation}
\label{eq:weak-final-proof}
    \EE[
        \ell(\widehat\eta,\eta)
    ]
    \leq
    C\left(
        \frac{\delta}{r_n^2}\wedge1
    \right)
    +
    Ce^{-cn\delta}.
\end{equation}

\medskip
\noindent
{\bf Bounded signal above one.}
Suppose now that
\(
    1<r_n\leq R_0
\)
for some fixed $R_0<\infty$. The same two-fold construction may be
used. The projected signal is bounded below by a positive constant,
and the one-dimensional segmentation bound gives
\[
    \EE[
        \ell(\widehat\eta,\eta)
    ]
    \leq
    C_{R_0}\delta
    +
    Ce^{-cn\delta}.
\]
Since
\(
    \inf_{1\leq x\leq R_0}
    \Phi^c(x)>0,
\)
this is equivalently
\begin{equation}
\label{eq:bounded-final-proof}
    \EE[
        \ell(\widehat\eta,\eta)
    ]
    \leq
    C\delta\Phi^c(r_n)
    +
    Ce^{-cn\delta}.
\end{equation}
Thus no sharp direction estimate is needed while $r_n$ remains
bounded.

\medskip
\noindent
{\bf Diverging intermediate signal.}
We now assume $r_n \geq R_0$ and eventually
\(
    r_n\to\infty
\)
and consider the intermediate range
\begin{equation}
\label{eq:intermediate-range-final}
    r_n^2
    \leq
    c_+\log(1/\delta),
\end{equation}
where $c_+<2$ is a fixed universal constant.

Let
\[
    L_n
    :=
    \left\lceil
        \log(e+\widehat r_{\rm sp})
    \right\rceil,
    \qquad
    M_n
    :=
    \left\lfloor
        \frac n{L_n}
    \right\rfloor.
\]
Since
\(
    \widehat r_{\rm sp}/r_n
    =
    1+o_{\PP}(1)
\)
when $r_n\to\infty$, we have
\begin{equation}
\label{eq:Ln-Mn-final-proof}
    L_n\asymp\log r_n,
    \qquad
    M_n\asymp\frac n{\log r_n}
\end{equation}
outside an event negligible compared with the final rate.

Let
\(
    S_1,\ldots,S_J
\)
be the overlapping intervals used by the procedure. They satisfy
\[
    |S_j|\asymp M_n,
    \qquad
    |S_j\cap S_{j+1}|\asymp M_n,
    \qquad
    J\asymp L_n.
\]
For each $j$, let
\[
    T_j
    :=
    \llbracket n\rrbracket\setminus S_j,
    \qquad
    m_j:=|T_j|.
\]
Then
\[
    m_j
    =
    n\left(
        1-O\left(\frac1{\log r_n}\right)
    \right),
\]
and
\begin{equation}
\label{eq:r-mj-comparison-final}
    r_{m_j}
    =
    r_n\left(
        1-O\left(\frac1{\log r_n}\right)
    \right).
\end{equation}

Fix $j$ and define
\[
    \widehat w_j
    :=
    \widehat w_{\rm sp}(T_j),
    \qquad
    \mu_j
    :=
    \left|
        \langle\theta,\widehat w_j\rangle
    \right|.
\]
Whenever
\(
    r_{m_j}\to\infty,
\)
\eqref{eq:sharp-direction-final} gives an event $\mathcal E_j$ such that
\[
    \PP(\mathcal E_j^c)
    \leq
    Ce^{-cm_j},
\]
and, on $\mathcal E_j$,
\begin{equation}
\label{eq:mu-j-sharp-final}
    \mu_j
    =
    r_{m_j}(1-o(1))
    =
    r_n(1-o(1)).
\end{equation}
Thus there exists a deterministic sequence $\beta_n$ such that
\[
    \mu_j
    \geq
    r_n(1-\beta_n)
\]
on $\mathcal E_j$ and $\beta_n\to0$ when $r_n \to 0$. This argument is applied separately to each $j$;
no simultaneous control over all folds is required.

On $\mathcal E_{\rm ad}$, the segmentation radius satisfies
\begin{equation}
\label{eq:ksharp-final}
    k_{\rm s}
    \asymp
    \frac1{r_n^2}
    \log\frac{r_n^2}{\delta}.
\end{equation}
Under
\eqref{eq:intermediate-range-final} and
$\delta\geq(\log n)/n$,
\[
    k_{\rm s}
    \lesssim
    \log n,
\]
whereas
\[
    M_n
    \asymp
    \frac n{\log r_n}.
\]
Since
\(
    r_n^2
    \lesssim
    \log(1/\delta)
    \lesssim
    \log n,
\)
we have
\(
    \log r_n
    =
    O(\log\log n),
\)
and therefore
\begin{equation}
\label{eq:k-over-M-final-proof}
    \frac{k_{\rm s}}{M_n}
    =
    o(1).
\end{equation}
Hence every consecutive overlap contains an interval
$O_j^\circ$ of size comparable to $M_n$ whose points are at distance
at least $k_{\rm s}$ from all artificial boundaries.

Since $\widehat w_j$ is constructed only from $T_j$, it is independent
of the noise terms indexed by $S_j$. Conditionally on
$\widehat w_j$ and the latent path $\eta$, the projected observations in $S_j$ therefore are independent. Hence the conditional segmentation analysis applies pathwise, and the resulting bound can subsequently be averaged over the latent Markov chain.

Because $c_+<2$, equations
\eqref{eq:intermediate-range-final} and
\eqref{eq:mu-j-sharp-final} imply, for all sufficiently large $n$,
\[
    \mu_j^2
    <
    2\log(1/\delta).
\]
Theorem~\ref{thm:risk_clust_new_procedure_ub} can therefore be
applied. On $\mathcal E_j$, it yields for every interior point of
$S_j$
\begin{equation}
\label{eq:local-sharp-risk-final}
    \PP\left(
        \widehat\eta_i^{(j)}
        \neq
        \nu_j\eta_i
        \,\middle|\,
        \mathcal E_j
    \right)
    \leq
    C r_n\delta
    \Phi^c\!\left(
        r_n(1-\beta_n)
    \right),
\end{equation}
where
\[
    \nu_j
    :=
    \sign\langle\theta,\widehat w_j\rangle.
\]
On $\mathcal E_j^c$ we use the trivial bound one. Since
$m_j=n(1-o(1))$,
\[
    e^{-cm_j}
    \leq
    Ce^{-cn},
\]
and hence, for every fixed $j$,
\begin{equation}
\label{eq:local-sharp-unconditional-final}
    R_n^{\rm loc}
    :=
    \sup_{i\in S_j^\circ}
    \PP\left(
        \widehat\eta_i^{(j)}
        \neq
        \nu_j\eta_i
    \right)
    \leq
    C r_n\delta
    \Phi^c\!\left(
        r_n(1-\beta_n)
    \right)
    +
    Ce^{-cn}.
\end{equation}

We now align consecutive local reconstructions on the intervals
$O_j^\circ$. By
\eqref{eq:alignment-proba-final},
\[
    \PP(\text{alignment $j$ fails})
    \leq
    C R_n^{\rm loc}.
\]
Summing over the
\[
    J\asymp\log r_n
\]
alignments and using
\eqref{eq:alignment-master-final}, we obtain
\begin{equation}
\label{eq:before-mills-final-proof}
    \EE[
        \ell(\widehat\eta,\eta)
    ]
    \leq
    C r_n\log r_n\,
    \delta
    \Phi^c\!\left(
        r_n(1-\beta_n)
    \right)
    +
    Ce^{-cn\delta}.
\end{equation}
Here the contribution
$J e^{-cn}$ is absorbed by $e^{-cn\delta}$.

The only observations not covered by an interior local window are the
$O(k_{\rm s})$ points adjacent to the two genuine sample boundaries.
Using the pointwise projected rule, their contribution is at most
\[
    C\frac{k_{\rm s}}n
    \Phi^c\!\left(
        r_n(1-\beta_n)
    \right).
\]
Since
\[
    \frac{k_{\rm s}}n
    \lesssim
    \frac1{nr_n^2}
    \log\frac1\delta
    \lesssim
    \frac{\delta}{r_n^2},
\]
this is bounded by the right-hand side of
\eqref{eq:before-mills-final-proof}.

It remains to absorb the factor $r_n\log r_n$. By Mills'
inequalities, if $x\to\infty$, $\gamma_x\to0$, and
$\log A_x=o(x^2)$, then there exists another sequence
$\widetilde\gamma_x\to0$ such that
\[
    A_x
    \Phi^c\!\left(
        x(1-\gamma_x)
    \right)
    \leq
    C
    \Phi^c\!\left(
        x(1-\widetilde\gamma_x)
    \right).
\]
Taking
\[
    A_x=x\log x
\]
and $x=r_n$, we obtain a sequence $\varepsilon_n\to0$ for which
\[
    r_n\log r_n\,
    \Phi^c\!\left(
        r_n(1-\beta_n)
    \right)
    \leq
    C
    \Phi^c\!\left(
        r_n(1-\varepsilon_n)
    \right).
\]
Therefore
\begin{equation}
\label{eq:intermediate-final-proof}
    \EE[
        \ell(\widehat\eta,\eta)
    ]
    \leq
    C\delta
    \Phi^c\!\left(
        r_n(1-\varepsilon_n)
    \right)
    +
    Ce^{-cn\delta}.
\end{equation}

\medskip
\noindent
{\bf Logarithmic and strong signal.}
Fix constants
\(
    0<c_-<c_+<2.
\)
When
\(
    r_n^2
    \geq
    c_-\log(1/\delta),
\)
we may use the iterative estimator of
Theorem~\ref{upper:minimax}.

If
\(
    r_n^2
    \leq
    2\log(1/\delta),
\)
that theorem gives
\[
    \EE[
        \ell(\widehat\eta,\eta)
    ]
    \leq
    Ce^{-cn\delta}
    +
    C
    \frac{\delta}{r_n^2}
    \log\frac{r_n^2}{\delta}
    \Phi^c\!\left(
        r_n(1-\varepsilon_n')
    \right).
\]
But
\(
    r_n^2
    \geq
    c_-\log(1/\delta)
\)
implies
\[
    \frac1{r_n^2}
    \log\frac{r_n^2}{\delta}
    \leq C.
\]
Consequently,
\begin{equation}
\label{eq:logarithmic-final-proof}
    \EE[
        \ell(\widehat\eta,\eta)
    ]
    \leq
    Ce^{-cn\delta}
    +
    C\delta
    \Phi^c\!\left(
        r_n(1-\varepsilon_n')
    \right).
\end{equation}
If
\(
    r_n^2
    \geq
    2\log(1/\delta),
\)
the second branch of
Theorem~\ref{upper:minimax} gives
\eqref{eq:logarithmic-final-proof} directly.

Thus the iterative estimator is already sharp, up to the vanishing
perturbation in the Gaussian argument, throughout the whole
logarithmic-signal region.

\noindent
{\bf Adaptive regime selection.}
It remains to justify the data-driven choice between the cross-fitted
and iterative estimators. Along every sequence such that $r_n\to\infty$,
\[
\frac{\widehat r_{\rm sp}}{r_n}=1+o_{\mathbb P}(1).
\]
Moreover, on the adaptive event,
\[
\log(1/\widehat\delta)=\log(1/\delta)+O(1).
\]

If $\log(1/\delta)\to\infty$, this additive error is
$o(\log(1/\delta))$, so the logarithmic threshold is located up to a
vanishing relative error. Since the cross-fitted estimator is valid for
\[
r_n^2\le c_+\log(1/\delta),
\]
while the iterative estimator is already sharp for
\[
r_n^2\ge c_-\log(1/\delta),
\]
with $c_-<c_\star<c_+$, the two validity regions overlap and either
choice yields the required rate in the transition region.

If instead $\log(1/\delta)=O(1)$, the switching threshold remains
bounded. Since $r_n\to\infty$ and
$\widehat r_{\rm sp}/r_n\to1$ in probability, the selector eventually
chooses the iterative estimator with probability tending to one.
Hence the data-driven regime selection is asymptotically harmless in
all cases.

Combining
\eqref{eq:weak-final-proof},
\eqref{eq:bounded-final-proof},
\eqref{eq:intermediate-final-proof},
and
\eqref{eq:logarithmic-final-proof},
we conclude that
\[
    \EE[
        \ell(\widehat\eta,\eta)
    ]
    \leq
    Ce^{-cn\delta}
    +
    C
    \begin{cases}
    \displaystyle
    \frac{\delta}{r_n^2}\wedge1,
    &
    r_n^2\leq1,
    \\[2.5ex]
    \displaystyle
    \delta
    \Phi^c\!\left(
        r_n(1-\varepsilon_n)
    \right),
    &
    r_n^2>1,
    \end{cases}
\]
where $\varepsilon_n\to0$ along every sequence such that $r_n\to\infty$ when $n\to\infty$.

\end{document}